\documentclass[11pt]{article}

\usepackage{amsmath,amssymb,enumitem,amsthm,stmaryrd}
\usepackage{graphics,pstricks,pst-node,xy}
\xyoption{all}

\setcounter{topnumber}{1}

\addtolength{\textheight}{1.2in}
\addtolength{\topmargin}{-.6in}
\addtolength{\textwidth}{1.5in}
\addtolength{\oddsidemargin}{-.75in}
\addtolength{\evensidemargin}{-.75in}

\numberwithin{equation}{section}
\newtheorem{thm}{Theorem}[section]
\newtheorem{prp}[thm]{Proposition}
\newtheorem{lmm}[thm]{Lemma}
\newtheorem{crl}[thm]{Corollary}

\newtheorem{thmnum}{Theorem}

\theoremstyle{definition}
\newtheorem{dfn}[thm]{Definition}

\def\eset{\emptyset}

\def\BE#1{\begin{equation}\label{#1}}
\def\EE{\end{equation}}
\def\lan{\langle}
\def\ran{\rangle}
\def\lr#1{\lan#1\ran}

\def\ch#1{\check{#1}}
\def\ov#1{\overline{#1}}
\def\ti#1{\tilde{#1}}
\def\wt#1{\widetilde{#1}}
\def\e_ref#1{(\ref{#1})}
\def\smsize#1{\begin{small}#1\end{small}}

\def\sf#1{\textsf{#1}}

\def\Lau#1{\llceil{#1}\rrceil}
\def\coeff#1{\llbracket{#1}\rrbracket}
\def\bigcoeff#1{\big\llbracket{#1}\big\rrbracket}

\def\llbr{\llbracket}
\def\rrbr{\rrbracket}
\def\LR#1{\left\llbr{#1}\right\rrbr}

\def\lra{\longrightarrow}
\def\Lra{\Longrightarrow}

\def\0{\mathbf{0}}
\def\1{\mathbf{1}}
\def\al{\alpha}
\def\be{\beta}
\def\ga{\gamma}
\def\de{\delta}
\def\ep{\epsilon}

\def\la{\lambda}
\def\om{\omega}
\def\si{\sigma}
\def\th{\theta}
\def\vp{\varpi}
\def\vt{\vartheta}

\def\Ga{\Gamma}
\def\La{\Lambda}
\def\Om{\Omega}

\def\De{\Delta}

\def\i{\infty}
\def\hb{\hbar}

\def\cA{\mathcal A}
\def\bD{\mathbf D}

\def\C{\mathbb C}
\def\cC{\mathcal C}
\def\ctC{\wt{\mathcal{C}}}
\def\nc{\mathrm{c}}
\def\ntc{\wt{\mathrm{c}}}
\def\fC{\mathfrak C}
\def\bc{\mathbf c}
\def\d{\mathfrak d}

\def\D{\mathfrak D}

\def\E{\mathbf e}
\def\tE{\textnormal{E}}
\def\F{\mathcal F}

\def\cH{\mathcal H}

\def\I{\mathfrak i}

\def\bI{\mathbb I}

\def\cM{\mathcal M}
\def\M{\mathfrak M}
\def\N{\mathcal N}

\def\O{\mathcal O}
\def\cO{\mathcal O}
\def\P{\mathbb P}
\def\cP{\mathcal P}

\def\Pn{\mathbb P^{n-1}}

\def\Q{\mathbb Q}

\def\bfr{\mathbf r}
\def\bfs{\mathbf s}
\def\bS{\mathbb S}
\def\cS{\mathcal S}
\def\T{\mathbb T}

\def\X{\mathfrak X}

\def\cU{\mathcal U}
\def\V{\mathcal V}

\def\cY{\mathcal Y}
\def\Z{\mathbb Z}
\def\cZ{\mathcal Z}

\def\a{\mathbf a}
\def\b{\mathbf b}

\def\cP{\mathcal{P}}
\def\bM{\mathbf{M}}
\def\nd{\textnormal{d}}

\def\ne{\textnormal{e}}
\def\fs{\mathfrak s}
\def\x{\mathbf x}
\def\y{\mathbf y}

\def\BPS{\textnormal{BPS}}
\def\Edg{\textnormal{Edg}}

\def\ev{\textnormal{ev}}
\def\id{\textnormal{id}}

\def\GW{\textnormal{GW}}

\def\mod{\textnormal{mod~}}

\def\rk{\textnormal{rk}}
\def\bPD{\textnormal{\bf PD}}

\def\Rs#1{\underset{#1}{\mathfrak R}}
\def\Sym{\textnormal{Sym}}

\def\SQ{\textnormal{SQ}}
\def\top{\textnormal{top}}
\def\val{\textnormal{val}}
\def\Ver{\textnormal{Ver}}

\begin{document}

\title{Double and Triple Givental's $J$-functions\\ 
for Stable Quotients Invariants}
\author{Aleksey Zinger}
\date{\today}

\maketitle

\begin{abstract}
We use mirror formulas for the stable quotients analogue of Givental's $J$-function for 
twisted projective invariants obtained in a previous paper to
obtain mirror formulas for the analogues of the double and triple Givental's 
$J$-functions (with descendants at all marked points) in this setting.
We then observe that the genus~0 stable quotients invariants
need not satisfy the divisor, string, or dilaton relations of the Gromov-Witten theory, but 
they do possess the integrality properties of
the genus~0 three-marked Gromov-Witten invariants of Calabi-Yau manifolds. 
We also relate the stable quotients invariants to the BPS counts arising
in Gromov-Witten theory and obtain mirror formulas for certain twisted Hurwitz numbers.
\end{abstract}

\tableofcontents

\section{Introduction}
\label{intro_sec}

\noindent
Gromov-Witten invariants of projective varieties are counts of curves 
that are conjectured (and known in some cases) to possess a rich structure.
In particular, so-called mirror formulas relate these symplectic invariants 
of a nonsingular variety~$X$ to complex-geometric invariants of 
the mirror family of~$X$.
In genus~0, this relation is often described by assembling two-point Gromov-Witten invariants
(but without constraints on the second marked point) into a generating function,
known as Givental's $J$-function, and expressing it in terms of an explicit 
hypergeometric series.
The genus~0 Gromov-Witten invariants of a projective complete intersection~$X$
are equal to the twisted Gromov-Witten invariants of the ambient space
associated to the direct sum of positive line bundles corresponding to~$X$. 
The genus~0 mirror formula in Gromov-Witten theory extends to 
the twisted Gromov-Witten invariants associated with direct sums of line bundles 
over projective spaces; see \cite{Elezi,Gi2,LLY3}. 
By~\cite{GWvsSQ}, the analogue of Givental's $J$-function for
the twisted stable quotients invariants defined in~\cite{MOP09} satisfies
a simpler version of the mirror formula from Gromov-Witten theory.
In this paper, we obtain mirror formulas for the stable quotients analogues of 
the double and triple Givental's $J$-functions for direct sums of line bundles.
We use them to test the stable quotients invariants for the analogues of the standard
properties satisfied by Gromov-Witten invariants.
In the future, we intend to apply the methods of this paper to show that the stable quotients and
Gromov-Witten invariants of projective complete intersections
are related by a simple mirror transform, in all genera, but with at least 
one marked point.

\subsection{Stable quotients}
\label{SQ_subs}

\noindent
The moduli spaces of stable quotients, $\ov{Q}_{g,m}(X,d)$, constructed in~\cite{MOP09}
and generalized in~\cite{CKM},
provide an alternative to
the moduli spaces of stable maps, $\ov\M_{g,m}(X,d)$, for compactifying spaces of
degree~$d$ morphisms from genus~$g$ nonsingular curves with $m$~marked points to
a projective variety~$X$ (with a choice of polarization).
A \sf{stable tuple of quotients} is a tuple
\BE{SQtuple_e}\big(\cC,y_1,\ldots,y_m;S_1\!\subset\!\C^{n_1}\!\otimes\!\cO_{\cC},
\ldots,S_p\!\subset\!\C^{n_p}\!\otimes\!\cO_{\cC}\big),\EE
where $\cC$ is a connected (at worst) nodal curve, 
$y_1,\!\ldots\!,y_m\!\in\!\cC^*$ are distinct smooth points, and
$$S_1\!\subset\!\C^{n_1}\!\otimes\!\cO_{\cC},
\ldots,S_p\!\subset\!\C^{n_p}\!\otimes\!\cO_{\cC}$$
are subsheaves such that  the supports of the torsions
of $\C^{n_1}\!\otimes\!\cO_{\cC}/S_1,\ldots,\C^{n_p}\!\otimes\!\cO_{\cC}/S_p$
are contained in $\cC^*\!-\!\{y_1,\ldots,y_m\}$ and 
the $\Q$-line bundle
$$\om_{\cC}(y_1\!+\!\ldots\!+\!y_m)\otimes
\big(\La^{\top}S_1^*\big)^{\ep}\otimes\ldots\otimes\big(\La^{\top}S_p^*\big)^{\ep}
\lra \cC$$
is ample for all $\ep\!\in\!\Q^+$;
this implies that $2g\!+\!m\!\ge\!2$.\\

\noindent
In this paper, we are concerned only with the case $g\!=\!0$.
For $m,d_1,\ldots,d_p\!\in\!\Z^{\ge0}$ and 
$n_1,\ldots,n_p\!\in\!\Z^+$, the moduli space
\BE{Qgm_e} 
\ov{Q}_{0,m}\big(\P^{n_1-1}\!\times\!\ldots\!\times\!\P^{n_p-1},
(d_1,\ldots,d_p)\big)\EE
parameterizing the stable tuples of quotients as in \e_ref{SQtuple_e}
with $h^1(\cC,\cO_{\cC})\!=\!0$, i.e.~$\cC$ is a rational curve,
$\rk(S_i)\!=\!1$, and $\deg(S_i)\!=\!-d_i$, 
is a nonsingular irreducible Deligne-Mumford stack and
$$\dim \ov{Q}_{0,m}\big(\P^{n_1-1}\!\times\!\ldots\!\times\!\P^{n_p-1},
(d_1,\ldots,d_p)\big)
=(d_1\!+\!1)n_1+\ldots+(d_p\!+\!1)n_p-p-3+m\,;$$
see \cite[Propositions~2.1,2.2]{GWvsSQ}.\\

\noindent
As in the case of stable maps, there are evaluation morphisms,
$$\ev_i\!: \ov{Q}_{0,m}\big(\P^{n_1-1}\!\times\!\ldots\!\times\!\P^{n_p-1},
(d_1,\ldots,d_p)\big)\lra\P^{n_1-1}\!\times\!\ldots\!\times\!\P^{n_p-1}, \qquad i=1,2,\ldots,m,$$
corresponding to each marked point.
There is also a universal curve
$$\pi\!: \cU\lra \ov{Q}_{0,m}(\Pn,d)$$
with $m$ sections $\si_1,\ldots,\si_m$ (given by the marked points) and
$p$ universal rank~1 subsheaves
$$\cS_i\subset \C^{n_i}\!\otimes\!\cO_{\cU}\,.$$
In the case $p\!=\!1$, we will denote $\cS_1$ by $\cS$.
For each $i\!=\!1,2,\ldots,m$, let
$$\psi_i=-\pi_*(\si_i^2)   \in H^2\big( \ov{Q}_{0,m}(\Pn,d)\big)$$
be the first chern class of the universal cotangent line bundle as usual.\\

\noindent
The twisted invariants of projective spaces that we study in this paper are indexed by tuples 
$\a\!=\!(a_1,\ldots,a_l)\!\in\!(\Z^*)^l$ of nonzero integers, 
with $l\!\in\!\Z^{\ge0}$.
For each such tuple~$\a$, let 
\begin{gather*}
|\a|=\sum_{k=1}^l|a_k|,  \qquad
\lr\a=\prod_{a_k>0}\!a_k\bigg/\!\!\prod_{a_k<0}\!a_k\,, \qquad
\a!=\prod_{a_k>0}\!\!a_k!\,, \qquad
\a^{\a}=\prod_{k=1}^l a_k^{|a_k|}\,,\\
\nu_n(\a)=n-|\a|,\qquad
\ell^{\pm}(\a)=\big|\{k\!:\,(\pm1)a_k\!>\!0\}\big|, \qquad
\ell(\a)=\ell^+(\a)-\ell^-(\a).
\end{gather*}
If in addition $n,d\!\in\!\Z^+$, let
\BE{Vstandfn_e}\V_{n;\a}^{(d)}=\bigoplus_{a_k>0}R^0\pi_*\big(\cS^{*a_k}\big)
 \oplus \bigoplus_{a_k<0}R^1\pi_*\big(\cS^{*a_k}\big)
 \lra \ov{Q}_{0,m}(\Pn,d),\EE
where $\pi\!:\cU\!\lra\!\ov{Q}_{0,m}(\Pn,d)$ is the universal curve and $m\!\ge\!2$;
these sheaves are locally free.\\

\noindent
By \cite[Theorem~4.5.2 and Proposition~6.2.3]{CKM},
$$\SQ_{n;\a}^d(c_1,\ldots,c_m)\equiv
\int_{\ov{Q}_{0,m}(\Pn,d)}\!\!\!\E(\V_{n;\a}^{(d)})
\prod_{i=1}^m \ev_i^*x^{c_i}\,, \quad
m\!\ge\!2,~d\!\in\!\Z^+,~c_i\!\in\!\Z^{\ge0},$$
where $x\!\in\!H^2(\Pn)$ is the hyperplane class,
are invariants of the total space $X_{n;\a}$ of the vector bundle
\BE{VBtot_e}  
\bigoplus_{a_k<0}\cO_{\Pn}(a_k)\big|_{X_{n;(a_k)_{a_k>0}}}\lra X_{n;(a_k)_{a_k>0}}\,,\EE
where $X_{n;(a_k)_{a_k>0}}\subset\Pn$ is a nonsingular complete intersection of multi-degree
$(a_k)_{a_k>0}$.
If \hbox{$\nu_n(\a)\!=\!0$}, i.e.~$X_{n;\a}$ is a Calabi-Yau complete intersection, let
$$\SQ_{n;\a}^{c_1,\ldots,c_m}(q)=\sum_{d=0}^{\i}q^d\,\SQ_{n;\a}^d(c_1,\ldots,c_m),$$
with $\GW_{n;\a}^0(\bc)\!\equiv\!\lr\a$ if $|\bc|\!=n\!-\!4\!-\!\ell(\a)\!+\!m$
and 0 otherwise.

\subsection{SQ-invariants and GW-invariants}
\label{SQvsGW_subs}

\noindent 
In Gromov-Witten theory,
there is a natural evaluation morphism $\ev\!:\cU\!\lra\!\Pn$
from the universal curve $\pi\!:\cU\!\lra\!\ov\M_{0,m}(\Pn,d)$.
If $n,d\!\in\!\Z^+$, the sheaf
\BE{VstandfnGW_e}\V_{n;\a}^{(d)}=\bigoplus_{a_k>0}R^0\pi_*\ev^*\cO_{\Pn}(a_k)
 \oplus \bigoplus_{a_k<0}R^1\pi_*\ev^*\cO_{\Pn}(a_k)
 \lra \ov\M_{0,m}(\Pn,d),\EE
is locally free.
It is well-known that 
$$\GW_{n;\a}^d(c_1,\ldots,c_m)\equiv
\int_{\ov\M_{0,m}(\Pn,d)}\!\!\!\E(\V_{n;\a}^{(d)})
\prod_{i=1}^m \ev_i^*x^{c_i}\,, \quad
m,c_i\!\in\!\Z^{\ge0},~d\!\in\!\Z^+,$$
are also invariants of~$X_{n;\a}$.
If $\nu_n(\a)\!=\!0$ and $m\!\ge\!2$, let
$$\GW_{n;\a}^{c_1,\ldots,c_m}(Q)=\sum_{d=0}^{\i}Q^d\,\GW_{n;\a}^d(c_1,\ldots,c_m),$$
with $\GW_{n;\a}^0(\bc)\!\equiv\!\lr\a$ if $|\bc|\!=n\!-\!4\!-\!\ell(\a)\!+\!m$
and 0 otherwise.\\

\noindent
Stable-quotients invariants and Gromov-Witten invariants are equal in many cases, 
but differ for many Calabi-Yau targets,
as we now describe.
Let
\begin{gather}\label{Fdfn_e}
\dot{F}_{n;\a}(w,q)\equiv\sum_{d=0}^{\i}q^d w^{\nu_n(\a)d}\,
\frac{\prod\limits_{a_k>0}\prod\limits_{r=1}^{a_kd}(a_kw\!+\!r)
\prod\limits_{a_k<0}\!\!\prod\limits_{r=0}^{-a_kd-1}\!\!(a_kw\!-\!r)}
{\prod\limits_{r=1}^{d}\big((w\!+\!r)^n\!-\!w^n\big)} \in \Q(w)\big[\big[q\big]\big]\,,\\
\label{YJdfn_e}
\dot{I}_0(q)=\dot{F}_{n;\a}(0,q), \qquad 
J_{n;\a}(q)=\frac{1}{\dot{I}_0(q)}\frac{\partial\dot{F}_{n;\a}}{\partial w}\bigg|_{(0,q)}\,.
\end{gather}
The term $w^n$ above is irrelevant for the purposes of the main formulas of 
Sections~\ref{intro_sec}-\ref{equivthm_sec}.
Its introduction is related to the expansion~\e_ref{cYexp_e}, which is  
used in an essential way in
the proof of~\e_ref{Z2cJpt_e} in Section~\ref{3ptpf_sec}.

\begin{thmnum}\label{GWvsSQ_thm}
Let $l\!\in\!\Z^{\ge0}$, $n\!\in\!\Z^+$, and $\a\!\in\!(\Z^*)^l$ be such that 
$\nu_n(\a)\!=\!0$.
If  $m\!=\!2,3$ and $\bc\!\in\!(\Z^{\ge0})^m$, then
\begin{gather}
\label{GWvsSQ_e1}
d^{3-m}\SQ_{n;\a}^d(\bc)\in\Z \qquad\forall\,d\!\in\!\Z,\\
\label{GWvsSQ_e2}\GW_{n;\a}^{\bc}(Q)=\dot{I}_0(q)^{m-2}\,\SQ_{n;\a}^{\bc}(q)
-\de_{m,2}\lr\a J_{n;\a}(q)\,,
\end{gather}
where $\de_{m,2}$ is the Kronecker delta function and
$Q\!=\!q\ne^{J_{n;\a}(q)}$ is \textsf{the mirror map}.
Furthermore, the genus~0 three-marked stable-quotients invariants of $X_{n;\a}$ satisfy 
the analogue of the dilaton equation of Gromov-Witten theory if and only if
$\ell^-(\a)\!>\!0$ and of the divisor and string relations if and only if
$\ell^-(\a)\!>\!1$.
\end{thmnum}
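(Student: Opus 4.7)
The plan is to assemble the relevant invariants into generating functions, derive mirror formulas for the stable-quotients analogues of the double and triple Givental $J$-functions, and then extract each of the three statements of the theorem by specialization.

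First I would introduce the SQ double and triple $J$-functions $\cZ^{\SQ}_2$ and $\cZ^{\SQ}_3$ assembling the integrals $\SQ^d_{n;\a}(\bc)$ with $\psi_i^{c_i}$ insertions into formal series in $1/\hb_i$. The torus-equivariant versions with respect to the standard action on $\P^{n-1}$ satisfy recursions, obtained from virtual localization on $\ov{Q}_{0,m}(\P^{n-1},d)$, that reduce their computation to residues of the one-marked SQ $J$-function. Since the latter is given by the closed-form mirror formula of \cite{GWvsSQ} in terms of $\dot{F}_{n;\a}$, I obtain closed-form hypergeometric expressions for $\cZ^{\SQ}_2$ and $\cZ^{\SQ}_3$ in terms of $\dot{F}_{n;\a}$ and its $w$-derivatives.

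To deduce \eqref{GWvsSQ_e2}, I would compare these formulas with the corresponding GW mirror formulas of \cite{Elezi,Gi2,LLY3}. The underlying hypergeometric series on the GW side differs from $\dot{F}_{n;\a}$ only by the normalization factor $\dot{I}_0$ and the mirror-map shift $J_{n;\a}$; substituting $Q=q\ne^{J_{n;\a}(q)}$ and multiplying the SQ side by $\dot{I}_0^{m-2}$ identifies the two generating functions. Extracting the appropriate coefficients in $\hb_i$ and specializing to $\psi_i^{c_i}$ insertions then yields \eqref{GWvsSQ_e2}. The correction $-\de_{m,2}\lr\a J_{n;\a}(q)$ accounts for the unstable contribution to the $m=2$ GW $J$-function that has no analogue in the stable-quotients theory (where $\ov{Q}_{0,2}(\P^{n-1},0)$ is empty).

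For the integrality assertion \eqref{GWvsSQ_e1}, I would combine \eqref{GWvsSQ_e2} with the BPS-integrality of the three-point genus-zero GW-invariants of Calabi-Yau targets and the fact that $\dot{I}_0(q)$ and $q\ne^{J_{n;\a}(q)}$ lie in $\Z[[q]]$, the latter being a standard prime-by-prime check on \eqref{Fdfn_e}. Finally, for the dilaton, divisor, and string relations, I would test each relation by specializing the triple SQ $J$-function: a $\psi$-insertion at the third marked point for dilaton, a divisor insertion for divisor, and the fundamental class for string. A direct computation on the closed-form formula shows that the discrepancy with the would-be relation is controlled by the product $\prod_{a_k<0} a_k$ of negative line-bundle contributions, vanishing exactly when $\ell^-(\a)\ge 1$ in the dilaton case and $\ell^-(\a)\ge 2$ in the divisor and string cases. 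The main technical obstacle is establishing the closed-form expression for the triple SQ $J$-function, where the localization recursion produces iterated residues that must be matched against an explicit hypergeometric ansatz built from $\dot{F}_{n;\a}$.
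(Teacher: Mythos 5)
Your overall strategy is the paper's: obtain mirror formulas for the SQ double and triple Givental $J$-functions (Theorem~\ref{main_thm}, via the equivariant Theorem~\ref{equiv_thm}), compare them to the known GW mirror formulas of \cite{Po,g0ci} to get \eqref{GWvsSQ_e2} and the modified relations \eqref{SQdiv_e}--\eqref{SQdil_e}, and then deduce \eqref{GWvsSQ_e1} from GW integrality together with $\dot{I}_0(q),Q(q)\in\Z[[q]]$. However, two of your explanations are off, and one of them hides the hardest step.

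First, the correction term $-\de_{m,2}\lr\a J_{n;\a}(q)$ in \eqref{GWvsSQ_e2} is \emph{not} an unstable/degree-zero artifact of $\ov{Q}_{0,2}(\Pn,0)$ being empty; the degree-zero conventions are matched between the two theories. It is the constant of integration that arises when you compare \eqref{CYform_e} with \eqref{CYformGW_e} and use the chain rule $Q\,\nd/\nd Q=\dot{I}_1(q)^{-1}\,q\,\nd/\nd q$ coming from $Q=q\ne^{J_{n;\a}(q)}$: the antiderivative of $\dot{I}_1-1$ is precisely $J_{n;\a}$.

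Second, and more seriously, your route to the closed form of the triple $J$-function --- ``localization recursion $+$ hypergeometric ansatz'' --- only closes in the Fano regime $\nu_n(\a)>0$, where the three-point primary SQ-invariants needed to pin down the solution of the recursion vanish for dimension reasons. The theorem you are proving assumes $\nu_n(\a)=0$, exactly the regime where this fails. The paper's essential additional input (Propositions~\ref{Z3equiv_prp}, \ref{equiv0_prp}, \ref{equiv0_prp2} and Section~\ref{equiv0pf_sec}) is to rewrite the needed input as twisted Hurwitz-number identities on $\ov\cM_{0,3|d}$ whose validity is $n$-independent, prove them for $\nu_n(\a)>0$ where the argument works, and then transfer to $\nu_n(\a)=0$. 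Without an argument of this kind your proposal does not establish the triple $J$-function formula in the Calabi--Yau case, and hence does not establish the $m=3$ half of \eqref{GWvsSQ_e2}, nor the dilaton/divisor/string assertions. Finally, the discrepancy in those last assertions is measured by $\dot{I}_0(q)$, $\dot{I}_1(q)$, and $J_{n;\a}(q)$, not by ``the product $\prod_{a_k<0}a_k$'' (which never vanishes); what governs the thresholds $\ell^-(\a)>0$ versus $\ell^-(\a)>1$ is which of $\dot{I}_0$, $\dot{I}_0\dot{I}_1$, $J_{n;\a}$ trivialize.
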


\noindent
The  relation~\e_ref{GWvsSQ_e2} follows from the explicit mirror
formulas for the stable-quotients analogues of the double and triple Givental's
$J$-functions provided by Theorem~\ref{main_thm} in Section~\ref{mainthm_sec} 
and similar results in Gromov-Witten theory  \cite{Po,g0ci};
see Section~\ref{mainthm_sec} for more details.
By \cite[Theorem 1.2.2 and Corollaries~1.4.1,1.4.2]{CK},
\e_ref{GWvsSQ_e2} holds for $m\!>\!3$ as well.
As the mirror formulas of Theorem~\ref{main_thm} relate SQ-invariants to
the hypergeometric series arising in the B-model of the mirror family
without a change of variables, \e_ref{GWvsSQ_e2} illustrates the principle
that the mirror map relating Gromov-Witten theory to the B-model 
reflects the choice of the curve-counting theory in the A-model
and is not intrinsic to mirror symmetry itself.\\

\noindent
The analogue of \e_ref{GWvsSQ_e2} for GW-invariants is well-known.
By \cite[Proposition~7.3.2]{MS}, the genus~0 GW-invariants of 
a Calabi-Yau manifold with 3+ marked points are integer.
The $m\!=\!2$ case of~\e_ref{GWvsSQ_e1} for GW-invariants is implied by 
the $m\!=\!3$ case and the divisor relation.
The $m\!=\!2,3$ cases of~\e_ref{GWvsSQ_e1} for SQ-invariants follow from
the $m\!=\!2,3$ cases of~\e_ref{GWvsSQ_e1} for GW-invariants and from~\e_ref{GWvsSQ_e2},
since $\dot{I}_0(q),Q(q)\!\in\!\Z[[q]]$;
the integrality of the coefficients of $Q(q)$ whenever $\ell^-(\a)\!=\!0$ is a special case of
\cite[Theorem~1]{KR}.\footnote{The integrality of the coefficients of $\dot{I}_0(q)$
and of $Q(q)$ in the cases $\ell^-(\a)\!>\!0$ is immediate from their definitions.}
Since \e_ref{GWvsSQ_e2} extends to $m\!>\!3$ by~\cite{CK}, so does \e_ref{GWvsSQ_e1}, 
but without the $d^{3-m}$ factors.\\

\noindent
Since $\dot{I}_0(q)\!=\!1$ if and only if $\ell^-(\a)\!=\!0$ and
$J_{n;\a}(q)\!=\!0$ if and only if $\ell^-(\a)\!=\!0,1$,
\e_ref{GWvsSQ_e2} implies that the primary SQ- and GW-invariants of Calabi-Yau
complete intersections are the same if $\ell^-(\a)\!>\!1$;
by Theorem~\ref{main_thm}, this is also the case for the descendant invariants.
Stable-quotients replacements for the divisor, string, or dilaton relations
\cite[Section~26.3]{MirSym} for an arbitrary  Calabi-Yau
complete intersection~$X_{n;\a}$ are provided by \e_ref{SQdiv_e}, 
\e_ref{SQstr_e}, and~\e_ref{SQdil_e}, respectively. 
For the sake of comparison, we list a few  genus~0 SQ- and GW-invariants
of the quintic threefold $X_{5,(5)}\!\subset\!\P^4$ in Table~\ref{GWvsSQ_tbl};
these are obtained from~\e_ref{CYform_e} and~\e_ref{CYformGW_e}, respectively.

\begin{table}
\begin{center}
\begin{tabular}{||c|c|c||}
\hline\hline
$d$ & \smsize{$d\GW_{n;\a}^d(1,1)$}& \smsize{$d\SQ_{n;\a}^d(1,1)$}\\
\hline
1& \smsize{2875}& \smsize{6725}\\
\hline
2& \smsize{4876875}& \smsize{16482625}\\
\hline
3& \smsize{8564575000}& \smsize{44704818125}\\
\hline
4& \smsize{15517926796875}& \smsize{126533974065625}\\
\hline
5& \smsize{28663236110956000}& \smsize{366622331794131725}\\
\hline
6& \smsize{53621944306062201000}& \smsize{1078002594137326617625}\\
\hline
7& \smsize{101216230345800061125625}& \smsize{3201813567943782886368125}\\
\hline 
8& \smsize{192323666400003538944396875}& \smsize{9579628267176528143932815625}\\
\hline
9& \smsize{367299732093982242625847031250}& \smsize{28820906443427523291443507328125}\\
\hline
10& \smsize{704288164978454714776724365580000}& \smsize{87086311562396929291553775833982625}\\ 
\hline\hline
\end{tabular}
\end{center}
\caption{Some genus~0 GW- and SQ-invariants of a quintic threefold $X_{5;(5)}$}
\label{GWvsSQ_tbl}
\end{table}

\subsection{SQ-invariants and BPS states}
\label{SQvsBPS_subs}

\noindent
Using~\e_ref{GWvsSQ_e2}, the genus~0 two- and three-marked SQ-invariants
of a Calabi-Yau complete intersection threefold~$X_{n;\a}$ can be expressed
in terms of the BPS counts of GW-theory.
For example, by the $m\!=\!2$ case of~\e_ref{GWvsSQ_e2},  
\BE{SQvsBPS_e}\SQ_{n;\a}^{1,1}(q)=\lr\a J_{n;\a}(q)-
\sum_{d=1}^{\i}\BPS_{n;\a}^d(1,1)\ln\big(1-q^d\ne^{dJ_{n;\a}(q)}\big),\EE
where $\BPS_{n;\a}^d(1,1)$
are the genus~0 two-marked BPS counts for~$X_{n;\a}$ defined~by
$$\GW_{n;\a}^{1,1}(Q)=-
\sum_{d=1}^{\i}\BPS_{n;\a}^d(1,1)\ln\big(1-Q^d\big).$$
If all genus 0 curves in $X_{n;\a}$ of degree at most~$d$ were smooth and 
had normal bundles $\cO(-1)\!\oplus\!\cO(-1)$,
the number of degree~$d$  genus 0 curves in $X_{n;\a}$ would be~$\BPS_{n;\a}^d(1,1)$;
see \cite[Section~1]{V}.\\

\noindent
Under the regularity assumption of the previous paragraph, the moduli space
$$\ov{Q}_{0,2}^{1,1}(X_{n;\a},d)\equiv\big\{u\!\in\!\ov{Q}_{0,2}(X_{n;\a},d)\!:\,
\ev_1(u)\!\in\!H_1,~\ev_2(u)\!\in\!H_2\big\},$$
where $H_1,H_2\!\subset\!\Pn$ are generic hyperplanes, would split into the topological
components:
\begin{itemize}
\item $\cZ^{1,1}_0(d)$ of stable quotients with torsion of degree~$d$ and thus corresponding
to a constant map to~$H_1\!\cap\!H_2$;
\item $\cZ^{1,1}_C(d)$ of stable quotients with image in a genus~0 curve
$C\!\subset\!X_{n;\a}$ of degree $d_C\!\le\!d$.
\end{itemize}
For $C\!\subset\!X_{n;\a}$ as above, $\cZ^{1,1}_C(d)$ consists of the closed subspaces
$\cZ^{1,1}_{C;r}(d)$, with $r\!\in\!\Z^+$ and $d_Cr\!\le\!d$,
whose generic element has torsion of degree $d\!-\!d_Cr$.
We note~that
$$\dim\cZ^{1,1}_{C;r}(d)=2r\!-\!2+d\!-\!d_Cr+2=d-(d_C\!-\!2)r,$$
which implies that each $\cZ^{1,1}_{C;r}(d)$ is an irreducible component
if $d_C\!>\!1$.
If $d_C\!=\!1$,  $\cZ^{1,1}_{C;r}(d)$ is contained in~$\cZ^{1,1}_{C;d}(d)$,
but still gives rise to a separate contribution
to $\SQ_{n;\a}^d(1,1)$, according to~\e_ref{SQvsBPS_e}.\\

\noindent
The number $\SQ_{n;\a}^d(2,0)$, which arises from the constrained moduli space 
$$\ov{Q}_{0,2}^{2,0}(X_{n;\a},d)=\cZ^{2,0}_0(d)=\cZ^{1,1}_0(d),$$
is $\lr\a$ times the coefficient $\coeff{J_{n;\a}(q)}_d$ of $q^d$ in~$J_{n;\a}(q)$;
see \cite[Theorem~1]{GWvsSQ}.
The contribution of $\cZ^{1,1}_0(d)$ to $\SQ_{n;\a}^d(2,0)$ is the same;
this explains the first term on the right-hand side of~\e_ref{SQvsBPS_e}.
Under the above regularity assumption,
\e_ref{SQvsBPS_e} can be re-written~as
\BE{SQvsBPS_e2}\SQ_{n;\a}^d(1,1)=\lr\a\coeff{J_{n;\a}(q)}_d
+\sum_C\sum_{r=1}^{\i}\frac{1}{r}\coeff{\ne^{J_{n;\a}(q)}}_{d-rd_C}  ,\EE
where the outer sum is taken over all genus~0 curves $C\!\subset\!X_{n;\a}$.
This suggests that the contribution of $\cZ^{1,1}_{C;r}(d)$ to $\SQ_{n;\a}^d(1,1)$
is $\frac{1}{r}\coeff{\ne^{J_{n;\a}(q)}}_{d-rd_C}$.
This contribution depends on the embedding into~$\Pn$, which is as expected,
given the nature of SQ-invariants.\\

\noindent
Since the embedding $C\!\lra\!\Pn$ corresponds to an inclusion
$\O_{\P^1}(-d_C)\!\lra\!\C^n\!\otimes\!\cO_{\P^1}$,
each element of $\cZ^{1,1}_{C;r}(d)$ corresponds to a tuple
\begin{gather*}
\big(\cC,y_1,y_2;S\!\subset\!S'^{\otimes d_c},
 S'\!\subset\!\C^2\!\otimes\!\cO_{\cC} \big),
\qquad\hbox{where}\\ 
\big(\cC,y_1,y_2;S\!\subset\!\C^2\!\otimes\!\cO_{\cC} \big)
\in\ov{Q}_{0,2}(\P^1,d), \qquad
\big(\cC,y_1,y_2;S'\!\subset\!\C^2\!\otimes\!\cO_{\cC} \big)
\in\ov{Q}_{0,2}(\P^1,r).
\end{gather*}
This modular style definition readily extends to arbitrary genus, 
number of marked points, and dimension of projective space.
The arising deformation-obstruction theory can be studied as in \cite[Section~6]{MOP09}.

\subsection{Outline of the paper}
\label{outline_subs}

\noindent
Theorem~\ref{GWvsSQ_thm} is a direct consequence of Theorem~\ref{main_thm}
in Section~\ref{mainthm_sec}, which in turn is 
the non-equivariant specialization of Theorem~\ref{equiv_thm} in 
Section~\ref{equiv_thm}.
We adapt the approaches of \cite{bcov0, bcov0_ci, Po} from Gromov-Witten theory,
outlined in Sections~\ref{pfs_sec} and~\ref{poliC_subs}, 
to show that certain equivariant two-point generating functions,
including the stable-quotients analogue of the double Givental's $J$-function,
satisfy certain good properties which guarantee uniqueness.
The proof that these generating functions satisfy the required properties
follows principles similar to the proof of the analogous statements
in \cite{bcov0, bcov0_ci, Po} and uses the localization theorem of~\cite{ABo};
it is carried out in Sections~\ref{recpf_sec} and~\ref{MPCpf_sec}.\\

\noindent
This approach also implies that certain equivariant three-point generating functions,
including the stable-quotients analogue of the triple Givental's $J$-function,
are determined by three-point primary (without $\psi$-classes) SQ-invariants. 
In the Fano cases, i.e.~$\nu_n(\a)\!>\!0$, enough of these invariants are essentially
trivial for  dimensional reasons to confirm Proposition~\ref{Z3equiv_prp} in these cases;
see Corollary~\ref{MirSym_crl}.
However, there is no dimensional reason for the vanishing of these invariants 
to extend to the Calabi-Yau cases, i.e.~$\nu_n(\a)\!=\!0$;
thus, a different argument is needed in these cases.
We employ the same kind of trick as used in~\cite{GWvsSQ} to confirm mirror symmetry
for the stable quotients analogue of Givental's $J$-function 
 and essentially deduce the Calabi-Yau cases from the Fano cases.
Specifically, we show that the equivariant three-point mirror formula
of Proposition~\ref{Z3equiv_prp} is equivalent to the closed formula 
for twisted three-point Hurwitz numbers of Proposition~\ref{equiv0_prp},
whenever $|\a|\!\le\!n$.
In Section~\ref{equiv0pf_sec}, we show that the validity of the latter does not depend~$n$;
since it holds whenever $|\a|\!<\!n$, it follows that it holds for all~$\a$, and so
the equivariant three-point mirror formula of Proposition~\ref{Z3equiv_prp} holds whenever   
$|\a|\!\le\!n$.
Along with~\cite{g0ci}, Proposition~\ref{Z3equiv_prp} finally leads 
to the mirror formula for
the stable-quotients analogue of the triple Givental's $J$-function
in Theorem~\ref{equiv_thm}; see Section~\ref{3ptpf_sec}.\\

\noindent
The closed formulas for twisted Hurwitz numbers of Propositions~\ref{equiv0_prp}
and~\ref{equiv0_prp2} 
are among the key ingredients in computing the genus~1 twisted stable quotients 
invariants with 1~marked point.
At the same time, this paper and~\cite{g0ci} provide an approach to comparing
the (equivariant) genus~$g$ $m$-fold Givental's $J$-functions,
\BE{Zmultidfn_e}\sum_{d=0}^{\i}q^d\{\ev_1\times\ldots\!\times\!\ev_m\}_*  \!\!\left[\frac{e(\dot\V_{n;\a}^{(d)})}{(\hb_1\!-\!\psi_1)\ldots(\hb_m\!-\!\psi_m)}\right]
\in H^*(\Pn)[\hb_1^{-1},\ldots,\hb_m^{-1}]\big[\big[q\big]\big]\EE
in the SQ- and GW-theories  
for all $g\ge\!0$ and $m\!\ge\!1$ with $2g\!+\!m\!\ge\!2$.
By Proposition~\ref{uniqueness_prp} and Lemmas~\ref{recgen_lmm} and~\ref{polgen_lmm},
in the genus~0 case the restrictions of these generating functions to insertions at only 
one marked point agree whenever $\nu_{\a}\!>\!1$.
In all cases, the approach of~\cite{g0ci} can be adapted to show that 
\e_ref{Zmultidfn_e} is a sum over (at least) trivalent $m$-marked graphs
with coefficients that involve equivariant $m'$-pointed Hurwitz numbers with $m'\!\le\!m$,
which are conversely completely determined by the stable-quotients analogue of
the $m'$-pointed Givental's $J$-function with insertions at only one marked point
 through relations that do not involve~$n$.
Since these relations hold whenever $\nu_n(\a)\!>\!0$, they hold for all~$\a$.
We intend to clarify these points in a future paper.\\

\noindent
The Gromov-Witten analogues of Theorem~\ref{main_thm} and its equivariant version, Theorem~\ref{equiv_thm} in Section~\ref{equivthm_sec}, 
extend to the so-called \sf{concavex vector bundles} over products of 
projective spaces, i.e.~vector bundles of the form
$$\bigoplus_{k=1}^l\cO_{\P^{n_1-1}\times\ldots\times\P^{n_p-1}}(a_{k;1},\ldots,a_{k;p})
\lra \P^{n_1-1}\!\times\!\ldots\!\times\!\P^{n_p-1},$$
where for each given $k\!=\!1,2,\ldots,l$ either $a_{k;1},\ldots,a_{k;p}\!\in\!\Z^{\ge0}$,
with $a_{k;i}\!\neq\!0$ for some~$i$, 
or $a_{k;1},\ldots,a_{k;p}\!\in\!\Z^-$.
The stable quotients analogue of these bundles are the sheaves
\BE{gensheaf_e}
\bigoplus_{k=1}^l\cS_1^{*a_{k;1}}\!\otimes\!\ldots\!\otimes\cS_p^{*a_{k;p}}
\lra\cU
\lra \ov{Q}_{0,2}\big(\P^{n_1-1}\!\times\!\ldots\!\times\!\P^{n_p-1},(d_1,\ldots,d_p)
\big)\EE
with the same condition on~$a_{k;i}$,
where $\cS_i\!\lra\!\cU$ is the universal subsheaf corresponding to the $i$-th factor.
We will comment on the necessary modifications at each step of the proof.


\subsection{Acknowledgments}

\noindent
The author would like to thank 
Y.~Cooper for many enlightening discussions concerning stable quotients invariants,
R.~Pandharipande for bringing moduli spaces of stable quotients to his attention,
A.~Popa for useful comments on a previous version of this paper,
and I.~Ciocan-Fontanine for explaining~\cite{CK}.
He would also like to thank the School of Mathematics at IAS
for its hospitality during the period when the results in this paper were obtained
and the paper itself was completed.
The author's research was partially supported by NSF grants DMS-0635607 and DMS-0846978
and the IAS Fund for Math.

\section{Main theorem}
\label{mainthm_sec}

\noindent
We arrange stable quotients invariants with two and three marked points
into generating functions
in Section~\ref{GivenJ_subs} and give explicit closed formulas for them 
in Section~\ref{MirSym_subs}. 
In Section~\ref{mainapplic_subs}, we use these formulas to relate SQ and GW-invariants,
with descendants, and obtain replacements for the divisor, string, and dilaton 
relations for SQ-invariants.

\subsection{Givental's $J$-functions}
\label{GivenJ_subs}

\noindent
For computational purposes, it is  convenient to define variations of 
the bundle~\e_ref{Vstandfn_e} by
\BE{Vprdfn_e}\begin{split}
 \dot\V_{n;\a}^{(d)}&=\bigoplus_{a_k>0}R^0\pi_*\big(\cS^{*a_k}(-\si_1)\big)
 \oplus \bigoplus_{a_k<0}R^1\pi_*\big(\cS^{*a_k}(-\si_1)\big)
 \lra \ov{Q}_{0,m}(\Pn,d), \\
\ddot\V_{n;\a}^{(d)}&=\bigoplus_{a_k>0}R^0\pi_*\big(\cS^{*a_k}(-\si_2)\big)
 \oplus \bigoplus_{a_k<0}R^1\pi_*\big(\cS^{*a_k}(-\si_2)\big)
 \lra \ov{Q}_{0,m}(\Pn,d),
\end{split}\EE
where $n,d\!\in\!\Z^+$, $m\!\ge\!2$, and
$\pi\!:\cU\!\lra\!\ov{Q}_{0,m}(\Pn,d)$ is the universal curve;
these sheaves are also locally free.
Whenever $\nu_n(\a)\!\ge\!0$,
\cite[Theorem~1]{GWvsSQ} provides an explicit closed formula for the stable quotients
analogue of Givental's $J$-function, the power series
\BE{Zdfn_e}
\dot{Z}_{n;\a}(x,\hb,q) \equiv
1+\sum_{d=1}^{\i}q^d\ev_{1*}\!\!\left[\frac{e(\dot\V_{n;\a}^{(d)})}{\hb\!-\!\psi_1}\right]
\in H^*(\Pn)[\hb^{-1}]\big[\big[q\big]\big],\EE
where $\ev_1:\ov{Q}_{0,2}(\Pn,d)\lra\Pn$ is as before
and $x\!\in\!H^2(\Pn)$ is the hyperplane class.
In this paper, we obtain a closed formula for the power series
\BE{otherZdfn_e}
\ddot{Z}_{n;\a}(x,\hb,q) \equiv
1+\sum_{d=1}^{\i}q^d\ev_{1*}\!\!\left[\frac{e(\ddot\V_{n;\a}^{(d)})}{\hb\!-\!\psi_1}\right]
\in H^*(\Pn)[\hb^{-1}]\big[\big[q\big]\big];\EE
see \e_ref{Givental_e}.\\

\noindent
We also give explicit formulas for the stable quotients
analogues of the double and triple Givental's $J$-functions, the power series 
\begin{gather}\label{Z2dfn_e}
\dot{Z}_{n;\a}^*(x_1,x_2,\hb_1,\hb_2,q) \equiv
\sum_{d=1}^{\i}q^d\big\{\ev_1\!\times\!\ev_2\}_*\!\!\left[\frac{e(\dot\V_{n;\a}^{(d)})}
{(\hb_1\!-\!\psi_1)(\hb_2\!-\!\psi_2)}\right]
\in H^*(\Pn)[\hb_1^{-1},\hb_2^{-1}]\big[\big[q\big]\big],\\
\label{Z3Jdfn_e}
\dot{Z}_{n;\a}^*(x_1,x_2,x_3,\hb_1,\hb_2,\hb_3,q) \equiv
\sum_{d=1}^{\i}q^d\big\{\ev_1\!\times\!\ev_2\!\times\!\ev_3\}_*\!
\!\left[\frac{e(\dot\V_{n;\a}^{(d)})}
{(\hb_1\!-\!\psi_1)(\hb_2\!-\!\psi_2)(\hb_3\!-\!\psi_3)}\right],
\end{gather}
where  $x_i\!=\!\pi_i^*x$ is the pull-back of the hyperplane class in~$\Pn$ by 
the $i$-th projection map and
\BE{evaltot_e}\begin{split}
\ev_1\!\times\!\ev_2\!:\ov{Q}_{0,2}(\Pn,d)&\lra\Pn\!\times\!\Pn, \\
\ev_1\!\times\!\ev_2\!\times\!\ev_3\!:\ov{Q}_{0,3}(\Pn,d)&\lra
\Pn\!\times\!\Pn\!\times\!\Pn
\end{split}\EE
are the total evaluation maps.
Let
\BE{Z23full_e}\begin{split}
\dot{Z}_{n;\a}(x_1,x_2,\hb_1,\hb_2,q)
&=\frac{1}{\hb_1\!+\!\hb_2}
\sum_{\begin{subarray}{c}s_1,s_2\ge0\\ s_1+s_2=n-1\end{subarray}}
\hspace{-.2in} x_1^{s_1}x_2^{s_2}
+\dot{Z}_{n;\a}^*(x_1,x_2,\hb_1,\hb_2,q)\,,\\
\dot{Z}_{n;\a}(x_1,x_2,x_3,\hb_1,\hb_2,\hb_3,q)
&=\frac{1}{\hb_1\hb_2\hb_3}
\sum_{\begin{subarray}{c}s_1,s_2,s_3\ge0\\ s_1,s_2,s_3\le n-1\\
s_1+s_2+s_3=2n-2 \end{subarray}}
\hspace{-.3in} x_1^{s_1}x_2^{s_2}x_3^{s_3}
+\dot{Z}_{n;\a}^*(x_1,x_2,x_3,\hb_1,\hb_2,\hb_3,q).
\end{split}\EE
For each $s\!\in\!\Z^{\ge0}$, define
\BE{Zsdfn_e}\begin{split}
\dot{Z}_{n;\a}^{(s)}(x,\hb,q) &\equiv x^s+
\sum_{d=1}^{\i}\!q^d\ev_{1*}\!\!\left[\frac{e(\dot\V_{n;\a}^{(d)})\ev_2^*x^s}
{\hb\!-\!\psi_1}\right]\in H^*(\Pn)[\hb^{-1}]\big[\big[q\big]\big],\\
\ddot{Z}_{n;\a}^{(s)}(x,\hb,q) &\equiv x^s+
\sum_{d=1}^{\i}\!q^d\ev_{1*}\!\!\left[\frac{e(\ddot\V_{n;\a}^{(d)})\ev_2^*x^s}
{\hb\!-\!\psi_1}\right]\in H^*(\Pn)[\hb^{-1}]\big[\big[q\big]\big],
\end{split}\EE
where $\ev_1,\ev_2\!:\ov{Q}_{0,2}(\Pn,d)\!\lra\!\Pn$.
Thus, $\dot{Z}_{n;\a}^{(0)}\!=\!\dot{Z}_{n;\a}$, $\ddot{Z}_{n;\a}^{(0)}\!=\!\ddot{Z}_{n;\a}$, 
and
\begin{gather*}
x^{\ell^+_-(\a)}\dot{Z}_{n;\a}^{(\ell^-_+(\a)+s)}(x,\hb,q)
=x^{\ell^-_+(\a)}\ddot{Z}_{n;\a}^{(\ell^+_-(\a)+s)}(x,\hb,q) \qquad\forall\,s\!\ge\!0,\\
\hbox{where}\qquad\ell^{\pm}_{\mp}(\a)=\max\big(\pm\ell(\a),0\big).
\end{gather*}
By Theorem~\ref{main_thm} below, $\dot{Z}_{n;\a}^{(s)}$, $\ddot{Z}_{n;\a}^{(s)}$, 
and the stable quotients  analogues of the double and triple Givental's $J$-functions,
\e_ref{Z2dfn_e} and~\e_ref{Z3Jdfn_e},  are explicit transforms of
Givental's $J$-function~$\dot{Z}_{n;\a}$ and its ``reflection"~$\ddot{Z}_{n;\a}$;
this transform depends only on~$\a$ (and $s$ in the first two cases).

\subsection{Mirror symmetry}
\label{MirSym_subs}

\noindent
Givental's $J$-function~$\dot{Z}_{n;\a}$  and its ``reflection"~$\ddot{Z}_{n;\a}$ 
in Gromov-Witten and stable-quotients theories are described by 
the hypergeometric series~\e_ref{Fdfn_e} and 
\BE{Fdfn_e2}\ddot{F}_{n;\a}(w,q)\equiv\sum_{d=0}^{\i}q^d w^{\nu_n(\a)d}\,
\frac{\prod\limits_{a_k>0}\prod\limits_{r=0}^{a_kd-1}(a_kw\!+\!r)
\prod\limits_{a_k<0}\!\!\prod\limits_{r=1}^{-a_kd}\!\!(a_kw\!-\!r)}
{\prod\limits_{r=1}^{d}\big((w\!+\!r)^n-w^n\big)} \in \Q(w)\big[\big[q\big]\big].\EE
These are power series in $q$ with constant term 1 whose coefficients are rational functions 
in $w$ which are regular at~$w=0$.
We denote the subgroup of all such power series by~$\cP$
and define
\BE{DMDfn_e}
\begin{aligned}
&\bD\!:\Q(w)\big[\big[q\big]\big]\lra \Q(w)\big[\big[q\big]\big], 
&\quad& \bM:\cP\lra\cP  \qquad\hbox{by}\\
&\bD H(w,q)\equiv \left\{1+\frac{q}{w}\frac{\nd}{\nd q}\right\}H(w,q),
&\quad&
\bM H(w,q)\equiv\bD\bigg(\frac{H(w,q)}{H(0,q)}\bigg)\,.
\end{aligned}\EE
If $\nu_n(\a)\!=\!0$ and $s\in\Z^{\ge0}$, let
\BE{Ipdfn_e} \dot{I}_s(q)\equiv \bM^s\dot{F}_{n;\a}(0,q), \qquad 
\ddot{I}_s(q)\equiv \bM^s\ddot{F}_{n;\a}(0,q).\EE
For example, $\dot{I}_s(q)\!=\!1$ if $s\!<\!\ell^-(\a)$, 
$\ddot{I}_s(q)\!=\!1$ if $s\!<\!\ell^+(\a)$,
$$\dot{I}_{\ell^-(\a)}(q)=\ddot{I}_{\ell^+(\a)}(q)
=\sum_{d=0}^{\i}q^d\frac{\prod\limits_{a_k>0}(a_kd)!\,
\prod\limits_{a_k<0}\big((-1)^{a_kd}(-a_kd)!\big)}{(d!)^n}
\qquad\text{if}\quad \nu_n(\a)\!=\!0,$$
and more generally $\dot{I}_{s+\ell_+^-(\a)}(q)\!=\!\ddot{I}_{s+\ell_-^+(\a)}(q)$ 
for all $s\!\ge\!0$.
If $\nu_n(\a)\!>\!0$, we set $\dot{I}_s(q),\ddot{I}_s(q)\!=\!1$.\\

\noindent
It is also convenient to introduce
\BE{hatFdfn_e}
F_{n;\a}(w,q)\equiv\sum_{d=0}^{\i}q^d w^{\nu_n(\a)d}\,
\frac{\prod\limits_{a_k>0}\prod\limits_{r=1}^{a_kd}(a_kw\!+\!r)
\prod\limits_{a_k<0}\!\!\prod\limits_{r=1}^{-a_kd}\!\!(a_kw\!-\!r)}
{\prod\limits_{r=1}^{d}(w\!+\!r)^n} \in \Q(w)\big[\big[q\big]\big]\EE
and the associated power series $I_s(q)=\bM^s{F}_{n;\a}(0,q)$ in the $\nu_n(\a)\!=\!0$ case.  
In the case $0\!<\!\nu_n(\a)\!<\!n$, we define $\nc^{(d)}_{s,s'}\in\!\Q$
with $d,s,s'\!\ge\!0$ by 
\BE{littlec_e}\begin{split}
\sum_{d=0}^{\i}\sum_{s'=0}^{\i}\nc^{(d)}_{s,s'}w^{s'}q^d
&\equiv w^s\bD^sF_{n;\a}\big(w,q/w^{\nu_n(\a)}\big)\\
&=w^s\bD^{s+\ell^-(\a)}\dot{F}_{n;\a}\big(w,q/w^{\nu_n(\a)}\big)
=w^s\bD^{s+\ell^+(\a)}\ddot{F}_{n;\a}\big(w,q/w^{\nu_n(\a)}\big).
\end{split}\EE
Since $\nc^{(0)}_{s,s'}=\de_{s,s'}$, the relations
\BE{littletic_e}
\sum_{\begin{subarray}{c}d_1,d_2\ge0\\ d_1+d_2=d\end{subarray}}
\sum_{t=0}^{s-\nu_n(\a)d_1}\ntc^{(d_1)}_{s,t}\nc^{(d_2)}_{t,s'}
=\de_{d,0}\de_{s,s'}
\quad \forall\,d,s'\!\in\!\Z^{\ge0},\, s'\!\le\!s\!-\!\nu_n(\a)d,\EE
inductively define $\ntc^{(d)}_{s,s'}\!\in\!\Q$
in terms of the numbers $\ntc^{(d_1)}_{s,t}$ with $d_1\!<\!d$.
For example, $\ntc^{(0)}_{s,s'}\!=\!\de_{s,s'}$ and
$$\sum_{s'=0}^{s-\nu_n(\a)}\ntc^{(1)}_{s,s'}w^{s'}+
\prod_{k=1}^l\!\!a_k\,\frac{\prod\limits_{a_k>0}\prod\limits_{r=1}^{a_k-1}(a_kw\!+\!r)
\prod\limits_{a_k<0}\!\!\prod\limits_{r=1}^{-a_k-1}\!\!(a_kw\!-\!r)}
{(w+1)^{n-\ell^+(\a)-\ell^-(\a)-s}}
\in w^{s-\nu_n(\a)+1}\Q[[w]].$$
If $s'\!<\!0$ or $\nu_n(\a)\!=\!0,n$, we set 
$\ntc^{(d)}_{s,s'}=\de_{d,0}\de_{s,s'}$.\\

\noindent
For $s_1,s_2,s_3,d\!\in\!\Z^{\ge0}$ with $s_1,s_2,s_3\!\le\!n\!-\!1$, let
\BE{ntc3dfn_e}\ntc^{(d)}_{s_1,s_2,s_3}=
\begin{cases}\bigcoeff{\big((1\!-\!\a^{\a}q)\dot\bI_{s_1}^c(q)
\ddot\bI_{s_2}^c(q)\ddot\bI_{s_3}^c(q)\big)^{-1}}_d,
&\hbox{if}~\nu_n(\a)\!=\!0;\\
\sum\limits_{\begin{subarray}{c}d_0,d_1,d_2,d_3\ge0\\ d_0+d_1+d_2+d_3=d\end{subarray}}
\!\!\!\! (\a^{\a})^{d_0}
\prod\limits_{t=1}^3
\ntc^{(d_t)}_{\hat{s}_t-\ell_t(\a),\hat{s}_t-\nu_n(\a)d_t-\ell_t(\a)}\,,&
\hbox{if}~\nu_n(\a)\!>\!0;
\end{cases}\EE
where
\BE{bIdfn_e}\dot\bI_s^c=\prod_{t=s+1}^{n-\ell^+(\a)}\!\!\!\dot{I}_t\,,\quad
\ddot\bI_s^c=\prod_{t=s+1}^{n-\ell^-(\a)}\!\!\!\ddot{I}_t\,,\quad
\hat{s}_t=n\!-\!1\!-\!s_t\,, \quad
\ell_t(\a)=\begin{cases}\ell^+(\a),&\hbox{if}~t\!=\!1;\\
\ell^-(\a),&\hbox{if}~t\!=\!2,3;\end{cases}\EE
and $\coeff{f(q)}_d$ is the coefficient of $q^d$ of $f(q)\!\in\!\Q[[q]]$.
In particular, $\dot\bI_s^c\!=\!1$ if $s\!\ge\!n\!-\!\ell^+(\a)$ and 
$\ddot\bI_s^c\!=\!1$ if $s\!\ge\!n\!-\!\ell^-(\a)$.
Since $I_t\!=\!\dot{I}_{t+\ell^-(\a)}=\ddot{I}_{t+\ell^+(\a)}$, we find~that 
$$\dot\bI_s^c(q)=(1-\a^{\a}q)^{-1}~~\hbox{if}~s\!<\!\ell^-(\a), \qquad
\ddot\bI_s^c(q)=(1-\a^{\a}q)^{-1}~~\hbox{if}~s\!<\!\ell^+(\a);$$
see \cite[Proposition~4.4]{g0ci}.
This implies that 
\BE{Cntcred_e}\sum_{d=0}^{\i}\ntc^{(d)}_{s_1,s_2,s_3}q^d=1
\quad\hbox{if}~\nu_n(\a)=0,~s_1\!+\!s_2\!+\!s_3=2n\!-\!2,~
\min(s_1,s_2,s_3)<\ell^-(\a).\EE
We use this observation in Section~\ref{mainapplic_subs}.
Since $\ntc_{s,s'}^{(0)}\!=\!\de_{s,s'}$, $\ntc_{s_1,s_2,s_3}^{(0)}\!=\!1$.\\

\noindent
Finally, for each $s\!\in\!\Z^+$, we define 
$\D^s\dot{Z}_{n;\a}(x,\hb,q),\D^s\ddot{Z}_{n;\a}(x,\hb,q)\in H^*(\Pn)[\hb][[q]]$ inductively~by
\BE{DsZdfn_e}\begin{aligned}
\D^0\dot{Z}_{n;\a}(x,\hb,q)&=\dot{Z}_{n;\a}(x,\hb,q),&
\D^s\dot{Z}_{n;\a}(x,\hb,q)&=
\frac{1}{\dot{I}_s(q)}
\left\{x+\hb\, q\frac{\nd}{\nd q}\right\}\D^{s-1}\dot{Z}_{n;\a}(x,\hb,q),\\
\D^0\ddot{Z}_{n;\a}(x,\hb,q)&=\ddot{Z}_{n;\a}(x,\hb,q),&
\D^s\ddot{Z}_{n;\a}(x,\hb,q)&=
\frac{1}{\ddot{I}_s(q)}
\left\{x+\hb\, q\frac{\nd}{\nd q}\right\}\D^{s-1}\ddot{Z}_{n;\a}(x,\hb,q).
\end{aligned}\EE

\begin{thmnum}\label{main_thm}
If $l\!\in\!\Z^{\ge0}$, $n\!\in\!\Z^+$, and $\a\!\in\!(\Z^*)^l$, 
the stable quotients analogue of the double Givental's $J$-function satisfies
\BE{Z2pt_e}
\dot{Z}_{n;\a}(x_1,x_2,\hb_1,\hb_2,q)= \frac{1}{\hb_1+\hb_2}
\sum_{\begin{subarray}{c}s_1,s_2\ge0\\ s_1+s_2=n-1 \end{subarray}}
\!\!\!\!\!\!
\dot{Z}_{n;\a}^{(s_1)}(x_1,\hb_1,q)\ddot{Z}_{n;\a}^{(s_2)}(x_2,\hb_2,q)\,.\EE
If in addition $\nu_{\a}\!\ge\!0$,
\begin{gather}
\label{Z2Jpt_e}
\begin{split}
&\dot{Z}_{n;\a}(x_1,x_2,x_3,\hb_1,\hb_2,\hb_3,q)\\
&\qquad\quad = \frac{1}{\hb_1\hb_2\hb_3}
\sum_{\begin{subarray}{c}d,s_1,s_2,s_3\ge0\\
s_1,s_2,s_3\le n-1 \\
s_1+s_2+s_3+\nu_n(\a)d=2n-2\\
\end{subarray}}
\hspace{-.35in}
\ntc^{(d)}_{s_1,s_2,s_3}q^d
\dot{Z}_{n;\a}^{(s_1)}(x_1,\hb_1,q)
\prod_{t=2}^3\ddot{Z}_{n;\a}^{(s_t)}(x_t,\hb_t,q)\,,
\end{split}\\
\label{Zs_e}
\ch{Z}_{n;\a}^{(s)}(x,\hb,q)=\sum_{d=0}^{\i}\sum_{s'=0}^{s-\nu_n(\a)d}
\ntc_{s-\ell^*(\a),s'-\ell^*(\a)}^{(d)}q^d\,\hb^{s-\nu_n(\a)d-s'}\D^{s'}\ch{Z}_{n;\a}(x,\hb,q),
\end{gather} 
where $(\ch{Z},\ell^*)=(\dot{Z},\ell^-),(\ddot{Z},\ell^+)$.
\end{thmnum}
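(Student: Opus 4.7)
The plan is to pass to the equivariant setting, prove the equivariant refinement (the not-yet-stated Theorem~\ref{equiv_thm}), and then take the non-equivariant specialization to recover~\e_ref{Z2pt_e}, \e_ref{Z2Jpt_e}, and~\e_ref{Zs_e}. Concretely, I would lift the standard $\T^n$-action on $\Pn$ to the moduli space $\ov{Q}_{0,m}(\Pn,d)$ and to the sheaves $\dot\V_{n;\a}^{(d)}$ and $\ddot\V_{n;\a}^{(d)}$; replacing Euler classes by their equivariant counterparts in \e_ref{Zdfn_e}--\e_ref{Zsdfn_e}, \e_ref{Z2dfn_e}, and \e_ref{Z3Jdfn_e} gives equivariant generating functions whose non-equivariant limits are the functions in the statement. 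All subsequent work takes place in the equivariant cohomology ring, with $\hb$ and the $\T^n$-weights on an equal footing.

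For the two-point formula~\e_ref{Z2pt_e} I would apply the Atiyah-Bott localization theorem of~\cite{ABo} to the equivariant analogue of $\dot Z_{n;\a}(x_1,x_2,\hb_1,\hb_2,q)$. The $\T^n$-fixed loci of $\ov{Q}_{0,2}(\Pn,d)$ are indexed by decorated trees, and a careful bookkeeping of their contributions (separating the distinguished one-vertex loci from those with a node between the two marked points) yields a residue-type recursion together with a mutual polynomiality condition in the spirit of~\cite{bcov0,bcov0_ci,Po}; these are the analogues of the properties established in Sections~\ref{recpf_sec} and~\ref{MPCpf_sec}. A uniqueness statement of the form of Proposition~\ref{uniqueness_prp} together with Lemmas~\ref{recgen_lmm} and~\ref{polgen_lmm} then pins down any equivariant two-point series satisfying these conditions. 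It remains to check that the equivariant version of the right-hand side of~\e_ref{Z2pt_e}, assembled out of $\dot{F}_{n;\a}$ and $\ddot{F}_{n;\a}$, satisfies the same recursion and polynomiality; this is a direct computation with the hypergeometric factors. The one-point transforms~\e_ref{Zs_e} fall out of the same mechanism: the equivariant lifts of $\dot{Z}^{(s)}_{n;\a}$ and $\ddot{Z}^{(s)}_{n;\a}$ again satisfy the uniqueness hypotheses, with their closed form arising from iterated application of $\bM$ and $\bD$ to $\dot F_{n;\a}$ and $\ddot F_{n;\a}$.

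For the three-point formula~\e_ref{Z2Jpt_e} I would run the same localization machinery on $\dot Z^*_{n;\a}(x_1,x_2,x_3,\hb_1,\hb_2,\hb_3,q)$. The recursion-plus-polynomiality analysis now reduces the determination of the equivariant three-point series to knowledge of the three-point \emph{primary} SQ-invariants (no $\psi$-classes) of $X_{n;\a}$, packaged into Proposition~\ref{Z3equiv_prp}. When $\nu_n(\a)>0$, dimensional counting forces enough of these invariants to vanish to complete the Fano case directly; this is the content of Corollary~\ref{MirSym_crl}. The Calabi-Yau case $\nu_n(\a)=0$ is the main obstacle, because the dimensional vanishing fails and the primary invariants are genuinely nontrivial. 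To bypass this, I would adapt the trick from~\cite{GWvsSQ}: reformulate the target equivariant identity as the closed formula for twisted three-point Hurwitz numbers stated in Proposition~\ref{equiv0_prp}, and then observe (as in Section~\ref{equiv0pf_sec}) that the validity of that Hurwitz-number identity does not depend on~$n$. Since the Fano argument already establishes it whenever $|\a|<n$, it must hold for all $\a$ with $|\a|\le n$, and the Calabi-Yau case is extracted as a boundary instance. Feeding the result back through the combinatorial identity~\e_ref{ntc3dfn_e}, the reduction~\e_ref{Cntcred_e}, and the three-point assembly carried out in Section~\ref{3ptpf_sec} using~\cite{g0ci} yields~\e_ref{Z2Jpt_e} in the equivariant theory, and non-equivariant specialization finishes the proof of Theorem~\ref{main_thm}.
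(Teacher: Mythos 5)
Your proposal reproduces the paper's strategy essentially verbatim: lift to equivariant cohomology, establish recursivity and a mutual polynomiality condition for the relevant generating series via Atiyah--Bott localization (Lemmas~\ref{recgen_lmm} and~\ref{polgen_lmm}), invoke the uniqueness statement (Proposition~\ref{uniqueness_prp}) after matching both sides mod $\hb^{-1}$, and handle the Calabi--Yau three-point case by converting Proposition~\ref{Z3equiv_prp} into the $n$-independent Hurwitz-number identity of Proposition~\ref{equiv0_prp} and bootstrapping from the Fano cases. The only slight imprecision is that the RHS of~\e_ref{Z2pt_e} is checked to be recursive and to satisfy the MPC directly as a moduli-space pushforward (via Lemmas~\ref{recgen_lmm} and~\ref{polgen_lmm}) rather than by a computation with the hypergeometric factors, the latter reduction being the content of the separate identity~\e_ref{Zs_e}; this is an equivalent route and does not affect correctness.
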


\subsection{Some computations}
\label{mainapplic_subs}

\noindent
The first identity in Theorem~\ref{main_thm} also holds for the Gromov-Witten analogues
of the generating series~$\dot{Z}_{n;\a}^*$, $\dot{Z}_{n;\a}^{(s)}$, and $\ddot{Z}_{n;\a}^{(s)}$; 
see \cite[Theorem~1.2]{Po} for the general (toric) case.
If $\nu_n(\a)\!\ge\!2\!-\!\ell^-(\a)$, the analogues of \e_ref{Z2Jpt_e},
\e_ref{Zs_e}, \e_ref{Givental_e}, and~\e_ref{ZvsY_e} hold in Gromov-Witten theory as well.
Thus, in this case 
the double Givental's $J$-functions in Gromov-Witten and stable quotients theories agree.
If $\nu_n(\a)\!=\!1$ and $\ell^-(\a)\!=\!0$, the analogue of~\e_ref{Zs_e}
in Gromov-Witten theory  holds  with 
$\left\{x+\hb\, q\frac{\nd}{\nd q}\right\}$
replaced by $\left\{\a! q+x+\hb\, q\frac{\nd}{\nd q}\right\}$
in~\e_ref{DsZdfn_e}.
Finally, if $\nu_n(\a)\!=\!0$ and $\ell^-(\a)\!\le\!1$,
the analogue of~\e_ref{Zs_e} in Gromov-Witten theory  holds  with
\begin{equation*}\begin{split}
\D^s\dot{Z}_{n;\a}(x,\hb,Q)&=
\frac{\dot{I}_1(q)}{\dot{I}_s(q)}
\left\{x+\hb\, Q\frac{\nd}{\nd Q}\right\}\D^{s-1}\dot{Z}_{n;\a}(x,\hb,Q)\,~~~\forall~s\!\in\!\Z^+,\\
\D^s\ddot{Z}_{n;\a}(x,\hb,Q)&=
\frac{\dot{I}_1(q)}{\ddot{I}_s(q)}
\left\{x+\hb\, Q\frac{\nd}{\nd Q}\right\}\D^{s-1}\ddot{Z}_{n;\a}(x,\hb,Q)\,~~~\forall~s\!\in\!\Z^+,
\end{split}\end{equation*}
where $Q=q\ne^{J_{n;\a}(q)}$.
The same comparison applies to the equivariant version of Theorem~\ref{main_thm},
Theorem~\ref{equiv_thm} in Section~\ref{equivthm_sec}, and its Gromov-Witten analogue;
see \cite[Theorem~4.1]{Po} for the general toric case.
Thus, just as is the case for the standard Givental's $J$-function,
the mirror formulas for 
the double Givental's $J$-function in the stable quotients theory 
are simpler versions of the mirror formulas for
the double Givental's $J$-function in the Gromov-Witten theory.
Furthermore, just as in Gromov-Witten theory, the generating functions 
$\dot{Z}_{n;\a}^{(s)}$, $\ddot{Z}_{n;\a}^{(s)}$, and 
$\dot{Z}_{n;\a}^*$ above do not change when the tuple $(a_1,\ldots,a_l)$
is replaced by~$(a_1,\ldots,a_l,1)$;
this is consistent with \cite[Proposition~4.6.1]{CKM}.\\

\noindent
Comparing Theorem~\ref{main_thm} and \cite[(1.7)]{GWvsSQ}
with \cite[Theorem~1.2]{Po} and the $m\!=\!3$ case of \cite[Theorem~A]{g0ci}, 
we find that 
\BE{GWvsSQ23_e}\begin{split}
&\dot{Z}_{n;\a}^{\GW}(x_1,\ldots,x_3,\hb_1,\ldots,\hb_3,Q)\\
&\hspace{1in}=
\dot{I}_0(q)^{m-2}
\ne^{-J_{n;\a}(q)\left(\frac{x_1}{\hb_1}+\ldots+\frac{x_m}{\hb_m}\right)}
\dot{Z}_{n;\a}(x_1,\ldots,x_m,\hb_1,\ldots,\hb_m,q)
\end{split}\EE
with $Q\!=\!q\ne^{J_{n;\a}(q)}$ as before and $m\!=\!2,3$;
we intend to extend this comparison to $m\!>\!3$ in a future paper.
The same relations hold between the generating series $Z_{n:\a}$ described below.
For $m\!=\!2,3$, $b_1,b_2,b_3,c_1,c_2,c_3\!\ge\!0$, let
\begin{equation*}\begin{split}
\SQ_{n;\a}^0(\tau_{b_1}c_1,\tau_{b_2}c_2,\tau_{b_3}c_3)&=
\begin{cases}
\lr\a,&\hbox{if}~b_1,b_2,b_3\!=\!0,~ 
c_1\!+\!c_2\!\!+\!c_3\!=\!n\!-\!1\!-\!\ell(\a);\\
0,&\hbox{otherwise};
\end{cases}\\
\SQ_{n;\a}^0(\tau_{b_1}c_1,\tau_{b_2}c_2)&=
\begin{cases}
\lr\a,&\hbox{if}~b_1,b_2\!=\!0,\,c_1\!+\!c_2\!=\!n\!-\!2\!-\!\ell(\a);\\
\frac{\lr\a}{2},&\hbox{if}~\{b_1,b_2\}=\{0,-1\},c_1\!+\!c_2\!=\!n\!-\!1\!-\!\ell(\a);\\
0,&\hbox{otherwise};
\end{cases}\\
\SQ_{n;\a}^d(\tau_{b_1}c_1,\ldots,\tau_{b_m}c_m)&=
\int_{\ov{Q}_{0,m}(\Pn,d)}\!\!\!\E(\V_{n;\a}^{(d)})
\prod_{i=1}^m\big(\psi_i^{b_i}\ev_i^*x^{c_i}\big) \qquad\forall~d\in\Z^+,\\
\SQ_{n;\a}^{c_1,\ldots,c_m}(q)_{b_1,\ldots,b_m}
&=\sum_{d=0}^{\i}q^d\,\SQ_{n;\a}^d(\tau_{b_1}c_1,\ldots,\tau_{b_m}c_m).
\end{split}\end{equation*}
Since GW-invariants satisfy the divisor, string, and dilaton relations, 
\e_ref{GWvsSQ23_e} leads to modified versions of these relations for SQ-invariants:
\begin{gather}\label{SQdiv_e}
\begin{split}
\dot{I}_0(q)\dot{I}_1(q)\SQ_{n;\a}^{c_1,c_2,1}(q)_{b_1,b_2,0}=
q\frac{\nd}{\nd q}\SQ_{n;\a}^{c_1,c_2}(q)_{b_1,b_2}
&+\SQ_{n;\a}^{c_1+1,c_2}(q)_{b_1-1,b_2}\\
&+\SQ_{n;\a}^{c_1,c_2+1}(q)_{b_1,b_2-1}\,,
\end{split}\\
\label{SQstr_e}
\dot{I}_0\SQ_{n;\a}^{c_1,c_2,0}(q)_{b_1,b_2,0}=
\SQ_{n;\a}^{c_1,c_2}(q)_{b_1-1,b_2}+\SQ_{n;\a}^{c_1,c_2}(q)_{b_1,b_2-1}\,, \\ 
\label{SQdil_e}
\SQ_{n;\a}^{c_1,c_2,0}(q)_{b_1,b_2,1}=
-J_{n;\a}(q)\SQ_{n;\a}^{c_1,c_2,1}(q)_{b_1,b_2,0}\,.
\end{gather}
The discrepancy from the corresponding relations of GW-invariants
is exhibited by the power series $\dot{I}_0$ and $\dot{I}_1$ (or equivalently $J_{n;\a}(q)$).\\

\noindent 
By \e_ref{equivGivental_e}, \e_ref{cYdfn_e}, \e_ref{Fdfn_e}, and~\e_ref{Fdfn_e2} 
\BE{Givental_e}
\dot{Z}_{n;\a}(x,\hb,q)=\frac{\dot{F}_{n;\a}(x/h,q/x^{\nu_n(\a)})}{\dot{I}_0(q)},\qquad
\ddot{Z}_{n;\a}(x,\hb,q)=\frac{\ddot{F}_{n;\a}(x/h,q/x^{\nu_n(\a)})}{\ddot{I}_0(q)}\,,\EE
if $\nu_n(a)\!\ge\!0$.\footnote{The right-hand sides of these expressions should be first simplified
in $\Q(x,\hb)[[q]]$, eliminating division by~$x$, and then projected to $H^*(\Pn)[\hb][[q]]$.}
For $s\!\in\Z^+$, define
\begin{equation*}\begin{aligned}
\D^0\dot{F}_{n;\a}(w,q)&=\frac{\dot{F}_{n;\a}(w,q)}{\dot{I}_0(q)},& 
\D^s\dot{F}_{n;\a}(w,q)&=
\frac{1}{\dot{I}_s(q)}
\left\{1+\frac{q}{w}\frac{\nd}{\nd q}\right\}\D^{s-1}\dot{F}_{n;\a}(w,q),\\
\D^0\ddot{F}_{n;\a}(w,q)&=\frac{\ddot{F}_{n;\a}(w,q)}{\ddot{I}_0(q)},&
\D^s\ddot{F}_{n;\a}(w,q)&=\frac{1}{\ddot{I}_s(q)}
\left\{1+\frac{q}{w}\frac{\nd}{\nd q}\right\}\D^{s-1}\ddot{F}_{n;\a}(w,q).
\end{aligned}\end{equation*}
Combining \e_ref{Givental_e} with \e_ref{Z2pt_e} and \e_ref{Zs_e}, we find~that 
\BE{ZvsY_e}
\dot{Z}_{n;\a}(x_1,x_2,\hb_1,\hb_2,q)
= \frac{1}{\hb_1\!+\!\hb_2}
\sum_{\begin{subarray}{c}s_1,s_2\ge0\\ s_1+s_2=n-1 \end{subarray}}
\!\!\!\!
x_1^{s_1}\dot{F}_{n;\a}^{(s_1)}\bigg(\frac{x_1}{\hb_1},\frac{q}{x_1^{\nu_n(\a)}}\bigg)\cdot
x_2^{s_2}\ddot{F}_{n;\a}^{(s_2)}\bigg(\frac{x_2}{\hb_2},\frac{q}{x_2^{\nu_n(\a)}}\bigg)\,,
\EE
where
\BE{Zs_e2}
\ch{F}_{n;\a}^{(s)}(w,q)=\sum_{d=0}^{\i}\sum_{s'=0}^{s-\nu_n(\a)d}
\frac{\ntc_{s-\ell^*(\a),s'-\ell^*(\a)}^{(d)}q^d}{w^{s-\nu_n(\a)d-s'}}
\,\D^{s'}\ch{F}_{n;\a}(w,q),.\EE
with $(\ch{F},\ell^*)=(\dot{F},\ell^-),(\ddot{F},\ell^+)$.\footnote{The right-hand 
side of \e_ref{ZvsY_e} should be first simplified
in $\Q(x_1,x_2,\hb_1,\hb_2)[[q]]$, eliminating division by~$x_1$ and~$x_2$, and 
then projected to $H^*(\Pn\!\times\!\Pn)[\hb_1,\hb_2][[q]]$.}
Thus, \e_ref{Givental_e} and Theorem~\ref{main_thm} provide
closed formulas for the twisted genus~0 two-point and three-point
SQ-invariants of projective spaces.\\

\noindent
The equivariant versions of the generating functions $\dot{Z}_{n;\a}$ defined 
in~\e_ref{Z23full_e} are ideally suited for further computations, such as of genus~0 
invariants with more marked points and of positive-genus twisted invariants with 
at least one marked point.  
However, for the purposes of computing the genus~0 two-point and three-point invariants, 
it is more natural to consider the generating functions
\BE{Z2dfn_e2}\begin{split}
Z_{n;\a}^*(x_1,x_2,\hb_1,\hb_2,q) &\equiv
\sum_{d=1}^{\i}q^d\big\{\ev_1\!\times\!\ev_2\}_*\!\!\left[\frac{e(\V_{n;\a}^{(d)})}
{(\hb_1\!-\!\psi_1)(\hb_2\!-\!\psi_2)}\right],\\
Z_{n;\a}^*(x_1,x_2,x_3,\hb_1,\hb_2,\hb_3,q) &\equiv
\sum_{d=1}^{\i}q^d\big\{\ev_1\!\times\!\ev_2\!\times\!\ev_3\}_*\!\!
\left[\frac{e(\V_{n;\a}^{(d)})}{(\hb_1\!-\!\psi_1)(\hb_2\!-\!\psi_2)(\hb_3\!-\!\psi_3)}\right],
\end{split}\EE
where $\V_{n;\a}^{(d)}$ is given by~\e_ref{Vstandfn_e} and the evaluation maps are 
as in~\e_ref{evaltot_e}.
In the case $\ell(\a)\!\ge\!0$, \e_ref{ZvsY_e} immediately gives
\BE{ZvsY_e2a}\begin{split}
&Z_{n;\a}^*(x_1,x_2,\hb_1,\hb_2,q)\\
&\qquad= \frac{\lr\a x_1^{\ell(\a)}}{\hb_1\!+\!\hb_2}
\sum_{\begin{subarray}{c}s_1,s_2\ge0 \\ s_1+s_2=n-1\end{subarray}}
\!\!\Bigg(-x_1^{s_1}x_2^{s_2}+
x_1^{s_1}x_2^{s_2}
\dot{F}_{n;\a}^{(s_1)}\bigg(\frac{x_1}{\hb_1},\frac{q}{x_1^{\nu_n(\a)}}\bigg)\cdot
\ddot{F}_{n;\a}^{(s_2)}\bigg(\frac{x_2}{\hb_2},\frac{q}{x_2^{\nu_n(\a)}}\bigg)\Bigg)\,
\end{split}\EE
and similarly for the three-point generating function in~\e_ref{Z2dfn_e2}.
In general, \e_ref{cZ2pt_e2}, \e_ref{cZs_e2}, \e_ref{cZs_e}, 
the second identity in \e_ref{equivGivental_e}, \e_ref{cYvsFdots_e},
the middle identity in \e_ref{cCred_e}, and~\e_ref{Zs_e2}  
give
\BE{ZvsY_e2}\begin{split}
Z_{n;\a}^*(x_1,x_2,\hb_1,\hb_2,q)&=\frac{\lr\a}{\hb_1\!+\!\hb_2}
\sum_{\begin{subarray}{c} s_1,s_2\ge0\\ s_1+s_2=n-1  \end{subarray}}
\!\!\Bigg(x_1^{s_1}
x_2^{s_2+\ell(\a)}\dot{F}_{n;\a}^{(s_2)*}\bigg(\frac{x_2}{\hb_2},\frac{q}{x_2^{\nu_n(\a)}}\bigg)\\
&\qquad\qquad+
x_1^{s_1+\ell(\a)}x_2^{s_2}
\dot{F}_{n;\a}^{(s_1)*}\bigg(\frac{x_1}{\hb_1},\frac{q}{x_1^{\nu_n(\a)}}\bigg)
\ddot{F}_{n;\a}^{(s_2)}\bigg(\frac{x_2}{\hb_2},\frac{q}{x_2^{\nu_n(\a)}}\bigg)\Bigg),
\end{split}\EE
where $\dot{F}_{n;\a}^{(s)*}(w,q)\equiv\dot{F}_{n;\a}^{(s)}(w,q)-1$.\footnote{The right-hand 
side of \e_ref{ZvsY_e2} should be first simplified
in $\Q(x_1,x_2,\hb_1,\hb_2)[[q]]$, eliminating division by~$x_1$ and~$x_2$, and 
then projected to $H^*(\Pn\!\times\!\Pn)[\hb_1,\hb_2][[q]]$.}\\

\noindent
An analogue of~\e_ref{ZvsY_e2} for the three-point generating function in~\e_ref{Z2dfn_e2} can
be similarly obtained from~\e_ref{cZ3pt_e2}, the last identity in~\e_ref{cCred_e}, 
and~\e_ref{Cntcred_e}.
In particular, in the Calabi-Yau case, $\nu_n(\a)\!=\!0$, 
\BE{ZvsY_e3}\begin{split}
&Z_{n;\a}^*(x_1,x_2,x_3,\hb_1,\hb_2,\hb_3,q) 
= \frac{\lr\a}{\hb_1\hb_2\hb_3}  
\Bigg\{\hspace{-.2in}
\sum_{\begin{subarray}{c}s_1,s_2,s_3\ge0\\ s_1,s_2,s_3\le n-1 \\
s_1+s_2+s_3=2n-2\\ \end{subarray}}\hspace{-.2in}
\Bigg(x_1^{s_1}x_2^{s_2}x_3^{s_3+\ell(\a)}
\dot{F}_{n;\a}^{(s_3)*}\bigg(\frac{x_3}{\hb_3},q\bigg)\\
&\hspace{1.3in}
+x_1^{s_1}x_2^{s_2+\ell(\a)}x_3^{s_3}
\dot{F}_{n;\a}^{(s_2)*}\bigg(\frac{x_2}{\hb_2},q\bigg)
\ddot{F}_{n;\a}^{(s_3)}\bigg(\frac{x_3}{\hb_3},q\bigg)\\
&\hspace{1.3in}
+x_1^{s_1+\ell(\a)}x_2^{s_2}x_3^{s_3}
\ntc_{s_1,s_2,s_3}(q)
\dot{F}_{n;\a}^{(s_1)*}\bigg(\frac{x_1}{\hb_1},q\bigg)
\prod_{t=2}^3\ddot{F}_{n;\a}^{(s_t)}\bigg(\frac{x_t}{\hb_t},q\bigg)\Bigg)\\
&\hspace{1.6in}+ 
\sum_{\begin{subarray}{c}s_1\ge\ell^-(\a),\,s_2,s_3\ge0\\ s_1,s_2,s_3\le n-1 \\
s_1+s_2+s_3=2n-2\\ \end{subarray}}\hspace{-.3in}
x_1^{s_1+\ell(\a)}x_2^{s_2}x_3^{s_3}
\ntc_{s_1,s_2,s_3}^*(q)
\prod_{t=2}^3\ddot{F}_{n;\a}^{(s_t)}\bigg(\frac{x_t}{\hb_t},q\bigg)\Bigg\},
\end{split}\EE
where
$$\ntc_{s_1,s_2,s_3}(q)\equiv 1+ \ntc_{s_1,s_2,s_3}^*(q)
= \frac1{(1\!-\!\a^{\a}q)\dot\bI_{s_1}^c(q)\ddot\bI_{s_2}^c(q)\ddot\bI_{s_3}^c(q)}\,.$$
This presentation of the three-point formula eliminates division by~$x_1$,
even if $\ell(\a)\!<\!0$, since $\dot{F}_{n;\a}^{(s)*}(w,q)$ is
divisible by~$w^{\ell^-(\a)-s}$ for $s\!\le\!\ell^-(\a)$.\\

\noindent
In the Calabi-Yau case, i.e.~$\nu_n(\a)\!=\!0$, we find that
\BE{CYform_e}
\lr\a+q\frac{\nd}{\nd q}\SQ_{n;\a}^{c_1,c_2}(q)=\lr\a \dot{I}_{c_1+1}(q)\,,\qquad
\SQ_{n;\a}^{c_1,c_2,c_3}(q)
=\frac{\lr{\a}}{(1\!-\!\a^{\a}q)
\prod\limits_{t=1}^{t=3}\prod\limits_{c=0}^{c=c_t}\!\dot{I}_c(q)}\,,\EE
whenever $c_1,c_2,c_3\!\in\Z^{\ge0}$, $c_1\!+\!c_2=n\!-\!2\!-\!\ell(\a)$
in the first equation, and $c_1\!+\!c_2\!+\!c_3=n\!-\!1\!-\!\ell(\a)$
in the second equation.
The $c_1\!=\!0$ case of~\e_ref{CYform_e} agrees with 
the $W\!\sslash\!G\!=\!X_{n;\a}$ case of \cite[Corollary~5.5.4(bc)]{CK}.
By~\e_ref{CYform_e},
\begin{equation*}\begin{split}
\max(c_1,c_2)\ge n\!-\!\ell^+(\a) \qquad&\Lra\qquad
\SQ_{n;\a}^d(c_1,c_2)(q)=0 \quad\forall\,d\!\in\!\Z^+,\\
\max(c_1,c_2,c_3)\ge n\!-\!\ell^+(\a) \qquad&\Lra\qquad
\SQ_{n;\a}^d(c_1,c_2,c_3)(q)=0 \quad\forall\,d\!\in\!\Z^+,
\end{split}\end{equation*}
as the case should be for intrinsic invariants of~$X_{n;\a}$.
On the other hand,  
\BE{CYformGW_e}
\lr\a+Q\frac{\nd}{\nd Q}
\GW_{n;\a}^{c_1,c_2}(Q)=\lr\a \frac{\dot{I}_{c_1+1}(q)}{\dot{I}_1(q)}\,,\qquad
\GW_{n;\a}^{c_1,c_2,c_3}(Q)
=\frac{\lr{\a}\dot{I}_0(q)}{(1\!-\!\a^{\a}q)
\prod\limits_{t=1}^{t=3}\prod\limits_{c=0}^{c=c_t}\!\dot{I}_c(q)},\EE
with the same assumptions on $c_1,c_2,c_3$ as in \e_ref{CYform_e} 
and $Q=q\ne^{J_{n;\a}(q)}$, as before;
see \cite[(1.5)]{bcov0_ci} and \cite[(1.7)]{g0ci}, respectively.\\

\noindent
In the case of  products of projective spaces and concavex sheaves~\e_ref{gensheaf_e},
the analogues of the above mirror formulas relate power series
\begin{alignat}{1}
\label{genY_e}
&\ch{F}_{n_1,\ldots,n_p;\a}\in\Q(w)\big[\big[q_1,\ldots,q_p\big]\big],\\
\label{genZ_e1}
&\ch{Z}_{n_1,\ldots,n_p;\a}^{(s_1,\ldots,s_p)}
\in H^*\big(\P^{n_1-1}\!\times\!\ldots\!\times\!\P^{n_p-1}\big)\big[\hb^{-1}\big]
\big[\big[q_1,\ldots,q_p\big]\big], \\
\label{genZ_e2}
&\ch{Z}_{n_1,\ldots,n_p;\a}^*
\in H^*\big((\P^{n_1-1}\!\times\!\ldots\!\times\!\P^{n_p-1})^m\big)
\big[\hb_1^{-1},\ldots,\hb_m^{-1}\big]
\big[\big[q_1,\ldots,q_p\big]\big],
\end{alignat}
with $\ch{F}$ and $\ch{Z}$ denoting $F$, $\dot{F}$, $\ddot{F}$,
$Z$, $\dot{Z}$, or $\ddot{Z}$ and $m\!=\!2,3$.
The coefficients of $q_1^{d_1}\!\ldots\!q_p^{d_p}$ in~\e_ref{genZ_e1} and~\e_ref{genZ_e2}
are defined by the same pushforwards as in~ \e_ref{Z2dfn_e}, \e_ref{Z3Jdfn_e}, 
\e_ref{Zsdfn_e}, and~\e_ref{Z2dfn_e2}, with the degree~$d$
of the stable quotients replaced by~$(d_1,\ldots,d_p)$
and $x^s$ by $x_1^{s_1}\!\ldots\!x_p^{s_p}$.
The coefficients of $q_1^{d_1}\!\ldots\!q_p^{d_p}$ in~\e_ref{genY_e}
are obtained from the coefficients in~\e_ref{Fdfn_e}, \e_ref{Fdfn_e2},
and~\e_ref{hatFdfn_e}
by replacing $a_kd$ and $a_kx$
by $a_{k;1}d_1\!+\!\ldots\!+\!a_{k;p}d_p$ and $a_{k;1}x_1\!+\!\ldots\!+\!a_{k;p}x_p$
in the numerator and taking the product of the denominators with
$(n,x,d)\!=\!(n_i,x_i,d_i)$ for each $i\!=\!1,\ldots,p$;
$$x_1,\ldots,x_p\in H^*(\P^{n_1-1}\!\times\!\ldots\!\times\!\P^{n_p-1})$$
now correspond to the pullbacks of the hyperplane classes by the projection maps.
If $\ell^-(\a)\!=\!0$, the analogue of~\e_ref{ZvsY_e2a} with $\lr\a x^{\ell(\a)}$ replaced
by the products of $a_{k;1}x_{1;1}\!+\!\ldots\!+\!a_{k;p}x_{1;p}$ and sums over pairs of $p$-tuples 
$(s_{1;1},\ldots,s_{1;p})$ and $(s_{2;1},\ldots,s_{2;p})$ with 
$s_{1;i}\!+\!s_{2;i}\!=\!n_i\!-\!1$ provides a closed formula 
for~$Z_{n_1,\ldots,n_p;\a}^*$.
In general, the relation~\e_ref{ZvsY_e2} extends to this case by replacing
$\lr\a x_i^{\ell(\a)}$ by the products and ratios of the terms 
$a_{k;1}x_{i;1}\!+\!\ldots\!+\!a_{k;p}x_{i;p}$.

\section{Equivariant mirror formulas}
\label{equivthm_sec}

\noindent
We begin this section by reviewing the equivariant setup used 
in \cite{bcov0,bcov0_ci,GWvsSQ}, closely following \cite[Section~3]{GWvsSQ}.
After defining equivariant versions of the generating functions 
$\dot{Z}_{n;\a}^{(s)}$, $\ddot{Z}_{n;\a}^{(s)}$, 
$\dot{Z}_{n;\a}^*$, and~$Z_{n;\a}^*$ 
and of the hypergeometric series $\dot{F}_{n;\a}$ and~$\ddot{F}_{n;\a}$,
we state an equivariant version of Theorem~\ref{main_thm}; 
see Theorem~\ref{equiv_thm} below.
This theorem immediately implies Theorem~\ref{main_thm}.
The proof of the two-point mirror formulas in Theorem~\ref{equiv_thm}
is outlined in Sections~\ref{pfs_sec} and~\ref{poliC_subs} 
and completed in Sections~\ref{recpf_sec} and~\ref{MPCpf_sec}.
We conclude this section with a specialization of the three-point formula of 
Theorem~\ref{equiv_thm} in Proposition~\ref{Z3equiv_prp}, 
which is proved in Section~\ref{equiv0pf_sec} and is a key step
in the proof of the full three-point formula of 
Theorem~\ref{equiv_thm} in Section~\ref{3ptpf_sec}.

\subsection{Equivariant setup}
\label{equivsetup_subs}

\noindent
The quotient of the classifying space for the $n$-torus $\T$ is 
$B\T\equiv(\P^{\i})^n$.
Thus, the group cohomology of~$\T$ is
$$H_{\T}^*\equiv H^*(B\T)=\Q[\al_1,\ldots,\al_n],$$
where $\al_i\!\equiv\!\pi_i^*c_1(\ga^*)$,
$\ga\!\lra\!\P^{\i}$ is the tautological line bundle,
and $\pi_i\!: (\P^{\i})^n\!\lra\!\P^{\i}$ is
the projection to the $i$-th component.
The field of fractions of $H^*_{\T}$ will be denoted by
$$\Q_{\al}\equiv \Q(\al_1,\ldots,\al_n).$$
We denote the equivariant $\Q$-cohomology of a topological space $M$
with a $\T$-action by $H_{\T}^*(M)$.
If the $\T$-action on $M$ lifts to an action on a complex vector bundle $V\!\lra\!M$,
let $\E(V)\in H_{\T}^*(M)$ denote the \sf{equivariant euler class of} $V$.
A continuous $\T$-equivariant map $f\!:M\!\lra\!M'$ between two compact oriented 
manifolds induces a pushforward homomorphism
$$f_*\!: H_{\T}^*(M) \lra H_{\T}^*(M').$$\\

\noindent
The standard action of $\T$ on $\C^{n-1}$,
$$\big(\ne^{\I\th_1},\ldots,\ne^{\I\th_n}\big)\cdot (z_1,\ldots,z_n) 
\equiv\big(\ne^{\I\th_1}z_1,\ldots,\ne^{\I\th_n}z_n\big),$$
descends to a $\T$-action on $\Pn$, which has $n$ fixed points:
\begin{equation}\label{fixedpt_e} 
P_1=[1,0,\ldots,0], \qquad P_2=[0,1,0,\ldots,0], 
\quad\ldots,\quad P_n=[0,\ldots,0,1].
\end{equation}
This standard $\T$-action on~$\Pn$ lifts to a natural 
$\T$-action on the tautological line bundle $\ga\!\lra\!\Pn$,
since $\ga\!\subset\!\Pn\!\times\!\C^n$ is preserved by the diagonal $\T$-action.
With 
$$\x\equiv\E(\ga^*)\in H_{\T}^*(\Pn)$$
denoting the equivariant hyperplane class,
the equivariant cohomology of $\Pn$ is given~by
\BE{pncoh_e}
H_{\T}^*(\Pn)= \Q[\x,\al_1,\ldots,\al_n]\big/(\x\!-\!\al_1)\ldots(\x\!-\!\al_n).\EE
Let $\x_t\in H_{\T}^*((\Pn)^m)$ be the pull-back
of $\x$ by the $t$-th projection map.\\

\noindent
The standard $\T$-representation on $\C^n$ (as well as any other representation) induces
$\T$-actions on $\ov{Q}_{0,m}(\Pn,d)$, $\cU$, $\V_{n;\a}^{(d)}$, $\dot\V_{n;\a}^{(d)}$, 
and $\ddot\V_{n;\a}^{(d)}$; see \e_ref{Vstandfn_e} and~\e_ref{Vprdfn_e} for the notation.
Thus, $\V_{n;\a}^{(d)}$, $\dot\V_{n;\a}^{(d)}$, and $\ddot\V_{n;\a}^{(d)}$ have well-defined
equivariant euler classes 
$$\E(\V_{n;\a}^{(d)}), \E(\dot\V_{n;\a}^{(d)}), \E(\ddot\V_{n;\a}^{(d)})
\in H_{\T}^*\big(\ov{Q}_{0,m}(\Pn,d)\big).$$
The universal cotangent line bundle for the $i$-th marked point
also has a well-defined equivariant Euler class, which will still be
denoted by~$\psi_i$.\\

\noindent
Similarly to~\e_ref{Zdfn_e} and~\e_ref{otherZdfn_e}, let
\BE{cZdfn_e}\begin{split}
\dot\cZ_{n;\a}(\x,\hb,q) &\equiv
1+\sum_{d=1}^{\i}q^d\ev_{1*}\!\!\left[\frac{\E(\dot\V_{n;\a}^{(d)})}{\hb\!-\!\psi_1}\right]
\in H_{\T}^*(\Pn)\big[\big[\hb^{-1},q\big]\big],\\
\ddot\cZ_{n;\a}(\x,\hb,q) &\equiv
1+\sum_{d=1}^{\i}q^d\ev_{1*}\!\!\left[\frac{\E(\ddot\V_{n;\a}^{(d)})}{\hb\!-\!\psi_1}\right]
\in H_{\T}^*(\Pn)\big[\big[\hb^{-1},q\big]\big].
\end{split}\EE
For each $s\!\in\!\Z^{\ge0}$, let
\BE{cZsdfn_e}\begin{split}
\dot\cZ_{n;\a}^{(s)}(\x,\hb,q) &\equiv x^s+
\sum_{d=1}^{\i}\!q^d\ev_{1*}\!\!\left[\frac{\E(\dot\V_{n;\a}^{(d)})\ev_2^*\x^s}
{\hb\!-\!\psi_1}\right]\in H_{\T}^*(\Pn)\big[\big[\hb^{-1},q\big]\big],\\
\ddot\cZ_{n;\a}^{(s)}(\x,\hb,q) &\equiv x^s+
\sum_{d=1}^{\i}\!q^d\ev_{1*}\!\!\left[\frac{\E(\ddot\V_{n;\a}^{(d)})\ev_2^*\x^s}
{\hb\!-\!\psi_1}\right]\in H_{\T}^*(\Pn)\big[\big[\hb^{-1},q\big]\big].
\end{split}\EE
Similarly to~\e_ref{Z2dfn_e} and \e_ref{Z3Jdfn_e}, we define
\begin{gather}\label{cZ2dfn_e}
\dot\cZ_{n;\a}^*(\x_1,\x_2,\hb_1,\hb_2,q) \equiv
\sum_{d=1}^{\i}q^d\big\{\ev_1\!\times\!\ev_2\}_*\!\!\left[\frac{\E(\dot\V_{n;\a}^{(d)})}
{(\hb_1\!-\!\psi_1)(\hb_2\!-\!\psi_2)}\right]
\in H_{\T}^*(\Pn)\big[\big[\hb_1^{-1},\hb_2^{-1},q\big]\big],\\
\label{cZ3Jdfn_e}
\dot\cZ_{n;\a}^*(\x_1,\x_2,\x_3,\hb_1,\hb_2,\hb_3,q) \equiv
\sum_{d=1}^{\i}q^d\big\{\ev_1\!\times\!\ev_2\!\times\!\ev_3\}_*\!
\!\left[\frac{e(\dot\V_{n;\a}^{(d)})}
{(\hb_1\!-\!\psi_1)(\hb_2\!-\!\psi_2)(\hb_3\!-\!\psi_3)}\right],
\end{gather}
with the total pushforwards by the total evaluation maps taken in equivariant 
cohomology.
Similarly to~\e_ref{Z23full_e}, let
\BE{Z23cfull_e}\begin{split}
\dot\cZ_{n;\a}(\x_1,\x_2,\hb_1,\hb_2,q)
&=\frac{\bPD(\De_{\Pn}^{(2)})}{\hb_1\!+\!\hb_2}
+\dot\cZ_{n;\a}^*(\x_1,\x_2,\hb_1,\hb_2,q)\,,\\
\dot\cZ_{n;\a}(\x_1,\x_2,\x_3,\hb_1,\hb_2,\hb_3,q)
&=\frac{\bPD(\De_{\Pn}^{(3)})}{\hb_1\hb_2\hb_3}
+\dot\cZ_{n;\a}^*(\x_1,\x_2,\x_3,\hb_1,\hb_2,\hb_3,q),
\end{split}\EE
where $\bPD(\De_{\Pn}^{(2)})$ and $\bPD(\De_{\Pn}^{(3)})$ are the equivariant
Poincar\'{e} duals of the (small) diagonals in 
$\Pn\!\times\!\Pn$ and $\Pn\!\times\!\Pn\!\times\!\Pn$, respectively.\\

\noindent
The above Poincar\'{e} duals can be written~as 
\BE{PDequiv_e}\begin{split}
\bPD(\De_{\Pn}^{(2)})&=
\sum_{\begin{subarray}{c}s_1,s_2,r\ge0\\ s_1+s_2+r=n-1 \end{subarray}}
\!\!\!\!\!\!\!\!\!(-1)^r \bfs_r\x_1^{s_1}\x_2^{s_2}\,,\\
\bPD(\De_{\Pn}^{(3)})&=
\sum_{\begin{subarray}{c}s_1,s_2,s_3,r\ge0\\ s_1+s_2+s_3+r=2n-2 \end{subarray}}
\!\!\!\!\!\!\!\!\!\bfs_r^{(2)}\x_1^{s_1}\x_2^{s_2}\x_3^{s_3}\\
&=\sum_{\begin{subarray}{c}s_1,s_2,s_3,r\ge0\\ 
s_1,s_2,s_3\le n-1\\
s_1+s_2+s_3+r=2n-2 \end{subarray}}
\sum_{\begin{subarray}{c}r_0,r_1,r_2,r_3\ge0\\
r_1\le\hat{s}_1,r_2\le\hat{s}_2,r_3\le\hat{s}_3\\ 
r_0+r_1+r_2+r_3=r\end{subarray}}\!\!\!\!\!\!\!
(-1)^{r_1+r_2+r_3}\eta_{r_0}\bfs_{r_1}\bfs_{r_2}\bfs_{r_3}
\x_1^{s_1}\x_2^{s_2}\x_3^{s_3}\,,
\end{split}\EE
where $\bfs_r,\eta_r,\bfs_r^{(2)}\!\in\!\Q[\al_1,\ldots,\al_n]$ are 
the $r$-th elementary symmetric polynomial
 in $\al_1,\ldots,\al_n$,
the sum of all degree~$r$ monomials in $\al_1,\ldots,\al_n$, and
the degree~$r$ term in $(1\!-\!\bfs_1\!+\!\bfs_2-\ldots)^2$,
respectively.
All three expressions for the Poincar\'{e} duals can be confirmed by pairing
them with $\x_1^{t_1}\x_2^{t_2}$ and $\x_1^{t_1}\x_2^{t_2}\x_3^{t_3}$,
with $t_1,t_2,t_3\!\le\!n\!-\!1$, and using the Localization Theorem of~\cite{ABo}
on $(\Pn)^m$
and the Residue Theorem on $S^2$ to reduce the equivariant integrals of 
$\x^{s+t}$ on~$\Pn$ to the polynomials~$\eta_r$;
these are the homogeneous polynomials in the power series expansion of 
$1/(1\!-\!\al_1)(1\!-\!\al_2)\ldots$
The coefficient of $\x_1^{s_1}\x_2^{s_2}\x_3^{s_3}$
in the second expression for  $\bPD(\De_{\Pn}^{(3)})$ is precisely
$\wt\cC_{s_1,s_2,s_3}^{(r)}(0)$, with $\wt\cC_{s_1,s_2,s_3}^{(r)}$
as in Theorem~\ref{equiv_thm}; see the end of Section~\ref{equivMS_subs}.
This provides a direct check of the degree~0 term in~\e_ref{Z2cJpt_e}.

\subsection{Equivariant mirror symmetry}
\label{equivMS_subs}

\noindent
The equivariant analogues of the power series in~\e_ref{Fdfn_e} 
and~\e_ref{Fdfn_e2}  are given~by
\BE{cYdfn_e}\begin{split}
\dot\cY_{n;\a}(\x,\hb,q)&\equiv\sum_{d=0}^{\i}q^d
\frac{\prod\limits_{a_k>0}\prod\limits_{r=1}^{a_kd}\!(a_k\x\!+\!r\hb)
\prod\limits_{a_k<0}\!\!\prod\limits_{r=0}^{-a_kd-1}\!\!(a_k\x\!-\!r\hb)}
{\prod\limits_{r=1}^d\left(\prod\limits_{k=1}^n(\x\!-\!\al_k\!+\!r\hb)-
\prod\limits_{k=1}^n(\x\!-\!\al_k)\right)}
\in \Q[\al_1,\ldots,\al_n,\x]\big[\big[\hb^{-1},q\big]\big],\\
\ddot\cY_{n;\a}(\x,\hb,q)&\equiv\sum_{d=0}^{\i}q^d
\frac{\prod\limits_{a_k>0}\prod\limits_{r=0}^{a_kd-1}\!(a_k\x\!+\!r\hb)
\prod\limits_{a_k<0}\!\!\prod\limits_{r=1}^{-a_kd}\!\!(a_k\x\!-\!r\hb)}
{\prod\limits_{r=1}^d\left(\prod\limits_{k=1}^n(\x\!-\!\al_k\!+\!r\hb)-
\prod\limits_{k=1}^n(\x\!-\!\al_k)\right)}
\in \Q[\al_1,\ldots,\al_n,\x]\big[\big[\hb^{-1},q\big]\big].
\end{split}\EE
The second products in the denominators above are irrelevant for 
the statements in this section, but are material 
to~\e_ref{cYexp_e} and thus to the proof of~\e_ref{Z2cJpt_e} in this paper.\\

\noindent
For each $s\!\in\!\Z^+$, we define 
$\D^s\dot\cZ_{n;\a}(\x,\hb,q),\D^s\ddot\cZ_{n;\a}(\x,\hb,q)\in H_{\T}^*(\Pn)[[\hb^{-1},q]]$ 
inductively~by
\BE{DscZdfn_e}\begin{aligned}
\D^0\dot\cZ_{n;\a}(\x,\hb,q)&=\dot\cZ_{n;\a}(\x,\hb,q),&
\D^s\dot\cZ_{n;\a}(\x,\hb,q)&=
\frac{1}{\dot{I}_s(q)}
\left\{\x+\hb\, q\frac{\nd}{\nd q}\right\}\D^{s-1}\dot\cZ_{n;\a}(\x,\hb,q),\\
\D^0\ddot\cZ_{n;\a}(\x,\hb,q)&=\ddot\cZ_{n;\a}(\x,\hb,q),&
\D^s\ddot\cZ_{n;\a}(\x,\hb,q)&=
\frac{1}{\ddot{I}_s(q)}
\left\{\x+\hb\, q\frac{\nd}{\nd q}\right\}\D^{s-1}\ddot\cZ_{n;\a}(\x,\hb,q).
\end{aligned}\EE

\begin{thmnum}\label{equiv_thm}
If $l\!\in\!\Z^{\ge0}$, $n\!\in\!\Z^+$, and $\a\!\in\!(\Z^*)^l$, then
\BE{main_e2}
\dot\cZ_{n;\a}(\x_1,\x_2,\hb_1,\hb_2,q)
 =\frac{1}{\hb_1+\hb_2}
\sum_{\begin{subarray}{c}s_1,s_2,r\ge0\\ s_1+s_2+r=n-1 \end{subarray}}
\!\!\!\!\!\!\!\!\!(-1)^r\bfs_r
\dot\cZ_{n;\a}^{(s_1)}(\x_1,\hb_1,q)\ddot\cZ_{n;\a}^{(s_2)}(\x_2,\hb_2,q),
\EE
where $\bfs_r\!\in\!\Q_{\al}$ is the $r$-th elementary symmetric polynomial
in $\al_1,\ldots,\al_n$.
If in addition $\nu_n(\a)\!\ge\!0$,  
\BE{equivGivental_e}
\dot\cZ_{n;\a}(\x,\hb,q)=\frac{\dot\cY_{n;\a}(\x,\hb,q)}{\dot{I}_0(q)},\qquad
\ddot\cZ_{n;\a}(\x,\hb,q)=\frac{\ddot\cY_{n;\a}(\x,\hb,q)}{\ddot{I}_0(q)}\,,\EE
and there exist 
$\ctC_{s,s'}^{(r)},\ctC_{s_1,s_2,s_3}^{(r)}\in\Q[\al_1,\ldots,\al_n][[q]]$
with $s,s',s_1,s_2,s_3,r\!\in\!\Z^{\ge0}$ 
such~that  
\BE{cCred_e}
\ctC_{s,s'}^{(r)}(0)=\de_{0,r}\de_{s,s'}, \quad
\bigcoeff{\ctC_{s,s'}^{(\nu_n(\a)d)}(q)\big|_{\al=0}}_d=\ntc_{s,s'}^{(d)}\,,\quad
\bigcoeff{\ctC_{s_1,s_2,s_3}^{(\nu_n(\a)d)}(q)\big|_{\al=0}}_d=\ntc_{s_1,s_2,s_3}^{(d)}\,,\EE
the coefficients of $q^d$ in $\ctC_{s,s'}^{(r)}(q)$ and 
$\ctC_{s_1,s_2,s_3}^{(r)}(q)$ are degree~$r\!-\!\nu_n(\a)d$ 
homogeneous symmetric polynomial in $\al_1,\al_2,\ldots,\al_n$, and
\begin{gather}\label{Z2cJpt_e}
\begin{split}
&\dot\cZ_{n;\a}(\x_1,\x_2,\x_3,\hb_1,\hb_2,\hb_3,q)\\
&\qquad\quad = \frac{1}{\hb_1\hb_2\hb_3}
\sum_{\begin{subarray}{c}r,s_1,s_2,s_3\ge0\\
s_1,s_2,s_3\le n-1 \\
s_1+s_2+s_3+r=2n-2\\
\end{subarray}}
\hspace{-.35in}
\ctC^{(r)}_{s_1,s_2,s_3}(q)
\dot\cZ_{n;\a}^{(s_1)}(\x_1,\hb_1,q)
\prod_{t=2}^3\ddot\cZ_{n;\a}^{(s_t)}(\x_t,\hb_t,q)\,,
\end{split}\\
\label{cZs_e}
\ch\cZ_{n;\a}^{(s)}(\x,\hb,q)=\sum_{r=0}^s\sum_{s'=0}^{s-r}
\ctC_{s-\ell^*(\a),s'-\ell^*(\a)}^{(r)}(q)\,\hb^{s-r-s'}\D^{s'}\ch\cZ_{n;\a}(\x,\hb,q),
\end{gather}
where $(\ch\cZ,\ell^*)=(\dot\cZ,\ell^-),(\ddot\cZ,\ell^+)$.
\end{thmnum}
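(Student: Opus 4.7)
I take \eqref{equivGivental_e} as given: for $\dot\cZ_{n;\a}$ it is \cite[Theorem 1]{GWvsSQ}, and the argument for $\ddot\cZ_{n;\a}$ is entirely parallel (swap $\sigma_1$ with $\sigma_2$ and exchange $\dot{F}_{n;\a}$ with $\ddot{F}_{n;\a}$ throughout the localization). It remains to prove \eqref{main_e2}, to construct structure constants $\ctC^{(r)}_{s,s'}$ and $\ctC^{(r)}_{s_1,s_2,s_3}$ satisfying \eqref{cCred_e}, and to deduce \eqref{cZs_e} and \eqref{Z2cJpt_e}. My overall strategy adapts the uniqueness method used in Gromov--Witten theory in \cite{bcov0,bcov0_ci,Po}.

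\textbf{Two-point formula via uniqueness.} The plan for \eqref{main_e2} is to isolate a subspace of two-point equivariant series in $H_{\T}^*(\Pn\!\times\!\Pn)[\hb_1^{-1},\hb_2^{-1}][[q]]$ cut out by two conditions: (i) a recursive pole structure in $\hb_1$ reflecting the combinatorics of $\T$-fixed loci in $\ov{Q}_{0,2}(\Pn,d)$, and (ii) a polynomiality condition on the sum of Localization residues over the fixed points of $\Pn$. These are the conditions of Proposition \ref{uniqueness_prp} and Lemmas \ref{recgen_lmm} and \ref{polgen_lmm}, which together force a member of this subspace to be determined by its reduction to a single marked point. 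First I would verify that the left-hand side of \eqref{main_e2} satisfies (i) and (ii) via Atiyah--Bott localization on $\ov{Q}_{0,2}(\Pn,d)$; then I would check the same for the right-hand side by expanding the factors $\dot\cZ_{n;\a}^{(s_1)}$ and $\ddot\cZ_{n;\a}^{(s_2)}$ using \eqref{equivGivental_e}. Collapsing one marked point reduces the identity to the known one-point mirror formula, closing the argument.

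\textbf{Structure constants and expansion \eqref{cZs_e}.} Expanding \eqref{main_e2} in powers of $\x_2/\hb_2$ and matching coefficients rewrites each $\dot\cZ_{n;\a}^{(s)}$ as a $\Q[\al_1,\ldots,\al_n][[q]]$-linear combination of the iterated derivatives $\D^{s'}\dot\cZ_{n;\a}$. Reading off these coefficients defines $\ctC^{(r)}_{s,s'}(q)$ and yields \eqref{cZs_e}; the analogue with $\ddot\cZ$ follows from the opposite expansion. The identities in \eqref{cCred_e} for $\ctC^{(r)}_{s,s'}$ are obtained by specializing to $\al=0$ and comparing with the combinatorial definitions \eqref{littlec_e} and \eqref{littletic_e} through \eqref{Givental_e}. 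Homogeneity of the $q^d$-coefficient in $\al_1,\ldots,\al_n$ is inherited from the equivariant scaling symmetry $\al\mapsto t\al$ of the pushforwards.

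\textbf{Three-point formula; the main obstacle.} Applying the uniqueness criterion to two of the three marked points in \eqref{Z2cJpt_e} reduces the identity to a statement about the three-point primary (no-$\psi$) pushforwards, packaged as $\ctC^{(r)}_{s_1,s_2,s_3}(q)$; this reduction is Proposition \ref{Z3equiv_prp}. When $\nu_n(\a)>0$ (the Fano case), enough three-point primary SQ-invariants vanish for dimensional reasons that Proposition \ref{Z3equiv_prp} follows immediately from Corollary \ref{MirSym_crl}. The main obstacle is the Calabi--Yau case $\nu_n(\a)=0$, where no such dimensional vanishing is available. I plan to resolve it by the trick of \cite{GWvsSQ}: rewrite Proposition \ref{Z3equiv_prp} as the closed formula for equivariant twisted three-point Hurwitz numbers in Proposition \ref{equiv0_prp}, whose formulation does not involve $n$ beyond the inequality $|\a|\le n$. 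The Fano analysis above establishes the Hurwitz formula whenever $|\a|<n$; its $n$-independence then extends it to $|\a|=n$, which is exactly the Calabi--Yau boundary. Feeding this back into Proposition \ref{Z3equiv_prp}, and invoking \cite{g0ci} to reassemble the factored form on the right-hand side, completes the proof of \eqref{Z2cJpt_e}.
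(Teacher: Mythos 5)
Your overall strategy — recursivity and mutual polynomiality (Lemmas~\ref{recgen_lmm} and~\ref{polgen_lmm}), the uniqueness machine of Proposition~\ref{uniqueness_prp}, the Fano case by dimensional vanishing, and the Calabi--Yau case by the $n$-independence of the twisted Hurwitz formula — matches the paper's. The one genuine gap is in your treatment of \eqref{cZs_e} and the construction of $\ctC^{(r)}_{s,s'}$. You propose to obtain them by ``expanding \eqref{main_e2} in powers of $\x_2/\hb_2$ and matching coefficients,'' but this cannot produce \eqref{cZs_e}: the right-hand side of \eqref{main_e2} contains the products $\dot\cZ^{(s_1)}\ddot\cZ^{(s_2)}$, not the differential operators $\D^{s'}\dot\cZ_{n;\a}$ (which carry a $q\frac{\nd}{\nd q}$ that does not arise from any $\x_2$-expansion), and the unknown two-point function on the left is not eliminated by this operation. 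The paper instead defines $\cC^{(r)}_{s,s'}$ purely algebraically from the expansion of $\D^s\dot\cY_{n;\a}$ in powers of $\hb^{-1}$ (equation~\eqref{Crec_e}), inverts that triangular system to produce $\ctC^{(r)}_{s,s'}$ (equation~\eqref{tiCrec_e}), and then proves \eqref{cZs_e} by another application of the (M1)--(M5) uniqueness scheme, checking directly that both sides are recursive, satisfy the same MPC, and agree modulo $\hb^{-1}$ because of \eqref{tiCrec_e}. Your claim that the $\al$-homogeneity of the $q^d$-coefficient is ``inherited from equivariant scaling'' is also not quite the paper's argument (the paper reads it off the explicit algebraic definition), though a scaling argument could be made to work. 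A smaller inaccuracy: you describe Proposition~\ref{Z3equiv_prp} as the reduction of \eqref{Z2cJpt_e} to three-point primary invariants, but it is actually a concrete intermediate identity, $\dot\cZ_{n;\a;3}^{(\0,\1)}(\x,\hb,q)=\hb^{-1}\dot\cZ_{n;\a}(\x,\hb,q)/\dot{I}_0(q)$, for a one-marked-point descendant over $\ov{Q}_{0,3}$; the actual derivation of \eqref{Z2cJpt_e} in Section~\ref{3ptpf_sec} breaks the localization graphs at the special trivalent vertex $v_0$ and recombines the vertex contribution (controlled by Proposition~\ref{equiv0_prp}) with the three strand contributions (controlled by \eqref{main_e2}, \eqref{equivGivental_e}, and \eqref{cZs_e}).
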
 

\noindent
Setting $\al\!=\!0$ in~\e_ref{main_e2}, \e_ref{equivGivental_e},
\e_ref{Z2cJpt_e}, and \e_ref{cZs_e}, 
we obtain \e_ref{Z2pt_e}, \e_ref{Givental_e}, \e_ref{Z2Jpt_e} and~\e_ref{Zs_e},
respectively.\\

\noindent
We now completely describe the power series $\ctC_{s,s'}^{(r)}$ of Theorem~\ref{equiv_thm};
it will be shown in Section~\ref{pfs_sec}  that they indeed satisfy~\e_ref{cZs_e}.
Let 
\BE{hatdfn_e}
\D^0\cY_{n;\a}(\x,\hb,q)=\frac1{I_0(q)}\sum_{d=0}^{\i}q^d\frac{
\prod\limits_{a_k>0}\prod\limits_{r=1}^{a_kd}(a_k\x\!+\!r\hb)
\prod\limits_{a_k<0}\!\!\prod\limits_{r=1}^{-a_kd}\!\!(a_k\x\!-\!r\hb)}
{\prod\limits_{r=1}^d\prod\limits_{k=1}^n(\x-\al_k+r\hb)}\,.\EE
For $s\!\in\!\Z^+$, let
\BE{DscYdfn_e}\D^s\cY_{n;\a}(\x,\hb,q)=
\frac{1}{I_s(q)}
\left\{\x+\hb\, q\frac{\nd}{\nd q}\right\}\D^{s-1}\cY_{n;\a}(\x,\hb,q)
\in \x^s+q\cdot\Q[\al_1,\ldots,\al_n]\big[\big[q\big]\big].\EE
Comparing with \e_ref{hatFdfn_e}, we find that 
\begin{gather}\label{cYvsF_e}
\D^s\cY_{n;\a}(\x,\hb,q)\big|_{\al=0}=
\x^s\D^sF_{n;\a}(\x/\hb,q/\x^{\nu_n(\a)}),\qquad\hbox{where}\\
\D^0F_{n;\a}(w,q)=\frac{F_{n;\a}(w,q)}{I_0(q)}\,,\quad
\D^sF_{n;\a}(w,q)=\frac{1}{I_s(q)}\left\{1+\frac{q}{w}\frac{\nd}{\nd q}\right\}
\D^{s-1}F_{n;\a}(w,q) \quad\forall\,s\!\in\!\Z^+.\notag
\end{gather}
For $r,s,s'\!\ge\!0$, define $\cC_{s,s'}^{(r)}\in\Q[\al_1,\ldots,\al_n][[q]]$ by
\BE{Crec_e}
\hb^s\sum_{s'=0}^{\i}\sum_{r=0}^{s'}\cC_{s,s'}^{(r)}(q)\x^{s'-r}\hb^{-s'}
=\D^s\cY_{n;\a}(\x,\hb,q)\,.\EE
By \e_ref{hatdfn_e}, \e_ref{DscYdfn_e}, and~\e_ref{Crec_e},  
the coefficient of $q^d$ in $\cC^{(r)}_{s,s'}$ is a degree~$r\!-\!\nu_n(\a)d$ 
homogeneous symmetric polynomial in~$\al$.
By~\e_ref{DscYdfn_e} and~\e_ref{cYvsF_e}, 
\BE{cC_e}
\cC_{s,s}^{(0)}(q)=1, \qquad \cC_{s,s'}^{(0)}(q)=0~~\forall\,s>s', \qquad 
\cC_{s,s'}^{(r)}(0)=\de_{r,0}\de_{s,s'}\,.\EE
By the first two statements above, the relations
\BE{tiCrec_e}
\sum_{\begin{subarray}{c}r_1,r_2\ge0\\ r_1+r_2=r\end{subarray}}\sum_{t=0}^{s-r_1}
\ctC_{s,t}^{(r_1)}(q)\cC^{(r_2)}_{t,s'-r_1}(q)=\de_{r,0}\de_{s,s'}
\quad\forall~r,s'\!\in\!\Z^{\ge0},\,r\!\le\!s'\!\le\!s,\EE
inductively define $\ctC_{s,s'-r}^{(r)}\in\Q[\al_1,\ldots,\al_n][[q]]$
with $r\!\le\!s'\!\le\!s$ in terms of the power series $\ctC^{(r_1)}_{s,t}$
with $r_1\!<\!r$ or $r_1\!=\!r$ and $t\!<\!s'\!-\!r$.
By~\e_ref{cC_e} and~\e_ref{tiCrec_e}, 
$$\ctC^{(0)}_{s,s'}=\de_{s,s'}\,, \qquad
\ctC^{(r)}_{s,s'}(0)=\de_{r,0}\de_{s,s'}\,,$$
and the coefficient of $q^d$ in $\ctC^{(r)}_{s,s'}$ is a degree~$r\!-\nu_n(\a)d$ 
homogeneous symmetric polynomial in~$\al$.
If $s'\!<\!0$, we set $\ctC^{(r)}_{s,s'}=\de_{r,0}\de_{s,s'}$.
If $\nu_n(\a)\!>\!0$,
$$\cC^{(\nu_n(\a)d)}_{s,s'}\big|_{\al=0}=\nc^{(d)}_{s,s'-\nu_n(\a)d}q^d
\qquad\forall\,s'\!\ge\!\nu_n(\a)d$$
by \e_ref{Crec_e}, \e_ref{cYvsF_e},  and \e_ref{littlec_e}. 
Thus, setting $\al\!=\!0$ in~\e_ref{tiCrec_e} and comparing with~\e_ref{littletic_e}
with~$s'$ replaced by $s'\!-\!\nu_n(\a)d$, we obtain the second identity in~\e_ref{cCred_e}.\\

\noindent
We next completely describe the power series $\ctC_{s_1,s_2,s_3}^{(r)}$ of Theorem~\ref{equiv_thm};
it will be shown in Section~\ref{3ptpf_sec}  that they indeed satisfy~\e_ref{Z2cJpt_e}.
For each $r\!\in\!\Z^{\ge0}$, let $p_r,\cH^{(r)}\!\in\!\Q[z_1,z_2,\ldots]$ be such that 
\BE{Hrdfn_e}p_r(\al_1,\al_2,\ldots)=\al_1^r+\al_2^r+\ldots
=\cH^{(r)}(\bfs_1,\bfs_2,\ldots).\EE
For $r,\nu\!\in\!\Z^{\ge0}$, we define $\cH^{(r)}_{\nu}\!\in\!\Q[\bfs_1,\bfs_2,\ldots][[z]]$ by
\BE{Hrnudfn_e}\cH^{(r)}_{\nu}(z)=
\begin{cases}
(1\!-\!z)^{-1},&\hbox{if}~\nu\!=\!0,~r\!=\!0;\\
\frac{1}{r}\frac{\nd}{\nd z}
\cH^{(r)}\big((1\!-\!z)^{-1}\bfs_1,(1\!-\!z)^{-1}\bfs_2,\ldots\big),
&\hbox{if}~\nu\!=\!0,~r\!\ge\!1;\\
\frac{1}{r+\nu}\frac{\nd}{\nd z}
\cH^{(r+\nu)}(\bfs_1,\ldots,\bfs_{\nu-1},\bfs_{\nu}\!-\!(-1)^{\nu}z,\bfs_{\nu+1},\ldots),
&\hbox{if}~\nu\!>\!0.
\end{cases}\EE
In particular, the coefficient of $z^d$ in $\cH^{(r)}_{\nu}(z)$ is a degree~$r\!-\!\nu d$ 
homogeneous symmetric polynomial in~$\al$, 
\BE{cHrestr_e}
\cH^{(r)}_{\nu}(0)=\eta_r,\qquad
\bigcoeff{\cH^{(\nu d)}_{\nu}(z)\big|_{\al=0}}_d=1.\EE
The second identity above follows from \cite[Lemma~B.3]{g0ci}.
Using induction via Newton's identity \cite[p577]{Artin}, 
the first identity in~\e_ref{cHrestr_e} can be reduced~to
$$\sum_{t=0}^r(-1)^t\eta_{r-t}\bfs_t=0, \quad 
\sum_{t=0}^r(-1)^t(r\!-\!t)\eta_{r-t}\bfs_t=p_r
\quad\forall~r\!\in\!\Z^+;$$
these two identities are equivalent~to
$$\frac{(1\!-\!\al_1 u)(1\!-\!\al_2 u)\ldots}{(1\!-\!\al_1 u)(1\!-\!\al_2 u)\ldots}=1,
\qquad
\frac{\nd}{\nd z}
\frac{(1\!-\!\al_1 u)(1\!-\!\al_2 u)\ldots}{(1\!-\!\al_1 uz)(1\!-\!\al_2 uz)\ldots}
\bigg|_{z=0}=\frac{\al_1 u}{1\!-\!\al_1 u}+\frac{\al_2 u}{1\!-\!\al_2 u}+\ldots$$
Let
\BE{ctC3dfn_e}\ctC^{(r)}_{s_1,s_2,s_3}(q)=
\sum_{\begin{subarray}{c}r_0,r_1,r_2,r_3\ge0\\
r_1\le\hat{s}_1,r_2\le\hat{s}_2,r_3\le\hat{s}_3\\ 
r_0+r_1+r_2+r_3=r\end{subarray}}
\frac{\cH_{\nu_n(\a)}^{(r_0)}(\a^{\a}q)}
{\dot\bI_{s_1+r_1}^c(q)\ddot\bI_{s_2+r_2}^c(q)\ddot\bI_{s_3+r_3}^c(q)}
\ddot\cC^{(r_1)}_{\hat{s}_1}(q)\dot\cC^{(r_2)}_{\hat{s}_2}(q)
\dot\cC^{(r_3)}_{\hat{s}_3}(q)\,,\EE
where
\BE{chcCdfn_e}\ch\cC^{(r)}_s(q)=
\sum_{\begin{subarray}{c}r',r''\ge0\\ r'+r''=r\end{subarray}}(-1)^{r'}\bfs_{r'}
\ch\cC^{(r'')}_{s-r'-\ell^*(\a),s-r-\ell^*(\a)}(q)\,\EE
with $(\ch\cC,\ell^*)=(\dot\cC,\ell^-),(\ddot\cC,\ell^+)$.
Since the coefficients of $q^d$ in $\cH_{\nu}^{(r)}$ and in $\ctC^{(r)}_{s,s'}$ 
are degree~$r\!-\nu_n(\a)d$ homogeneous symmetric polynomials in~$\al$,
the coefficient of $q^d$ in $\ctC^{(r)}_{s_1,s_2,s_3}$ is also 
a degree~$r\!-\nu_n(\a)d$ homogeneous symmetric polynomial in~$\al$.
The last identity in~\e_ref{cCred_e} follows from \e_ref{ctC3dfn_e},
the second identity in
\e_ref{cHrestr_e}, the middle identity in~\e_ref{cCred_e}, 
and~\e_ref{ntc3dfn_e}.

\subsection{Related mirror formulas}
\label{otherform_subs}

\noindent
Similarly to \e_ref{Z2dfn_e2}, we define
\BE{cZ2dfn_e2}\begin{split}
\cZ_{n;\a}^*(\x_1,\x_2,\hb_1,\hb_2,q) &\equiv
\sum_{d=1}^{\i}q^d\big\{\ev_1\!\times\!\ev_2\}_*\!\!\left[\frac{\E(\V_{n;\a}^{(d)})}
{(\hb_1\!-\!\psi_1)(\hb_2\!-\!\psi_2)}\right],\\
\cZ_{n;\a}^*(\x_1,\x_2,\x_3,\hb_1,\hb_2,\hb_3,q) &\equiv
\sum_{d=1}^{\i}q^d\big\{\ev_1\!\times\!\ev_2\!\times\!\ev_3\}_*\!\!
\left[\frac{e(\V_{n;\a}^{(d)})}{(\hb_1\!-\!\psi_1)(\hb_2\!-\!\psi_2)(\hb_3\!-\!\psi_3)}\right],
\end{split}\EE
with the evaluation maps as in~\e_ref{evaltot_e}.
For each $s\!\in\!\Z^{\ge0}$, let
\begin{equation*}\begin{split}
\cZ_{n;\a}^{(s)*}(\x,\hb,q)&\equiv
\sum_{d=1}^{\i}q^d\ev_{1*}\!\!\left[\frac{\E(\V_{n;\a}^{(d)})\ev_2^*\x^s}
{\hb\!-\!\psi_1}\right]\in H_{\T}^*(\Pn)\big[\big[\hb^{-1},q\big]\big],\\
\ddot\cZ_{n;\a}^{(s)*}(\x,\hb,q)&\equiv
\sum_{d=1}^{\i}q^d\ev_{1*}\!\!\left[\frac{\E(\ddot\V_{n;\a}^{(d)})\ev_2^*\x^s}
{\hb\!-\!\psi_1}\right]\in H_{\T}^*(\Pn)\big[\big[\hb^{-1},q\big]\big].
\end{split}\end{equation*}
Since $\x_1,\x_2\!\in\!H_{\T}^*(\Pn\!\times\!\Pn)\!\otimes_{\Q[\al_1,\ldots,\al_n]}\Q_{\al}$
are invertible,
the first equation in~\e_ref{PDequiv_e} gives
\begin{equation*}\begin{split}
\lr\a\!\!\!\!\!\!\!\!\!
\sum_{\begin{subarray}{c}s_1,s_2,r\ge0\\ s_1+s_2+r=n-1 \end{subarray}}
\!\!\!\!\!\!\!\!\!\!\!(-1)^r \bfs_r\x_1^{s_1}\ddot\cZ_{n;\a}^{(s_2)*}(\x_2,\hb,q)
&=\sum_{d=1}^{\i}q^d
\big\{\id\!\times\!\ev_1\big\}_*\!\!\left[
\frac{\pi_2^*\E(\V_{n;\a}^{(d)})\,\{\id\!\times\!\ev_2\}^*(\bPD(\De_{\Pn})\x_2^{-\ell(\a)})}
{\hb\!-\!\psi_1}\right]\\
&=\sum_{d=1}^{\i}q^d
\big\{\id\!\times\!\ev_1\big\}_*\!\!\left[
\frac{\pi_2^*\E(\V_{n;\a}^{(d)})\,\{\id\!\times\!\ev_2\}^*(\bPD(\De_{\Pn})\x_1^{-\ell(\a)})}
{\hb\!-\!\psi_1}\right]\\
&=\x_1^{-\ell(\a)}\!\!\!\!\!\!\!\!\!
\sum_{\begin{subarray}{c}s_1,s_2,r\ge0\\ s_1+s_2+r=n-1 \end{subarray}}
\!\!\!\!\!\!\!\!\!\!\!(-1)^r \bfs_r\x_1^{s_1}\cZ_{n;\a}^{(s_2)*}(\x_2,\hb,q),
\end{split}\end{equation*}
where $\pi_2\!:\Pn\!\times\!\ov{Q}_{0,2}(\Pn,d)\!\lra\!\ov{Q}_{0,2}(\Pn,d)$ is 
the projection map.
Combining the last identity with~\e_ref{main_e2}, we obtain
\BE{cZ2pt_e2}\begin{split}
&\cZ_{n;\a}^*(\x_1,\x_2,\hb_1,\hb_2,q)\\
&\qquad =\frac{1}{\hb_1+\hb_2}
\sum_{\begin{subarray}{c}s_1,s_2,r\ge0\\ s_1+s_2+r=n-1\end{subarray}}
\!\!\!\!\!\!\!\!\!(-1)^r\bfs_r\big(\x_1^{s_1}\cZ_{n;\a}^{(s_2)*}(\x_2,\hb_2,q)+
\cZ_{n;\a}^{(s_1)*}(\x_1,\hb_1,q)\ddot\cZ_{n;\a}^{(s_2)}(\x_2,\hb_2,q)\big).
\end{split}\EE
Similar reasoning gives
\BE{cZ3pt_e2}\begin{split}
&\cZ_{n;\a}^*(\x_1,\x_2,\x_3,\hb_1,\hb_2,\hb_3,q) 
= \frac{1}{\hb_1\hb_2\hb_3}  \hspace{-.2in}
\sum_{\begin{subarray}{c}r,s_1,s_2,s_3\ge0\\ s_1,s_2,s_3\le n-1 \\
s_1+s_2+s_3+r=2n-2\\ \end{subarray}}\hspace{-.2in}
\Bigg(\ctC^{(r)}_{s_1,s_2,s_3}(0)\x_1^{s_1}\x_2^{s_2}
\cZ_{n;\a}^{(s_3)*}(\x_3,\hb_3,q)\\
&\hspace{2in}
+\ctC^{(r)}_{s_1,s_2,s_3}(0)\x_1^{s_1}\cZ_{n;\a}^{(s_2)*}(\x_2,\hb_2,q)
\ddot\cZ_{n;\a}^{(s_3)}(\x_3,\hb_3,q)\\
&\hspace{2in}+\ctC^{(r)}_{s_1,s_2,s_3}(0)
\cZ_{n;\a}^{(s_1)*}(\x_1,\hb_1,q)
\prod_{t=2}^3\ddot\cZ_{n;\a}^{(s_t)}(\x_t,\hb_t,q)\\
&\hspace{2in} 
+\lr\a\x_1^{\ell^-(\a)}\ctC^{(r)*}_{s_1,s_2,s_3}(q)
\dot\cZ_{n;\a}^{(s_1)}(\x_1,\hb_1,q)
\prod_{t=2}^3\ddot\cZ_{n;\a}^{(s_t)}(\x_t,\hb_t,q)\Bigg),
\end{split}
\EE
where $\wt\cC_{s_1,s_2,s_3}^{(r)*}(q)\!=\!\wt\cC_{s_1,s_2,s_3}^{(r)}(q)\!-\! 
\wt\cC_{s_1,s_2,s_3}^{(r)}(0)$.\\

\noindent
On the other hand,  by \e_ref{cZs_e} and the first identity in~\e_ref{equivGivental_e},
\BE{cZs_e2}\begin{split}
\cZ_{n;\a}^{(s)*}(\x,\hb,q)&=-\lr\a\x^{\ell(\a)+s}\\
&\qquad +\lr\a\x^{\ell(\a)}
\sum_{r=0}^s\sum_{s'=0}^{s-r}
\ctC_{s-\ell^-(\a),s'-\ell^-(\a)}^{(r)}(q)\,\hb^{s-r-s'}\D^{s'}\dot\cY_{n;\a}(\x,\hb,q),
\end{split}\EE
where
$$\D^0\ch\cY_{n;\a}(\x,\hb,q)=\frac{\ch\cY_{n;\a}(\x,\hb,q)}{\ch{I}_0(q)}, \quad
\D^s\ch\cY_{n;\a}(\x,\hb,q)=
\frac{1}{\ch{I}_s(q)}
\left\{\x+\hb\, q\frac{\nd}{\nd q}\right\}\D^{s-1}\ch\cY_{n;\a}(\x,\hb,q)$$
for all $s\!\in\!\Z^+$ and 
$(\ch\cY,\ch{I})\!=\!(\dot\cY,\dot{I}),(\ddot\cY,\ddot{I})$.
By \e_ref{cYdfn_e}, \e_ref{Fdfn_e}, and~\e_ref{Fdfn_e2}, 
\begin{gather}\label{cYvsFdots_e}
\D^s\ch\cY_{n;\a}(\x,\hb,q)\big|_{\al=0}=
\x^s\D^s\ch{F}_{n;\a}(\x/\hb,q/\x^{\nu_n(\a)}),\qquad\hbox{where}\\
\D^0\ch{F}_{n;\a}(w,q)=\frac{\ch{F}_{n;\a}(w,q)}{\ch{I}_0(q)}\,,\quad
\D^s\ch{F}_{n;\a}(w,q)=\frac{1}{\ch{I}_s(q)}\left\{1+\frac{q}{w}\frac{\nd}{\nd q}\right\}
\D^{s-1}\ch{F}_{n;\a}(w,q) \quad\forall\,s\!\in\!\Z^+,\notag
\end{gather}
with $(\ch\cY,\ch{F},\ch{I})\!=\!(\dot\cY,\dot{F},\dot{I}),(\ddot\cY,\ddot{F},\ddot{I})$.
Simplifying the right-hand side of \e_ref{cZs_e2} in 
$\Q_{\al}(\x,\hb)[[\hb^{-1},q]]$ to eliminate division by~$\x$  
and setting $\al\!=\!0$, we obtain~\e_ref{ZvsY_e2}.

\subsection{Other three-point generating functions}
\label{OtherCon_subs}

\noindent
The main step in the proof of the mirror formula~\e_ref{Z2cJpt_e}
for the stable quotients analogue of the triple Givental's $J$-function
involves determining a mirror formula for the generating function
\BE{cZ3dfn_e0}
\dot\cZ_{n;\a;3}^{(\0,\1)}(\x,\hb,q)\equiv
1+\sum_{d=1}^{\i}q^d\ev_{1*}\!\!\left[\frac{\E(\dot\V_{n;\a}^{(d)})}{\hb\!-\!\psi_1}\right]
\in H_{\T}^*(\Pn)\big[\big[\hb^{-1},q\big]\big],\EE
where $\ev_1\!:\ov{Q}_{0,3}(\Pn,d)\!\lra\!\Pn$ is the evaluation map at the first marked point.
By~\e_ref{cZ3vsY_e}, the SQ-invariants do {\it not} satisfy the string relation
\cite[Section~26.3]{MirSym} in the pure Calabi-Yau cases, $\nu_n(\a)\!=\!0$ and $\ell^-(\a)\!=\!0$ 
(when $\dot{I}_0(q)\!\neq\!1$), even though the relevant forgetful morphism, $f_{2,3}$ below, is
defined.
Since in these cases the twisted invariants of~$\Pn$ are intrinsic invariants 
of the corresponding complete intersection~$X_{n;\a}$,
this implies that the construction of virtual fundamental class in~\cite{CKM}
does not respect the forgetful morphism 
$$f_{2,3}\!:\ov{Q}_{0,3}(X_{n;\a},d)\lra\ov{Q}_{0,2}(X_{n;\a},d),$$
at least in the Calabi-Yau cases.

\begin{prp}\label{Z3equiv_prp}
If $l\!\in\!\Z^{\ge0}$, $n\!\in\!\Z^+$, and $\a\!\in\!(\Z^*)^l$ are such that
$\nu_n(\a)\!\ge\!0$, then 
\BE{cZ3vsY_e} \dot\cZ_{n;\a;3}^{(\0,\1)}(\x,\hb,q)=\hb^{-1} 
\frac{\dot\cZ_{n;\a}(\x,\hb,q)}{\dot{I}_0(q)}.\EE\\
\end{prp}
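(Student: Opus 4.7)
I would establish \e_ref{cZ3vsY_e} in two stages: first for Fano tuples ($\nu_n(\a)\!>\!0$) by dimensional vanishing, then for Calabi-Yau tuples ($\nu_n(\a)\!=\!0$) by the ``varying $n$'' trick from~\cite{GWvsSQ} that reduces the Calabi-Yau case to an equivalent closed formula for twisted three-point Hurwitz numbers (Proposition~\ref{equiv0_prp}) whose validity turns out to be $n$-independent.

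\textbf{Fano case via reconstruction.} Following the uniqueness/reconstruction scheme of Sections~\ref{pfs_sec}-\ref{poliC_subs}, both sides of \e_ref{cZ3vsY_e} are governed by a finite collection of three-point primary SQ-invariants of $\Pn$ with insertions in powers of~$\x$. When $\nu_n(\a)\!>\!0$, the virtual dimension count for $\ov Q_{0,3}(\Pn,d)$ forces almost all of these primary invariants to vanish, and the residual ones are pinned down explicitly. Direct comparison with $\hb^{-1}\dot\cZ_{n;\a}/\dot I_0(q)$, whose reconstruction is controlled by the two-point theory via \e_ref{equivGivental_e}, then forces the identity. This is essentially the content of Corollary~\ref{MirSym_crl}.

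\textbf{Reduction to Hurwitz numbers in the Calabi-Yau case.} When $\nu_n(\a)\!=\!0$ the dimensional vanishings fail, so I would apply Atiyah-Bott localization to both sides of \e_ref{cZ3vsY_e} on $\ov Q_{0,3}(\Pn,d)$ and $\ov Q_{0,2}(\Pn,d)$, matching contributions vertex-by-vertex. The combinatorial bookkeeping (analogous to the two-point reduction carried out for \e_ref{main_e2}) should collapse the identity, whenever $|\a|\!\le\!n$, to the closed formula of Proposition~\ref{equiv0_prp} for a sum of twisted three-point Hurwitz numbers attached to a single $\T$-fixed point of $\Pn$. The crucial feature is that this Hurwitz formula involves only the tuple $\a$ and the local equivariant weights at the chosen fixed point, so its statement makes sense independently of the ambient dimension~$n$.

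\textbf{Main obstacle.} The hard part will be Section~\ref{equiv0pf_sec}: showing that the validity of Proposition~\ref{equiv0_prp} is genuinely independent of~$n$ in the range $|\a|\!\le\!n$. Granting this, one combines it with the Fano case, which already establishes Proposition~\ref{equiv0_prp} whenever $|\a|\!<\!n$; given any Calabi-Yau tuple with $|\a|\!=\!n$, one enlarges~$n$ to some $n'\!>\!|\a|$ (where $\a$ becomes Fano in $\P^{n'-1}$), applies the Fano Hurwitz formula there, and then uses $n$-independence to descend back to the original~$n$. This yields \e_ref{cZ3vsY_e} in the Calabi-Yau case and completes the proof. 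Establishing the $n$-independence itself will require unpacking the hypergeometric series on the right-hand side of \e_ref{cZ3vsY_e} via \e_ref{cYdfn_e} and recognizing that each coefficient of $q^d\hb^{-k}$ can be interpreted as a residue-type sum over $\a$-data and weights that is manifestly unchanged when~$n$ is shifted provided $|\a|\!\le\!n$.
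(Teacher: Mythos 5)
Your proposal follows essentially the same route as the paper. You correctly identify the three key ingredients: the Fano case via dimensional vanishing and the uniqueness Proposition~\ref{uniqueness_prp} (which is exactly Corollary~\ref{MirSym_crl}), the equivalence with the twisted three-point Hurwitz formula of Proposition~\ref{equiv0_prp}, and the $n$-independence of the resulting relations that lets you transport validity from the Fano range to the Calabi--Yau boundary.

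One small imprecision worth noting: the equivalence between \e_ref{cZ3vsY_e} and Proposition~\ref{equiv0_prp} (the paper's Proposition~\ref{cZstring_prp}) is established not by directly matching localization contributions ``vertex-by-vertex'' on the two sides, but by extracting the \emph{secondary} coefficients (the non-pole part at $\hb\!=\!0$) in the $C$-recursion for $\dot\cZ_{n;\a;3}^{(\0,\1)}$ and for $\hb^{-1}\dot\cZ_{n;\a}/\dot{I}_0$, and observing that both satisfy the same degree-recursive relation if and only if the Hurwitz formula holds; the localization only enters through Lemma~\ref{recgen_lmm}'s construction of the recursion and the explicit evaluation of vertex contributions at the special trivalent vertex. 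Likewise, the $n$-independence is realized concretely by rewriting the coefficients of $q^d$ as polynomials $\cH_{\a;d}$, $\dot\Phi_{\a;d}^{(0)}$, etc.\ in $\al_i^{-1}$ and the elementary symmetric functions $\fs_1,\ldots,\fs_d$ of $\{(\al_i\!-\!\al_k)^{-1}\!:k\!\neq\!i\}$; the identity to be proved then lives in $\Q[y,\fs_1,\ldots,\fs_d]$ and manifestly does not reference~$n$. Your sketch captures this mechanism correctly even if it elides the polynomial-ring bookkeeping.
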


\noindent
In principle, this proposition is contained in \cite[Corollary~1.4.1]{CK}.
We give a direct proof, along the lines of~\cite{GWvsSQ}.
In the process of proving this proposition, we establish the mirror formula
for equivariant Hurwitz numbers in Proposition~\ref{equiv0_prp}.
This in turn allows us to derive~\e_ref{Z2cJpt_e}  from~\e_ref{main_e2}
and~\e_ref{cZs_e} following the approach of~\cite{g0ci}; see Section~\ref{3ptpf_sec}.\\

\noindent
Similarly to~\e_ref{cZ3dfn_e0}, let
\BE{cZ3dfn_e}\begin{split}
\dot\cZ_{n;\a;2}^{(\0,\1)}(\x,\hb,q)&\equiv
1+\sum_{d=1}^{\i}q^d\ev_{1*}\!\!\left[\frac{f_{2,3}^*\E(\dot\V_{n;\a}^{(d)})}{\hb\!-\!\psi_1}\right]
\in H_{\T}^*(\Pn)\big[\big[\hb^{-1},q\big]\big],
\end{split}\EE
where $\ev_1\!:\ov{Q}_{0,3}(\Pn,d)\!\lra\!\Pn$ is the evaluation map at the first marked point
and 
$$f_{2,3}\!:\ov{Q}_{0,3}(\Pn,d)\lra\ov{Q}_{0,2}(\Pn,d)$$ 
is the forgetful morphism.
By the proof of the string relation \cite[Section~26.3]{MirSym},
\BE{cZstr_e}\dot\cZ_{n;\a;2}^{(\0,\1)}(\x,\hb,q)=\hb^{-1}\dot\cZ_{n;\a}(\x,\hb,q).\EE
We use this identity to establish the mirror formula for Hurwitz numbers in
Proposition~\ref{equiv0_prp2}.\\

\noindent
As stated in Section~\ref{intro_sec},
Theorem~\ref{equiv_thm} generalizes to products of projective spaces and
concavex sheaves~\e_ref{gensheaf_e}.
The relevant torus action is then the product of the actions on the components
described above.
If its weights are denoted by $\al_{i;j}$, with $i\!=\!1,\ldots,p$ and $j\!=\!1,\ldots,n_i$,
the analogues of the above mirror formulas relate power series
\begin{alignat}{1}
\label{gencY_e}
\ch\cY_{n_1,\ldots,n_p;\a}
&\in \Q[\al_{1;1},\ldots,\al_{p;n_p},\x_1,\ldots,\x_p]\big[\big[\hb^{-1},q_1,\ldots,q_p\big]\big],\\
\label{gencZ_e1}
\ch\cZ_{n_1,\ldots,n_p;\a}^{(s_1,\ldots,s_p)}
&\in H_{\T}^*\big(\P^{n_1-1}\!\times\!\ldots\!\times\!\P^{n_p-1}\big)
\big[\big[\hb^{-1},q_1,\ldots,q_p\big]\big], \\
\label{gencZ_e2}
\ch\cZ_{n_1,\ldots,n_p;\a}^*
&\in H_{\T}^*\big((\P^{n_1-1}\!\times\!\ldots\!\times\!\P^{n_p-1})^m\big)
\big[\big[\hb_1^{-1},\ldots,\hb_M^{-1},q_1,\ldots,q_p\big]\big],
\end{alignat}
with $\ch\cY$ and $\ch\cZ$ denoting $\cY$, $\dot\cY$, $\ddot\cY$,
$\cZ$, $\dot\cZ$, or $\ddot\cZ$ and $m\!=\!2,3$.
The coefficients of $q_1^{d_1}\!\ldots\!q_p^{d_p}$ in~\e_ref{gencZ_e1} and~\e_ref{gencZ_e2}
are defined by the same pushforwards as in~\e_ref{cZsdfn_e}, 
\e_ref{cZ2dfn_e}, \e_ref{cZ3Jdfn_e}, and~\e_ref{cZ2dfn_e2} with the degree~$d$
of the stable quotients replaced by $(d_1,\ldots,d_p)$ and $\x^s$ by $\x_1^{s_1}\!\ldots\!\x_p^{s_p}$.
The coefficients of $q_1^{d_1}\!\ldots\!q_p^{d_p}$ in~\e_ref{gencY_e}
are obtained from the coefficients in~\e_ref{cYdfn_e}  and~\e_ref{hatdfn_e}
by replacing $a_kd$ and $a_k\x$
by $a_{k;1}d_1\!+\!\ldots\!+\!a_{k;p}d_p$ and $a_{k;1}\x_1\!+\!\ldots\!+\!a_{k;p}\x_p$
in the numerator and taking the product of the denominators with
$(n,\x,d)\!=\!(n_i,\x_i,d_i)$ for each $s\!=\!1,\ldots,p$;
in the $i$-th factor, $\al_k$ is also replaced by~$\al_{i;k}$;
$$\x_1,\ldots,\x_p\in H_{\T}^*(\P^{n_1-1}\!\times\!\ldots\!\times\!\P^{n_p-1})$$
now correspond to the pullbacks of the equivariant hyperplane classes by the projection maps.
The statements of Theorem~\ref{equiv_thm}, \e_ref{cZ2pt_e2}, and~\e_ref{cZ3pt_e2} 
extend by replacing the symmetric polynomials by products 
of symmetric polynomials in the $p$~different sets of variables and
$\lr\a\x^{\ell(\a)}$ by the products and ratios of the terms 
$a_{k;1}\x_1\!+\!\ldots\!+\!a_{k;p}\x_p$;
our proofs extend directly to this situation.

\section{Equivariant twisted Hurwitz numbers}
\label{Hirwitz_sec}

\noindent
The fixed loci of the $\T$-action on $\ov{Q}_{0,m}(\Pn,d)$ involve moduli spaces of weighted
curves and certain vector bundles, which we describe in this section.
As a corollary of the proof of Theorem~\ref{equiv_thm}, we obtain closed formulas
for euler classes of these vector bundles in some cases.
These formulas, described in Propositions~\ref{equiv0_prp} and~\ref{equiv0_prp2}
below, are a key ingredient in computing the genus~1 stable quotients invariants.\\

\noindent
A \sf{stable $d$-tuple of~flecks on a  quasi-stable $m$-marked curve} is a tuple
\BE{torsiontuple_e} (\cC,y_1,\ldots,y_m;\hat{y}_1,\ldots,\hat{y}_d),\EE
where $\cC$ is a connected (at worst) nodal curve, $y_1,\ldots,y_m\!\in\!\cC^*$
are distinct smooth points, and $\hat{y}_1,\ldots,\hat{y}_d\in\cC^*\!-\!\{y_1,\ldots,y_m\}$,
such that the $\Q$-line bundle
$$\om_{\cC}\big(y_1\!+\!\ldots\!+\!y_m+\ep(\hat{y}_1\!+\!\ldots\!+\!\hat{y}_d)\big)
\lra \cC$$
is ample for all $\ep\!\in\!\Q^+$;
this again implies that $2g\!+\!m\!\ge\!2$.
An \sf{isomorphism}
$$\phi\!:(\cC,y_1,\ldots,y_m;\hat{y}_1,\ldots,\hat{y}_d)
\lra (\cC',y_1',\ldots,y_m';\hat{y}_1',\ldots,\hat{y}_d')$$
between curves with $m$ marked points and $d$ flecks is an isomorphism $\phi\!:\cC\!\lra\!\cC'$
such that
$$\phi(y_i)=y_i'~~~\forall~i=1,\ldots,m, \qquad
\phi(\hat{y}_j)=\hat{y}_j'~~~\forall~j=1,\ldots,d.$$
The automorphism group of any stable curve with $m$ marked points and $d$ flecks is finite.
For $g,m,d\!\in\!\Z^{\ge0}$, the moduli space $\ov\cM_{g,m|d}$
parameterizing the stable $d$-tuples of flecks as in~\e_ref{torsiontuple_e}
with $h^1(\cC,\cO_{\cC})\!=\!g$ 
is a nonsingular irreducible proper Deligne-Mumford stack;
see~\cite[Proposition~2.3]{GWvsSQ}.
If $m\!\ge\!m'\!\ge\!2$, let
\begin{equation*}\begin{split}
f_{m',m}\!:\ov\cM_{0,m|d}&\lra\ov\cM_{0,m'|d+m-m'}, \\
(\cC,y_1,\ldots,y_m;\hat{y}_1,\ldots,\hat{y}_d)&\lra
(\cC',y_1,\ldots,y_{m'};\hat{y}_1,\ldots,\hat{y}_d,y_{m'+1},\ldots,y_{m}),
\end{split}\end{equation*}
be the morphism converting the last $m\!-\!m'$ marked points 
into the last $m\!-\!m'$ flecks 
and contracting components of~$\cC$ if necessary.\\

\noindent
Any tuple as in~\e_ref{torsiontuple_e} induces a quasi-stable quotient
$$\cO_{\cC}\big(-\hat{y}_1-\ldots-\hat{y}_d\big)\subset
\cO_{\cC} \equiv \C^1\!\otimes\!\cO_{\cC}\,.$$
For any ordered partition $d\!=\!d_1\!+\!\ldots\!+\!d_p$ with $d_1,\ldots\!,d_p\!\in\!\Z^{\ge0}$,
this correspondence gives rise to a morphism
$$\ov\cM_{g,m|d}\lra
\ov{Q}_{g,m}\big(\P^0\!\times\!\ldots\!\times\!\P^0,(d_1,\ldots,d_p)\big).$$
In turn, this morphism induces an isomorphism
\BE{curvquot_e}\phi\!:\ov\cM_{g,m|d}\big/\bS_{d_1}\!\times\!\ldots\!\times\!\bS_{d_p}
\stackrel{\sim}{\lra} 
\ov{Q}_{g,m}\big(\P^0\!\times\!\ldots\!\times\!\P^0,(d_1,\ldots,d_p)\big),\EE
with the symmetric group $\bS_{d_1}$ acting on $\ov\cM_{g,m|d}$ by permuting
the points $\hat{y}_1,\ldots,\hat{y}_{d_1}$,
$\bS_{d_2}$ acting on $\ov\cM_{g,m|d}$ by permuting
the points $\hat{y}_{d_1+1},\ldots,\hat{y}_{d_1+d_2}$, etc.\\

\noindent
There is again  a universal curve
$$\pi\!: \cU\lra \ov\cM_{g,m|d}$$
with sections $\si_1,\ldots,\si_m$ and $\hat\si_1,\ldots\hat\si_d$.
Let 
$$\psi_i=-\pi_*(\si_i^2),\hat\psi_i=-\pi_*(\hat\si_i^2)\in H^2\big(\ov\cM_{g,m|d}\big)$$
be the first chern classes of the universal cotangent line bundles.
For $m\!\ge\!2$, $d',d\!\in\!\Z^+$ with $d'\!\le\!d$, and 
$\bfr\!\equiv\!(r_1,\ldots,r_{d'})\!\in\!(\Z^{\ge0})^{d'}$, 
let
$$\cS_{\bfr}=\cO\big(-\hat\si_1-\ldots-\hat\si_{d-d'}
-r_1\hat\si_{d-d'+1}-\ldots-r_{d'}\hat\si_d\big)
\lra \cU\lra \ov\cM_{0,m|d}\,.$$
If $\be\!\in\!H_{\T}^2$, denote~by
\BE{cSbe_e} \cS_{\bfr}^*(\be)\lra \cU\lra \ov\cM_{0,m|d}\EE
the sheaf $\cS_{\bfr}^*$ with the $\T$-action so that
$$\E\big(\cS_{\bfr}^*(\be)\big)=\be\!\times\!1+1\!\times\!e(\cS_{\bfr}^*)
\in H_{\T}^*(\cU)=
H_{\T}^*\otimes H^*(\cU).$$
Similarly to \e_ref{Vprdfn_e}, let
\BE{V0prdfn_e}\begin{split}
\dot\V_{\a;\bfr}'^{(d)}(\be)&=\bigoplus_{a_k>0}R^0\pi_*\big(\cS_{\bfr}^*(\be)^{a_k}(-\si_1)\big)
 \oplus \bigoplus_{a_k<0}R^1\pi_*\big(\cS_{\bfr}^*(\be)^{a_k}(-\si_1)\big)
 \lra \ov\cM_{0,m|d},\\
\ddot\V_{\a;\bfr}'^{(d)}(\be)&=\bigoplus_{a_k>0}R^0\pi_*\big(\cS_{\bfr}^*(\be)^{a_k}(-\si_2)\big)
 \oplus \bigoplus_{a_k<0}R^1\pi_*\big(\cS_{\bfr}^*(\be)^{a_k}(-\si_2)\big)
 \lra \ov\cM_{0,m|d}\,,
\end{split}\EE
where $\pi\!:\cU\!\lra\!\ov\cM_{0,m|d}$ is the projection as before;
these sheaves are locally free.
If $m'\!\in\!\Z^+$, $2\!\le\!m'\!\le\!m$, and $\bfr\!\in\!(\Z^{\ge0})^{m-m'}$, let
\BE{W0dfn_e}\dot\V_{\a;\bfr}^{(d)}(\be)=f_{m',m}^*\dot\V_{\a;\bfr}'^{(d)}(\be),~
\ddot\V_{\a;\bfr}^{(d)}(\be)=f_{m',m}^*\ddot\V_{\a;\bfr}'^{(d)}(\be)
\lra  \ov\cM_{0,m|d}.\EE
In the case $m'\!=\!m$, we will denote the bundles 
$\dot\V_{\a;\bfr}^{(d)}(\be)$ and $\ddot\V_{\a;\bfr}^{(d)}(\be)$ by
$\dot\V_{\a}^{(d)}(\be)$ and $\ddot\V_{\a}^{(d)}(\be)$, respectively.\\

\noindent
The equivariant euler classes of the bundles $\dot\V_{\a;\bfr}^{(d)}(\be)$
and $\ddot\V_{\a;\bfr}^{(d)}(\be)$ enter into the localization computations
in Sections~\ref{recpf_sec}-\ref{equiv0pf_sec}.
As a corollary of these computations, we obtain closed formulas for the euler classes
of these bundles in the case $m\!=\!3$; see Propositions~\ref{equiv0_prp} 
and~\ref{equiv0_prp2} below.
These formulas are a key ingredient in computing 
the  genus~0 three-point and genus~1 SQ-invariants.\\

\noindent
If $f\!\in\!\Q_{\al}[[q]]$ and $d\!\in\!\Z^{\ge0}$,
let $\coeff{f}_{q;d}\!\in\!\Q_{\al}$ denote the coefficient of~$q^d$ in~$f$.
If $f\!=\!f(z)$ is a rational function in $z$ and possibly some other
variables, for any $z_0\!\in\!\P^1\!\supset\!\C$ let
\BE{Rsdfn_e}\Rs{z=z_0}f(z) \equiv \frac{1}{2\pi\I}\oint f(z)\nd z,\EE
where the integral is taken over a positively oriented loop around $z\!=\!z_0$
with no other singular points of $f\nd z$,
denote the residue of the 1-form~$f\nd z$.
If $z_1,\ldots,z_k\!\in\!\P^1$ is any collection of points, let
\BE{Rssumdfn_e}\Rs{z=z_1,\ldots,z_k}f(z)
\equiv\sum_{i=1}^{i=k}\Rs{z=z_i}f(z)\EE
be the sum of the corresponding residues.\\

\noindent
For any variable $\y$ and $r\!\in\!\Z^{\ge0}$, let $\bfs_r(y)$ denote
the $r$-th elementary symmetric polynomial in $\{\y\!-\!\al_k\}$.
We define power series $L_{n;\a},\xi_{n;\a}\in\Q_{\al}[\x][[q]]$ by
\begin{alignat}{2}
\label{Ldfn_e}
L_{n;\a}&\in \x+q\Q_{\al}[\x][[q]], &\qquad 
\bfs_n\big(L_{n;\a}(\x,q)\big)-q\a^{\a}L_{n;\a}(\x,q)^{|\a|}&=\bfs_n(\x), \\
\xi_{n;\a}&\in q\Q_{\al}[\x][[q]],&\qquad  
\x+q\frac{\nd}{\nd q}\xi_{n;\a}(\x,q)&=L_{n;\a}(\x,q).\notag
\end{alignat}
By \cite[Remark~4.5]{g0ci}, the coefficients of the power series 
$$\ne^{-\xi_{n;\a}(\al_i,q)/\hb}\dot\cY_{n;\a}(\al_i,\hb,q)\in\Q_{\al}[\hb][[q]]$$
are regular at $\hb\!=\!0$.
Thus, there is an expansion
\BE{cYexp_e}\ne^{-\xi_{n;\a}(\al_i,q)/\hb}\dot\cY_{n;\a}(\al_i,\hb,q)
=\sum_{r=0}^{\i}\dot\Phi_{n;\a}^{(r)}(\al_i,q)\hb^r\EE
with $\dot\Phi_{n;\a}^{(0)}(\x,q)\!-\!1,\dot\Phi_{n;\a}^{(1)}(\x,q),\dot\Phi_{n;\a}^{(2)}(\x,q),
\ldots\in q\Q_{\al}[\x][[q]]$.
Furthermore, 
\BE{Phi0dfn_e}\dot\Phi_{n;\a}^{(0)}(\x,q)=
\left(\frac{\x\cdot\bfs_{n-1}(\x)}
{L_{n;\a}(\x,q)\,\bfs_{n-1}(L_{n;\a}(\x,q))-|\a|q\a^{\a}L_{n;\a}(\x,q)^{|\a|}}\right)^{\frac12}
\bigg(\frac{L_{n;\a}(\x,q)}{\x}\bigg)^{\frac{\ell(\a)+1}{2}}\,.\footnotemark\EE
\footnotetext{Only the case $\ell^-(\a)\!=\!0$ is explicitly
considered in~\cite{g0ci}, but the argument is the same in all cases.}

\begin{prp}\label{equiv0_prp}
If $l\!\in\!\Z^{\ge0}$, $n\!\in\!\Z^+$, and $\a\!\in\!(\Z^*)^l$, then
\begin{equation*}\begin{split}
&\sum_{d=0}^{\i}\frac{q^d}{d!}
\int_{\ov\cM_{0,3|d}}\frac{\E(\dot\V_{\a}^{(d)}(\al_i))}
{\prod\limits_{k\neq i}\!\!\E(\dot\V_1^{(d)}(\al_i\!-\!\al_k))\,
(\hb_1\!-\!\psi_1)(\hb_2\!-\!\psi_2)(\hb_3\!-\!\psi_3)}\\
&\hspace{2in}
=\frac{\ne^{\frac{\xi_{n;\a}(\al_i,q)}{\hb_1}+\frac{\xi_{n;\a}(\al_i,q)}{\hb_2}+
\frac{\xi_{n;\a}(\al_i,q)}{\hb_3}}}{\hb_1\hb_2\hb_3\,\dot\Phi_{n;\a}^{(0)}(\al_i,q)}
\in \Q_{\al}\big[\big[\hb_1^{-1},\hb_2^{-1},\hb_3^{-1},q\big]\big]
\end{split}\end{equation*}
for every $i\!=\!1,\ldots,n$.
\end{prp}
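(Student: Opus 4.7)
The plan is to deduce Proposition~\ref{equiv0_prp} for arbitrary~$\a$ from its Fano range $|\a|\!<\!n$ by means of a stabilization argument showing that the validity of the stated identity is essentially independent of~$n$.

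For the Fano case I would exploit the localization equivalence between Propositions~\ref{Z3equiv_prp} and~\ref{equiv0_prp}. Via the isomorphism~\e_ref{curvquot_e} with $p\!=\!1$, the stack $\ov\cM_{0,3|d}/\bS_d$ is identified with the component $Z_i\!\subset\!\ov Q_{0,3}(\Pn,d)^{\T}$ of stable quotients supported set-theoretically over the fixed point~$P_i$. Along~$Z_i$ the universal subsheaf is the pullback of a line bundle on the universal curve that carries the weight-$\al_i$ character of $\C\!\cdot\!e_i\!\subset\!\C^n$, so $\dot\V_{n;\a}^{(d)}|_{Z_i}$ splits into a $\T$-fixed summand of equivariant euler class $\E(\dot\V_\a^{(d)}(\al_i))$ and a moving summand of equivariant euler class $\prod_{k\neq i}\E(\dot\V_1^{(d)}(\al_i\!-\!\al_k))$ --- the latter being precisely the equivariant euler class of the normal bundle to~$Z_i$. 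The factor~$1/d!$ accounts for the $\bS_d$-quotient. By the localization theorem of~\cite{ABo}, the LHS of Proposition~\ref{equiv0_prp} is then the contribution of~$Z_i$ to a three-point $\T$-equivariant generating function on $\ov Q_{0,3}(\Pn,d)$ carrying descendant insertions at all three marked points. In the Fano range $\nu_n(\a)\!>\!0$, Proposition~\ref{Z3equiv_prp} itself follows from the dimensional vanishing of the relevant three-point primary SQ-invariants noted in Corollary~\ref{MirSym_crl}; combining this with the first identity in~\e_ref{equivGivental_e} and the asymptotic expansion~\e_ref{cYexp_e} at $\x\!=\!\al_i$ pins down the same $Z_i$-contribution and produces the closed form predicted by the RHS of Proposition~\ref{equiv0_prp}. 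This establishes the proposition whenever $|\a|\!<\!n$.

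The main obstacle is extending the identity from $|\a|\!<\!n$ to arbitrary~$\a$. Both sides depend on~$n$ only through symmetric data of the weight set $\{\al_i\!-\!\al_k\}_{k\neq i}$: the LHS through the normal-bundle product in the denominator, the RHS through $\bfs_n$ in the defining equations~\e_ref{Ldfn_e} for $L_{n;\a}$ and $\xi_{n;\a}$ and in~\e_ref{Phi0dfn_e} for $\dot\Phi_{n;\a}^{(0)}$. Treating $\al_1,\al_2,\ldots$ as an infinite family of formal variables, I would show that, coefficient by coefficient in~$q$, both sides admit a reformulation as rational expressions in $\al_i$ and in finitely many Newton power sums of $\{\al_k\}_{k\neq i}$ whose validity is insensitive to adjoining further variables $\al_{n+1},\al_{n+2},\ldots$. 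Since the identity already holds for every $n\!>\!|\a|$, this formal-stabilization step --- parallel to the trick used in~\cite{GWvsSQ} to reduce the two-point mirror formula to its Fano case --- propagates it to all~$(n,\a)$.
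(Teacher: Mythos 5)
Your two-step strategy (establish the identity when $\nu_n(\a)\!>\!0$, then propagate by an $n$-stabilization argument) is the same as the paper's, and you correctly identify Corollary~\ref{MirSym_crl} and the $n$-independence trick from~\cite{GWvsSQ} as the two pillars. However, the Fano case is where your sketch has a genuine gap. You assert that combining Corollary~\ref{MirSym_crl}, \e_ref{equivGivental_e}, and~\e_ref{cYexp_e} ``pins down the same $Z_i$-contribution and produces the closed form.'' This step hides the central difficulty: $\dot\cZ_{n;\a;3}^{(\0,\1)}(\al_i,\hb,q)$ is a sum over \emph{all} $\T$-fixed loci, not just the single-vertex locus $Z_i\!\approx\!\ov\cM_{0,3|d}/\bS_d$. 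Knowing the total value of the generating function does not by itself isolate the $Z_i$-contribution; one must also account for graphs in which the vertex $v_{\min}$ carries one or two outgoing edges or positive $\d$-value. This is precisely the content of Proposition~\ref{cZstring_prp}, whose proof in Section~\ref{recpf_subs2} computes the secondary (non-polar) coefficients of the $\dot\fC$-recursion by analyzing the four types of graphs with $\val(v_{\min})\!\ge\!3$, applying the residue theorem to sum over the branches, and matching the result~\e_ref{wtcZna_e} against the closed form~\e_ref{wtcZ1_e}. The equivalence between the string relation for $\dot\cZ_{n;\a;3}^{(\0,\1)}$ and the Hurwitz-number identity~\e_ref{equiv0thm_e} falls out of this matching; it is not a formal consequence of identifying one fixed locus. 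Your proposal does not identify this as a step that needs to be done. You also omit the preliminary reduction~\e_ref{Fred_e}, which removes the three $\psi$-classes via $\F_{n;\a}^{(b_1,b_2,b_3)}=\frac{\xi_{n;\a}^{b_1+b_2+b_3}}{b_1!b_2!b_3!}\F_{n;\a}^{(0,0,0)}$ and reduces the Proposition to the single identity~\e_ref{equiv0thm_e}.

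Your stabilization step is also mis-stated in a way that would fail: you propose to express both sides in ``Newton power sums of $\{\al_k\}_{k\neq i}$,'' but those are not stable under adjoining new variables $\al_{n+1},\al_{n+2},\ldots$ (a new $\al_k$ changes $\sum_k\al_k$, $\sum_k\al_k^2$, etc.\ with no canonical way to discard the new terms). The paper instead expresses the $q^d$-coefficients as universal polynomials in $\al_i^{-1}$ and the elementary symmetric polynomials $\fs_1,\ldots,\fs_d$ of $\{(\al_i-\al_k)^{-1}\}_{k\neq i}$, modulo the normalization $A_{n;\a}^d(\al_i)$; see~\e_ref{cHnums_e} and~\e_ref{cYnums_e}. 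These stabilize correctly because sending the new $\al_k$'s to infinity sets the extra $(\al_i-\al_k)^{-1}$'s to zero, leaving the universal polynomials unchanged. With those two corrections the argument does align with the paper's.
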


\begin{prp}\label{equiv0_prp2}
If $l\!\in\!\Z^{\ge0}$, $n\!\in\!\Z^+$, and $\a\!\in\!(\Z^*)^l$, then
\begin{equation*}\begin{split}
&\sum_{b=0}^{\i}\sum_{r=0}^{\i}\sum_{d=0}^{\i}\frac{q^d}{d!}
\int_{\ov\cM_{0,3|d}}\frac{\E(\dot\V_{\a;r}^{(d)}(\al_i))\psi_3^b}
{\prod\limits_{k\neq i}\!\!\E(\dot\V_1^{(d)}(\al_i\!-\!\al_k))\,
(\hb_1\!-\!\psi_1)(\hb_2\!-\!\psi_2)}
\Rs{\hb=0}\frac{(-1)^b}{\hb^{b+1}}\LR{\dot\cY_{n;\eset}(\al_i,\hb,q)}_{q;r}q^r\\
&\hspace{3in}
=\frac{\ne^{\frac{\xi_{n;\a}(\al_i,q)}{\hb_1}+\frac{\xi_{n;\a}(\al_i,q)}{\hb_2}}}{\hb_1\hb_2}
\in \Q_{\al}\big[\big[\hb_1^{-1},\hb_2^{-1},q\big]\big]
\end{split}\end{equation*}
for every $i\!=\!1,\ldots,n$.
\end{prp}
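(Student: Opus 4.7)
The proof of Proposition~\ref{equiv0_prp2} runs in parallel to that of Proposition~\ref{equiv0_prp}, but with the three-point mirror formula \eqref{cZ3vsY_e} replaced by the string relation \eqref{cZstr_e}, as signaled in the paragraph immediately preceding the proposition's statement. The overall strategy is: apply equivariant Atiyah--Bott localization to an auxiliary two-point generating function, identify the left-hand side of the proposition as the contribution of a specific family of $\T$-fixed loci, and then match this to the closed form on the right-hand side using the string relation together with the mirror formula of Theorem~\ref{equiv_thm}.

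First, I would realize the left-hand side as the contribution of a specific family of $\T$-fixed loci to an equivariant localization on $\ov{Q}_{0,3}(\Pn,d)$. Applying localization to the two-point pullback pushforward behind \eqref{cZstr_e} and restricting the evaluation at the first marked point to~$P_i$, the resulting sum decomposes by decorated graphs. The family of interest consists of the ``star-plus-tail'' loci: a single ghost vertex at~$P_i$ carrying the first two marked points along with one outgoing edge of degree~$r$, with the third marked point sitting on a leg at the far end of this edge. By the standard analysis of universal subsheaves in the stable-quotients setting, cf.~\cite[Section~6]{MOP09} and~\cite[Section~2]{GWvsSQ}, the restriction of $\E(\dot\V_{n;\a}^{(d)})$ to such a locus factors as $\E(\dot\V_{\a;r}^{(d)}(\al_i))$ on the central vertex --- the twist $\cS_{(r)}^*$ in~\eqref{V0prdfn_e} encoding the outgoing edge of degree~$r$ --- divided by the equivariant normal-bundle contribution $\prod_{k\neq i}\E(\dot\V_1^{(d)}(\al_i-\al_k))$ in the $P_k$-directions, $k\neq i$. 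The two real marked points supply the $1/(\hb_1-\psi_1)(\hb_2-\psi_2)$ factor on $\ov\cM_{0,3|d}$, while the edge-plus-leg contribution collapses to $\sum_b\psi_3^b\Rs{\hb=0}\frac{(-1)^b}{\hb^{b+1}}\LR{\dot\cY_{n;\eset}(\al_i,\hb,q)}_{q;r}q^r$ via the classical node-smoothing identity; here $\dot\cY_{n;\eset}$ is the untwisted (i.e.~$\a=\eset$) equivariant Givental $J$-function for $\Pn$ restricted to~$P_i$.

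Second, I would match the localization expression with the right-hand side via the string relation \eqref{cZstr_e} together with the mirror formula~\eqref{equivGivental_e}: these identify the closed form of the relevant generating function as $\hb^{-1}\dot\cY_{n;\a}(\al_i,\hb,q)/\dot{I}_0(q)$ in a single-$\hb$ version, and the asymptotic expansion~\eqref{cYexp_e} extracts the exponential factor $\ne^{\xi_{n;\a}(\al_i,q)/\hb}$ times a regular series in~$\hb$. A Givental-type doubling, one variable $\hb_i$ per real marked point, following the pattern of \cite{bcov0,bcov0_ci,g0ci}, then produces the two-variable right-hand side $\ne^{\xi_{n;\a}(\al_i,q)/\hb_1+\xi_{n;\a}(\al_i,q)/\hb_2}/(\hb_1\hb_2)$ of Proposition~\ref{equiv0_prp2}.

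The main obstacle is the detailed Euler-class bookkeeping at the ghost vertex: verifying precisely how $\dot\V_{n;\a}^{(d)}$ on $\ov{Q}_{0,3}(\Pn,d)$ restricts to the ``star-plus-tail'' fixed locus and splits into the $\dot\V_{\a;r}^{(d)}(\al_i)$ piece at the central vertex and the leg's $\dot\cY_{n;\eset}$-piece, with all normal-bundle and gluing corrections correctly accounted for. The twist $\cS_{(r)}^*$ appearing in~\eqref{V0prdfn_e} is precisely designed to capture the behaviour of $\cS$ across the smoothing node of an outgoing degree-$r$ edge, but tracking this demands a careful local analysis. A secondary point of care is the packaging of the sum over $b$ via $\Rs{\hb=0}\frac{(-1)^b}{\hb^{b+1}}$, the standard node-smoothing trick, which in the stable-quotients setting must be verified to interact correctly with the twist~$\cS_{(r)}$ and with the presence of flecks.
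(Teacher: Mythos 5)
Your localization sketch captures the right starting point: the paper does apply Atiyah--Bott localization to the two-point generating function $\dot\cZ_{n;\a;2}^{(\0,\1)}(\al_i,\hb,q)$, the twisted bundles $\dot\V_{\a;r}^{(d)}(\al_i)$ do arise exactly as you describe from the way $f_{2,3}^*\dot\V_{n;\a}^{(d)}$ restricts to a fixed locus when the branch carrying the third marked point is contracted to a fleck of degree~$r$, and the branch contribution is indeed governed by the untwisted series $\dot\cY_{n;\eset}$. The gap is in your ``matching'' step: you invoke the mirror formula (\ref{equivGivental_e}) and the expansion (\ref{cYexp_e}), both of which presuppose $\nu_n(\a)\ge 0$, while the proposition is asserted for all $(n,\a)$ with no such hypothesis.

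The paper closes this in two stages that your proposal leaves out. First, the two-variable statement is reduced to the single identity (\ref{equiv0thm2_e}) by peeling off the $\psi_1^{b_1}\psi_2^{b_2}$ factors on $\ov\cM_{0,3|d}$ (the same $\psi$-class identity used to obtain (\ref{Fred_e}), applied here to two of the three markings as noted just before (\ref{equiv0thm2_e})); this, and not a ``Givental-type doubling,'' is where the factor $\ne^{\xi_{n;\a}(\al_i,q)/\hb_1+\xi_{n;\a}(\al_i,q)/\hb_2}/(\hb_1\hb_2)$ comes from. Second, and decisively, the normalized coefficients $\LR{\F_{n;\a;r}^{(0,0,b)}(\al_i,q)}_{q;d}\big/\bigl(A_{n;\a}(\al_i)^dA_{\a}(\al_i)^r\bigr)$ and the coefficients $\dot\cY_{\eset;d,d'}$ are shown to be universal polynomials in $\al_i^{-1}$ and the elementary symmetric polynomials of $\{(\al_i\!-\!\al_k)^{-1}\!:k\neq i\}$ that do not depend on~$n$. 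This $n$-independence is what allows the $\nu_n(\a)\ge 0$ case to be bootstrapped to arbitrary $(n,\a)$; that case itself is established not by a direct evaluation of the fixed-locus sum but by comparing the degree-recursive structures of $\hb^{-1}\dot\cZ_{n;\a}$ and $\dot\cZ_{n;\a;2}^{(\0,\1)}$ (Proposition~\ref{cZstring_prp2}, which rests on Proposition~\ref{uniqueness_prp} and Lemmas~\ref{recgen_lmm} and~\ref{polgen_lmm}) against the unconditionally-valid string relation~(\ref{cZstr_e}). Without the $n$-independence and the degree-recursive comparison, your sketch cannot reach the full generality of the proposition.
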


\section{Outline of the proof of Theorem~\ref{equiv_thm}}
\label{pfs_sec}

\noindent
The first identity in~\e_ref{equivGivental_e} is the subject of \cite[Theorem~3]{GWvsSQ}.
The proof of the remaining statements of Theorem~\ref{equiv_thm} follows 
the same principle as
the proof of \cite[Theorem~4]{bcov0_ci}; it is outlined below.
However, its adaptation to the present situation requires a number
of modifications.
In particular, the twisted stable quotients invariants are not known to satisfy 
the analogue of the string relation of Gromov-Witten theory
(in fact, by Proposition~\ref{Z3equiv_prp}, in general they do not).
This requires a direct proof of the key properties for 
the stable quotients analogue of double Givental's $J$-function
described in Lemmas~\ref{recgen_lmm} and~\ref{polgen_lmm} below;
in Gromov-Witten theory, these properties are deduced from the analogous
properties for three-point invariants, which simplifies the argument.
We thus describe the argument in detail.\\

\noindent
Let $\Q_{\al}\Lau{\hb}\equiv \Q_{\al}[[\hb^{-1}]]+\Q_{\al}[\hb]$
denote the $\Q_{\al}$-algebra of Laurent series in $\hb^{-1}$ (with finite principal part).
We will view the $\Q_{\al}$-algebra $\Q_{\al}(\hb)$ of rational functions in~$\hb$
with coefficients in~$\Q_{\al}$
as a subalgebra of $\Q_{\al}\Lau{\hb}$
via the embedding given by taking the Laurent series of rational functions at $\hb^{-1}\!=\!0$.
If
$$\F(\hb,q)=\sum_{d=0}^{\i}\sum_{r=-N_d}^{\i}\!\!\!\F^{(r)}(d)\hb^{-r}q^d
\in \Q_{\al}\Lau{\hb}\big[\big[q\big]\big]$$
for some $N_d\!\in\!\Z$ and $\F^{(r)}(d)\!\in\!\Q_{\al}$, we define
$$\F(\hb,q) \cong 
\sum_{d=0}^{\i}
\sum_{r=-N_d}^{p-1}\!\!\!\F^{(r)}(d)\hb^{-r}\quad(\mod \hb^{-p}),$$ 
i.e.~we drop $\hb^{-p}$ and higher powers of $\hb^{-1}$, 
instead of higher powers of~$\hb$.\\

\noindent
For $1\!\le\!i,j\!\leq\!n$ with $i\neq j$ and $d\in\Z^{+}$, let
\BE{Cdfn}\begin{split}
\dot\fC_i^j(d)&\equiv  \frac{\prod\limits_{a_k>0}    
  \prod\limits_{r=1}^{a_kd}\left(a_k\al_i+r\,\frac{\al_j-\al_i}{d}\right)\,
 \prod\limits_{a_k<0} 
  \prod\limits_{r=0}^{-a_kd-1}\left(a_k\al_i-r\,\frac{\al_j-\al_i}{d}\right)}
{d\underset{(r,k)\neq(d,j)}{\prod\limits_{r=1}^d\prod\limits_{k=1}^n}
            \left(\al_i-\al_k+r\,\frac{\al_j-\al_i}{d}\right)}\in\Q_{\al} \,,\\
\ddot\fC_i^j(d)&\equiv  \frac{\prod\limits_{a_k>0}    
  \prod\limits_{r=0}^{a_kd-1}\left(a_k\al_i+r\,\frac{\al_j-\al_i}{d}\right)\,
 \prod\limits_{a_k<0} 
  \prod\limits_{r=1}^{-a_kd}\left(a_k\al_i-r\,\frac{\al_j-\al_i}{d}\right)}
{d\underset{(r,k)\neq(d,j)}{\prod\limits_{r=1}^d\prod\limits_{k=1}^n}
            \left(\al_i-\al_k+r\,\frac{\al_j-\al_i}{d}\right)}\in\Q_{\al} \,. 
\end{split}\EE
We will follow the five steps in \cite[Section~1.3]{bcov0} to verify 
\e_ref{main_e2}, the second statement in~\e_ref{equivGivental_e}, and~\e_ref{cZs_e}:

\begin{enumerate}[label=(M\arabic*),leftmargin=*]

\item\label{transM_item} 
if $\F,\F'\in H_{\T}^*(\Pn)\Lau{\hb}\big[\big[q\big]\big]$, 
$$\F(\x\!=\!\al_i,\hb,q)\in \Q_{\al}(\hb)\big[\big[q\big]\big]\subset 
\Q_{\al}\Lau{\hb}\big[\big[q\big]\big] \qquad\forall\,i\!=\!1,2,\ldots,n,$$
$\F'$ is recursive in the sense of Definition~\ref{recur_dfn}, and
$\F$ and $\F'$ satisfy a mutual polynomiality condition (MPC) of Definition~\ref{MPC_dfn}, 
then the transforms of $\F'$ of Lemma~\ref{Phistr_lmm4} are also recursive
and satisfy the same MPC 
with respect to~$\F$;

\item\label{uniqM_item} 
if  $\F,\F'\in H_{\T}^*(\Pn)\Lau{\hb}\big[\big[q\big]\big]$,
$$\F(\x\!=\!\al_i,\hb,q)\in \Q_{\al}^*+q\cdot \Q_{\al}(\hb)\big[\big[q\big]\big]
\subset \Q_{\al}\Lau{\hb}\big[\big[q\big]\big]
 \qquad\forall\,i\!=\!1,2,\ldots,n,$$
$\F'$ is recursive in the sense of Definition~\ref{recur_dfn}, and
$\F$ and $\F'$ satisfy a fixed MPC, 
then $\F'$ is determined by its  ``mod $\hb^{-1}$ part'';

\item\label{recM_item} the two sides of the second identity in~\e_ref{equivGivental_e}
and the $\ddot\cZ$ case in~\e_ref{cZs_e} are $\ddot\fC$-recursive in the sense of Definition~\ref{recur_dfn} 
with $\ddot\fC$ as in~\e_ref{Cdfn}, 
while the two sides of the $\dot\cZ$ case in~\e_ref{cZs_e} are $\dot\fC$-recursive 
in the sense of Definition~\ref{recur_dfn} 
with $\dot\fC$ as in~\e_ref{Cdfn};

\item\label{polM_item}
the two sides of each of the equations in \e_ref{equivGivental_e} and \e_ref{cZs_e}
satisfy the same MPC (dependent on the equation) with respect to  
$\dot\cY_{n;\a}(\x,\hb,q)$;

\item\label{equalmodM_item} 
the two sides of each of the four equations in \e_ref{equivGivental_e} and \e_ref{cZs_e}, 
viewed as elements of $H_{\T}^*(\Pn)\Lau{\hb}\big[\big[q\big]\big]$,  agree mod~$\hb^{-1}$. 

\end{enumerate}

\noindent
The first two claims above, \ref{transM_item} and \ref{uniqM_item}, sum up
Lemma~\ref{Phistr_lmm4} and Proposition~\ref{uniqueness_prp}, respectively.
By Lemmas~\ref{recgen_lmm} and~\ref{polgen_lmm},
the stable quotients generating functions $\dot\cZ_{n;\a}^{(s)}$ 
and~$\ddot\cZ_{n;\a}^{(s)}$ 
are $\dot\fC$-recursive and $\ddot\fC$-recursive and satisfy MPCs with respect 
to $\dot\cZ_{n;\a}(\x,\hb,q)$.
Along with the first identity in~\e_ref{equivGivental_e}, 
the latter implies that they satisfy MPCs with respect to~$\dot\cY_{n;\a}$.
It is immediate from~\e_ref{cZsdfn_e} that 
\BE{Z2wmod_e}
\dot\cZ_{n;\a}^{(s)}(\x,\hb,q),
\ddot\cZ_{n;\a}^{(s)}(\x,\hb,q) \cong \x^s \quad(\mod\hb^{-1}\big)
\qquad\forall~s\in\Z^{\ge0}\,.\EE
By the proof of the first identity in \e_ref{equivGivental_e}, as well as of its
Gromov-Witten analogue, the power series~$\dot\cY_{n;\a}$ is $\dot\fC$-recursive
and satisfies the same MPC with respect to~$\dot\cY_{n;\a}$ as~$\dot\cZ_{n;\a}^{(s)}$;
see \cite[Lemma~5.4]{GWvsSQ}.
A nearly identical argument shows that the power series~$\ddot\cY_{n;\a}$ is $\ddot\fC$-recursive
and satisfies the same MPC with respect to~$\dot\cY_{n;\a}$ as~$\ddot\cZ_{n;\a}^{(s)}$;
see \cite[Section~4.3]{bcov0_ci} for the $\ell^-(\a)\!=\!0$ case.
Since 
$$\ddot\cY_{n;\a}(\x,\hb,q) \cong 1 \quad(\mod\hb^{-1}\big),$$
this establishes the second identity in~\e_ref{equivGivental_e}.
Along with~\e_ref{equivGivental_e}, 
the admissibility of transforms~\ref{deriv_ch} and~\ref{mult_ch} in Lemma~\ref{Phistr_lmm4} 
implies that both sides of the $\dot\cZ$ equation in~\e_ref{cZs_e} are $\dot\fC$-recursive 
and satisfy the same MPC with respect to~$\dot\cY_{n;\a}$, 
no matter what the coefficients $\ctC_{s,s'}^{(r)}$ are.
Similarly, both sides of the $\ddot\cZ$ equation in~\e_ref{cZs_e} are $\ddot\fC$-recursive 
and satisfy the same MPC with respect to~$\dot\cY_{n;\a}$.
By~\e_ref{DscZdfn_e}, \e_ref{equivGivental_e},  \e_ref{cYdfn_e},
\e_ref{tiCrec_e}, \e_ref{Crec_e}, \e_ref{DscYdfn_e}, and~\e_ref{hatdfn_e},
\BE{cZscong_e}\begin{split}
\sum_{r=0}^s\sum_{s'=0}^{s-r}
\ctC_{s-\ell^-(\a),s'-\ell^-(\a)}^{(r)}(q)\,\hb^{s-r-s'}\D^{s'}\dot\cZ_{n;\a}(\x,\hb,q)
&\cong  \x^s \quad(\mod\hb^{-1}\big),\\
\sum_{r=0}^s\sum_{s'=0}^{s-r}
\ctC_{s-\ell^+(\a),s'-\ell^+(\a)}^{(r)}(q)\,\hb^{s-r-s'}\D^{s'}\ddot\cZ_{n;\a}(\x,\hb,q)
&\cong \x^s \quad(\mod\hb^{-1}\big).\,\footnotemark
\end{split}\EE
Thus, \e_ref{cZs_e} follows from~\ref{uniqM_item}.
\footnotetext{LHS of \e_ref{tiCrec_e} with $s$ replaced by $s\!-\!\ell^-(\a)$
is the coefficient of $\hb^s\x^{-r}(\x/\hb)^{s'+\ell^-(\a)}$ 
in the first identity in~\e_ref{cZscong_e} if $s\!\ge\!\ell^-(\a)$;
LHS of \e_ref{tiCrec_e} with $s$ replaced by $s\!-\!\ell^+(\a)$
is the coefficient of $\hb^s\x^{-r}(\x/\hb)^{s'+\ell^+(\a)}$ 
in the second identity in~\e_ref{cZscong_e} if $s\!\ge\!\ell^+(\a)$.}\\

\noindent
The proof of \e_ref{main_e2} follows the same principle, which we apply to a multiple
of~\e_ref{main_e2}.
For each $i\!=\!1,2,\ldots,n$, let
\BE{phidfn_e} \phi_i\equiv\prod_{k\neq i}(\x\!-\!\al_k) \in H_{\T}^*(\Pn).\EE
By the Localization Theorem~\cite{ABo},
$\phi_i$ is the equivariant Poincar\'{e} dual of the fixed point $P_i\!\in\!\Pn$;
see \cite[Section~3.1]{bcov0}.
Since $\x|_{P_i}\!=\!\al_i$,
\begin{equation}\label{pushch_e}\begin{split}
\dot\cZ_{n;\a}(\al_i,\al_j,\hb_1,\hb_2,q)
&=\int_{P_i\times P_j}\!\!\!\dot\cZ_{n;\a}(\x_1,\x_2,\hb_1,\hb_2,q)
=\int_{\Pn\times\Pn}\!\!\dot\cZ_{n;\a}(\x_1,\x_2,\hb_1,\hb_2,q)\phi_i\!\times\!\phi_j\\
&=\frac{1}{\hb_1\!+\!\hb_2}\prod_{k\neq i}(\al_j-\al_k)
+\sum_{d=1}^{\i}q^d\!\!
\int_{\ov{Q}_{0,2}(\Pn,d)}\!
\frac{\E(\dot\V_{n;\a}^{(d)})\,\ev_1^*\phi_i\,\ev_2^*\phi_j}{(\hb_1\!-\!\psi_1)(\hb_2\!-\!\psi_2)}\,;
\end{split}\end{equation}
the last equality holds 
by the defining property of the cohomology push-forward \cite[(3.11)]{bcov0}.
By Lemmas~\ref{recgen_lmm} and~\ref{polgen_lmm},
$\dot\cZ_{n;\a}(\x_1,\x_2,\hb_1,\hb_2,q)$ is $\dot\fC$-recursive and satisfies the same MPC as
$\dot\cZ_{n;\a}$ 
with respect to $\dot\cZ_{n;\a}(\x,\hb,q)$ for $(\x,\hb)\!=\!(\x_1,\hb_1)$ and 
$\x_2\!=\!\al_j$ fixed.\footnote{In other words, the coefficient of every power of $\hb_2^{-1}$
in $\dot\cZ_{n;\a}(\x,\al_j,\hb,\hb_2,q)$ is $\dot\fC$-recursive and satisfies the same MPC as
$\dot\cZ_{n;\a}(\x,\hb,q)$ with respect to $\dot\cZ_{n;\a}(\x,\hb,q)$.}
It is also $\ddot\fC$-recursive and satisfies the same MPC as $\ddot\cZ_{n;\a}$
with respect to $\dot\cZ_{n;\a}(\x,\hb,q)$ for $(\x,\hb)\!=\!(\x_2,\hb_2)$ and 
$\x_1\!=\!\al_i$ fixed.
By~\ref{transM_item} and~\ref{uniqM_item},  it is thus sufficient to compare 
\BE{Zcomp_e} 
(\hb_1\!+\!\hb_2)\dot\cZ_{n;\a}(\x_1,\x_2,\hb_1,\hb_2,q)
\quad\hbox{and}\quad
\sum_{\begin{subarray}{c}s_1,s_2,r\ge0\\ s_1+s_2+r=n-1 \end{subarray}}
\!\!\!\!\!\!\!\!\!(-1)^r\bfs_r
\dot\cZ_{n;\a}^{(s_1)}(\x_1,\hb_1,q)\ddot\cZ_{n;\a}^{(s_2)}(\x_2,\hb_2,q)\EE
for all $\x_1\!=\!\al_i$ and $\x_2\!=\!\al_j$ with $i,j\!=\!1,2,\ldots,n$
modulo~$\hb_1^{-1}$:
\begin{alignat*}{1}
&(\hb_1\!+\!\hb_2)\dot\cZ_{n;\a}(\al_i,\al_j,\hb_1,\hb_2,q) \cong   
\sum_{\begin{subarray}{c}s_1,s_2,r\ge0\\ s_1+s_2+r=n-1 \end{subarray}}
\!\!\!\!\!\!\!\!(-1)^r\bfs_r\al_i^{s_1}\al_j^{s_2}
+\sum_{d=1}^{\i}q^d\int_{\ov{Q}_{0,2}(\Pn,d)}\!\!\!
\frac{\E(\dot\V_{n;\a}^{(d)})\ev_1^*\phi_i\ev_2^*\phi_j}{\hb_2\!-\!\psi_2};\\
&\sum_{\begin{subarray}{c}s_1,s_2,r\ge0\\ s_1+s_2+r=n-1 \end{subarray}}
\!\!\!\!\!\!\!\!
(-1)^r\bfs_r \dot\cZ_{n;\a}^{(s_1)}(\al_i,\hb_1,q)\ddot\cZ_{n;\a}^{(s_2)}(\al_j,\hb_2,q)\cong
\sum_{\begin{subarray}{c}s_1,s_2,r\ge0\\ s_1+s_2+r=n-1\end{subarray}}\!\!\!\!\!\!\!
(-1)^r\bfs_r\al_i^{s_1}\ddot\cZ_{n;\a}^{(s_2)}(\al_j,\hb_2,q).
\end{alignat*}
In order to see that the two right-hand side power series are the same,
it is sufficient to compare them modulo~$\hb_2^{-1}$:
\begin{alignat*}{2}
&\sum_{\begin{subarray}{c}s_1,s_2,r\ge0\\ s_1+s_2+r=n-1\end{subarray}}
\!\!\!\!\!\!\!\!\! (-1)^r\bfs_r \al_i^{s_1}\al_j^{s_2}
+\sum_{d=1}^{\i}q^d\int_{\ov{Q}_{0,2}(\Pn,d)}\!\!\!
\frac{\E(\dot\V_{n;\a}^{(d)})\ev_1^*\phi_i\ev_2^*\phi_j}{\hb_2\!-\!\psi_2}\cong
\sum_{\begin{subarray}{c}s_1,s_2,r\ge0\\ s_1+s_2+r=n-1\end{subarray}}
   \!\!\!\!\!\!\!\!(-1)^r\bfs_r \al_i^{s_1}\al_j^{s_2};\\
&\sum_{\begin{subarray}{c} s_1,s_2,r\ge0\\ s_1+s_2+r=n-1\end{subarray}}
\!\!\!\!\!\!
(-1)^r\bfs_r\al_i^{s_1}\ddot\cZ_{n;\a}^{(s_2)}(\al_j,\hb_2,q)\cong
\sum_{\begin{subarray}{c}s_1,s_2,r\ge0\\ s_1+s_2+r=n-1\end{subarray}}
\!\!\!\!\!\!(-1)^r\bfs_r\al_i^{s_1}\al_j^{s_2}.
\end{alignat*}
From this we conclude that the two expressions in~\e_ref{Zcomp_e} are the same;
this proves~\e_ref{main_e2}.\\

\noindent
By Proposition~\ref{uniqueness_prp} and Lemmas~\ref{recgen_lmm} and~\ref{polgen_lmm},
the stable quotients analogue of triple Givental's $J$-function is determined
by the primary three-point SQ-invariants.
Since all such invariants are related to the corresponding GW-invariants by 
\cite[Theorem 1.2.2 and Corollaries~1.4.1,1.4.2]{CK},
a version of~\e_ref{Z2cJpt_e} can be proved by comparing it to its GW-analogue
provided by \cite[Theorem~B]{g0ci}.
We instead prove~\e_ref{Z2cJpt_e} directly in Section~\ref{3ptpf_sec}
by reducing the computation to the two-point formulas of Theorem~\ref{equiv_thm}
and the mirror formula for Hurwitz numbers in Propositions~\ref{equiv0_prp}.
In the process, we obtain a precise description of the equivariant structure
coefficients appearing in~\e_ref{Z2cJpt_e}, which is not done in~\cite{g0ci}.

\section{Recursivity, polynomiality,  and admissible transforms} 
\label{poliC_subs}
 
This section describes the algebraic observations used in the proof of 
Theorem~\ref{equiv_thm}.
It is based on \cite[Sections 2.1, 2.2]{bcov0} and \cite[Section~4.1]{bcov0_ci}.
Let
$$[n]=\{1,2,\ldots,n\}.$$

\begin{dfn}\label{recur_dfn}
Let $C\equiv(C_i^j(d))_{d,i,j\in\Z^{+}}$ be any collection of elements of~$\Q_{\al}$.
A~power series $\F\!\in\!H_{\T}^*(\Pn)\Lau{\hb}[[q]]$ is \sf{$C$-recursive} if
the following holds:
if $d^*\!\in\!\Z^{\ge0}$ is such~that
$$\bigcoeff{\F(\x\!=\!\al_i,\hb,q)}_{q;d^*-d}\in \Q_{\al}(\hb)
\subset \Q_{\al}\Lau{\hb}
\qquad\forall\,d\!\in\![d^*],\,i\!\in\![n],$$
and $\bigcoeff{\F(\al_i,\hb,q)}_{q;d}$ is regular at
$\hb\!=\!(\al_i\!-\!\al_j)/d$ for all $d\!<\!d^*$ and $i\!\neq\!j$, then
\BE{recurdfn_e}
\bigcoeff{\F(\al_i,\hb,q)}_{q;d^*}-
\sum_{d=1}^{d^*}\sum_{j\neq i}\frac{C_i^j(d)}{\hb-\frac{\al_j-\al_i}{d}}
\bigcoeff{\F(\al_j,z,q)}_{q;d^*-d}\big|_{z=\frac{\al_j-\al_i}{d}}
\in \Q_{\al}[\hb,\hb^{-1}]\subset \Q_{\al}\Lau{\hb}.\EE
\end{dfn}

\noindent
Thus, if $\F\!\in\!H_{\T}^*(\Pn)\Lau{\hb}[[q]]$ is $C$-recursive, for any collection~$C$, 
then
$$\F(\x\!=\!\al_i,\hb,q)\in \Q_{\al}(\hb)\big[\big[q\big]\big]
\subset \Q_{\al}\Lau{\hb}[[q]]
\qquad\forall\,i\!\in\![n],$$
as can be seen by induction on $d$, and
\BE{recurdfn_e2} \F(\al_i,\hb,q)=\sum_{d=0}^{\i}\sum_{r=-N_d}^{N_d}
\F_i^r(d)\hb^rq^d+
\sum_{d=1}^{\i}\sum_{j\neq i}\frac{C_i^j(d)q^d}{\hb-\frac{\al_j-\al_i}{d}}
\F(\al_j,(\al_j\!-\!\al_i)/d,q) \qquad\forall~i\!\in\![n],\EE
for some $\F_i^r(d)\!\in\!\Q_{\al}$.
The nominal issue with defining $C$-recursivity by~\e_ref{recurdfn_e2}, as is normally done,
is that a priori the evaluation
of $\F(\al_j,\hb,q)$ at $\hb\!=\!(\al_j\!-\!\al_i)/d$ need not be well-defined,
since $\F(\al_j,\hb,q)$ is a power series with coefficients in $\Q_{\al}\Lau{\hb^{-1}}$;
a priori they  may not converge anywhere.
However, taking the coefficient of each power of~$q$ in \e_ref{recurdfn_e2}
shows by induction on the degree~$d$
that this evaluation does make sense;
this is the substance of Definition~\ref{recur_dfn}.

\begin{dfn}\label{MPC_dfn}
Let $\eta\!\in\!\Q_{\al}(\x)$ be such that $\eta(\x\!=\!\al_i)\!\in\!\Q_{\al}$ 
is well-defined and nonzero for every $i\!\in\![n]$.
For any $\F\!\equiv\!\F(\x,\hb,q),\F'\!\equiv\!\F'(\x,\hb,q)\in H_{\T}^*(\Pn)\Lau{\hb}[[q]]$, 
let
\BE{PhiZdfn_e}
\Phi_{\F,\F'}^{\eta}(\hb,z,q)\equiv \sum_{i=1}^n
\frac{\eta(\al_i)\ne^{\al_iz}}{\prod\limits_{k\neq i}(\al_i\!-\!\al_k)}
\F\big(\al_i,\hb,q\ne^{\hb z}\big)\F(\al_i,-\hb,q) \in \Q_{\al}\Lau{\hb}[[z,q]].\EE
If $\F,\F'\in H_{\T}^*(\Pn)\Lau{\hb}[[q]]$, the pair $(\F,\F')$ 
\sf{satisfies the $\eta$ mutual polynomiality condition ($\eta$-MPC)} if
$\Phi_{\F,\F'}^{\eta}\in \Q_{\al}[\hb][[z,q]]$.
\end{dfn}

\noindent
If $\F,\F'\!\in\!H_{\T}^*(\Pn)\Lau{\hb}[[q]]$ and
\BE{YZrat_e}
\F(\x\!=\!\al_i,\hb,q), \F'(\x\!=\!\al_i,\hb,q)\in 
\Q_{\al}(\hb)\big[\big[q\big]\big] \qquad\forall\,i\!\in\![n],\EE
then the pair $(\F,\F')$ satisfies the $\eta$-MPC if and only if 
the pair $(\F',\F)$ does; 
see \cite[Lemma~2.2]{bcov0} for the $\eta\!=\!1$, 
$\ell^+(\a)\!=\!1$, $\ell^-(\a)\!=\!0$ case
(the proof readily carries over to the general case).
Thus, if \e_ref{YZrat_e} holds, the statement that $\F$ and $\F'$ satisfy the MPC
is unambiguous.

\begin{prp}\label{uniqueness_prp}
Let $\eta\!\in\!\Q_{\al}(\x)$ be such that $\eta(\x\!=\!\al_i)\!\in\!\Q_{\al}$ is well-defined 
and nonzero for every $i\!\in\![n]$.
If $\F,\F'\in H_{\T}^*(\Pn)\Lau{\hb}[[q]]$,
$$\F(\x\!=\!\al_i,\hb,q)\in \Q_{\al}^*+q\cdot \Q_{\al}(\hb)\big[\big[q\big]\big]
\subset \Q_{\al}\Lau{\hb}\big[\big[q\big]\big]
 \qquad\forall\,i\!\in\![n],$$
$\F'$ is recursive, and $\F$ and $\F'$ satisfy the $\eta$-MPC, 
then  $\F'\cong 0~(\mod \hb^{-1})$ if and only if $\F'=0$.
\end{prp}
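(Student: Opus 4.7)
The ``if'' direction being trivial, the content is the forward implication. The plan is to induct on the $q$-degree $d^* \geq 0$ and show $\bigcoeff{\F'(\al_i,\hb,q)}_{q;d^*} = 0$ for every $i \in [n]$; since elements of $H_\T^*(\Pn)\!\otimes\!\Q_\al$ are recovered from their values at the $n$ fixed points by Lagrange interpolation (using the expression of $\x$ via \e_ref{pncoh_e}), this gives $\F' = 0$.

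The inductive step combines the two hypotheses in a pincer. First, $C$-recursivity ensures that once $\bigcoeff{\F'(\al_i,\hb,q)}_{q;d} = 0$ for all $d < d^*$ and all $i$, the recursion sum in \e_ref{recurdfn_e} drops out entirely, so $P_i(\hb) \equiv \bigcoeff{\F'(\al_i,\hb,q)}_{q;d^*}$ is a finite Laurent polynomial in $\hb$ over $\Q_\al$. The hypothesis $\F' \cong 0 \pmod{\hb^{-1}}$ then strips its non-negative-degree part, so $P_i(\hb) \in \hb^{-1}\Q_\al[\hb^{-1}]$.

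Second, I would read off the $q^{d^*}$-coefficient of $\Phi_{\F,\F'}^{\eta}(\hb,z,q)$ from \e_ref{PhiZdfn_e}. Expanding the shifted argument as $\F(\al_i,\hb,q\ne^{\hb z}) = \sum_{d_1} q^{d_1}\ne^{d_1\hb z}\bigcoeff{\F(\al_i,\hb,q)}_{q;d_1}$, the inductive vanishing of the $\F'$-factor collapses the $(d_1,d_2)$-double sum to the single surviving term $(0,d^*)$, while the hypothesis $\F(\al_i,\hb,q)\in\Q_\al^* + q\cdot\Q_\al(\hb)[[q]]$ identifies $\bigcoeff{\F(\al_i,\hb,q)}_{q;0}$ with a nonzero $\hb$-independent constant $C_i\in\Q_\al^*$. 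What remains is
\[
\bigcoeff{\Phi_{\F,\F'}^{\eta}(\hb,z,q)}_{q;d^*}
= \sum_{i=1}^n \frac{\eta(\al_i)\,C_i\,\ne^{\al_i z}}{\prod_{k\neq i}(\al_i - \al_k)}\, P_i(-\hb),
\]
which by the $\eta$-MPC lies in $\Q_\al[\hb][[z]]$, while each $P_i(-\hb)$ contributes only strictly negative powers of $\hb$; the only common element of these two subspaces of $\Q_\al\Lau{\hb}[[z]]$ is zero.

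The main (but mild) obstacle will be converting this vanishing of the weighted sum into the pointwise vanishing $P_i \equiv 0$. For each $r \geq 1$ I would equate the coefficient of $\hb^{-r}$ to zero, obtaining a $\Q_\al$-linear relation among the exponentials $\ne^{\al_1 z}, \ldots, \ne^{\al_n z}$; since $\al_1,\ldots,\al_n$ are algebraically independent generators of $\Q_\al$, these exponentials are linearly independent over $\Q_\al[[z]]$, and since $\eta(\al_i), C_i \neq 0$ and $\prod_{k\neq i}(\al_i-\al_k)\neq 0$ in $\Q_\al$, each $\hb^{-r}$-coefficient of $P_i$ must vanish. The base case $d^* = 0$ is handled identically, with the recursion sum in \e_ref{recurdfn_e} vacuous.
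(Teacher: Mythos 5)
Your proof is correct and reproduces the standard argument from \cite[Proposition~2.1]{bcov0}, which is exactly what the paper cites (with the hypothesis as corrected in \cite[Footnote~3]{bcov0_ci}): inducting on $q$-degree, recursivity annihilates the pole terms and yields a Laurent polynomial, the mod-$\hb^{-1}$ hypothesis confines it to $\hb^{-1}\Q_\al[\hb^{-1}]$, and the MPC together with $\Q_\al$-linear independence of $\ne^{\al_1 z},\ldots,\ne^{\al_n z}$ forces it to vanish. Your handling of the surviving $(0,d^*)$ term via the $\hb$-independence and nonvanishing of $\bigcoeff{\F(\al_i,\hb,q)}_{q;0}\in\Q_\al^*$ is exactly the point where the corrected hypothesis enters, so the argument is sound.
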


\noindent
This is essentially \cite[Proposition~2.1]{bcov0}, with the assumptions corrected in
\cite[Footnote~3]{bcov0_ci}.
The proof in~\cite{bcov0}, which treats the $\eta\!=\!1$ case, readily extends to
the general case; see also the paragraph following \cite[Proposition~4.3]{bcov0_ci}.

\begin{lmm}\label{Phistr_lmm4}
Let $C\equiv(C_i^j(d))_{d,i,j\in\Z^{+}}$ be any collection of elements of~$\Q_{\al}$ and
$\eta\!\in\!\Q_{\al}(\x)$ be such that $\eta(\x\!=\!\al_i)\!\in\!\Q_{\al}$ is well-defined 
and nonzero for every $i\!\in\![n]$.
If  $\F,\F'\!\in\!H_{\T}^*(\Pn)\Lau{\hb}[[q]]$,
$$\F(\x\!=\!\al_i,\hb,q)\in \Q_{\al}(\hb)\big[\big[q\big]\big]
\subset \Q_{\al}\Lau{\hb}\big[\big[q\big]\big] \qquad\forall\,i\!\in\![n],$$
$\F'$ is $C$-recursive (and satisfies the $\eta$-MPC with respect to~$\F$), then
\begin{enumerate}[label=(\roman*)]

\item\label{deriv_ch}  
$\left\{\x\!+\!\hb\, q\frac{\nd}{\nd q}\right\}\F'$
is $C$-recursive (and satisfies the $\eta$-MPC with respect to~$\F$);

\item\label{mult_ch} if $f\!\in\!\Q_{\al}[\hb][[q]]$, then 
$f\F'$ is $C$-recursive (and satisfies the $\eta$-MPC with respect to~$\F$).

\end{enumerate}
\end{lmm}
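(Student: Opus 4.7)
The plan is to verify, separately, the recursivity and MPC assertions for each of transformations \ref{deriv_ch} and \ref{mult_ch}, working directly from Definitions~\ref{recur_dfn} and~\ref{MPC_dfn}; all four verifications amount to elementary algebraic manipulations.

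For \ref{deriv_ch} with recursivity, set $\F''\equiv\{\x+\hb q\,\tfrac{\nd}{\nd q}\}\F'$ and abbreviate $w\equiv(\al_j-\al_i)/d$, so that $\al_j=\al_i+dw$.  Restricting at $\x=\al_i$ and $\x=\al_j$ and extracting appropriate coefficients of~$q$ gives $\coeff{\F''(\al_i,\hb,q)}_{q;d^*}=(\al_i+d^*\hb)\coeff{\F'(\al_i,\hb,q)}_{q;d^*}$ and $\coeff{\F''(\al_j,w,q)}_{q;d^*-d}=(\al_i+d^*w)\coeff{\F'(\al_j,w,q)}_{q;d^*-d}$ (using $\al_j+(d^*\!-\!d)w=\al_i+d^*w$).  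The elementary partial-fraction identity $(\al_i+d^*\hb)/(\hb-w)=(\al_i+d^*w)/(\hb-w)+d^*$ then rewrites the recursion residual of $\F''$ at degree~$d^*$ as $(\al_i+d^*\hb)$ times the recursion residual of $\F'$ at degree~$d^*$, plus $d^*\sum_{d,j}C_i^j(d)\coeff{\F'(\al_j,z,q)}_{q;d^*-d}\big|_{z=w}$; both terms lie in $\Q_{\al}[\hb,\hb^{-1}]$ by the $C$-recursivity of $\F'$.  For \ref{mult_ch} with recursivity, expand $f(\hb,q)=\sum_e f_e(\hb)q^e$ with $f_e\!\in\!\Q_{\al}[\hb]$ and split $f_e(\hb)=f_e(w)+(\hb-w)g_e(\hb,w)$ with $g_e\!\in\!\Q_{\al}[\hb,w]$; the recursion residual of $f\F'$ at degree~$d^*$ then decomposes into $\sum_e f_e(\hb)\cdot(\hbox{recursion residual of }\F'\hbox{ at degree }d^*\!-\!e)$, which is inductively in $\Q_{\al}[\hb,\hb^{-1}]$, plus $\sum_{e,d,j}C_i^j(d)g_e(\hb,w)\coeff{\F'(\al_j,z,q)}_{q;d^*-d-e}\big|_{z=w}$, which is polynomial in~$\hb$.

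Both MPC assertions reduce to identities for $\Phi_{\F,\F'}^{\eta}$ of~\eqref{PhiZdfn_e}.  The chain-rule identity $\partial_z\F(\al_i,\hb,q\ne^{\hb z})=\hb\cdot q\partial_q\F(\al_i,\hb,q\ne^{\hb z})$, combined with the Leibniz rule, yields
\BE{MPCderivid_e}
(\partial_z-\hb\,q\partial_q)\,\Phi_{\F,\F'}^{\eta}\;=\;\sum_{i=1}^n \frac{\eta(\al_i)\ne^{\al_iz}}{\prod\limits_{k\neq i}(\al_i-\al_k)}\F(\al_i,\hb,q\ne^{\hb z})\,\big\{\al_i-\hb q\,\tfrac{\nd}{\nd q}\big\}\F'(\al_i,-\hb,q)\;=\;\Phi_{\F,\{\x+\hb q\tfrac{\nd}{\nd q}\}\F'}^{\eta},
\EE
where the last equality uses that the substitution $(\x,\hb)\mapsto(\al_i,-\hb)$ converts $\{\x+\hb q\,\tfrac{\nd}{\nd q}\}\F'$ into $\{\al_i-\hb q\,\tfrac{\nd}{\nd q}\}\F'(\al_i,-\hb,q)$.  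Since $\partial_z-\hb q\partial_q$ preserves $\Q_{\al}[\hb][[z,q]]$, \eqref{MPCderivid_e} shows that \ref{deriv_ch} preserves the $\eta$-MPC.  For \ref{mult_ch}, the $\x$-independence of $f$ gives $\Phi_{\F,f\F'}^{\eta}=f(-\hb,q)\cdot\Phi_{\F,\F'}^{\eta}$, and multiplication by $f(-\hb,q)\in\Q_{\al}[\hb][[q]]$ trivially preserves $\Q_{\al}[\hb][[z,q]]$.  The main obstacle is identifying the exact differential operator $\partial_z-\hb q\partial_q$ in~\eqref{MPCderivid_e}: one must track the sign flip $\hb\mapsto-\hb$ in the second factor of $\Phi_{\F,\F'}^{\eta}$ and verify that the $(q\partial_q\F)$ contributions generated by $\partial_z$ and by $\hb q\partial_q$ cancel cleanly, leaving only $\{\al_i-\hb q\,\tfrac{\nd}{\nd q}\}\F'$ on the right; once this operator is pinned down, the remaining bookkeeping is routine.
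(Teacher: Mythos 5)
Your proof is correct, and it fills in a step the paper itself does not: the paper simply cites \cite[Lemma~2.3]{bcov0} (for the $\eta\!=\!1$ case) and a remark in \cite{bcov0_ci}, so there is no in-text argument to compare against. Your direct verification is the expected one. In the recursivity part, the two partial-fraction decompositions correctly express the $d^*$-residual of the transformed series as a $\Q_{\al}[\hb]$-combination of the residuals of $\F'$ at degrees $\le d^*$, plus terms manifestly polynomial in $\hb$. In the MPC part, the computation $(\partial_z - \hb\,q\partial_q)\Phi_{\F,\F'}^{\eta} = \Phi_{\F,\{\x+\hb q\partial_q\}\F'}^{\eta}$ is right once one uses $\partial_z\F(\al_i,\hb,qe^{\hb z})=\hb\,q\partial_q\F(\al_i,\hb,qe^{\hb z})$ and tracks the sign flip $\hb\mapsto-\hb$ in the second factor; the $\partial_z$ and $\hb q\partial_q$ contributions on $G\equiv\F(\al_i,\hb,qe^{\hb z})$ cancel exactly, leaving $e^{\al_iz}G\{\al_i-\hb q\partial_q\}H$ as you say, and the factorization $\Phi^{\eta}_{\F,f\F'}=f(-\hb,q)\,\Phi^{\eta}_{\F,\F'}$ for \ref{mult_ch} is immediate.

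One small point you leave implicit, which is worth recording since Definition~\ref{recur_dfn} is conditional: to invoke ``the recursion residual of $\F'$ at degree $d^*$ (resp.\ $d^*\!-\!e$) lies in $\Q_{\al}[\hb,\hb^{-1}]$'' you must know that the hypotheses of Definition~\ref{recur_dfn} for $\F'$ at those degrees follow from the corresponding hypotheses for $\F''=\{\x+\hb q\tfrac{\nd}{\nd q}\}\F'$ (resp.\ $f\F'$) at degree~$d^*$. For \ref{deriv_ch} this is clear since $\coeff{\F''(\al_i,\hb,q)}_{q;e}=(\al_i+e\hb)\coeff{\F'(\al_i,\hb,q)}_{q;e}$ with $\al_i+e\hb$ a nonzero polynomial not vanishing at the relevant points $\hb=(\al_i-\al_j)/d$; for \ref{mult_ch} it follows by the same triangular/inductive reasoning you already use for the residual. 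This is routine, but making it explicit would tighten the argument.
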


\noindent
This lemma is essentially contained in \cite[Lemma 2.3]{bcov0}.
The proof in~\cite{bcov0}, which treats the $\eta\!=\!1$ case, readily extends to
the general case; see also the paragraph following \cite[Lemma~4.4]{bcov0_ci}.\\

\noindent
The next two sections establish Lemmas~\ref{recgen_lmm} and~\ref{polgen_lmm} below, 
the $m\!=\!2$ cases of which complete the proofs of 
\e_ref{main_e2}, the second statement in~\e_ref{equivGivental_e}, and~\e_ref{cZs_e}.
The $m\!=\!3$ cases of these lemmas are used in the proof of 
Proposition~\ref{Z3equiv_prp} and~\ref{equiv0_prp} in Section~\ref{equiv0pf_sec}.
If $m\!\ge\!m'\!\ge\!2$, let 
\BE{fmmdfn_e}f_{m',m}\!: \ov{Q}_{0,m}(\Pn,d)\lra \ov{Q}_{0,m'}(\Pn,d)\EE
denote the forgetful morphism dropping the last $m\!-\!m'$ points;
this morphism is defined if $m'\!>\!2$ or $d\!>\!0$.
With the bundles
$$\dot\V_{n;\a}^{(d)},\ddot\V_{n;\a}^{(d)}\lra\ov{Q}_{0,m'}(\Pn,d)$$
defined by~\e_ref{Vprdfn_e}, let 
\BE{Wdfn_e}
\dot\V_{n;\a;m'}^{(d)}=f_{m',m}^*\dot\V_{n;\a}^{(d)},\,
\ddot\V_{n;\a;m'}^{(d)}=f_{m',m}^*\ddot\V_{n;\a}^{(d)}\lra\ov{Q}_{0,m}(\Pn,d)\,.\EE
For $\b\!\equiv\!(b_2,\ldots,b_m)\!\in\!(\Z^{\ge0})^{m-1}$ and 
$\vp\!\equiv\!(\vp_2,\ldots,\vp_m)\!\in\!H_{\T}^*(\Pn)^{m-1}$, let
\BE{cZgendfn_e}\begin{split}
\dot\cZ_{n;\a;m'}^{(\b,\vp)}(\x,\hb,q) &\equiv
\sum_{d=0}^{\i}q^d\ev_{1*}\!\!\left[\frac{\E(\dot\V_{n;\a;m'}^{(d)})}{\hb\!-\!\psi_1}
\prod_{j=2}^{j=m}\!(\psi_j^{b_j}\ev_j^*\vp_j)\right]
\in H_{\T}^*(\Pn)[\hb^{-1}]\big[\big[q\big]\big],\\
\ddot\cZ_{n;\a;m'}^{(\b,\vp)}(\x,\hb,q) &\equiv
\sum_{d=0}^{\i}q^d\ev_{1*}\!\!\left[\frac{\E(\ddot\V_{n;\a;m'}^{(d)})}{\hb\!-\!\psi_1}
\prod_{j=2}^{j=m}\!(\psi_j^{b_j}\ev_j^*\vp_j)\right]
\in H_{\T}^*(\Pn)[\hb^{-1}]\big[\big[q\big]\big],
\end{split}\EE
where $\ev_j:\ov{Q}_{0,m}(\Pn,d)\lra\Pn$ is the evaluation map at the $j$-th marked point
and the degree~0 terms in the $m'\!=\!2$ case are defined~by
\begin{alignat*}{2}
\E(\dot\V_{n;\a;2}^{(0)}),\E(\ddot\V_{n;\a;2}^{(0)})&=1 &\qquad&\hbox{if}~~m\ge3, \\
\ev_{1*}\!\!\left[\frac{\E(\dot\V_{n;\a;2}^{(0)})}{\hb\!-\!\psi_1}(\psi_2^{b_2}\ev_2^*\vp_2)\right],
\ev_{1*}\!\!\left[\frac{\E(\ddot\V_{n;\a;2}^{(0)})}{\hb\!-\!\psi_1}
(\psi_2^{b_2}\ev_2^*\vp_2)\right] &=(-\hb)^{b_2}\vp_2 &\qquad&\hbox{if}~~m=2.
\end{alignat*}

\begin{lmm}\label{recgen_lmm} 
Let $l\!\in\!\Z^{\ge0}$, $m,m',n\!\in\!\Z^+$ with $m\!\ge\!m'\!\ge\!2$, and $\a\!\in\!(\Z^*)^l$.
For all $\b\!\in\!(\Z^{\ge0})^{m-1}$ and $\vp\!\in\!H_{\T}^*(\Pn)^{m-1}$,
the power series 
$\dot\cZ_{n;\a;m'}^{(\b,\vp)}$ and $\ddot\cZ_{n;\a;m'}^{(\b,\vp)}$ 
defined by~\e_ref{cZgendfn_e} are $\dot\fC$
and $\ddot\fC$-recursive, respectively.
\end{lmm}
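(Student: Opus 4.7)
The plan is to apply the Atiyah--Bott localization formula to the equivariant pushforwards appearing in~\eqref{cZgendfn_e} after restriction to $\x = \al_i$, and to read off the simple poles in~$\hb$ required by Definition~\ref{recur_dfn}. Using the classification of the $\T$-fixed loci of $\ov{Q}_{0,m}(\Pn,d)$ from \cite[Section~4.1]{GWvsSQ}, each fixed locus is indexed by a decorated rational tree whose vertices are labeled by the torus-fixed points $P_1,\ldots,P_n$ and whose edges correspond to rigid degree-$d$ maps between them; after restriction to $\x=\al_i$, only trees with the first marked point~$y_1$ mapped to~$P_i$ contribute.

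I would then split these contributions by the position of~$y_1$ in the tree: either (a)~on a contracted ghost component mapped to~$P_i$, or (b)~on a rigid $\P^1$ bridge of degree $d\in[d^*]$ mapping from~$P_i$ to some $P_j$, $j\neq i$, joined to the remainder (the ``stump'') of the curve at a node over~$P_j$. In case~(a), the class $\psi_1$ is nilpotent, so $1/(\hb-\psi_1)$ contributes only polynomial terms in $\hb,\hb^{-1}$ to $\bigcoeff{\ch\cZ_{n;\a;m'}^{(\b,\vp)}(\al_i,\hb,q)}_{q;d^*}$ and produces no poles at $(\al_j-\al_i)/d$. In case~(b), the bridge is rigid, so $\psi_1$ is forced to equal $(\al_j-\al_i)/d$, which is precisely where the simple pole appears. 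The additional insertions $\psi_j^{b_j}\ev_j^*\vp_j$ with $j\ge 2$ all live on the stump side and thus do not affect the location of the pole.

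To extract the residue I factor each case~(b) fixed locus as bridge~$\cup$~stump. The stump is, up to a $\Z/d$-automorphism of the degree-$d$ cover, a fixed locus inside $\ov{Q}_{0,m}(\Pn,d^*-d)$ with a distinguished new marked point mapped to~$P_j$ and $\psi$-class at that point set to $z=(\al_j-\al_i)/d$. Summing the stump contributions over all compatible decorations reassembles $\bigcoeff{\ch\cZ_{n;\a;m'}^{(\b,\vp)}(\al_j,z,q)}_{q;d^*-d}\big|_{z=(\al_j-\al_i)/d}$ as required by~\eqref{recurdfn_e}. The remaining ``bridge-plus-node'' factor---the ratio of the equivariant Euler class of $\cS^{*a_k}(-\si_1)$, resp.\ $\cS^{*a_k}(-\si_2)$, restricted to the bridge, divided by the deformation Euler class of the bridge relative to its two endpoints and the automorphism factor~$d$---is identified with $\dot\fC_i^j(d)$, resp.\ $\ddot\fC_i^j(d)$, by a direct computation of the cohomology of $\O(a_kd)$ on $\P^1$ with torus weights $a_k\al_i + r(\al_j-\al_i)/d$.

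The main obstacle is tracking the bundle on either side of the node. For $\dot\V$, the twist by $-\si_1$ acts on the bridge but not on the stump; for $\ddot\V$, the twist by $-\si_2$ acts on the stump but not on the bridge. The corresponding asymmetry in node contributions, computed via the Mayer--Vietoris-type exact sequence for $R^\bullet\pi_*$ across the node, produces the shift between the two cases in~\eqref{Cdfn}: the products over~$r$ run from $1$ to $a_kd$ for $\dot\fC_i^j(d)$ and from $0$ to $a_kd-1$ for $\ddot\fC_i^j(d)$, with an analogous shift on the $a_k<0$ side coming from $R^1\pi_*$. Once this bookkeeping is carried out---and the stump-side euler factors are matched to the inductive $\ch\cZ_{n;\a;m'}^{(\b,\vp)}(\al_j,\cdot,q)$ by converting between the original $-\si_1$ (or $-\si_2$) twist and the twist at the new marked point---the verification of~\eqref{recurdfn_e} becomes a direct parallel of \cite[Lemma~5.4]{GWvsSQ}.
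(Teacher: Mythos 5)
Your proposal is correct and matches the paper's own argument. The paper likewise applies the Atiyah--Bott localization theorem after pairing with $\ev_1^*\phi_i$, indexes the fixed loci by decorated rational trees, and splits the sum according to whether the vertex $v_{\min}$ carrying the first marked point has valence~2 with a single edge (your rigid-bridge case, giving the simple poles at $\hb=(\al_j-\al_i)/\d(e_1)$ and the coefficient $\ch\fC_i^j(\d(e_1))$ via the edge integral) or valence~$\ge3$ (your ghost-component case, where $\psi_1$ restricted to the factor $\ov\cM_{0,m_0|\d(v_{\min})}$ is nilpotent, so $1/(\hb-\psi_1)$ is a Laurent polynomial in $\hb$ and contributes only to the polynomial part of the recursion). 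The bookkeeping of the $-\si_1$ versus $-\si_2$ twist across the node that you identify as producing the shift between the two product ranges in~\eqref{Cdfn} is exactly what the paper encodes in the factorization $\E(\ch\V_{n;\a;m'}^{(|\Ga|)})|_{Q_\Ga}=\pi_1^*\E(\ch\V_{n;\a}^{(|\Ga_1|)})\cdot\pi_2^*\E(\ch\V_{n;\a;m'}^{(|\Ga_2|)})$ together with the edge formula~\eqref{EdgContr_e}.
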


\begin{lmm}\label{polgen_lmm} 
Let $l\!\in\!\Z^{\ge0}$, $m,m',n\!\in\!\Z^+$ with $m\!\ge\!m'\!\ge\!2$, 
$\a\!\in\!(\Z^*)^l$, 
$$\dot\eta(\x)=\lr\a\x^{\ell(\a)}\,,\qquad \ddot\eta(\x)=1\,.$$
For all $\b\!\in\!(\Z^{\ge0})^{m-1}$ and $\vp\!\in\!H_{\T}^*(\Pn)^{m-1}$,
the power series 
$$\hb^{m-2}\dot\cZ_{n;\a;m'}^{(\b,\vp)}(\x,\hb,q) \qquad\hbox{and}\qquad 
\hb^{m-2}\ddot\cZ_{n;\a;m'}^{(\b,\vp)}(\x,\hb,q)$$
satisfy the $\dot\eta$ and $\ddot\eta$-MPC, respectively,
with respect to  the power series $\dot\cZ_{n;\a}(\x,\hb,q)$ defined by~\e_ref{cZdfn_e}.
\end{lmm}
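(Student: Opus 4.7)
The plan is to realize the generating function $\Phi^{\eta}_{\dot\cZ_{n;\a},\,\hb^{m-2}\ch\cZ_{n;\a;m'}^{(\b,\vp)}}$ of Definition~\ref{MPC_dfn}, with $(\ch\cZ,\eta)=(\dot\cZ,\dot\eta)$ or $(\ddot\cZ,\ddot\eta)$, as a $\T\!\times\!\C^*$-equivariant pushforward from a proper moduli space; properness will then force the result to lie in $\Q_\al[\hb][[z,q]]$, yielding the MPC. This is the stable-quotients adaptation of the strategy used in \cite[Section~3]{bcov0}, \cite[Section~4.2]{bcov0_ci}, and~\cite{Po}.

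Let $\C^*$ act on $\P^1$ with fixed points $q_0,q_\i$ and opposite tangent weights $\pm\hb$, so that $\T\!\times\!\C^*$ acts diagonally on $\Pn\!\times\!\P^1$.  On the proper moduli space $\wt\cM_d:=\ov{Q}_{0,m+1}(\Pn\!\times\!\P^1,(d,1))$ form the equivariant class $I_d$ built from: the pull-back of $\E(\ch\V^{(d)}_{n;\a;m'})$ along the forgetful map $\wt\cM_d\lra\ov{Q}_{0,m}(\Pn,d)$ (which drops the $\P^1$-factor and the last marked point), the insertion $1/(-\hb\!-\!\psi_{m+1})$ at a free marked point constrained to lie over $q_\i$, the descendant insertions $\psi_j^{b_j}\{\pi_{\Pn}\!\circ\!\ev_j\}^*\vp_j$ at the other $m\!-\!1$ marked points constrained to lie over $q_0$, and the factor $\eta(\ev_{m+1}^*\pi_{\Pn}^*\x)\ne^{z\cdot\ev_{m+1}^*\pi_{\Pn}^*\x}$.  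Since $\wt\cM_d$ is proper, each coefficient of $\sum_d q^d\int_{\wt\cM_d}I_d$ is a polynomial in~$\hb$.

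Next, apply $\T\!\times\!\C^*$-localization of \cite{ABo}.  The contributing fixed loci are indexed by $i\!\in\![n]$ and a splitting $d\!=\!d_1\!+\!d_0$ and consist of: a ``main'' component mapping to the fiber over $q_0$, carrying the $m$ marked points with their $\psi_j,\ev_j$-data and degree $d_1$ in~$\Pn$, joined through a chain of rational curves covering $\P^1$ to a component sitting over $(P_i,q_\i)$ that carries the free marked point.  The fixed-point at $(P_i,q_\i)$ yields the factor $\eta(\al_i)\ne^{\al_i z}/\!\prod_{k\neq i}\!(\al_i\!-\!\al_k)$; the chain together with the terminal component assemble, by the standard chain-contribution analysis as in \cite[Section~3]{bcov0} and \cite[Lemma~5.4]{GWvsSQ}, into $\dot\cZ_{n;\a}(\al_i,\hb,q\ne^{\hb z})$, with $\ne^{\hb z}$ arising from the accumulated $\C^*$-weights along the chain; and the main-component piece assembles into $\hb^{m-2}\ch\cZ^{(\b,\vp)}_{n;\a;m'}(\al_i,-\hb,q)$.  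Summing over~$i$ reproduces $\Phi^{\eta}_{\dot\cZ_{n;\a},\,\hb^{m-2}\ch\cZ_{n;\a;m'}^{(\b,\vp)}}$, which is therefore polynomial in~$\hb$.

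The main obstacle is the analysis of the node between the chain and the main component, and in particular the identification of $\eta$ there.  Decomposing $R^\bullet\pi_*(\cS^{*a_k}(-\si))$ on the nodal smoothing via the usual long exact sequence, one finds: when the subtraction $-\si$ lies at the attaching section (the $\dot\V$ case, where $\ch\V=\dot\V$ uses $-\si_1$ and the chain attaches along its $\si_1$), an extra factor $\prod_{a_k>0}\!a_k\al_i\big/\!\prod_{a_k<0}\!a_k\al_i\cdot\al_i^{\ell(\a)}=\dot\eta(\al_i)$ arises from evaluating the twisted sheaves at the node; when the subtraction is at a different section (the $\ddot\V$ case, $-\si_2$ on the main side while the chain still attaches at $\si_1$), the two branches glue without correction and $\ddot\eta\equiv 1$.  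The $\hb^{m-2}$ prefactor then emerges from the virtual normal bundle and psi-class bookkeeping at the attaching node, relative to the two-marked-point insertion encoded in $\dot\cZ_{n;\a}$.
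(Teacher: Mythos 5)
Your high-level plan is the right one, and it is the paper's plan: exhibit $\Phi^\eta$ as an equivariant pushforward from a proper moduli space built out of stable quotients to a product with a projective line, conclude polynomiality from properness, and verify the identity by $\wt\T\!=\!\C^*\!\times\!\T$ localization. However, the specific integrand you write down would not reproduce $\Phi^\eta$, and the gap is precisely the ingredient that makes this strategy work for stable quotients.

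The first factor in $\Phi^\eta$ is $\dot\cZ_{n;\a}(\al_i,\hb,q\ne^{\hb z})$, and the essential feature is the substitution $q\mapsto q\ne^{\hb z}$. In your integrand the only $z$-dependence sits in $\ne^{z\cdot\ev_{m+1}^*\pi_{\Pn}^*\x}$; on a fixed locus where the $(m\!+\!1)$-st marked point lands at $(P_i,q_\i)$ this restricts to $\ne^{\al_i z}$, which is $\hb$-independent and hence cannot manufacture the needed $(\ne^{\hb z})^{|\Ga_1|}$ weighting by the degree $|\Ga_1|$ of the sub-curve on the $q_1$ side. The paper avoids this by introducing the $\wt\T$-equivariant morphism $\theta\!:\X_d'\lra\ov\X_d=\P(\C^n\!\otimes\!\Sym^dV^*)$ (constructed for stable quotients in \cite[Section~7]{GWvsSQ}, precisely because the usual universal evaluation map from Gromov--Witten theory is unavailable here) and inserts $\ne^{(\theta^*\Om)z}$, where $\Om$ is the equivariant hyperplane class on $\ov\X_d$. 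The key restriction formula is $\theta^*\Om|_{Q_\Ga}=\al_i+|\Ga_1|\hb$; this is a class whose fixed-locus restriction sees the degree distribution, which $\ev_{m+1}^*\pi_{\Pn}^*\x$ cannot. Without $\theta^*\Om$ you produce $\ne^{\al_iz}\dot\cZ_{n;\a}(\al_i,\hb,q)$ rather than $\ne^{\al_iz}\dot\cZ_{n;\a}(\al_i,\hb,q\ne^{\hb z})$, and the argument fails.

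Two further mismatches with the paper's setup. First, you place a $1/(-\hb\!-\!\psi_{m+1})$ insertion at an extra, $(m\!+\!1)$-st marked point; in the paper the proper space $\X_d'$ has only $m$ marked points with two incidence conditions, and both factors $1/(\hb\!-\!\psi)$ and $1/(-\hb\!-\!\psi)$ emerge from the normal-bundle node factors $\om_{e_0;v_1}\!-\!\pi_1^*\psi_2$ and $\om_{e_0;v_2}\!-\!\pi_2^*\psi_1$ (with $\om_{e_0;v_1}\!=\!\hb$, $\om_{e_0;v_2}\!=\!-\hb$) rather than from insertions on the proper space. Second, the $(-\hb)^{m-2}$ prefactor does not emerge ``from the virtual normal bundle and psi-class bookkeeping''; it is built into the integrand as the product $\prod_{j\ge3}\ev_j^*\E(\ga_1^*)$ over the marked points constrained to the $q_2$ fiber, using $\E(\ga_1^*)|_{q_2}=-\hb$. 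Your identification of the source of $\dot\eta$ (the short exact sequence relating $\V$ and $\dot\V$ and the relation $\E(\V_{n;\a;m'}^{(d)})=\lr\a\ev_1^*\x^{\ell(\a)}\E(\dot\V_{n;\a;m'}^{(d)})$) is on the right track, but it is cleaner to integrate $F^*\E(\V_{n;\a;m'}^{(d)})$ on $\X_d'$ rather than to multiply in $\eta$ as a separate insertion.
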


\noindent
By Lemma~\ref{recgen_lmm}, the power series $\dot\cZ_{n;\a}^{(s)}$ and $\ddot\cZ_{n;\a}^{(s)}$ 
defined 
by~\e_ref{cZsdfn_e} are $\dot\fC$ and $\ddot\fC$-recursive, respectively.
Furthermore, the power series $\dot\cZ_{n;\a}$ defined by~\e_ref{Z23cfull_e}
is $\dot\fC$-recursive for $(\x,\hb)\!=\!(\x_1,\hb_1)$ and $\x_2\!=\!\al_j$ fixed
and is $\ddot\fC$-recursive for $(\x,\hb)\!=\!(\x_2,\hb_2)$ and $\x_1\!=\!\al_j$ fixed.
By Lemma~\ref{polgen_lmm}, $\dot\cZ_{n;\a}^{(s)}$ and $\ddot\cZ_{n;\a}^{(s)}$ 
satisfy the $\dot\eta$ and $\ddot\eta$-MPC, respectively,
with respect to  the power series $\dot\cZ_{n;\a}(\x,\hb,q)$ defined by~\e_ref{cZdfn_e}.
Furthermore, the power series $\dot\cZ_{n;\a}$ defined by~\e_ref{Z23cfull_e}
satisfies the $\dot\eta$-MPC with respect to $\dot\cZ_{n;\a}(\x,\hb,q)$
for $(\x,\hb)\!=\!(\x_1,\hb_1)$ and $\x_2\!=\!\al_j$ fixed
and the $\ddot\eta$-MPC with respect to $\dot\cZ_{n;\a}(\x,\hb,q)$
for $(\x,\hb)\!=\!(\x_2,\hb_2)$ and $\x_1\!=\!\al_j$ fixed.\\

\noindent
In the case of  products of projective spaces and concavex sheaves~\e_ref{gensheaf_e},
Definition~\ref{recur_dfn} becomes inductive on the total degree
$d_1\!+\!\ldots\!+\!d_p$ of $q_1^{d_1}\!\ldots\!q_p^{d_p}$.
The power series~$\F$ is evaluated at $(\x_1,\ldots,\x_p)\!=\!(\al_{1;i_1},\ldots,\al_{p;i_p})$
for the purposes of the $C$-recursivity condition~\e_ref{recurdfn_e}
and~\e_ref{recurdfn_e2}.
The relevant structure coefficients, extending~\e_ref{Cdfn}, are given~by
\begin{equation*}\begin{split}
\dot\fC_{i_1\ldots i_p}^j(s;d)&\equiv  \frac{
\prod\limits_{a_{k;1}\ge0}
\prod\limits_{r=1}^{a_{k;s}d}\!\!\left(
 \sum\limits_{t=1}^p a_{k;t}\al_{t;i_t}+r\,\frac{\al_{s;j}-\al_{s;i_s}}{d}\right)
\prod\limits_{a_{k;1}<0}\!\!
\prod\limits_{r=0}^{-a_{k;s}d-1}\!\!\left(\sum\limits_{t=1}^p a_{k;t}\al_{t;i_t}
-r\,\frac{\al_{s;j}-\al_{s;i_s}}{d}\right)}
{d\underset{(r,k)\neq(d,j)}{\prod\limits_{r=1}^d\prod\limits_{k=1}^{n_s}}
            \left(\al_{s;i_s}-\al_{s;k}+r\,\frac{\al_{s;j}-\al_{s;i_s}}{d}\right)},\\
\ddot\fC_{i_1\ldots i_p}^j(s;d)&\equiv  \frac{
\prod\limits_{a_{k;1}\ge0}\prod\limits_{r=0}^{a_{k;s}d-1}\!\!\left(
 \sum\limits_{t=1}^p a_{k;t}\al_{t;i_t}+r\,\frac{\al_{s;j}-\al_{s;i_s}}{d}\right)
\prod\limits_{a_{k;1}<0}\!\!
\prod\limits_{r=1}^{-a_{k;s}d}\!\!\left(\sum\limits_{t=1}^p a_{k;t}\al_{t;i_t}
-r\,\frac{\al_{s;j}-\al_{s;i_s}}{d}\right)}
{d\underset{(r,k)\neq(d,j)}{\prod\limits_{r=1}^d\prod\limits_{k=1}^{n_s}}
            \left(\al_{s;i_s}-\al_{s;k}+r\,\frac{\al_{s;j}-\al_{s;i_s}}{d}\right)},            
\end{split}\end{equation*}
with $s\!\in\![p]$ and $j\!\neq\!i_s$.
The double sums in these equations are then replaced by triple sums over $s\!\in\![p]$,
$j\!\in\![n_s]\!-\!i_s$, and $d\!\in\!\Z^+$, and with $\F$ evaluated at
$$\x_t=\begin{cases}
\al_{s;j},&\hbox{if}~t\!=\!s;\\
\al_{t;i_t},&\hbox{if}~t\!\neq\!s;
\end{cases} \qquad
z=\frac{\al_{s;j}\!-\!\al_{s;i_s}}{d}.$$
The secondary coefficients~$\F_i^r(d)$ in~\e_ref{recurdfn_e2} now become
$\F_{i_1\ldots i_p}^r(d_1,\ldots,d_p)$, with $i_s\!\in\![n_s]$ and $d_s\!\in\!\Z^{\ge0}$.
In the analogue of Definition~\ref{MPC_dfn}, $\eta\!\in\!R(\x_1,\ldots,\x_p)$
is such that the evaluation of~$\eta$ at $(\al_{1;i_1},\ldots,\al_{p;i_p})$
for all elements $(i_1,\ldots,i_p)$
of $[n_1]\!\times\!\ldots\!\times\![n_p]$ is well-defined and not zero,
$\Phi_{\F}$  is a power series in $z_1,\ldots,z_p$
and $q_1,\ldots,q_p$, the sum is taken over all elements $(i_1,\ldots,i_p)$
of $[n_1]\!\times\!\ldots\!\times\![n_p]$, the leading fraction is replaced~by
$$\frac{\eta(\al_{1;i_1},\ldots,\al_{p;i_p})\ne^{\al_{1;i_1}z_1+\ldots+\al_{p;i_p}z_p}}
{\prod\limits_{s=1}^p\prod\limits_{k\neq i_s}(\al_{s;i_s}\!-\!\al_{s;k})}\,,$$
and the $q\ne^{\hb z}$-insertion in the first power series is replaced
by the insertions $q_1\ne^{\hb z_1},\ldots,q_p\ne^{\hb z_p}$.
Lemma~\ref{polgen_lmm} holds with
$$\dot\eta(\x_1,\ldots,\x_p)=
\frac{\prod\limits_{a_{k;1}\ge0}\sum\limits_{s=1}^pa_{k;s}\x_s}
{\prod\limits_{a_{k;1}<0}\sum\limits_{s=1}^pa_{k;s}\x_s}.$$

\section{Recursivity for stable quotients} 
\label{recpf_sec}

\noindent
In this section, we use the classical localization theorem~\cite{ABo}
to show that the generating functions $\dot\cZ_{n;\a;m'}^{(\b,\vp)}$ and 
$\ddot\cZ_{n;\a;m'}^{(\b,\vp)}$
defined in~\e_ref{cZgendfn_e} are recursive.
The argument is similar to the proof in \cite[Section~6]{GWvsSQ} of recursivity for 
the generating function~$\dot\cZ_{n;\a}$ defined by~\e_ref{cZdfn_e}, 
but requires some modifications.\\

\noindent
If $\T$ acts smoothly on a smooth compact oriented manifold $M$, there is a well-defined
integration-along-the-fiber homomorphism
$$\int_M\!: H_{\T}^*(M)\lra H_{\T}^*$$
for the fiber bundle $BM\!\lra\!B\T$.
The classical localization theorem of~\cite{ABo} relates it to
integration along the fixed locus of the $\T$-action.
The latter is a union of smooth compact orientable manifolds~$F$;
$\T$ acts on the normal bundle $\N F$ of each~$F$.
Once an orientation of $F$ is chosen, there is a well-defined
integration-along-the-fiber homomorphism
$$\int_F\!: H_{\T}^*(F)\lra H_{\T}^*.$$
The localization theorem states that
\begin{equation}\label{ABothm_e}
\int_M\eta = \sum_F\int_F\frac{\eta|_F}{\E(\N F)} \in \Q_{\al}
\qquad\forall~\eta\in H_{\T}^*(M),
\end{equation}
where the sum is taken over all components $F$ of the fixed locus of $\T$.
Part of the statement of~\e_ref{ABothm_e} is that $\E(\N F)$
is invertible in $H_{\T}^*(F)\!\otimes_{\Q[\al_1,\ldots,\al_n]}\!\Q_{\al}$.
In the case of the standard action of $\T$ on~$\P^{n-1}$, \e_ref{ABothm_e} implies that
\BE{phiprop_e} \eta|_{P_i}=\int_{\P^{n-1}}\eta\phi_i \in\Q_{\al}
\qquad\qquad\forall~\eta\!\in\!H_{\T}^*(\P^{n-1}),~i=1,2,\ldots,n,\EE
with $\phi_i$ as in~\e_ref{phidfn_e}.

\subsection{Fixed locus data} 
\label{FixLoc_subs}

\noindent
The proof of Lemma~\ref{recgen_lmm} involves a localization computation on $\ov{Q}_{0,m}(\Pn,d)$.
Thus, we need to describe the fixed loci of the $\T$-action on $\ov{Q}_{0,m}(\Pn,d)$,
their normal bundles, and the restrictions of the relevant cohomology classes
to these fixed loci.\\

\noindent
As in the case of stable maps described in \cite[Section~27.3]{MirSym},
the fixed loci of the $\T$-action on $\ov{Q}_{0,m}(\P^{n-1},d)$
are indexed by \sf{decorated graphs}, 
\BE{decortgraphdfn_e} \Ga = \big(\Ver,\Edg;\mu,\d,\vt\big),\EE
where $(\Ver,\Edg)$ is a connected graph that has no loops, with $\Ver$ and $\Edg$ denoting
its sets of vertices and edges, and
$$\mu\!:\Ver\lra [n], \qquad  \d\!: \Ver\!\sqcup\!\Edg\lra\Z^{\ge0},
\quad\hbox{and}\quad 
\vt\!: [m]\lra\Ver$$
are maps such that
\begin{gather}\label{decorgraphcond_e}
\mu(v_1)\neq\mu(v_2)  ~~~\hbox{if}~~\{v_1,v_2\}\in\Edg, \qquad
\d(e)\neq0~~\forall\,e\!\in\!\Edg,\\
\val(v)\equiv \big|\vt^{-1}(v)\big|\!+\!\big|\{e\!\in\!\Edg\!:~v\!\in\!e\}\big|\!+\!\d(v)\ge2
\qquad\forall\,v\!\in\!\Ver.\notag
\end{gather}
In Figure~\ref{loopgraph_fig}, the vertices of a decorated graph $\Ga$
are indicated by dots.
The values of the map $(\mu,\d)$ on some of the vertices
are indicated next to those vertices.
Similarly, the values of the map $\d$ on some of the edges are indicated next to~them.
The elements of the~sets $[m]$ are shown in bold face;
they are linked by line segments to their images under~$\vt$.
By~\e_ref{decorgraphcond_e}, no two consecutive vertices have the same first label
and thus $j\!\neq\!i$.\\

\noindent
With $\Ga$ as in~\e_ref{decortgraphdfn_e}, let
$$|\Ga|\equiv\sum_{v\in\Ver}\!\!\d(v)+\sum_{e\in\Edg}\!\!\d(e)$$
be the degree of~$\Ga$.
For each $v\!\in\!\Ver$, let
$$\tE_v=\big\{e\!\in\!\Edg\!:~v\!\in\!e\big\}$$
be the set of edges leaving from $v$.
There is a unique partial order $\prec$ on $\Ver$ 
that has a unique minimal element~$v_{\min}$ such that $v_{\min}\!=\!\vt(1)$ and $v\!\prec\!w$ if 
there exist distinct vertices $v_1,\ldots,v_k\!\in\!\Ver$ such~that 
$$v\in\{v_{\min},v_1,\ldots,v_{k-1}\},\qquad w=v_k,
\quad\hbox{and}\quad
\{v_{\min},v_1\},\{v_1,v_2\},\ldots,\{v_{k-1},v_k\}\in\Edg,$$
i.e.~$v$ lies between $v_0$ and $w$ in $(\Ver,\Edg)$.
If $e\!=\!\{v_1,v_2\}\!\in\!\Edg$ is any edge in $\Ga$ with $v_1\!\prec\!v_2$, let
\begin{gather*}
\Ga_e\equiv \big(\{v_1,v_2\},\{e\};\mu_e,\d_e,\vt_e\big), \qquad\hbox{where}\\
\mu_e=\mu|_e, \qquad \d_e(e)=\d(e),~~\d_e|_e=0, 
\qquad \vt_e\!:\{1,2\}\lra e, ~~~\vt_e(1)=v_1,~~\vt_e(2)=v_2;
\end{gather*}
see Figure~\ref{subgraphs_fig}.\\

\noindent
With $m'\!\le\!m$ as in Lemmas~\ref{recgen_lmm} and~\ref{polgen_lmm}, let
\begin{gather*}
\Ver_{m'}=\big\{v\!\in\!\Ver\!:~v\!\preceq\!\vt(i)~\hbox{for some}~i\!\in\![m']\big\},
\qquad \Edg_{m'}=\big\{\{v_1,v_2\}\!\in\!\Edg\!:~v_1,v_2\!\in\!\Ver_{m'}\big\};
\end{gather*}
in particular, the graph $(\Ver_{m'},\Edg_{m'})$ is a tree.
For each $v\!\in\!\Ver_{m'}$, define
\begin{gather*}
r_{m';v}\!: \tE_v\!-\!\Edg_{m'}\lra\Z^+ \qquad\hbox{by}\qquad
r_{m';v}\big(\{v,v'\}\big)
=\sum_{\begin{subarray}{c}\{v_1,v_2\}\in\Edg\\ v'\preceq v_2\end{subarray}}
\!\!\!\!\!\!\!\!\d\big(\{v_1,v_2\}\big)
+\sum_{\begin{subarray}{c}w\in\Ver\\ v'\preceq w\end{subarray}}\!\!\d(w),\\
\d_{m'}(v)=\d(v)+\sum_{e\in\tE_v-\Edg_{m'}}\!\!\!\!\!\!r_{m;'v}(e)\,.
\end{gather*}\\

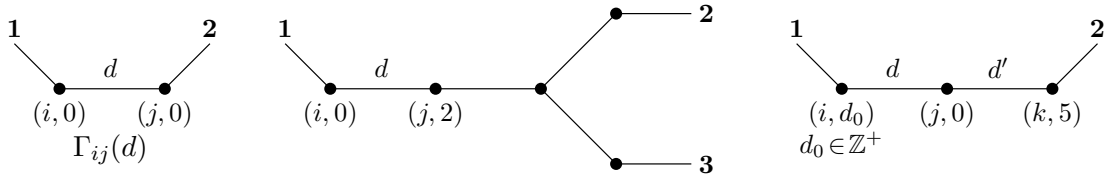
\begin{figure}
\begin{pspicture}(-.7,-1)(10,1.2)
\psset{unit=.4cm}
\psline[linewidth=.04](2.5,0)(6,0)\rput(4.2,.7){\smsize{$d$}}
\pscircle*(2.5,0){.2}\rput(2.5,-.85){\smsize{$(i,0)$}}
\pscircle*(6,0){.2}\rput(6,-.85){\smsize{$(j,0)$}}
\psline[linewidth=.04](2.5,0)(1,1.5)\rput(1,2){\smsize{$\bf 1$}}
\psline[linewidth=.04](6,0)(7.5,1.5)\rput(7.5,2){\smsize{$\bf 2$}}
\rput(4.2,-2){$\Ga_{ij}(d)$}
\psline[linewidth=.04](11.5,0)(15,0)\rput(13.2,.7){\smsize{$d$}}
\psline[linewidth=.04](15,0)(18.5,0)\pscircle*(18.5,0){.2}
\psline[linewidth=.04](18.5,0)(21,2.5)\pscircle*(21,2.5){.2}
\psline[linewidth=.04](18.5,0)(21,-2.5)\pscircle*(21,-2.5){.2}
\pscircle*(11.5,0){.2}\rput(11.5,-.85){\smsize{$(i,0)$}}
\pscircle*(15,0){.2}\rput(15,-.85){\smsize{$(j,2)$}}
\psline[linewidth=.04](11.5,0)(10,1.5)\rput(10,2){\smsize{$\bf 1$}}
\psline[linewidth=.04](21,2.5)(23.5,2.5)\rput(24,2.5){\smsize{$\bf 2$}}
\psline[linewidth=.04](21,-2.5)(23.5,-2.5)\rput(24,-2.5){\smsize{$\bf 3$}}
\psline[linewidth=.04](28.5,0)(32,0)\pscircle*(32,0){.2}
\rput(30.2,.7){\smsize{$d$}}\rput(33.7,.7){\smsize{$d'$}}
\pscircle*(28.5,0){.2}\rput(28.5,-.85){\smsize{$(i,d_0)$}}
\rput(28.5,-1.85){\smsize{$d_0\!\in\!\Z^+$}}
\rput(32,-.85){\smsize{$(j,0)$}}
\rput(35.5,-.85){\smsize{$(k,5)$}}
\psline[linewidth=.04](32,0)(35.5,0)\pscircle*(35.5,0){.2}
\psline[linewidth=.04](28.5,0)(27,1.5)\rput(27,2){\smsize{$\bf 1$}}
\psline[linewidth=.04](35.5,0)(37,1.5)\rput(37,2){\smsize{$\bf 2$}}
\end{pspicture}
\caption{Two trees with $\val(v_{\min})\!=\!2$ and a tree with $\val(v_{\min})\!\ge\!3$}
\label{loopgraph_fig}
\end{figure}

\begin{figure}
\begin{pspicture}(-2.5,-.6)(10,1)
\psset{unit=.4cm}
\psline[linewidth=.04](2.5,0)(6,0)\rput(4.2,.7){\smsize{$d$}}
\pscircle*(2.5,0){.2}\rput(2.5,-.85){\smsize{$(i,0)$}}
\pscircle*(6,0){.2}\rput(6,-.85){\smsize{$(j,0)$}}
\psline[linewidth=.04](2.5,0)(1,1.5)\rput(1,2){\smsize{$\bf 1$}}
\psline[linewidth=.04](6,0)(7.5,1.5)\rput(7.5,2){\smsize{$\bf 2$}}
\psline[linewidth=.04](22.5,0)(26,0)\rput(24.2,.7){\smsize{$d'$}}
\pscircle*(22.5,0){.2}\rput(22.5,-.85){\smsize{$(j,0)$}}
\pscircle*(26,0){.2}\rput(26,-.85){\smsize{$(k,0)$}}
\psline[linewidth=.04](22.5,0)(21,1.5)\rput(21,2){\smsize{$\bf 1$}}
\psline[linewidth=.04](26,0)(27.5,1.5)\rput(27.5,2){\smsize{$\bf 2$}}
\end{pspicture}
\caption{The subtrees corresponding to the edges of the last graph in Figure~\ref{loopgraph_fig}.}
\label{subgraphs_fig}
\end{figure}
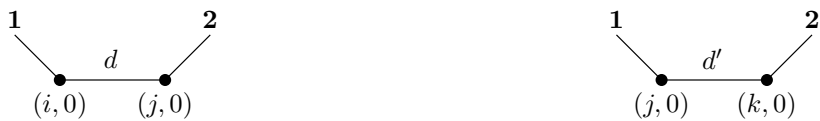

\noindent
As described in \cite[Section~7.3]{MOP09},
the fixed locus $Q_{\Ga}$ of $\ov{Q}_{0,m}(\P^{n-1},|\Ga|)$ corresponding to a
decorated graph $\Ga$ consists of the stable quotients
$$(\cC,y_1,\ldots,y_m;S\subset \C^n\!\otimes\!\cO_{\cC})$$
over quasi-stable rational $m$-marked curves that satisfy the following conditions.
The components of $\cC$ on which the corresponding quotient is torsion-free
are rational and correspond to the edges of~$\Ga$;
the restriction of~$S$ to any such component corresponds
to a morphism to~$\P^{n-1}$ of the opposite degree to that of the subsheaf.
Furthermore, if $e\!=\!\{v_1,v_2\}$ is an edge, the corresponding morphism~$f_e$
is a degree-$\d(e)$ cover of the line
$$\P^1_{\mu(v_1),\mu(v_2)}\subset\P^{n-1}$$
passing through the fixed points $P_{\mu(v_1)}$ and $P_{\mu(v_2)}$;
it is ramified only over $P_{\mu(v_1)}$ and~$P_{\mu(v_2)}$.
In particular, $f_e$ is unique up to isomorphism.
The remaining components of $\cC$ are indexed by the vertices $v\!\in\!\Ver$ 
of valence $\val(v)\!\ge\!3$.
The restriction of~$S$ to such a component~$\cC_v$ of~$\cC$ (or possibly a connected union
of irreducible components) is a subsheaf
of the trivial subsheaf $P_{\mu(v)}\!\subset\!\C^n\!\otimes\!\cO_{\cC_v}$
of degree~$-\d(v)$;
thus, the induced morphism takes~$\cC_v$ to the fixed point~$P_{\mu(v)}\!\in\!\Pn$.
Each such component~$\cC_v$ also carries 
$|\vt^{-1}(v)|\!+\!|\tE_v|$ marked points, corresponding
to the marked points  and/or the nodes of~$\cC$;
we index these points by the set $\vt^{-1}(v)\!\sqcup\!\tE_v$ in the canonical way.
Thus, as stacks,
\BE{Zlocus_e}\begin{split}
Q_{\Ga}&\approx \prod_{\begin{subarray}{c}v\in\Ver\\ \val(v)\ge3\end{subarray}}
\!\!\!\ov{Q}_{0,|\vt^{-1}(v)|+|\tE_v|}(\P^0,\d(v))\times\prod_{e\in\Edg}\!\!Q_{\Ga_e}\\
&\approx  \prod_{\begin{subarray}{c}v\in\Ver\\ \val(v)\ge3\end{subarray}}
\!\!\!\ov\cM_{0,|\vt^{-1}(v)|+|\tE_v||\d(v)}/\bS_{\d(v)}\times\prod_{e\in\Edg}\!\!Q_{\Ga_e}\\
&\approx
\Bigg(\prod_{\begin{subarray}{c}v\in\Ver\\ \val(v)\ge3\end{subarray}}
\!\!\!\ov\cM_{0,|\vt^{-1}(v)|+|\tE_v||\d(v)}/\bS_{\d(v)}\bigg)\Bigg/\prod_{e\in\Edg}\!\!\Z_{\d(e)},
\end{split}\EE
with each cyclic group $\Z_{\d(e)}$ acting trivially.
For example, in the case of the last diagram in Figure~\ref{loopgraph_fig},
$$Q_{\Ga}\approx
\Big(\ov\cM_{0,2|d_0}/\bS_{d_0}\times \ov\cM_{0,2|5}/\bS_5\Big)
\big/\Z_d\!\times\!\Z_{d'}$$
is a fixed locus in $\ov{Q}_{0,2}(\P^{n-1},d_0\!+\!5\!+\!d\!+\!d')$.
If  $m'\!\le\!m$ is as in Lemmas~\ref{recgen_lmm} and~\ref{polgen_lmm}, 
the morphism~$f_{m',m}$ in~\e_ref{fmmdfn_e} sends the locus $Q_{\Ga}$
of $\ov{Q}_{0,m}(\Pn,d)$ to (a subset of) the locus $Q_{\Ga_{m'}}$
of $\ov{Q}_{0,m'}(\Pn,d)$, where
$$\Ga_{m'}=\big(\Ver_{m'},\Edg_{m'};\mu|_{\Ver_{m'}},\d_{m'},\vt|_{[m']} \big),$$
as $f_{m',m}$ contracts the ends of the elements of $\ov{Q}_{0,m'}(\Pn,d)$ that do not
carry any of the marked points indexed by the set~$[m']$.\\

\noindent
If $v\!\in\!\Ver$ and $\val(v)\!\ge\!3$, for the purposes of definitions~\e_ref{V0prdfn_e} and~\e_ref{W0dfn_e} we identify $[|\vt^{-1}(v)|\!+\!|\tE_v|]$ with the set 
$\vt^{-1}(v)\!\sqcup\!\tE_v$ indexing the marked points on~$\cC_v$ so that
the element~1 in the former is identified with $1\!\in\![m]$ if $\vt(1)\!=\!v$ and 
with the unique edge $e_v^-\!=\!\{v_-,v\}$ with $v^-\!\prec\!v$ 
separating $v$ from the marked point~1 otherwise.
Similarly, if $v\!\preceq\!\vt(2)$, we associate the element~2 of $[|\vt^{-1}(v)|\!+\!|\tE_v|]$
with $2\!\in\![m]$ if $\vt(2)\!=\!v$ and 
with the unique edge $e_v^+\!=\!\{v,v_+\}$ with $v_+\!\preceq\!\vt(2)$ 
separating $v$ from the marked point~2 otherwise.
Finally, if $m'\!\le\!m$ is as in Lemmas~\ref{recgen_lmm} and~\ref{polgen_lmm} and 
$v\!\in\!\Ver_{m'}$, we associate the $|\tE_v\!-\!\Edg_{m'}|$ largest elements of  
$[|\vt^{-1}(v)|\!+\!|\tE_v|]$ with the subset $\tE_v\!-\!\Edg_{m'}$ of 
$\vt^{-1}(v)\!\sqcup\!\tE_v$.\\

\noindent
If $\Ga$ is a decorated graph as above and $e\!=\!\{v_1,v_2\}\in\!\Edg$
with $v_1\!\prec\!v_2$, let
$$\pi_e\!:Q_{\Ga}\lra Q_{\Ga_e}\subset \ov{Q}_{0,2}(\Pn,\d(e))$$
be the projection in the decomposition~\e_ref{Zlocus_e} and 
$$\om_{e;v_1}=-\pi_e^*\psi_1,~\om_{e;v_2}=-\pi_e^*\psi_2\in H^2(Q_{\Ga}).$$
Similarly, for each $v\!\in\!\Ver$ such that $\val(v)\!\ge\!3$, let
$$\pi_v\!:Q_{\Ga}\lra\ov\cM_{0,|\vt^{-1}(v)|+|\tE_v||\d(v)}/\bS_{\d(v)}$$
be the corresponding projection and
$$\psi_{v;e}=\pi_v^*\psi_e \in H^2(Q_{\Ga}) \qquad
\forall~v\!\in\!\tE_v\,.$$
By \cite[Section~27.2]{MirSym},
\BE{psiform_e} \om_{e;v_i}=\frac{\al_{\mu(v_i)}-\al_{\mu(v_{3-i})}}{\d(e)}\
\qquad i=1,2.\EE
By \cite[Section~7.4]{MOP09}, the euler class of the normal bundle of $Q_{\Ga}$ in
$\ov{Q}_{0,m}(\Pn,|\Ga|)$ is described~by
\BE{NZform_e}\begin{split}
\frac{\E(\N Q_{\Ga})}{\E(T_{\mu(v_{\min})}\Pn)}
&=\prod_{\begin{subarray}{c}v\in\Ver\\ \val(v)\ge3\end{subarray}}
\prod_{k\neq\mu(v)}\!\!\!\pi_v^*\E\big(\dot\V_1^{(\d(v))}(\al_{\mu(v)}\!-\!\al_k)\big)~
\prod_{e\in\Edg}\!\!\!\pi_e^*\E\big(H^0(f_e^*T\P^n\!\otimes\!\cO(-y_1))/\C\big)\\
&\quad
\times\prod_{\begin{subarray}{c}v\in\Ver\\ \val(v)=2,\vt^{-1}(v)=\eset\end{subarray}}
\hspace{-.3in}\bigg(\sum_{e\in\tE_v}\!\om_{e;v}\bigg)
\prod_{\begin{subarray}{c}v\in\Ver\\ \val(v)\ge3\end{subarray}}\!\!\!
\bigg(\prod_{e\in\tE_v}\!\!\big(\om_{e;v}\!-\!\psi_{v;e}\big)\bigg),
\end{split}\EE
where $\C\!\subset\!H^0(f_e^*T\P^n\!\otimes\!\cO(-y_1))$ denotes the trivial $\T$-representation.
The terms on the first line correspond to the deformations of the sheaf without changing
the domain, while the terms on the second line correspond to the deformations of the domain.
By~\e_ref{Wdfn_e}, \e_ref{Vprdfn_e}, \e_ref{V0prdfn_e}, and~\e_ref{W0dfn_e}, 
\BE{cVform_e}\begin{split}
\E(\dot\V_{n;\a;m'}^{(|\Ga|)})\big|_{Q_{\Ga}}
&=\prod_{\begin{subarray}{c}v\in\Ver_{m'}\\ \val(v)\ge3\end{subarray}}
\!\!\!\!\!\pi_v^*\E\big(\dot\V_{\a;r_{m';v}}^{(\d(v))}(\al_{\mu(v)})\big)
\cdot \prod_{e\in\Edg_{m'}}\!\!\!\!\!\!\pi_e^*\E\big(\dot\V_{n;\a}^{\d(e)}\big)\,,\\
\E(\ddot\V_{n;\a;m'}^{(|\Ga|)})\big|_{Q_{\Ga}}
&=\prod_{\begin{subarray}{c}v\in\Ver_2\\ \val(v)\ge3\end{subarray}}
\!\!\!\pi_v^*\E\big(\ddot\V_{\a;r_{m';v}}^{(\d(v))}(\al_{\mu(v)})\big)
\cdot \prod_{e\in\Edg_2}\!\!\!\pi_e^*\E\big(\ddot\V_{n;\a}^{\d(e)}\big)\\
&\quad\times\prod_{\begin{subarray}{c}v\in\Ver_{m'}-\Ver_2\\ \val(v)\ge3\end{subarray}}
\!\!\!\!\!\!\!\!\!\!\!\!\pi_v^*\E\big(\dot\V_{\a;r_{m';v}}^{(\d(v))}(\al_{\mu(v)})\big)
\cdot \prod_{e\in\Edg_{m'}-\Edg_2}\!\!\!\!\!\!\!\!\!\!\!\pi_e^*\E\big(\dot\V_{n;\a}^{\d(e)}\big)\,.
\end{split}\EE
By \cite[Section~27.2]{MirSym},
\BE{EdgContr_e}\begin{split}
\int_{Q_{\Ga_e}}
\frac{\E(\dot\V_{n;\a}^{\d(e)})}{\E\big(H^0(f_e^*T\P^n\!\otimes\!\cO(-y_1))/\C\big)}
&=\dot\fC_{\mu(v_1)}^{\mu(v_2)}\big(\d(e)\big)\,,\\
\int_{Q_{\Ga_e}}
\frac{\E(\ddot\V_{n;\a}^{\d(e)})}{\E\big(H^0(f_e^*T\P^n\!\otimes\!\cO(-y_1))/\C\big)}
&=\ddot\fC_{\mu(v_1)}^{\mu(v_2)}\big(\d(e)\big)\,,
\end{split}
\qquad\forall~e\!=\!\{v_1,v_2\}~\hbox{with}~v_1\!\prec\!v_2,\EE
with $\dot\fC_{\mu(v_1)}^{\mu(v_2)}(\d(e))$  and 
$\ddot\fC_{\mu(v_1)}^{\mu(v_2)}(\d(e))$ given by~\e_ref{Cdfn}.

\subsection{Proof of Lemma~\ref{recgen_lmm}} 
\label{RecPf_subs}

\noindent
We apply the localization theorem~to
\BE{Zeval_e}\begin{split}
\dot\cZ_{n;\a;m'}^{(\b,\vp)}(\al_i,\hb,q)
&=\sum_{d=0}^{\i}q^d\int_{\ov{Q}_{0,m}(\Pn,d)}
\frac{\E(\dot\V_{n;\a;m'}^{(d)})\ev_1^*\phi_i}{\hb\!-\!\psi_1}\prod_{j=2}^m(\psi_j^{b_j}\ev_j^*\vp_j)\,,\\
\ddot\cZ_{n;\a;m'}^{(\b,\vp)}(\al_i,\hb,q)
&=\sum_{d=0}^{\i}q^d\int_{\ov{Q}_{0,m}(\Pn,d)}
\frac{\E(\ddot\V_{n;\a;m'}^{(d)})\ev_1^*\phi_i}{\hb\!-\!\psi_1}\prod_{j=2}^m(\psi_j^{b_j}\ev_j^*\vp_j)\,,
\end{split}\EE
where $\phi_i$ is the equivariant Poincar\'{e} dual of the fixed point $P_i\!\in\!\Pn$,
as in~\e_ref{phidfn_e}, and 
the degree~0 terms in the $m\!=\!2$ case are defined~by
\begin{equation*}\begin{split}
\int_{\ov{Q}_{0,2}(\Pn,0)}
\frac{\E(\dot\V_{n;\a;m'}^{(d)})\ev_1^*\phi_i}{\hb\!-\!\psi_1}(\psi_2^{b_2}\ev_2^*\vp_2)
&\equiv (-\hb)^{b_2}\vp_2|_{P_i}\,,\\
\int_{\ov{Q}_{0,2}(\Pn,0)}
\frac{\E(\ddot\V_{n;\a;m'}^{(d)})\ev_1^*\phi_i}{\hb\!-\!\psi_1}(\psi_2^{b_2}\ev_2^*\vp_2)
&\equiv (-\hb)^{b_2}\vp_2|_{P_i}\,.
\end{split}\end{equation*}
Since $\phi_i|_{P_j}\!=\!0$ unless $j\!=\!i$, a decorated graph as in~\e_ref{decortgraphdfn_e}
contributes to the two expressions in~\e_ref{Zeval_e} only if the first marked point is attached
to a vertex labeled~$i$, i.e.~$\mu(v_{\min})\!=\!i$ for the smallest element $v_{\min}\!\in\!\Ver$.
We show that, just as for Givental's $J$-function, 
the $(d,j)$-summand in~\e_ref{recurdfn_e2} with $C\!=\!\dot\fC,\ddot\fC$ and
$\F\!=\!\dot\cZ_{n;\a;m'}^{(\b,\vp)},\ddot\cZ_{n;\a;m'}^{(\b,\vp)}$,  i.e.
\BE{djsummand_e}
\frac{\dot\fC_i^j(d)q^d}{\hb-\frac{\al_j-\al_i}{d}}
\dot\cZ_{n;\a;m'}^{(\b,\vp)}(\al_j,(\al_j\!-\!\al_i)/d,q)
\quad\hbox{and}\quad
\frac{\ddot\fC_i^j(d)q^d}{\hb-\frac{\al_j-\al_i}{d}}
\ddot\cZ_{n;\a;m'}^{(\b,\vp)}(\al_j,(\al_j\!-\!\al_i)/d,q)\,,\EE
respectively,
is the sum over all graphs such that $\mu(v_{\min})\!=\!i$, i.e.~the first marked point
is mapped to the fixed point $P_i\!\in\!\Pn$, 
$v_{\min}$ is a bivalent vertex,  i.e.~$\d(v_{\min})\!=\!0$, $\vt^{-1}(v_{\min})\!=\!\{1\}$, 
the only edge leaving this vertex is labeled~$d$, and
the other vertex of this edge is labeled~$j$.
We also show that the first sum on the right-hand side of~\e_ref{recurdfn_e2} is
the sum over  all graphs such that $\mu(v_{\min})\!=\!i$ and $\val(v_{\min})\!\ge\!3$.\\

\noindent
If $\Ga$ is a decorated graph with $\mu(v_{\min})\!=\!i$ as above,
\BE{phirestr_e} \ev_1^*\phi_i\big|_{Q_{\Ga}}
=\prod_{k\neq i}(\al_i\!-\!\al_k)
=\E\big(T_{\mu(v_{\min})}\Pn\big).\EE
Suppose in addition that $\val(v_{\min})\!=\!2$ and $|\tE_{v_{\min}}|\!=\!1$.
Let $v_1\!\equiv\!(v_{\min})_+$ be the immediate successor of $v_{\min}$ in~$\Ga$
and $e_1\!=\!\{v_{\min},v_1\}$ be the edge leaving~$v_{\min}$.
If $|\Edg|\!>\!1$ or $\val(v_1)\!>\!2$,
i.e.~$\Ga$ is not as in the first diagram in Figure~\ref{loopgraph_fig},
we break~$\Ga$ at $v_1$ into two ``sub-graphs":
\begin{enumerate}[label=(\roman*)]
\item $\Ga_1\!=\!\Ga_{e_1}$ consisting of the vertices $v_{\min}\!\prec\!v_1$,
the edge $\{v_{\min},v_1\}$, with the $\d$-value of~0 at both vertices, and a marked point
at $v$ and~$v_1$;
\item $\Ga_2$ consisting of all vertices, edges, and marked points of $\Ga$, other than
the vertex $v_{\min}$ and the edge $\{v_{\min},v_1\}$, and with the marked point~1
attached at~$v_1$;
\end{enumerate}
see Figure~\ref{splitgraph_fig}.
By~\e_ref{Zlocus_e},
\BE{Qsplit_e} Q_{\Ga}\approx Q_{\Ga_1}\times Q_{\Ga_2}.\EE
Let $\pi_1,\pi_2\!:Q_{\Ga}\lra Q_{\Ga_1},Q_{\Ga_2}$ be the component projection maps.
By~\e_ref{NZform_e} and~\e_ref{cVform_e},
\begin{equation*}\begin{split}
\frac{\E(\N Q_{\Ga})}{\E(T_{P_i}\Pn)}&=
\pi_1^*\bigg(\frac{\E(\N Q_{\Ga_1})}{\E(T_{P_i}\P^{n-1})}\bigg)
\cdot\pi_2^*\bigg(\frac{\E(\N Q_{\Ga_2})}{\E(T_{P_{\mu(v_1)}}\P^{n-1})}\bigg)
\cdot \big(\om_{e;v_1}-\pi_2^*\psi_1\big),\\
\E\big(\dot\V_{n;\a;m'}^{(|\Ga|)}\big)\big|_{Q_{\Ga}}&=
  \pi_1^*\E\big(\dot\V_{n;\a}^{(|\Ga_1|)}\big)\cdot\pi_2^*\E(\dot\V_{n;\a;m'}^{(|\Ga_2|)}\big)\,,
  \qquad
\E\big(\ddot\V_{n;\a;m'}^{(|\Ga|)}\big)\big|_{Q_{\Ga}}=
  \pi_1^*\E\big(\ddot\V_{n;\a}^{(|\Ga_1|)}\big)\cdot\pi_2^*\E(\ddot\V_{n;\a;m'}^{(|\Ga_2|)}\big)\,.
\end{split}\end{equation*}\\

\begin{figure}
\begin{pspicture}(-3.5,-1)(10,1)
\psset{unit=.4cm}
\psline[linewidth=.04](1.5,0)(5,0)\rput(3.2,.7){\smsize{$d$}}
\pscircle*(1.5,0){.2}\rput(1.5,-.85){\smsize{$(i,0)$}}
\pscircle*(5,0){.2}\rput(5,-.85){\smsize{$(j,0)$}}
\psline[linewidth=.04](1.5,0)(0,1.5)\rput(0,2){\smsize{$\bf 1$}}
\psline[linewidth=.04](5,0)(6.5,1.5)\rput(6.5,2){\smsize{$\bf 2$}}
\rput(3.3,-1.7){$\Ga_1$}
\psline[linewidth=.04](15,0)(18.5,0)\pscircle*(18.5,0){.2}
\psline[linewidth=.04](18.5,0)(21,2.5)\pscircle*(21,2.5){.2}
\psline[linewidth=.04](18.5,0)(21,-2.5)\pscircle*(21,-2.5){.2}
\pscircle*(15,0){.2}\rput(15,-.85){\smsize{$(j,2)$}}
\psline[linewidth=.04](15,0)(13.5,1.5)\rput(13.5,2){\smsize{$\bf 1$}}
\psline[linewidth=.04](21,2.5)(24,2.5)\rput(24.5,2.5){\smsize{$\bf 2$}}
\psline[linewidth=.04](21,-2.5)(24,-2.5)\rput(24.5,-2.5){\smsize{$\bf 3$}}
\rput(17.5,-1.7){$\Ga_2$}
\end{pspicture}
\caption{The two sub-graphs of the second graph in Figure~\ref{loopgraph_fig}.}
\label{splitgraph_fig}
\end{figure}
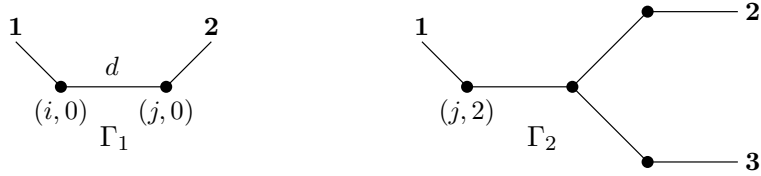

\noindent
Combining the above splittings
with~\e_ref{psiform_e}, \e_ref{EdgContr_e}, and~\e_ref{phirestr_e}, we find that
\begin{equation*}\begin{split}
&q^{|\Ga|}\!\int_{Q_{\Ga}}
\frac{\E(\dot\V_{n;\a;m'}^{(|\Ga|)})\ev_1^*\phi_i}{\hb\!-\!\psi_1}
\prod_{j=2}^{j=m}(\psi_j^{b_j}\ev_j^*\vp_j)\Big|_{Q_{\Ga}}
\frac{1}{\E(\N Q_{\Ga})}\\
&\hspace{.2in}
=\frac{\dot\fC_i^{\mu(v_1)}(\d(e_1))q^{\d(e_1)}}{\hb-\frac{\al_{\mu(v_1)}-\al_i}{\d(e_1)}}
\Bigg(q^{|\Ga_2|}\!\bigg\{\int_{Q_{\Ga_2}}\!\!\!\!\!
\frac{\E(\dot\V_{n;\a;m'}^{(|\Ga_2|)})\ev_1^*\phi_{\mu(v_1)}}{\hb\!-\!\psi_1}
\prod_{j=2}^{j=m}(\psi_j^{b_j}\ev_j^*\vp_j)
\frac{1}{\E(\N Q_{\Ga_2})}\bigg\}\bigg|_{\hb=\frac{\al_{\mu(v_1)}-\al_i}{\d(e_1)}}\Bigg)\,;
\end{split}\end{equation*}
the same identity holds with $\dot\V$ replaced by $\ddot\V$ and
$\dot\fC_i^{\mu(v_1)}(\d(e_1))$ by $\ddot\fC_i^{\mu(v_1)}(\d(e_1))$.
By the first equation in~\e_ref{Zeval_e} with $i$ replaced by $\mu(v_1)$ 
and the localization formula~\e_ref{ABothm_e}, 
the sum of the last factor above over all possibilities for~$\Ga_2$, 
with $\Ga_1$ held fixed,~is
$$\dot\cZ_{n;\a;m'}^{(\b,\vp)}
\big(\al_{\mu(v_1)},(\al_{\mu(v_1)}\!-\!\al_i)/\d(e_1),q\big)-
\de_{m,2}\bigg(\frac{\al_i\!-\!\al_{\mu(v_1)}}{\d(e_1)}\bigg)^{b_2}\vp_2|_{P_{\mu(v_1)}};$$
if $\dot\V$ is replaced by $\ddot\V$, then the sum becomes 
$$\ddot\cZ_{n;\a;m'}^{(\b,\vp)}
\big(\al_{\mu(v_1)},(\al_{\mu(v_1)}\!-\!\al_i)/\d(e_1),q\big)-
\de_{m,2}\bigg(\frac{\al_i\!-\!\al_{\mu(v_1)}}{\d(e_1)}\bigg)^{b_2}\vp_2|_{P_{\mu(v_1)}}\,.$$
In the $m\!=\!2$ case,  the contributions of the one-edge graph $\Ga_{i\mu(v_1)}(\d(e_1))$
such as $\d(v_1)\!=\!0$, as in the first diagram in Figure~\ref{loopgraph_fig}, 
to the two expressions in~\e_ref{Zeval_e} are 
$$\frac{\dot\fC_i^{\mu(v_1)}(\d(e_1))q^{\d(e_1)}}
{\hb_1-\frac{\al_{\mu(v_1)}-\al_i}{\d(e_1)}}
\bigg(\frac{\al_i\!-\!\al_{\mu(v_1)}}{\d(e_1)}\bigg)^{\!\!b_2}\vp_2|_{P_{\mu(v_1)}}
\quad\hbox{and}\quad
\frac{\ddot\fC_i^{\mu(v_1)}(\d(e_1))q^{\d(e_1)}}
{\hb_1-\frac{\al_{\mu(v_1)}-\al_i}{\d(e_1)}}
\bigg(\frac{\al_i\!-\!\al_{\mu(v_1)}}{\d(e_1)}\bigg)^{\!\!b_2}
\vp_2|_{P_{\mu(v_1)}}\,,$$
respectively.
Thus, the contributions to the two expressions in~\e_ref{Zeval_e} 
from all graphs~$\Ga$ such that $\d(v_{\min})\!=\!0$,
$\mu(v_1)\!=\!j$, and $\d(e_1)\!=\!d$ are given by~\e_ref{djsummand_e},
i.e.~they are the $(d,j)$-summands in the recursions~\e_ref{recurdfn_e2} 
for~$\dot\cZ_{n;\a;m'}^{(\b,\vp)}$ and~$\ddot\cZ_{n;\a;m'}^{(\b,\vp)}$.\\

\noindent
Suppose next that $\Ga$ is a graph such that $\mu(v_{\min})\!=\!i$ and $\val(v_{\min})\!\ge\!3$.
If $|\Ver|\!>\!1$, i.e.~$\Ga$ is not as in the first diagram in Figure~\ref{splitgraph_fig2},
we break~$\Ga$ at $v_{\min}$ into ``sub-graphs":
\begin{enumerate}[label=(\roman*)]
\item\label{Ga0split_it}  $\Ga_0$ consisting of the vertex $\{v_{\min}\}$ only,
with the same $\mu$ and $\d$-values as in~$\Ga$, with the same marked points as before,
along with a marked point~$e$ for each edge $e\!\in\!\tE_{v_{\min}}$ from~$v_{\min}$; 
\item for each $e\!\in\!\tE_{v_{\min}}$, $\Ga_{c;e}$ consisting of the branch of $\Ga$
beginning with the edge~$e$ at $v_{\min}$, 
with the $\d$-value of $v_{\min}$ replaced by~0, and with one marked point at~$v_{\min}$;
\end{enumerate}
see Figures~\ref{splitgraph_fig2} and~\ref{splitgraph_fig2b}.
By~\e_ref{Zlocus_e},
\begin{gather}\label{Zlocus_e2}
Q_{\Ga}\approx Q_{\Ga_0}\times \prod_{e\in\tE_{v_{\min}}}\!\!\!\!\!\!Q_{\Ga_{c;e}}=
(\ov\cM_{0,m_0|\d(v_{\min})}/\bS_{\d(v_{\min})})\times 
\prod_{e\in\tE_{v_{\min}}}\!\!\!\!\!\! Q_{\Ga_{c;e}}\,,\\
\hbox{where}\qquad m_0=|\vt^{-1}(v_{\min})|\!+\!|\tE_{v_{\min}}|. \notag
\end{gather}
Let $\pi_0,\pi_{c;e}$ be the component projection maps in~\e_ref{Zlocus_e2}.
Since $\psi_1|_{Q_{\Ga}}=\pi_0^*\psi_1$,
$\T$ acts trivially on $\ov\cM_{0,m_0|\d(v_{\min})}$,
$$\psi_1=1\times\psi_1\in H_{\T}^*\big(\ov\cM_{0,m_0|\d(v_{\min})}\big)
=H_{\T}^*\otimes H^*\big(\ov\cM_{0,m_0|\d(v_{\min})}\big),$$
i.e.~$\T$ acts trivially on the universal cotangent line bundle for the first marked point
on $\ov\cM_{0,m_0|\d(v_{\min})}$, and
the dimension of $\ov\cM_{0,m_0|\d(v_{\min})}$ is $m_0\!+\!\d(v_{\min})\!-\!3$,
$$\frac{1}{\hb-\psi_1}\big|_{Q_{\Ga}}
=\sum_{r=0}^{m_0+\d(v_{\min})-3}\hb^{-(r+1)}\pi_0^*\psi_1^r\,.$$
Since $m_0\!+\!\d(v_{\min})\!\le\!m\!+\!|\Ga|$ and 
$\Ga$ contributes to the coefficient of $q^{|\Ga|}$
in~\e_ref{Zeval_e}, 
it follows that \e_ref{recurdfn_e2} holds with $\F$ replaced by 
$\dot\cZ_{n;\a;m'}^{(\b,\vp)}$ and $\ddot\cZ_{n;\a;m'}^{(\b,\vp)}$
with $N_d\!=\!m\!+\!d\!-\!2$, $C_i^j(d)\!=\!\dot\fC_i^j(d)$ in the first case, and 
$C_i^j(d)\!=\!\ddot\fC_i^j(d)$ in the second case.\\

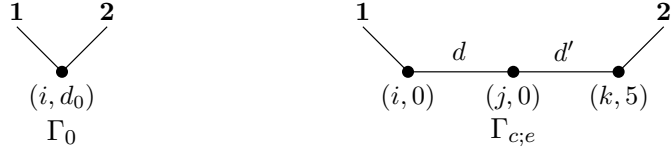
\begin{figure}
\begin{pspicture}(1.5,-.8)(10,1)
\psset{unit=.4cm}
\pscircle*(15,0){.2}\rput(15,-.85){\smsize{$(i,d_0)$}}
\psline[linewidth=.04](15,0)(13.5,1.5)\rput(13.5,2){\smsize{$\bf 1$}}
\psline[linewidth=.04](15,0)(16.5,1.5)\rput(16.5,2){\smsize{$\bf 2$}}
\rput(15,-2){$\Ga_0$}
\psline[linewidth=.04](26.5,0)(30,0)\pscircle*(30,0){.2}
\rput(28.2,.7){\smsize{$d$}}\rput(31.7,.7){\smsize{$d'$}}
\pscircle*(26.5,0){.2}\rput(26.5,-.85){\smsize{$(i,0)$}}
\rput(30,-.85){\smsize{$(j,0)$}}
\rput(33.5,-.85){\smsize{$(k,5)$}}
\psline[linewidth=.04](30,0)(33.5,0)\pscircle*(33.5,0){.2}
\psline[linewidth=.04](26.5,0)(25,1.5)\rput(25,2){\smsize{$\bf 1$}}
\psline[linewidth=.04](33.5,0)(35,1.5)\rput(35,2){\smsize{$\bf 2$}}
\rput(30,-2){$\Ga_{c;e}$}
\end{pspicture}
\caption{The two sub-graphs of the last graph in Figure~\ref{loopgraph_fig}.}
\label{splitgraph_fig2}
\end{figure}

\noindent
The argument in this section extends to products of projective spaces and concavex sheaves~\e_ref{gensheaf_e} as described in \cite[Section~6]{GWvsSQ}.

\section{Polynomiality for stable quotients} 
\label{MPCpf_sec}

\noindent
In this section, we use the classical localization theorem~\cite{ABo}
to show that the generating functions $\hb^{m-2}\dot\cZ_{n;\a;m'}^{(\b,\vp)}$ and 
$\hb^{m-2}\ddot\cZ_{n;\a;m'}^{(\b,\vp)}$ defined in~\e_ref{cZgendfn_e} 
satisfy specific mutual polynomiality
conditions of Definition~\ref{MPC_dfn} with respect to the generating function 
$\dot\cZ_{n;\a}$ defined in~\e_ref{cZdfn_e}.
The argument is similar to the proof in \cite[Section~7]{GWvsSQ} of self-polynomiality
for the generating function $\dot\cZ_{n;\a}$ defined in~\e_ref{cZdfn_e},
but requires some modifications.

\subsection{Proof of Lemma~\ref{polgen_lmm}} 
\label{MPCpf_subs}

\noindent
The proof involves applying the classical localization theorem~\cite{ABo} with $(n\!+\!1)$-torus
$$\wt\T\equiv\C^*\times\T,$$
where $\T=(\C^*)^n$ as before.
We denote the weight of the standard action of the one-torus $\C^*$ on $\C$ by~$\hb$.
Thus, by Section~\ref{equivsetup_subs},
$$H_{\C^*}^*\approx\Q[\hb], \quad H_{\wt\T}^*\approx\Q[\hb,\al_1,\ldots,\al_n]
.$$
Throughout this section, $V\!=\!\C\!\oplus\!\C$ denotes the representation
of $\C^*$ with the weights $0$ and~$-\hb$.
The induced action on $\P V$ has two fixed points:
$$q_1\equiv[1,0], \qquad q_2\equiv[0,1].$$
With $\ga_1\!\lra\!\P V$ denoting the tautological line bundle,
\BE{hbaract_e}
\E(\ga_1^*)\big|_{q_1}=0, \quad \E(\ga_1^*)\big|_{q_2}=-\hb,
\quad \E(T_{q_1}\P V)=\hb, \quad \E(T_{q_2}\P V)=-\hb;\EE
this follows from our definition of the weights in \cite[Section~3]{GWvsSQ}.\\

\noindent
For each $d\!\in\!\Z^{\ge0}$, the action of $\wt\T$ on $\C^n\!\otimes\!\Sym^dV^*$ induces
an action on
$$\ov\X_d\equiv\P\big(\C^n\!\otimes\!\Sym^dV^*\big).$$
It has $(d\!+\!1)n$ fixed points:
$$P_i(r)\equiv \big[\ti{P}_i\otimes u^{d-r}v^r\big], \qquad
i\in[n],~r\in\{0\}\!\cup\![d],$$
if $(u,v)$ are the standard coordinates on $V$ and $\ti{P}_i\!\in\!\C^n$ is
the $i$-th coordinate vector (so that $[\ti{P}_i]\!=\!P_i\!\in\!\P^{n-1}$).
Let
$$\Om\equiv \E(\ga^*)\in H_{\wt\T}^*\big(\ov\X_d\big)$$
denote the equivariant hyperplane class.\\

\noindent
For all $i\!\in\![n]$ and $r\in\{0\}\!\cup\![d]$,
\begin{equation}\label{Xrestr_e}
\Om|_{P_i(r)}=\al_i\!+\!r\hb, \qquad
\E(T_{P_i(r)}\ov\X_d)=\Bigg\{\underset{(s,k)\neq(r,i)}{\prod_{s=0}^d\prod_{k=1}^n}
(\Om\!-\!\al_k\!-\!s\hb)\Bigg\}\bigg|_{\Om=\al_i+r\hb}.
\end{equation}
Since
\begin{gather*}
B\ov\X_d=\P\big(B(\C^n\!\otimes\!\Sym^dV^*)\big)\lra B\wt\T \qquad\hbox{and}\\
c\big(B(\C^n\!\otimes\!\Sym^dV^*)\big)
=\prod_{s=0}^d\prod_{k=1}^n\big(1-(\al_k\!+\!s\hb)\big)
\in H^*\big(B\wt\T),
\end{gather*}
the $\wt\T$-equivariant cohomology of $\ov\X_d$ is given~by
\begin{equation*}\begin{split}
H_{\ti\T}^*\big(\ov\X_d\big)&\equiv H^*\big(B\ov\X_d\big)
=H^*\big(B\wt\T\big)\big[\Om\big]\Big/
 \prod_{s=0}^d\prod_{k=1}^n\big(\Om-(\al_k\!+\!s\hb)\big)\\
&\approx \Q\big[\Om,\hb,\al_1,\ldots,\al_n\big]\Big/
 \prod_{s=0}^d\prod_{k=1}^n\big(\Om-\al_k-\!s\hb\big)\\
&\subset \Q_{\al}[\hb,\Om]\Big/
 \prod_{s=0}^d\prod_{k=1}^n\big(\Om-\al_k-s\hb\big).
\end{split}\end{equation*}
In particular, every element of $H_{\ti\T}^*(\ov\X_d)$ is a polynomial in
$\Om$ with coefficients in $\Q_{\al}[\hb]$ of degree at most $(d\!+\!1)n\!-\!1$.\\

\noindent
For each $d\!\in\!\Z^{\ge0}$, let 
\BE{Xdfn_e}
\X_d'=\big\{b\!\in\!\ov{Q}_{0,m}\big(\P V\!\times\!\P^{n-1},(1,d)\big)\!:
\ev_1(b)\!\in\!q_1\!\times\!\P^{n-1},~\ev_2(b)\!\in\!q_2\!\times\!\P^{n-1}\big\}.\EE
A general element of $b$ of $\X_d'$ determines a morphism
$$(f,g)\!:\P^1\lra(\P V,\P^{n-1}),$$
up to an automorphism  of the domain~$\P^1$.
Thus, the morphism
$$g\circ f^{-1}\!: \P V\lra \P^{n-1}$$
is well-defined and determines an element $\th(b)\!\in\!\ov\X_d$.
By \cite[Section~7]{GWvsSQ}, this morphism extends to a $\wt\T$-equivariant morphism
$$\th\!=\!\th_d\!:\X_d'\lra\ov\X_d.~\footnote{This morphism is the composition of
the morphism $\th_d$ defined in~\cite{GWvsSQ} in the $m\!=\!2$ case with
the forgetful morphism 
$$\ov{Q}_{0,m}\big(\P V\!\times\!\P^{n-1},(1,d)\big)\lra
\ov{Q}_{0,2}\big(\P V\!\times\!\P^{n-1},(1,d)\big).$$}
$$\\

\noindent
If $d\!\in\!\Z^+$, there is also a natural forgetful morphism
$$F\!: \X_d'\lra \ov{Q}_{0,m}\big(\P^{n-1},d\big),$$
which drops the first sheaf in the pair and contracts one component of the domain if necessary.
If in addition $m\!\ge\!m'\!\ge\!2$, $f_{m',m}$ is as in~\e_ref{fmmdfn_e}, and
$\V_{n;\a}^{(d)}$ is as in~\e_ref{Vstandfn_e},  let
$$ \V_{n;\a;m'}^{(d)}=f_{m',m}^*\V_{n;\a}^{(d)}\lra\ov{Q}_{0,m}(\Pn,d)\,.$$
From the usual short exact sequence for the restriction along $\si_1$, we find that
\BE{VvsVpr_e}
\E\big(\V_{n;\a;m'}^{(d)}\big)
=\lr\a \ev_1^*\x^{\ell(\a)}\E\big(\dot\V_{n;\a;m'}^{(d)}\big)
\in H_{\T}^*\big(\ov{Q}_{0,m}(\Pn,d)\big).\EE
In the case $d\!=\!0$, we set
\begin{equation*}\begin{split}
F^*\E(\V_{n;\a;m'}^{(0)})&=\lr\a\ev_1^*\big(1\!\times\!\x^{\ell(\a)}\big)\in 
H^*\big( \ov{Q}_{0,m}(\P V\!\times\!\P^{n-1},(1,0))\big),\\
F^*\E(\ddot\V_{n;\a;m'}^{(0)})&=1\in 
H^*\big( \ov{Q}_{0,m}(\P V\!\times\!\P^{n-1},(1,0))\big).
\end{split}\end{equation*}

\begin{lmm}\label{PhiZstr_lmm}
Let $l\!\in\!\Z^{\ge0}$, $m,m',n\!\in\!\Z^+$ with $m\!\ge\!m'\!\ge\!2$, and $\a\!\in\!(\Z^*)^l$.
With $\dot\cZ_{n;\a},\dot\cZ_{n;\a;m'}^{(\b,\vp)},\ddot\cZ_{n;\a;m'}^{(\b,\vp)}$ 
as in~\e_ref{cZdfn_e} and~\e_ref{cZgendfn_e}, 
\BE{PhiZstr_e}\begin{split}
&(-\hb)^{m-2}\Phi_{\dot\cZ_{n;\a;m'},\ch\cZ_{n;\a;m'}^{(\b,\vp)}}^{\ch\eta}(\hb,z,q) \\
&\hspace{1.5in}=\sum_{d=0}^{\i}q^d\!\!
\int_{\X_d'}\!\!\!\ne^{(\th^*\Om)z}F^*\E(\ch\V_{n;\a;m'}^{(d)})\,
\psi_2^{b_2}\ev_2^*\vp_2
\prod_{j=3}^{j=m}\!\!\psi_j^{b_j}\ev_j^*(\E(\ga_1^*)\vp_j).
\end{split}\EE
with $(\ch\cZ,\ch\V,\ch\eta)=(\dot\cZ,\V,\dot\eta),(\ddot\cZ,\ddot\V,\ddot\eta)$.
\end{lmm}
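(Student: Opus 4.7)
The plan is to apply the $\wt\T$-equivariant Atiyah--Bott localization theorem to the integral on the right-hand side of \e_ref{PhiZstr_e}; the moduli space $\X_d'$ is a proper Deligne--Mumford stack with a natural $\wt\T$-action, and the equivariant weights on its tangent bundle are nontrivial on each fixed component by the usual check.

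First I classify the $\wt\T$-fixed loci in $\X_d'$. Since the $\C^*$-factor acts on $\P V$ with only the two fixed points $q_1,q_2$, and since the constraints defining $\X_d'$ in \e_ref{Xdfn_e} force $\ev_1$ to $q_1\!\times\!\P^{n-1}$ and $\ev_2$ to $q_2\!\times\!\P^{n-1}$, a $\wt\T$-fixed stable quotient decomposes into a distinguished ``bridge'' rational component on which the map to $\P V$ is the identity and the map to $\P^{n-1}$ is constant at some fixed point $P_i$, together with two subtrees attached at the $q_1$- and $q_2$-preimages that carry stable-quotient configurations to $\P^{n-1}$ of degrees $d_1$ and $d_2$ (with $d_1\!+\!d_2\!=\!d$), indexed by decorated graphs exactly as in Section~\ref{FixLoc_subs}. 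Marked point~$1$ attaches to the $q_1$-side and marked points $2,3,\ldots,m$ attach to the $q_2$-side: the latter because the insertion $\E(\ga_1^*)$ vanishes at $q_1$ by \e_ref{hbaract_e}, killing any contribution in which $j\!\ge\!3$ attaches to the $q_1$-side and producing the scalar $(-\hb)^{m-2}$ from \e_ref{hbaract_e} on the $q_2$-side.

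Next I assemble the contributions. By \e_ref{Xrestr_e}, the restriction $\th^*\Om|_F$ on a fixed locus equals $\al_i\!+\!r\hb$ for an $r$ determined by how the bridge-degree is distributed among the two sides; tracking this carefully shows that summing $\ne^{(\th^*\Om)z}q^d$ over all $d$ and all splittings of the bridge degree produces the effective substitution $q\!\mapsto\!q\ne^{\hb z}$ in the $q_1$-side generating function, while the $q_2$-side contributes the overall factor $\ne^{\al_iz}$ and keeps the $q$-variable unchanged. The tangent weights $\hb$ and $-\hb$ of $\P V$ at $q_1$ and $q_2$ respectively, entering through the normal-bundle Euler class of the bridge, produce the sign flip $\hb\!\mapsto\!-\hb$ on the $q_2$-side. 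Summing the localization contributions on each side via the formulas of Section~\ref{FixLoc_subs} reassembles them into $\dot\cZ_{n;\a;m'}(\al_i,\hb,q\ne^{\hb z})$ and $\ch\cZ_{n;\a;m'}^{(\b,\vp)}(\al_i,-\hb,q)$ respectively, with the node-smoothing modes at the bridge endpoints accounting precisely for the $(\hb\!-\!\psi_1)$-denominators in these generating functions. The tangent weights of $\P^{n-1}$ normal to $P_i$ contribute $1/\prod_{k\neq i}(\al_i\!-\!\al_k)$, and the choice of $\ch\V\in\{\V,\ddot\V\}$ contributes the weight $\ch\eta(\al_i)$ via the identity \e_ref{VvsVpr_e}.

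Summing over all fixed loci and all $i$ then yields \e_ref{PhiZstr_e}. The principal obstacle will be the precise bookkeeping of equivariant weights on the bridge, in particular verifying how the $r\hb$-shift in $\th^*\Om$ at each bridge configuration produces the asymmetric insertions $q\ne^{\hb z}$ versus $q$ in the two arguments of $\Phi$, and confirming that the bridge's normal-bundle contributions combine cleanly with the node-smoothing factors on the two sides, rather than leaving residual corrections that would spoil the factorized form of the right-hand side.
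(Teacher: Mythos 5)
Your proposal follows essentially the same approach as the paper: $\wt\T$-localization on $\X_d'$, classification of fixed loci by graphs with a distinguished bridge edge joining the $q_1$- and $q_2$-sides over a fixed point $P_i$, the vanishing of $\E(\ga_1^*)$ at $q_1$ forcing the markings $j\ge3$ to the $q_2$-side and supplying the $(-\hb)^{m-2}$ factor, the weights $\pm\hb$ on the bridge producing the $(\hb-\psi)$ and $(-\hb-\psi)$ denominators, $\E(\V)=\lr\a\ev_1^*\x^{\ell(\a)}\E(\dot\V)$ supplying $\ch\eta(\al_i)$, and the computation $\th^*\Om|_{Q_\Ga}=\al_i+|\Ga_1|\hb$ turning $\ne^{(\th^*\Om)z}q^{|\Ga|}$ into $(q\ne^{\hb z})^{|\Ga_1|}q^{|\Ga_2|}$. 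The one spot where your phrasing is loose is the role of $r$ in $\th^*\Om$: $r$ is the $\P^{n-1}$-degree of the $q_1$-subtree $\Ga_1$, not a "splitting of the bridge degree" (the bridge itself has $\P^{n-1}$-degree~$0$), but this does not affect the correctness of your outline.
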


\noindent
Since the right-hand sides of the above expressions lie in 
$H_{\wt\T}^*[[z,q]]\subset\Q_{\al}[\hb][[z,q]]$,
this lemma is a more precise version of Lemma~\ref{polgen_lmm}.

\subsection{Proof of Lemma~\ref{PhiZstr_lmm}}
\label{PolStrPf_subs}

\noindent
We apply the localization theorem of~\cite{ABo} to the $\wt\T$-action on~$\X_d'$.
We show that each fixed locus of the $\wt\T$-action on $\X_d'$ contributing 
to the right-hand sides in~\e_ref{PhiZstr_e}
corresponds to a pair $(\Ga_1,\Ga_2)$ of decorated graphs as in~\e_ref{decortgraphdfn_e}, 
with $\Ga_1$ and $\Ga_2$ contributing to
the two generating functions in the subscript of the corresponding correlator~$\Phi$
evaluated at $x\!=\!\al_i$ for some $i\!\in\![n]$.\\

\noindent
Similarly to Section~\ref{recpf_sec}, the fixed loci of the $\wt\T$-action on
$\ov{Q}_{0,m}(\P V\!\times\!\P^{n-1},(d',d))$
correspond to decorated graphs $\Ga$ with $m$ marked points distributed between the ends
of~$\Ga$.
The map~$\d$ should now take values in pairs of nonnegative integers, indicating the degrees of
the two subsheaves.
The map~$\mu$ should similarly take values in the pairs~$(i,j)$
with $i\!\in\![2]$ and $j\!\in\![n]$,
indicating the fixed point~$(q_i,P_j)$ to which the vertex is mapped.
The $\mu$-values on consecutive vertices must differ by precisely one of the two components.\\

\noindent
The situation for the  $\wt\T$-action on
$$\X_d'\subset\ov{Q}_{0,m}\big(\P V\!\times\!\P^{n-1},(1,d)\big)$$
is simpler, however.
There is a unique edge of positive $\P V$-degree; we draw it as a thick line
in Figure~\ref{Xlocus_fig}.
The first component of the value of~$\d$ on all other edges
and on all vertices must be~0; so we drop~it.
The first component of the value of~$\mu$ on the vertices changes only when the thick
edge is crossed.
Thus, we drop the first components of the vertex labels as well,
with the convention that these components are~1 on the left side of the thick
edge and~2 on the right.
In particular, the vertices to the left of the thick edge (including the left endpoint)
lie in $q_1\!\times\!\P^{n-1}$ and the vertices to its right lie in $q_2\!\times\!\P^{n-1}$.
Thus, by~\e_ref{Xdfn_e}, the marked point~1 is attached to a vertex to the left of
the thick edge and the marked point~2 is attached to a vertex to the right.
By the localization formula~\e_ref{ABothm_e} and
the first equation in~\e_ref{hbaract_e}, $\Ga$ does not contribute to the right-hand
sides in~\e_ref{PhiZstr_e} unless the marked points indexed by $j\!\ge\!3$ are also
attached to vertices to the right of the thick edge.
Finally, the remaining, second component of~$\mu$ takes the same value $i\!\in\![n]$
on the two vertices of the thick edge.\\

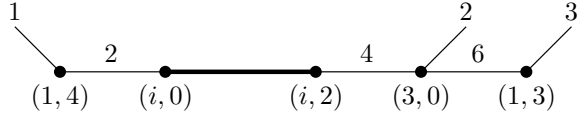
\begin{figure}
\begin{pspicture}(0.3,-.8)(10,1)
\psset{unit=.4cm}
\psline[linewidth=.15](17,0)(22,0)
\pscircle*(17,0){.2}\pscircle*(22,0){.2}
\psline[linewidth=.04](17,0)(13.5,0)\pscircle*(13.5,0){.2}
\psline[linewidth=.04](13.5,0)(12,1.5)\rput(12,2){\smsize{$1$}}
\rput(15.2,.6){\smsize{2}}\rput(13.5,-.85){\smsize{$(1,4)$}}
\rput(17,-.85){\smsize{$(i,0)$}}
\psline[linewidth=.04](22,0)(25.5,0)\pscircle*(25.5,0){.2}
\psline[linewidth=.04](25.5,0)(29,0)\pscircle*(29,0){.2}
\psline[linewidth=.04](25.5,0)(27,1.5)\rput(27,2){\smsize{$2$}}
\psline[linewidth=.04](29,0)(30.5,1.5)\rput(30.5,2){\smsize{$3$}}
\rput(22,-.85){\smsize{$(i,2)$}}\rput(25.5,-.85){\smsize{$(3,0)$}}
\rput(29,-.85){\smsize{$(1,3)$}}
\rput(23.7,.6){\smsize{4}}\rput(27.4,.6){\smsize{6}}
\end{pspicture}
\caption{A graph representing a fixed locus in $\X_d'$; $i\!\neq\!1,3$}
\label{Xlocus_fig}
\end{figure}

\noindent
Let $\cA_i$ denote the set of graphs as above so that the $\mu$-value on
the two endpoints of the thick edge is labeled~$i$; see Figure~\ref{Xlocus_fig}.
We break each graph $\Ga\!\in\!\cA_i$ into three sub-graphs:
\begin{enumerate}[label=(\roman*)]
\item $\Ga_1$ consisting of all vertices of $\Ga$ to the left of the thick edge,
including its left vertex~$v_1$ with its $\d$-value,
and a new marked point attached to~$v_1$;
\item $\Ga_0$ consisting of the thick edge~$e_0$, its two vertices $v_1$ and $v_2$,
with $\d$-values set to~0, and
new marked points $1$ and $2$ attached to $v_1$ and $v_2$, respectively;
\item $\Ga_2$ consisting of all vertices to the right of the thick edge,
including its right vertex~$v_2$ with its $\d$-value, and a new marked point attached to~$v_2$;
\end{enumerate}
see Figure~\ref{Xsplit_fig}.
From~\e_ref{Zlocus_e}, we then obtain a splitting of
the fixed locus in $\X_d'$ corresponding to~$\Ga$:
\BE{Xlocus_e1}
Q_{\Ga}\approx Q_{\Ga_1}\times Q_{\Ga_0}\times Q_{\Ga_2}
\subset\ov{Q}_{0,2}(\P^{n-1},|\Ga_1|)\times
\ov{Q}_{0,2}(\P V,1)\times\ov{Q}_{0,m}(\P^{n-1},|\Ga_2|).\EE
The exceptional cases are $|\Ga_1|\!=\!0$ and $m\!=\!2,|\Ga_2|\!=\!0$;
the above isomorphism then holds with the corresponding component replaced by a point.\\

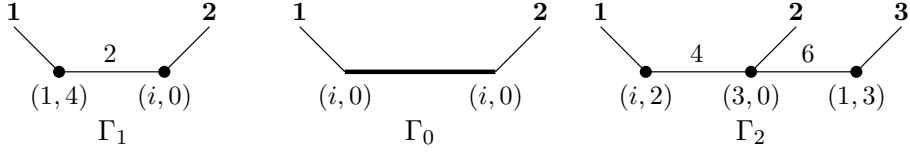
\begin{figure}
\begin{pspicture}(0,-.8)(10,1)
\psset{unit=.4cm}
\psline[linewidth=.04](7.5,0)(11,0)\rput(9.2,.6){\smsize{2}}
\pscircle*(7.5,0){.2}\rput(7.5,-.85){\smsize{$(1,4)$}}
\pscircle*(11,0){.2}\rput(11,-.85){\smsize{$(i,0)$}}
\psline[linewidth=.04](7.5,0)(6,1.5)\rput(6,2){\smsize{$\bf 1$}}
\psline[linewidth=.04](11,0)(12.5,1.5)\rput(12.5,2){\smsize{$\bf 2$}}
\rput(9.3,-2){$\Ga_1$}
\psline[linewidth=.04](17,0)(15.5,1.5)\rput(15.5,2){\smsize{$\bf 1$}}
\psline[linewidth=.04](22,0)(23.5,1.5)\rput(23.5,2){\smsize{$\bf 2$}}
\psline[linewidth=.15](17,0)(22,0)
\rput(17,-.85){\smsize{$(i,0)$}}\rput(22,-.85){\smsize{$(i,0)$}}
\rput(19.5,-2){$\Ga_0$}
\psline[linewidth=.04](27,0)(30.5,0)\pscircle*(30.5,0){.2}
\psline[linewidth=.04](30.5,0)(34,0)\pscircle*(34,0){.2}
\pscircle*(27,0){.2}\rput(27,-.85){\smsize{$(i,2)$}}
\rput(30.5,-.85){\smsize{$(3,0)$}}\rput(34,-.85){\smsize{$(1,3)$}}
\psline[linewidth=.04](27,0)(25.5,1.5)\rput(25.5,2){\smsize{$\bf 1$}}
\psline[linewidth=.04](30.5,0)(32,1.5)\rput(32,2){\smsize{$\bf 2$}}
\psline[linewidth=.04](34,0)(35.5,1.5)\rput(35.5,2){\smsize{$\bf 3$}}
\rput(28.7,.6){\smsize{4}}\rput(32.4,.6){\smsize{6}}
\rput(30.5,-2){$\Ga_2$}
\end{pspicture}
\caption{The three sub-graphs of the graph in Figure~\ref{Xlocus_fig}}
\label{Xsplit_fig}
\end{figure}

\noindent
Let $\pi_1$, $\pi_0$, and $\pi_2$ denote the three component projection maps in~\e_ref{Xlocus_e1}.
By~\e_ref{NZform_e}, 
\BE{bndsplit3_e}
\frac{\E(\N Q_{\Ga})}{\E(T_{P_i}\Pn)}=
\pi_1^*\bigg(\frac{\E(\N Q_{\Ga_1})}{\E(T_{P_i}\P^{n-1})}\bigg)
\cdot\pi_2^*\bigg(\frac{\E(\N Q_{\Ga_2})}{\E(T_{P_i}\P^{n-1})}\bigg)
\cdot \big(\om_{e_0;v_1}-\pi_1^*\psi_2\big) \big(\om_{e_0;v_2}-\pi_2^*\psi_1\big).\EE
Since for every $j\!=\!m'\!+\!1,\ldots\!m$ the closest vertex of $\Ver_{m'}$ lies
to the right of the thick edge, by~\e_ref{cVform_e} and~\e_ref{VvsVpr_e}, 
\BE{bndsplit4_e}\begin{split}
F^*\E\big(\V_{n;\a;m'}^{(|\Ga|)}\big)\big|_{Q_{\Ga}}&=
\dot\eta(\al_i)\pi_1^*\E\big(\ddot\V_{n;\a}^{(|\Ga_1|)}\big)
\pi_2^*\E(\dot\V_{n;\a;m'}^{(|\Ga_2|)}\big)\,,\\
F^*\E\big(\ddot\V_{n;\a;m'}^{(|\Ga|)}\big)\big|_{Q_{\Ga}}&=
\ddot\eta(\al_i)\pi_1^*\E\big(\ddot\V_{n;\a}^{(|\Ga_1|)}\big)
\pi_2^*\E(\ddot\V_{n;\a;m'}^{(|\Ga_2|)}\big).
\end{split}\EE
Since $Q_{\Ga_0}$ consists of a degree~1 map,
by the last two identities in~\e_ref{hbaract_e}
\BE{omrestr_e} \om_{e_0;v_1}=\hb, \qquad \om_{e_0;v_2}=-\hb.\EE
The morphism $\th$ takes the locus $Q_{\Ga}$ to a fixed point $P_k(r)\!\in\!\ov\X_d$.
It is immediate that $k\!=\!i$.
By continuity considerations, $r\!=\!|\Ga_1|$.
Thus, by the first identity in \e_ref{Xrestr_e},
\BE{thOmrestr_e}\th^*\Om\big|_{Q_{\Ga}}=\al_i+|\Ga_1|\hb.\EE
Combining \e_ref{bndsplit3_e}-\e_ref{thOmrestr_e} and the second equation in~\e_ref{hbaract_e}, 
we obtain
\BE{Xbndlsplit_e5}\begin{split}
&q^{|\Ga|}\int_{Q_{\Ga}}\!\!\! \frac{\ne^{(\th^*\Om)z}F^*\E(\V_{n;\a;m'}^{(|\Ga|)})\,
\psi_2^{b_2}\ev_2^*\vp_2
\prod\limits_{j=3}^{j=m}\!\!\psi_j^{b_j}\ev_j^*(\E(\ga_1^*)\vp_j)}{\E(\N Q_{\Ga})}\\
&\hspace{.8in}=(-\hb)^{m-2}
\frac{\dot\eta(\al_i)\ne^{\al_iz}}{\prod\limits_{k\neq i}(\al_i\!-\!\al_k)}
\Bigg\{\ne^{|\Ga_1|\hb z}q^{|\Ga_1|}\!\! \int_{Q_{\Ga_1}} \!\!\!\!\!\!
\frac{\E(\ddot\V_{n;\a}^{(|\Ga_1|)})\ev_2^*\phi_i}{\hb\!-\!\psi_2}
\Big|_{Q_{\Ga_1}}\frac{1}{\E(\N Q_{\Ga_1})}\Bigg\}\\
&\hspace{1.7in}
\times \Bigg\{q^{|\Ga_2|}\int_{Q_{\Ga_2}} \!\!\!\!\!
\frac{\E(\dot\V_{n;\a;m'}^{(|\Ga_2|)})\ev_1^*\phi_i
\prod\limits_{j=2}^{j=m}(\psi_j^{b_j}\ev_j^*\vp_j)}
{(-\hb)\!-\!\psi_1}
\Big|_{Q_{\Ga_2}}\frac{1}{\E(\N Q_{\Ga_2})}\Bigg\}.
\end{split}\EE
This identity remains valid with $|\Ga_1|\!=\!0$ and/or $m\!=\!2,|\Ga_2|\!=\!0$
if we set the corresponding integral to~1 or to~$\hb^{b_2}\vp_2|_{P_i}$, 
respectively.\\

\noindent
We now sum up the last identity over all $\Ga\!\in\!\cA_i$.
This is the same as summing over all pairs $(\Ga_1,\Ga_2)$ of decorated graphs such that
\begin{enumerate}[label=(\arabic*)]
\item $\Ga_1$ is a 2-pointed graph of degree $d_1\!\ge\!0$ such that the marked point~2
is attached to a vertex labeled~$i$;
\item $\Ga_2$ is an $m$-pointed graph of degree $d_2\!\ge\!0$ such that the marked point~1
is attached to a vertex labeled~$i$.
\end{enumerate}
By the localization formula~\e_ref{ABothm_e} and symmetry,
\begin{equation*}\begin{split}
&1+\sum_{\Ga_1}(q\ne^{\hb z})^{|\Ga_1|}\Bigg\{\! \int_{Q_{\Ga_1}} \!\!
\frac{\E(\ddot\V_{n;\a}^{(|\Ga_1|)})\ev_2^*\phi_i}{(\hb\!-\!\psi_2)\E(\N Q_{\Ga_1})}
\Bigg\}\\
&\hspace{1in}=1+\sum_{d=1}^{\i}(q\ne^{\hb z})^d
\int_{\ov{Q}_{0,2}(\P^{n-1},d)}\frac{\E(\ddot\V_{n;\a}^{(d)})\ev_2^*\phi_i}{\hb\!-\!\psi_2}
=\dot\cZ_{n;\a}\big(\al_i,\hb,q\ne^{\hb z}\big);\\
&\de_{m,2}\hb^{b_2}\vp_2|_{P_i}
+\sum_{\Ga_2}q^{|\Ga_2|}\Bigg\{\int_{Q_{\Ga_2}} \!\!\!\
\frac{\E(\dot\V_{n;\a;m'}^{(|\Ga_2|)})\ev_1^*\phi_i
\prod\limits_{j=2}^{j=m}(\psi_j^{b_j}\ev_j^*\vp_j)}
{\E(\N Q_{\Ga_2})(-\hb\!-\!\psi_1)}\Bigg\}\\
&\quad=\de_{m,2}\hb^{b_2}\vp_2|_{P_i}+\sum_{d=\max(3-m,0)}^{\i}\!\!\!\!\!\!q^d\!\!
\int_{\ov{Q}_{0,m}(\P^{n-1},d)}\!\!\!
\frac{\E(\dot\V_{n;\a;m'}^{(|\Ga_2|)})\ev_1^*\phi_i
\prod\limits_{j=2}^{j=m}(\psi_j^{b_j}\ev_j^*\vp_j)}
{(-\hb\!-\!\psi_1)}
=\dot\cZ_{n;\a;m'}^{(\b,\vp)}\big(\al_i,-\hb,q\big).
\end{split}\end{equation*}
Combining with this with~\e_ref{ABothm_e}, we obtain
\begin{equation*}\begin{split}
&\sum_{d=0}^{\i}q^d\!\!
\int_{\X_d'}\ne^{(\th^*\Om)z}F^*\E(\V_{n;\a;m'}^{(d)})\,
\psi_2^{b_2}\ev_2^*\vp_2
\prod_{j=3}^{j=m}\psi_j^{b_j}\ev_j^*(\E(\ga_1^*)\vp_j)\\
&\hspace{.5in}
=(-\hb)^{m-2}
\sum_{i=1}^n\frac{\dot\eta(\al_i)\ne^{\al_iz}}{\prod\limits_{k\neq i}(\al_i\!-\!\al_k)}
\dot\cZ_{n;\a}\big(\al_i,\hb,q\ne^{\hb z}\big)\dot\cZ_{n;\a;m'}^{(\b,\vp)}\big(\al_i,-\hb,q\big)\\
&\hspace{.5in}
=(-\hb)^{m-2}\Phi_{\dot\cZ_{n;\a},\dot\cZ_{n;\a;m'}^{(\b,\vp)}}^{\dot\eta}(\hb,z,q),
\end{split}\end{equation*}
as claimed in the $\dot\cZ$ identity in~\e_ref{PhiZstr_e}.\\

\noindent
From  \e_ref{bndsplit3_e}-\e_ref{thOmrestr_e}, we also find that 
\e_ref{Xbndlsplit_e5} holds with $\V$ and $\dot\V$ replaced by $\ddot\V$ and
$\dot\eta$ by~$\ddot\eta$, with the same conventions in the $|\Ga_1|\!=\!0$ and
$m\!=\!2,|\Ga_2|\!=\!0$ cases.
We then sum up the resulting identity over all pairs $(\Ga_1,\Ga_2)$ of decorated graphs as
in the previous paragraph.
The sum of the terms in the first curly brackets over all possibilities for $\Ga_1$ 
is exactly the same as before, while the sum of the terms in the second curly brackets 
over all possibilities for $\Ga_2$ is described by the same expression as before
with $\dot\V_{n;\a;m'}^{(|\Ga_2|)}$ and $\dot\cZ_{n;\a;m'}^{(\b,\vp)}$ replaced by 
$\ddot\V_{n;\a;m'}^{(|\Ga_2|)}$ and $\ddot\cZ_{n;\a;m'}^{(\b,\vp)}$, respectively.
Thus,
\begin{equation*}\begin{split}
&\sum_{d=0}^{\i}q^d\!\!
\int_{\X_d'}\ne^{(\th^*\Om)z}F^*\E(\ddot\V_{n;\a;m'}^{(d)})\,
\psi_2^{b_2}\ev_2^*\vp_2
\prod_{j=3}^{j=m}\psi_j^{b_j}\ev_j^*(\E(\ga_1^*)\vp_j)\\
&\hspace{.5in}
=(-\hb)^{m-2}
\sum_{i=1}^n\frac{\ddot\eta(\al_i)\ne^{\al_iz}}{\prod\limits_{k\neq i}(\al_i\!-\!\al_k)}
\dot\cZ_{n;\a}\big(\al_i,\hb,q\ne^{\hb z}\big)\ddot\cZ_{n;\a;m'}^{(\b,\vp)}\big(\al_i,-\hb,q\big)\\
&\hspace{.5in}=
(-\hb)^{m-2}\Phi_{\dot\cZ_{n;\a},\ddot\cZ_{n;\a;m'}^{(\b,\vp)}}^{\ddot\eta}(\hb,z,q),
\end{split}\end{equation*}
as claimed in the $\ddot\cZ$ identity in~\e_ref{PhiZstr_e}.\\

\noindent
In the case of products of projective spaces and concavex sheaves~\e_ref{gensheaf_e},
the spaces
$$\ov{Q}_{0,m}(\P V\!\times\!\Pn,(1,d)) \qquad\hbox{and}\qquad
\ov\X_d=\P\big(\C^n\!\otimes\!\Sym^dV^*\big) $$
are replaced~by
$$\ov{Q}_{0,m}\big(\P V\!\times\!\P^{n_1-1}\!\times\!\ldots\!\times\!\P^{n_p-1},
(1,d_1,\ldots,d_p)\big)  \quad\hbox{and}\quad
\P\big(\C^{n_1}\!\otimes\!\Sym^{d_1}V^*\big)\!\!\times\!\ldots\!\times\!
\P\big(\C^{n_p}\!\otimes\!\Sym^{d_p}V^*\big),$$
respectively.
Lemma~\ref{PhiZstr_lmm} extends to this situation by replacing $z$ and $q$ 
 in~\e_ref{PhiZstr_e}
by $z_1,\ldots,z_p$ and $q_1,\ldots,q_p$,
$q^d$ by $q_1^{d_1}\!\ldots\!q_p^{d_p}$, 
$\X_d'$ by $\X_{d_1,\ldots,d_p}'$,
$\ne^{(\th^*\Om z)}$ by $\ne^{(\th^*\Om_1)z_1+\ldots+(\th^*\Om_p)z_p}$,
and the indices $d$ and $n$ on the bundles $\V,\ddot\V$ by $(d_1,\ldots,d_p)$
and $(n_1,\ldots,n_p)$, and summing over $d_1,\ldots,d_p\!\ge\!0$ instead of $d\!\ge\!0$.
The vertices of the thick edge in Figure~\ref{Xlocus_fig}
are now labeled by a tuple $(i_1,\ldots,i_p)$ with $i_s\!\in\![n_s]$,
as needed for the extension of Definition~\ref{MPC_dfn}
described at the end of Section~\ref{poliC_subs}.
The relation~\e_ref{thOmrestr_e} becomes
$$\th^*\Om_s\big|_{Q_{\Ga}}=\al_{s;i_s}+|\Ga_1|_s\hb\,,$$
where $|\Ga_1|_s$ is the sum of the $s$-th components of the values of~$\d$
on the vertices and edges of~$\Ga_1$ (corresponding to the degree of the maps to~$\P^{n_s-1}$).
Otherwise, the proof is identical.

\section{Stable quotients vs.~Hurwitz numbers}
\label{equiv0pf_sec}

\noindent
Our proof of Propositions~\ref{equiv0_prp} and~\ref{equiv0_prp2} that describe twisted 
Hurwitz numbers on $\ov\cM_{0,3|d}$ is analogous to the proof of \cite[Theorem~4]{GWvsSQ},
which describes similar integrals on~$\ov\cM_{0,2|d}$.
In particular, we show that it is sufficient to verify the statements of 
Propositions~\ref{equiv0_prp} and~\ref{equiv0_prp2} for each fixed $\a$ and for all $n$
sufficiently large (compared to~$|\a|$).
For $\nu_n(\a)\!>\!0$, we obtain the statements of Propositions~\ref{equiv0_prp} 
and~\ref{equiv0_prp2}
by analyzing the secondary (middle) terms in the recursion~\e_ref{recurdfn_e2}
for the three-point generating functions $\dot\cZ_{n;\a;3}^{(\0,\1)}$ and
$\dot\cZ_{n;\a;2}^{(\0,\1)}$ defined in~\e_ref{cZ3dfn_e0} and~\e_ref{cZ3dfn_e}, 
respectively.
We also use \e_ref{cZstr_e} and~\e_ref{cZ3vsY_e}.
The latter is the string equation for stable quotients invariants;
in Proposition~\ref{cZstring_prp2}, we show that it is equivalent to 
Proposition~\ref{equiv0_prp2}
whenever $\nu_(\a)\!\ge\!0$.
In Proposition~\ref{cZstring_prp}, we show that~\e_ref{cZ3vsY_e} is equivalent 
to Proposition~\ref{equiv0_prp} whenever $\nu_n(\a)\!\ge\!0$.
We confirm Proposition~\ref{equiv0_prp} whenever $\nu_n(\a)\!>\!0$ using 
Proposition~\ref{uniqueness_prp}; see Corollary~\ref{MirSym_crl}.
Since it is sufficient to verify the statement of Proposition~\ref{equiv0_prp} with 
$\nu_n(\a)\!>\!0$, the $\nu_n(\a)\!=\!0$ case of Proposition~\ref{equiv0_prp} then concludes
the proof of~\e_ref{cZ3vsY_e}.

\subsection{Proof of Propositions~\ref{Z3equiv_prp}, \ref{equiv0_prp}, and~\ref{equiv0_prp2}}
\label{Hurwpf_subs}

\noindent
With $n$ and $\a$ as in Propositions~\ref{equiv0_prp} and~\ref{equiv0_prp2} and 
$b_1,b_2,b_3,r\!\in\!\Z^{\ge0}$, let
\begin{equation*}\begin{split}
\F_{n;\a}^{(b_1,b_2,b_3)}(\al_i,q)&=
\sum_{d=0}^{\i}\frac{q^d}{d!}
\int_{\ov\cM_{0,3|d}}\frac{\E(\dot\V_{\a}^{(d)}(\al_i))\psi_1^{b_1}\psi_2^{b_2}\psi_3^{b_3}}
{\prod\limits_{k\neq i}\!\!\E(\dot\V_1^{(d)}(\al_i\!-\!\al_k))},\\
\F_{n;\a;r}^{(b_1,b_2,b_3)}(\al_i,q)&=
\sum_{d=0}^{\i}\frac{q^d}{d!}
\int_{\ov\cM_{0,3|d}}\frac{\E(\dot\V_{\a;r}^{(d)}(\al_i))\psi_1^{b_1}\psi_2^{b_2}\psi_3^{b_3}}
{\prod\limits_{k\neq i}\!\!\E(\dot\V_1^{(d)}(\al_i\!-\!\al_k))}\,.
\end{split}\end{equation*}
By \cite[Remark 8.5]{GWvsSQ},
\BE{Fred_e}\F_{n;\a}^{(b_1,b_2,b_3)}(\al_i,q)=\frac{\xi_{n;\a}(\al_i,q)^{b_1+b_2+b_3}}{b_1!b_2!b_3!}
\F_{n;\a}^{(0,0,0)}(\al_i,q);\EE
thus, it is sufficient to show that 
\BE{equiv0thm_e}
\F_{n;\a}^{(0,0,0)}(\al_i,q)
=\frac{1}{\dot\Phi_{n;\a}^{(0)}(\al_i,q)}\,.\EE
By the same reasoning as in \cite[Remarks 8.4,8.5]{GWvsSQ},
$$\F_{n;\a;r}^{(b_1,b_2,b_3)}(\al_i,q)=\frac{\xi_{n;\a}(\al_i,q)^{b_1+b_2}}{b_1!b_2!}
\F_{n;\a;r}^{(0,0,b_3)}(\al_i,q);$$
thus, it is sufficient to show that 
\BE{equiv0thm2_e}\sum_{b=0}^{\i}\sum_{r=0}^{\i}\F_{n;\a;r}^{(0,0,b)}(\al_i,q)
\Rs{\hb=0}\Bigg\{\frac{(-1)^b}{\hb^{b+1}}\LR{\dot\cY_{n;\eset}(\al_i,\hb,q)}_{q;r}q^r\Bigg\}
=1.\EE

\begin{crl}\label{MirSym_crl}
Let $l\!\in\!\Z^{\ge0}$, $n\!\in\!\Z^+$, and $\a\!\in\!(\Z^*)^l$.
If $\nu_n(\a)\!>\!0$,
$$\dot\cZ_{n;\a;3}^{(\0,\1)}(\x,\hb,q)=
\hb^{-1}\dot\cZ_{n;\a}(\x,\hb,q)
\in H_{\T}^*(\Pn)\big[\big[\hb^{-1},q\big]\big].$$
\end{crl}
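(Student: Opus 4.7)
The plan is to apply the uniqueness principle of Proposition~\ref{uniqueness_prp} to the difference
\[
\F'(\x,\hb,q) \equiv \hb\,\dot\cZ_{n;\a;3}^{(\0,\1)}(\x,\hb,q) - \dot\cZ_{n;\a}(\x,\hb,q),
\]
with reference series $\F = \dot\cZ_{n;\a}$ and weight $\eta = \dot\eta(\x) = \lr\a\x^{\ell(\a)}$. The hypothesis on $\F$ is immediate: $\dot\cZ_{n;\a}(\al_i,\hb,0) = 1 \in \Q_{\al}^*$, and every higher coefficient of $q$ in $\dot\cZ_{n;\a}(\al_i,\hb,q)$ is a polynomial in $\hb^{-1}$ with coefficients in $\Q_\al$. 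The corollary reduces to establishing that $\F'$ is $\dot\fC$-recursive, satisfies the $\dot\eta$-MPC with respect to $\F$, and vanishes modulo $\hb^{-1}$; the proposition will then force $\F' = 0$.

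Recursivity and polynomiality are assembled from Lemmas~\ref{recgen_lmm}--\ref{polgen_lmm}. The $(m,m')=(3,3)$, $(\b,\vp)=(\0,\1)$ case of Lemma~\ref{recgen_lmm} shows that $\dot\cZ_{n;\a;3}^{(\0,\1)}$ is $\dot\fC$-recursive, while the $(m,m')=(2,2)$, $(\b,\vp)=(0,1)$ case identifies $\dot\cZ_{n;\a;2}^{(0,1)}$ with $\dot\cZ_{n;\a}$ and yields the same there. Lemma~\ref{Phistr_lmm4}(ii) applied with $f = \hb \in \Q_\al[\hb]$, together with $\Q_\al$-linearity of the recursivity condition, transfers this property to $\F'$. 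The same two cases of Lemma~\ref{polgen_lmm} assert that $\hb\,\dot\cZ_{n;\a;3}^{(\0,\1)} = \hb^{3-2}\dot\cZ_{n;\a;3}^{(\0,\1)}$ and $\dot\cZ_{n;\a} = \hb^{2-2}\dot\cZ_{n;\a;2}^{(0,1)}$ satisfy the $\dot\eta$-MPC with respect to $\dot\cZ_{n;\a}$; linearity of the MPC condition then transfers it to $\F'$.

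The crucial step, and the only place where the Fano hypothesis $\nu_n(\a) > 0$ is used, is to verify $\F' \cong 0 \pmod{\hb^{-1}}$. Since both $\dot\cZ_{n;\a;3}^{(\0,\1)}$ and $\dot\cZ_{n;\a}$ lie in $H_\T^*(\Pn)[[\hb^{-1},q]]$, checking the congruence reduces to comparing the coefficient of $\hb^{-1}$ in $\dot\cZ_{n;\a;3}^{(\0,\1)}$ with the coefficient of $\hb^0$ in $\dot\cZ_{n;\a}$. Expanding $1/(\hb-\psi_1)=\sum_{k\ge0}\hb^{-k-1}\psi_1^k$, the $d$-th summand of $\dot\cZ_{n;\a;3}^{(\0,\1)}$ contributes $\ev_{1*}\E(\dot\V_{n;\a}^{(d)}) \in H^*(\Pn)$, with $\ev_1\!:\ov{Q}_{0,3}(\Pn,d)\!\to\!\Pn$, to the $\hb^{-1}$-coefficient, while the $d\ge 1$ summands of $\dot\cZ_{n;\a}$ contribute only strictly negative powers of $\hb$. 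The $d=0$ term of $\dot\cZ_{n;\a}$ gives $1$; the $d=0$ term of $\dot\cZ_{n;\a;3}^{(\0,\1)}$, computed on $\ov{Q}_{0,3}(\Pn,0)\cong\Pn$ where $\psi_1 = 0$, $\ev_1 = \id$, and the twist by $-\si_1$ kills $\dot\V_{n;\a}^{(0)}$, equals $\hb^{-1}$ and contributes $1$ to the $\hb^{-1}$-coefficient. For $d \ge 1$, the dimension count
\[
\deg\,\ev_{1*}\E(\dot\V_{n;\a}^{(d)}) = \rk\dot\V_{n;\a}^{(d)} - \dim\ov{Q}_{0,3}(\Pn,d) + \dim\Pn = |\a|d - dn = -\nu_n(\a)d < 0
\]
forces the pushforward to vanish. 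Hence both coefficients equal $1$, so $\F' \cong 0 \pmod{\hb^{-1}}$, and Proposition~\ref{uniqueness_prp} yields the corollary. The hard part is precisely this dimensional vanishing, which collapses in the Calabi--Yau case $\nu_n(\a)=0$ and there reappears as the $\dot{I}_0(q)^{-1}$-correction of Proposition~\ref{Z3equiv_prp}.
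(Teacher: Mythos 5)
Your proof is correct and follows essentially the same route as the paper's: both establish $\dot\fC$-recursivity and the $\dot\eta$-MPC for $\hb\dot\cZ_{n;\a;3}^{(\0,\1)}$ and $\dot\cZ_{n;\a}$ via Lemmas~\ref{recgen_lmm}, \ref{polgen_lmm}, and \ref{Phistr_lmm4}\ref{mult_ch}, verify agreement modulo $\hb^{-1}$ by the dimension count on $\ov{Q}_{0,3}(\Pn,d)$ (your degree bound $-\nu_n(\a)d<0$ is the same inequality the paper writes as $\dim\ov{Q}_{0,3}(\Pn,d)-\rk\dot\V_{n;\a;3}^{(d)}>\dim\Pn$), and conclude via Proposition~\ref{uniqueness_prp}. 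Your exposition is somewhat more explicit, correctly identifying the degree-zero term of $\dot\cZ_{n;\a;3}^{(\0,\1)}$ as $\hb^{-1}$ (as it must be from \e_ref{cZgendfn_e} and for consistency with Proposition~\ref{Z3equiv_prp}, even though \e_ref{cZ3dfn_e0} writes ``$1$'').
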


\begin{proof}
By Lemma~\ref{Phistr_lmm4}\ref{mult_ch} and Lemmas~\ref{recgen_lmm} and~\ref{polgen_lmm},
the series $\hb\dot\cZ_{n;\a;3}^{(\0,\1)}(\x,\hb,q)$ and $\dot\cZ_{n;\a}(\x,\hb,q)$
are $\fC$-recursive and satisfy the $\dot\eta$-MPC with respect to~$\dot\cZ_{n;\a}(\x,\hb,q)$,
no matter what $n$ and $\a$ are.
It is immediate that 
$$\dot\cZ_{n;\a}(\x,\hb,q)\cong1 \quad\mod\hb^{-1}\,.$$
If $\nu_n(\a)\!>\!0$ and $d\!\in\!\Z^+$,
\begin{equation*}\begin{split}
\dim\ov{Q}_{0,3}(\Pn,d)-\rk\dot\V_{n;\a;3}^{(d)}
&=\nu_n(\a)d+(n\!-\!1)>n\!-\!1=\dim\Pn.
\end{split}\end{equation*}
Thus,
$$\hb\dot\cZ_{n;\a;3}^{(\0,\1)}(\x,\hb,q) 
\cong1 \quad\mod\hb^{-1}\,,$$
whenever $\nu_n(\a)\!>\!0$.
The claim now follows from Proposition~\ref{uniqueness_prp}.
\end{proof}

\begin{prp}\label{cZstring_prp}
If $l\!\in\!\Z^{\ge0}$, $n\!\in\!\Z^+$, and $\a\!\in\!(\Z^*)^l$ are such that $\nu_n(\a)\!\ge\!0$,
then
\BE{equivMS_e2}
\dot\cZ_{n;\a;3}^{(\0,\1)}(\x,\hb,q)=\hb^{-1}\frac{\dot\cZ_{n;\a}(\x,\hb,q)}{\dot{I}_0(q)}
\in \left(H_{\T}^*(\Pn)\right)\big[\big[\hb^{-1},q\big]\big]\EE
if and only if \e_ref{equiv0thm_e} holds for all $i\!\in\![n]$.
\end{prp}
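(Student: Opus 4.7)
The plan is to apply Proposition~\ref{uniqueness_prp} to reduce~\e_ref{equivMS_e2} to a fixed-point comparison modulo $\hb^{-1}$, and then to evaluate that comparison by localization so that it becomes exactly~\e_ref{equiv0thm_e}.

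First I would multiply~\e_ref{equivMS_e2} through by $\hb$ and show that both series $\hb\dot\cZ_{n;\a;3}^{(\0,\1)}(\x,\hb,q)$ and $\dot\cZ_{n;\a}(\x,\hb,q)/\dot{I}_0(q)$ are $\dot\fC$-recursive and satisfy the $\dot\eta$-MPC with respect to $\dot\cZ_{n;\a}(\x,\hb,q)$. For the first series this is Lemmas~\ref{recgen_lmm} and~\ref{polgen_lmm} applied with $m\!=\!m'\!=\!3$, $\b\!=\!\0$, $\vp\!=\!\1$; for the second it is the content of \cite[Sections~6,7]{GWvsSQ}, upgraded by Lemma~\ref{Phistr_lmm4}\ref{mult_ch} after multiplication by the $\hb$-independent factor $1/\dot{I}_0(q)\!\in\!\Q[[q]]$. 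Hence their difference satisfies the hypotheses of Proposition~\ref{uniqueness_prp} with $\F\!=\!\dot\cZ_{n;\a}$, using $\dot\cZ_{n;\a}(\al_i,\hb,q)\!\in\!1\!+\!q\cdot\Q_{\al}(\hb)[[q]]$. Since $\dot\cZ_{n;\a}(\al_i,\hb,q)\!\cong\!1\!\pmod{\hb^{-1}}$ is immediate from~\e_ref{cZdfn_e}, the identity~\e_ref{equivMS_e2} is then equivalent to the fixed-point relations
\BE{planred_e}
\hb\dot\cZ_{n;\a;3}^{(\0,\1)}(\al_i,\hb,q)\,\cong\,\frac{1}{\dot{I}_0(q)}\pmod{\hb^{-1}}\qquad\forall\,i\!\in\![n].
\EE

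The next step is to evaluate the left-hand side of~\e_ref{planred_e} by the localization theorem~\e_ref{ABothm_e} on $\ov{Q}_{0,3}(\Pn,d)$, organized by the fixed-locus decomposition of Section~\ref{FixLoc_subs}. The insertion $\ev_1^*\phi_i$ forces $\mu(v_{\min})\!=\!i$, the expansion $\hb/(\hb\!-\!\psi_1)|_{Q_\Ga}\!\cong\!1\!\pmod{\hb^{-1}}$ removes the $\psi_1$-denominator, and the contribution of the single-vertex graph (with $\d(v_{\min})\!=\!d$ and all three marked points attached to~$v_{\min}$) reduces via~\e_ref{phirestr_e}, \e_ref{NZform_e}, \e_ref{cVform_e}, and the identity $\E(T_{P_i}\Pn)\!=\!\prod_{k\neq i}(\al_i\!-\!\al_k)$ to the coefficient of~$q^d$ in~$\F_{n;\a}^{(0,0,0)}(\al_i,q)$. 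Rewriting the right-hand side of~\e_ref{equivMS_e2} at $\x\!=\!\al_i$ using the expansion~\e_ref{cYexp_e} and the first identity in~\e_ref{equivGivental_e}, and extracting the $\hb^0$-coefficient, rewrites~\e_ref{planred_e} as the scalar identity
\[
\F_{n;\a}^{(0,0,0)}(\al_i,q)\cdot\dot\Phi_{n;\a}^{(0)}(\al_i,q)\,=\,1,
\]
which is precisely~\e_ref{equiv0thm_e}.

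The main obstacle will be the bookkeeping in the second step: showing that the multi-vertex contributions to the localization of $\hb\dot\cZ_{n;\a;3}^{(\0,\1)}(\al_i,\hb,q)$ combine with the higher coefficients $\dot\Phi_{n;\a}^{(r)}$, $r\!\ge\!1$, in~\e_ref{cYexp_e} in such a way that they cancel in the mod~$\hb^{-1}$ reduction, leaving only the single-vertex term matched against~$\dot\Phi_{n;\a}^{(0)}$. I would handle this by first using~\e_ref{Fred_e} to collapse all $\psi_1$-descendants at the first marked point in the single-vertex contribution into an overall factor~$\ne^{\xi_{n;\a}(\al_i,q)/\hb}$, which matches the $\ne^{\xi_{n;\a}(\al_i,q)/\hb}$ prefactor on the right of~\e_ref{cYexp_e}. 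After cancelling this common exponential and using the vanishing of positive-$\hb$ coefficients of $\dot\cY_{n;\a}$ to collapse the remaining sum $\sum_{r\ge 0}\dot\Phi^{(r)}\hb^r/\dot{I}_0^2$ modulo~$\hb^{-1}$, the residual comparison becomes the desired identity between $\F_{n;\a}^{(0,0,0)}(\al_i,q)$ and $1/\dot\Phi_{n;\a}^{(0)}(\al_i,q)$. This parallels the two-point analysis in \cite[Section~8]{GWvsSQ}, but on $\ov\cM_{0,3|d}$ rather than $\ov\cM_{0,2|d}$.
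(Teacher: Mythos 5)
The reduction via Proposition~\ref{uniqueness_prp} in your first paragraph is correct: both $\hb\dot\cZ_{n;\a;3}^{(\0,\1)}$ and $\dot\cZ_{n;\a}/\dot{I}_0$ are $\dot\fC$-recursive and $\dot\eta$-MPC (Lemmas~\ref{recgen_lmm}, \ref{polgen_lmm}, \ref{Phistr_lmm4}), so~\e_ref{equivMS_e2} is indeed equivalent to the mod~$\hb^{-1}$ fixed-point relation you write. However, the passage from that relation to~\e_ref{equiv0thm_e} contains a genuine gap. The mod~$\hb^{-1}$ part of $\hb\dot\cZ_{n;\a;3}^{(\0,\1)}(\al_i,\hb,q)$ is $1+\sum_{d\ge1}q^d\int_{\ov{Q}_{0,3}(\Pn,d)}\E(\dot\V_{n;\a}^{(d)})\ev_1^*\phi_i$, and its localization decomposition is \emph{not} dominated by the single-vertex graph: the multi-vertex fixed loci contribute nontrivially and do not cancel. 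In the paper's computation (Section~\ref{recpf_subs2}), the secondary coefficients $\{\dot\cZ_{n;\a;3}^{(\0,\1)}\}_i^1(d)$ receive contributions from all four graph types of Figure~\ref{Stermgraphs_fig}; types~(ii) and~(iii) contribute $\F_{n;\a}^{(0,0,0)}\bigl(\dot\Phi^{(0)}_{n;\a}/\dot{I}_0-1\bigr)^{|\tE_{v_{\min}}|}$ each, combining with type~(i) to give $\F_{n;\a}^{(0,0,0)}\dot\Phi^{(0)2}_{n;\a}/\dot{I}_0^2$ (not $\F_{n;\a}^{(0,0,0)}$), while type~(iv) produces a feedback term involving $\dot\cZ_{n;\a;3}^{(\0,\1)}$ itself. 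Your proposal asserts these extra terms cancel, which is false, and without tracking them one cannot see why the comparison ultimately isolates $\F_{n;\a}^{(0,0,0)}\dot\Phi_{n;\a}^{(0)}=1$.

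The paper's actual route is also different in a structural way: rather than equating the $\hb^0$-coefficients directly, it derives two parallel \emph{degree-recursive} relations,~\e_ref{wtcZ1_e} for $\sum_d\{\wt\cZ_{n;\a}\}_i^1(d)q^d$ (via the Residue Theorem on $\P^1$ applied to the recursion for $\dot\cZ_{n;\a}$) and~\e_ref{wtcZna_e} for $\sum_d\{\dot\cZ_{n;\a;3}^{(\0,\1)}\}_i^1(d)q^d$ (via the graph-type analysis just described), each expressing the $r=-1$ secondary coefficient of the series in terms of the series itself. These relations have the same form iff~\e_ref{equiv0thm_e} holds, and only then does Proposition~\ref{uniqueness_prp} together with Lemma~\ref{recgen_lmm} close the argument inductively on degree. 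Your proposal skips the derivation of these two relations, which is the substantive content of the proof; the step you flag as "bookkeeping" is in fact the entire argument, and the intended cancellation you describe does not occur.
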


\begin{prp}\label{cZstring_prp2}
If $l\!\in\!\Z^{\ge0}$, $n\!\in\!\Z^+$, and $\a\!\in\!(\Z^*)^l$ are such that $\nu_n(\a)\!\ge\!n$,
then
\BE{equivMS_e4}
\dot\cZ_{n;\a;2}^{(\0,\1)}(\x,\hb,q)=\hb^{-1}\dot\cZ_{n;\a}(\x,\hb,q)
\in \left(H_{\T}^*(\Pn)\right)\big[\big[\hb^{-1},q\big]\big]\EE
if and only if \e_ref{equiv0thm2_e} holds for all $i\!\in\![n]$.
\end{prp}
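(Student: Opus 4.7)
The plan is to follow the template of Proposition~\ref{cZstring_prp}, which in turn parallels the proof of \cite[Theorem~4]{GWvsSQ}, adapted to the simpler bundle $f_{2,3}^*\dot\V_{n;\a}^{(d)}$ appearing in $\dot\cZ_{n;\a;2}^{(\0,\1)}$. By the $m\!=\!3,m'\!=\!2$ cases of Lemmas~\ref{recgen_lmm} and~\ref{polgen_lmm}, the series $\hb\,\dot\cZ_{n;\a;2}^{(\0,\1)}(\x,\hb,q)$ is $\dot\fC$-recursive and satisfies the $\dot\eta$-MPC with respect to $\dot\cZ_{n;\a}(\x,\hb,q)$, while the same two properties hold for $\dot\cZ_{n;\a}(\x,\hb,q)$ itself by the $m\!=\!m'\!=\!2$ cases. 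Applying Proposition~\ref{uniqueness_prp} to the difference reduces \e_ref{equivMS_e4} to the equality, for every $i\!\in\![n]$, of the polynomial-in-$\hb$ parts of $\hb\,\dot\cZ_{n;\a;2}^{(\0,\1)}(\al_i,\hb,q)$ and $\dot\cZ_{n;\a}(\al_i,\hb,q)$.

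The next step is to localize $\hb\,\dot\cZ_{n;\a;2}^{(\0,\1)}(\al_i,\hb,q)$ on $\ov{Q}_{0,3}(\Pn,d)$ via the fixed-locus data of Section~\ref{FixLoc_subs}. The factor $\ev_1^*\phi_i$ forces $\mu(v_{\min})\!=\!i$, and those graphs $\Ga$ with $\val(v_{\min})\!=\!2$ and $\vt^{-1}(v_{\min})\!=\!\{1\}$ give exactly the recursive $(d,j)$-summands of \e_ref{recurdfn_e2} with coefficients $\dot\fC_i^j(d)$, as in Section~\ref{RecPf_subs}. The polynomial-in-$\hb$ part therefore comes from graphs in which $v_{\min}$ is an aggregator~--- either $\val(v_{\min})\!\ge\!3$, or $\vt^{-1}(v_{\min})\!\supsetneq\!\{1\}$. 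Because $\dot\V_{n;\a;2}^{(d)}\!=\!f_{2,3}^*\dot\V_{n;\a}^{(d)}$ is pulled back via the forgetful morphism, the sub-branch $\Ga_3$ of $\Ga$ carrying the third marked point contributes no $\a$-twist to the Euler class. One may therefore split each such $\Ga$ into a core $\Ga\!\setminus\!\Ga_3$ (still supporting the twisted bundle with effective parameter $r$ equal to the degree collected along~$\Ga_3$) and an untwisted tail $\Ga_3$, and factor the localization contribution accordingly using \e_ref{Zlocus_e}, \e_ref{NZform_e}, and~\e_ref{EdgContr_e}.

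Summing over tails $\Ga_3$ of fixed total degree $r$ whose third marked point carries $\psi_3^b$, and using the closed formula $\dot\cZ_{n;\eset}\!=\!\dot\cY_{n;\eset}$ from \cite[Theorem~3]{GWvsSQ} together with the node-smoothing factor $\om-\psi$, identifies this tail contribution as the residue $\Rs{\hb=0}\bigl((-1)^b/\hb^{b+1}\bigr)\LR{\dot\cY_{n;\eset}(\al_i,\hb,q)}_{q;r}q^r$. The core contribution is precisely the equivariant Hurwitz integral $\F_{n;\a;r}^{(0,0,b)}(\al_i,q)$ against the bundle $\dot\V_{\a;r}^{(d)}(\al_i)$ over $\ov\cM_{0,3|d}$. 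The polynomial-in-$\hb$ part of the LHS of \e_ref{equivMS_e4} at $\x\!=\!\al_i$ thus takes the product form
\[
\Bigg(\sum_{b,r\ge0}\F_{n;\a;r}^{(0,0,b)}(\al_i,q)\Rs{\hb=0}\frac{(-1)^b}{\hb^{b+1}}\LR{\dot\cY_{n;\eset}(\al_i,\hb,q)}_{q;r}q^r\Bigg)\cdot\bigl(\text{polynomial part of }\dot\cZ_{n;\a}(\al_i,\hb,q)\bigr),
\]
whereas the polynomial part of the RHS is the second factor alone. Hence \e_ref{equivMS_e4} is equivalent to the bracketed sum being $1$ for every~$i$, which is \e_ref{equiv0thm2_e}.

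The principal obstacle is the bookkeeping in the penultimate step: one must verify that, after accounting for the edge-weight denominators \e_ref{EdgContr_e} specialized to $\a\!=\!\eset$ and the vertex contributions from~\e_ref{NZform_e}, the sum over tail graphs $\Ga_3$ telescopes into the single residue of $\dot\cY_{n;\eset}$ at $\hb\!=\!0$ indicated above. This is the direct analogue of the identification of tail contributions in the proof of \cite[Theorem~4]{GWvsSQ} and in the forthcoming proof of Proposition~\ref{cZstring_prp}; once carried out, the remaining assembly of the formula and the appeal to Proposition~\ref{uniqueness_prp} are routine.
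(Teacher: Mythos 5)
Your general framing is right — reduce to a mod-$\hb^{-1}$ comparison via Proposition~\ref{uniqueness_prp} and Lemmas~\ref{recgen_lmm}, \ref{polgen_lmm}, then localize on $\ov{Q}_{0,3}(\Pn,d)$ with $v_{\min}$ as the pivot vertex — and this matches the paper's strategy. But the central claim that the polynomial-in-$\hb$ part of $\hb\,\dot\cZ_{n;\a;2}^{(\0,\1)}(\al_i,\hb,q)$ factors as
\[
\Bigl(\textstyle\sum_{b,r}\F_{n;\a;r}^{(0,0,b)}(\al_i,q)\,\Rs{\hb=0}\frac{(-1)^b}{\hb^{b+1}}\LR{\dot\cY_{n;\eset}(\al_i,\hb,q)}_{q;r}q^r\Bigr)\cdot\bigl(\text{poly part of }\dot\cZ_{n;\a}(\al_i,\hb,q)\bigr)
\]
is not established and, as stated, is wrong. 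The decomposition of a graph $\Ga$ with $\val(v_{\min})\ge3$ at $v_{\min}$ produces subgraphs whose contributions do not assemble into such a product, because the bundle splitting in~\e_ref{NZcVsplit_e} changes with the graph type: when the branch leaving $v_{\min}$ carries \emph{both} marked points $2$ and $3$ (the paper's type~(iv) graphs, which occur whenever $\d(v_{\min})>0$, $|\vt^{-1}(v_{\min})|=1$, $|\tE_{v_{\min}}|=1$), the restriction of $\E(\dot\V_{n;\a;2}^{(|\Ga|)})$ to $Q_{\Ga}$ puts a copy of $\E(\dot\V_{n;\a;2}^{(|\Ga_{c;e}|)})$ on the branch. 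Summing over branches therefore yields residues of $\dot\cZ_{n;\a;2}^{(\0,\1)}$ itself, so the resulting identity is degree-recursive in the unknown power series rather than a closed product. This is precisely the self-referential term $-\sum_{b\ge1}\frac{\xi_{n;\a}^b}{b!}\Rs{\hb=0}\bigl\{\frac{(-1)^b}{\hb^b}\dot\cZ_{n;\a;2}^{(\0,\1)}(\al_i,\hb,q)\bigr\}$ that appears in the paper's computation; your product form suppresses it.

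What the paper actually does is compute the coefficient of $\hb^{-1}$ in $\dot\cZ_{n;\a;2}^{(\0,\1)}(\al_i,\hb,q)$ by classifying graphs with $\val(v_{\min})\ge3$ into four types according to which of marked points $2,3$ are at $v_{\min}$ versus on branches, applying the appropriate bundle splitting to each type (including the torsion parameter $r$ on the core when a branch carries only the forgotten marked point $3$), and comparing the resulting degree-recursive identity to the analogous identity \e_ref{wtcZb1_e} for $\hb^{-1}\dot\cZ_{n;\a}(\al_i,\hb,q)$. The two identities have the same form if and only if the Hurwitz sum in \e_ref{equiv0thm2_e} equals $1$; equality of the power series then follows as in Proposition~\ref{cZstring_prp}. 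To repair your argument, replace the factorization step with this four-type case analysis and compare degree-recursive relations rather than attempting a direct product decomposition; you should also be careful that the case analysis depends on where marked point~$2$ sits, since the branch through marked point~$2$ \emph{does} carry the $\a$-twist, so the twisting is not confined to the core as your description suggests.
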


\noindent
For any $t,t'\!\in\![d]$ with $t\!\neq\!t'$, let 
$\De_{tt'}\!\in\!H^2(\ov\cM_{0,m|d})$ denote the class of the diagonal divisor
$$\big\{[\cC,y_1,\ldots,y_m;\hat{y}_1,\ldots,\hat{y}_d]\in\!\ov\cM_{g,m|d}\!:~
\hat{y}_t\!=\!\hat{y}_{t'}\big\}.$$
For any $t\!\in\![d]$, let
$$\De_t=\sum_{t'>t}\De_{tt'}\,.$$
We denote by $\fs_1,\fs_2,\ldots$ the elementary symmetric polynomials~in 
$$\{\be_k\}=\big\{(\al_i\!-\!\al_k)^{-1}\!:~k\!\neq\!i\big\}$$
for any given number of formal variables~$\be_k$.
Let
$$A_{\a}(\al_i)
=\prod_{a_k>0}\!\big(a_k^{a_k}\al_i^{a_k}\big)\prod_{a_k<0}\!\big(a_k^{-a_k}\al_i^{-a_k}\big),
\qquad
A_{n;\a}(\al_i)
=\frac{A_{\a}(\al_i)}
{\prod\limits_{k\neq i}(\al_i\!-\!\al_k)}.$$

\begin{proof}[{\bf \emph{Proof of \e_ref{equiv0thm_e}}}]
By~(1) in the proof of \cite[Proposition~8.3]{GWvsSQ}, 
\BE{cHnums_e}\begin{split}
\frac{\LR{\F_{n;\a}^{(0,0,0)}(\al_i,q)}_{q;d}}{A_{n;\a}^d(\al_i)}
&=\int_{\ov\cM_{0,3|d}}
\frac{\prod\limits_{a_k>0}\prod\limits_{t=1}^d\prod\limits_{\la=1}^{a_k}
\!\!\left(1\!-\!\frac{\la\hat\psi_t}{a_k\al_i}\!+\!\frac{\De_t}{\al_i}\right)\,
\prod\limits_{a_k<0}\prod\limits_{t=1}^d\prod\limits_{\la=0}^{-a_k-1}
\!\!\left(1\!+\!\frac{\la\hat\psi_t}{a_k\al_i}\!+\!\frac{\De_t}{\al_i}\right)}
{\prod\limits_{k\neq i}\prod\limits_{t=1}^d
\!\!\left(1\!-\!\frac{\hat\psi_t}{\al_i-\al_k}\!+\!\frac{\De_t}{\al_i-\al_k}\right)}\\
&=\cH_{\a;d}\big(\al_i^{-1},\fs_1,\ldots,\fs_d)
\end{split}\EE
for some $\cH_{\a;d}\!\in\!\Q[y,\fs_1,\ldots,\fs_d]$ dependent only on $\a$ and $d$, but not 
on~$n$.\footnote{Whatever polynomial works for $n\!>\!d$ works for all~$n$;
this can be seen by setting the extra $\be_k$'s to~0.}
Similarly, for any $d,d'\!\in\!\Z^{\ge0}$
there exists $\dot\cY_{\a;d,d'}\!\in\!\Q[y,\fs_1,\ldots,\fs_{d'}]$, independent of~$n$,
such~that
\BE{cYnums_e}\LR{\hb^d\LR{\dot\cY_{n;\a}(\al_i,\hb,q)}_{q;d}}_{\hb;d'}
=A_{n;\a}^d(\al_i)\dot\cY_{\a;d,d'}(y,\fs_1,\ldots,\fs_{d'})\,.\EE
Thus, by \e_ref{cYexp_e},  there exist 
$\xi_{\a;d},\dot\Phi_{\a;d}^{(0)}\!\in\!\Q[y,\fs_1,\ldots,\fs_d]$, 
independent of~$n$, such~that 
\begin{equation*}\begin{split}
\LR{\xi_{n;\a}(\al_i,q)}_{q;d}
&\equiv \Rs{\hb=0}\LR{\log\dot\cY_{n;\a}(\al_i,\hb,q)}_{q;d}
=A_{n;\a}^d(\al_i)\xi_{\a;d}\big(\al_i^{-1},\fs_1,\ldots,\fs_{d-1}),\\
\LR{\dot\Phi_{n;\a}^{(0)}(\al_i,q)}_{q;d}
&\equiv \Rs{\hb=0}\frac1{\hb}\!\LR{\ne^{-\frac{\xi_{n;\a}(\al_i,q)}{\hb}}
\dot\cY_{n;\a}(\al_i,\hb,q)}_{q;d}
=A_{n;\a}^d(\al_i)\dot\Phi_{\a;d}^{(0)}\big(\al_i^{-1},\fs_1,\ldots,\fs_d).
\end{split}\end{equation*}
We conclude that  \e_ref{equiv0thm_e} is equivalent~to
$$\sum_{\begin{subarray}{c}d_1,d_2\ge0\\ d_1+d_2=d\end{subarray}}
\cH_{\a;d_1}\dot\Phi_{\a;d_2}^{(0)}=\de_{d,0}  \qquad\forall\,d\in\Z^{\ge0}\,.$$
By Corollary~\ref{MirSym_crl} and Proposition~\ref{cZstring_prp}, 
these relations hold whenever $\nu_n(\a)\!>\!0$; 
since they do not involve~$n$, they thus hold for all pairs $(n,\a)$.
\end{proof}

\begin{proof}[{\bf \emph{Proof of \e_ref{equiv0thm2_e}}}]
For $t\!\in\![d\!+\!1]$ and $r\!\in\!\Z^{\ge0}$, we define $\hat\psi_t',\De_{t;r}'\!\in\!H^2(\ov\M_{0,3|d})$  by
$$\hat\psi_t'=f_{2;3}^*\hat\psi_t,\qquad
\De_{t;r}=f_{2;3}^*\De_t+\begin{cases}(r\!-\!1)f_{2;3}^*\De_{t,d+1} ,&\hbox{if}~t\!\le\!d;\\
0,&\hbox{if}~t\!=\!d\!+\!1.
\end{cases}$$
Similarly to~(1) in the proof of \cite[Proposition~8.3]{GWvsSQ}, 
\begin{equation*}\begin{split}
a_k\!>\!0\quad&\Lra\quad \E(\dot\V_{a_k;r}^{(d)}(\al_i))
=\prod_{t=1}^d\prod_{\la=1}^{a_k}\big(a_k\al_i-\la\hat\psi_t'+a_k\De_{t;r}'\big)
\cdot \prod_{\la=1}^{ra_k}\big(a_k\al_i-\la\hat\psi_{d+1}'\big)\,;\\
a_k\!<\!0\quad&\Lra\quad \E(\dot\V_{a_k;r}^{(d)}(\al_i))
=\prod_{t=1}^d\prod_{\la=0}^{-a_k-1}\big(a_k\al_i+\la\hat\psi_t'+a_k\De_{t;r}'\big)
\cdot \prod_{\la=0}^{-ra_k-1}\big(a_k\al_i+\la\hat\psi_{d+1}'\big).
\end{split}\end{equation*}
Thus, similarly to~\e_ref{cHnums_e}, 
\begin{equation*}\begin{split}
\frac{\LR{\F_{n;\a;r}^{(0,0,b)}(\al_i,q)}_{q;d}}{A_{n;\a}(\al_i)^d A_{\a}(\al_i)^r}
&=\cH_{\a;r;d}^{(b)}\big(\al_i^{-1},\fs_1,\ldots,\fs_d)
\end{split}\end{equation*}
for some $\cH_{\a;r;d}^{(b)}\!\in\!\Q[y,\fs_1,\ldots,\fs_d]$ dependent only on 
$\a$, $r$, $b$, and $d$, but not on~$n$.
Thus, by~\e_ref{cYnums_e} with $\a\!=\!\eset$, \e_ref{equiv0thm2_e} is equivalent~to
$$\sum_{\begin{subarray}{c}d_1,d_2\ge0\\ d_1+d_2=d\end{subarray}}\sum_{b=0}^{\i}
(-1)^b\cH_{\a;d_2;d_1}^{(b)}\dot\cY_{\eset;d_2,d_2+b}=\de_{d,0}  
\qquad\forall\,d\in\Z^{\ge0}\,.$$
By \e_ref{cZstr_e} and Proposition~\ref{cZstring_prp2}, these relations hold whenever 
$\nu_n(\a)\!\ge\!0$; 
since they do not involve~$n$, they thus hold for all pairs $(n,\a)$.
\end{proof}

\subsection{Proof of Proposition~\ref{cZstring_prp}}
\label{recpf_subs2}

\noindent
We  study the secondary (middle) terms in 
the recursions~\e_ref{recurdfn_e2} for
$$\wt\cZ_{n;\a}(\x,\hb,q)\equiv\hb^{-1}\frac{\dot\cZ_{n;\a}(\x,\hb,q)}{\dot{I}_0(q)}
\qquad\hbox{and}\qquad
\dot\cZ_{n;\a;3}^{(\0,\1)}(\x,\hb,q).$$
We show that \e_ref{equivMS_e2} implies~\e_ref{equiv0thm_e} by considering the $r\!=\!-1$
coefficients in these recursions.
Conversely, if~\e_ref{equiv0thm_e} holds, we show that 
the $r\!=\!-1$ coefficients in these recursions are described in the same degree-recursive way 
in terms of the corresponding power series;
Proposition~\ref{uniqueness_prp} and Lemma~\ref{recgen_lmm} then imply that 
$\dot\cZ_{n;\a;3}^{(\0,\1)}\!=\!\wt\cZ_{n;\a}$.\footnote{The same argument, with slightly more
notation, can be used to show that all secondary coefficients are described in 
the same degree-recursive way, thus bypassing 
Proposition~\ref{uniqueness_prp} and Lemma~\ref{recgen_lmm}.}\\

\noindent
By Lemmas~\ref{Phistr_lmm4} and~\ref{recgen_lmm},
\BE{cZrec_e}\begin{split}
\dot\cZ_{n;\a}(\al_i,\hb,q)&=\sum_{d=0}^{\i}\sum_{r=0}^{N_d-1}
\{\dot\cZ_{n;\a}\}_i^r(d)\hb^{-r}q^d+
\sum_{d=1}^{\i}\sum_{j\neq i}\frac{\dot\fC_i^j(d)q^d}{\hb-\frac{\al_j-\al_i}{d}}
\dot\cZ_{n;\a}(\al_j,(\al_j\!-\!\al_i)/d,q),\\
\wt\cZ_{n;\a}(\al_i,\hb,q)&=\sum_{d=0}^{\i}\sum_{r=1}^{N_d}
\{\wt\cZ_{n;\a}\}_i^r(d)\hb^{-r}q^d+
\sum_{d=1}^{\i}\sum_{j\neq i}\frac{\dot\fC_i^j(d)q^d}{\hb-\frac{\al_j-\al_i}{d}}
\wt\cZ_{n;\a}(\al_j,(\al_j\!-\!\al_i)/d,q),
\end{split}\EE
for some $N_d\!\in\!\Z^+$ and 
$\{\dot\cZ_{n;\a}\}_i^r(d),\{\wt\cZ_{n;\a}\}_i^r(d)\!\in\!\Q_{\al}$.
It is immediate that
\begin{equation*}\begin{split}
&\dot{I}_0(q)\sum_{d=0}^{\i}\{\wt\cZ_{n;\a}\}_i^1(d)q^d
-\sum_{d=0}^{\i}\{\dot\cZ_{n;\a}\}_i^0(d)q^d
=-\sum_{d=1}^{\i}\sum_{j\neq i}
\frac{\dot\fC_i^j(d)q^d}{(\al_j\!-\!\al_i)/d}
\dot\cZ_{n;\a}(\al_j,(\al_j\!-\!\al_i)/d,q)\\
&\qquad=-\sum_{d=1}^{\i}\sum_{j\neq i}
\Rs{\hb=\frac{\al_j-\al_i}{d}}\big\{\hb^{-1}\dot\cZ_{n;\a}(\al_i,\hb,q)\big\}
=\Rs{\hb=0,\i}\big\{\hb^{-1}\dot\cZ_{n;\a}(\al_i,\hb,q)\big\}\\
&\qquad=\Rs{\hb=0}\big\{\hb^{-1}\dot\cZ_{n;\a}(\al_i,\hb,q)\big\}-1\,;
\end{split}\end{equation*}
the first and second equalities above follow from the first equation in~\e_ref{cZrec_e},
while the third from the Residue Theorem on~$\P^1$ and~\e_ref{cZrec_e} again,
which implies that the coefficients of $q^d$ in $\dot\cZ_{n;\a}(\al_i,\hb,q)$
are regular in $\hb$ away from $\hb\!=\!(\al_j\!-\!\al_i)/d$ with $d\!\in\!\Z^+$ and $j\!\neq\!i$
and $\hb\!=\!0,\i$.
Combining the last identity with the first statement in \e_ref{equivGivental_e},
and~\e_ref{cYexp_e}, we obtain
\BE{wtcZ1_e}
\sum_{d=0}^{\i}\{\wt\cZ_{n;\a}\}_i^1(d)q^d
=\frac{\dot\Phi_{n;\a}^{(0)}(q)}{\dot{I}_0(q)^2}
-\sum_{b=1}^{\i}\frac{\xi_{n;\a}(q)^b}{b!}\Rs{\hb=0}
\bigg\{\frac{(-1)^b}{\hb^b}\wt\cZ_{n;\a}(\al_i,\hb,q)\bigg\}.\,\EE\\

\noindent
By Lemma~\ref{recgen_lmm},
$$\dot\cZ_{n;\a;3}^{(\0,\1)}(\al_i,\hb,q)=\sum_{d=0}^{\i}\sum_{r=1}^{N_d}
\{\dot\cZ_{n;\a;3}^{(\0,\1)}\}_i^r(d)\hb^{-r}q^d+
\sum_{d=1}^{\i}\sum_{j\neq i}\frac{\dot\fC_i^j(d)q^d}{\hb-\frac{\al_j-\al_i}{d}}
\dot\cZ_{n;\a;3}^{(\0,\1)}(\al_j,(\al_j\!-\!\al_i)/d,q),$$
for some $N_d\!\in\!\Z^+$ and 
$\{\dot\cZ_{n;\a;3}^{(\0,\1)}\}_i^r(d)\!\in\!\Q_{\al}$.
By Section~\ref{RecPf_subs}, the secondary coefficients 
$\{\dot\cZ_{n;\a;3}^{(\0,\1)}\}_i^r(d)$ arise
from the contributions of decorated graphs~$\Ga$ as in~\e_ref{decortgraphdfn_e} such that 
the vertex~$v_{\min}$ to which the first marked point is attached is of valence~3 or higher.
In this case, there are four types of such graphs:
\begin{enumerate}[label=(\roman*)]
\item single-vertex graphs;
\item graphs with either marked point 2 or 3, but not both, attached to $v_{\min}$,
i.e.~$|\vt^{-1}(v_{\min})|\!=\!2$;
\item graphs with two edges leaving $v_{\min}$, i.e.~$|\tE_{v_{\min}}|\!=\!2$;
\item graphs with $|\vt^{-1}(v_{\min})|,|\tE_{v_{\min}}|\!=\!1$, but $\d(v_{\min})\!>\!0$;
\end{enumerate}
see Figure~\ref{Stermgraphs_fig}.\\

\begin{figure}
\begin{pspicture}(-.7,-1)(10,1.2)
\psset{unit=.4cm}
\pscircle*(2,0){.2}\rput(3.5,0){\smsize{$(i,d_0)$}}
\psline[linewidth=.04](2,0)(.6,1.4)\rput(.6,1.9){\smsize{$\bf 1$}}
\psline[linewidth=.04](2,0)(-.5,0)\rput(-1,0){\smsize{$\bf 2$}}
\psline[linewidth=.04](2,0)(.6,-1.4)\rput(.6,-1.9){\smsize{$\bf 3$}}
\psline[linewidth=.04](8.1,0)(10.8,0)\rput(9.4,.7){\smsize{$d$}}
\psline[linewidth=.04](10.8,0)(13.5,0)\pscircle*(13.5,0){.2}
\psline[linewidth=.04](13.5,0)(14.9,1.4)\rput(14.9,1.9){\smsize{$\bf 3$}}
\psline[linewidth=.04](8.1,0)(6.7,1.4)\rput(6.7,1.9){\smsize{$\bf 1$}}
\psline[linewidth=.04](8.1,0)(6.7,-1.4)\rput(6.7,-1.9){\smsize{$\bf 2$}}
\pscircle*(8.1,0){.2}\rput(6.5,0){\smsize{$(i,d_0)$}}
\pscircle*(10.8,0){.2}\rput(10.8,-.85){\smsize{$(j,*)$}}
\pscircle*(18.1,0){.2}\rput(17.4,-.85){\smsize{$(i,d_0)$}}
\psline[linewidth=.04](18.1,0)(19.8,1.7)\pscircle*(19.8,1.7){.2}
\psline[linewidth=.04](18.1,0)(19.8,-1.7)\pscircle*(19.8,-1.7){.2}
\psline[linewidth=.04](19.8,1.7)(22.5,1.7)\pscircle*(22.5,1.7){.2}
\psline[linewidth=.04](19.8,-1.7)(22.5,-1.7)\pscircle*(22.5,-1.7){.2}
\psline[linewidth=.04](22.5,1.7)(24.5,1.7)\rput(25,1.7){\smsize{$\bf 2$}}
\psline[linewidth=.04](22.5,-1.7)(24.5,-1.7)\rput(25,-1.7){\smsize{$\bf 3$}}
\rput(19.8,2.5){\smsize{$(j_2,*)$}}\rput(19.8,-2.5){\smsize{$(j_3,*)$}}
\psline[linewidth=.04](18.1,0)(16.7,1.4)\rput(16.7,1.9){\smsize{$\bf 1$}}
\rput(19.5,.8){\smsize{$d_2$}}\rput(19.7,-.8){\smsize{$d_3$}}
\psline[linewidth=.04](28.1,0)(30.8,0)\rput(29.4,.7){\smsize{$d$}}
\psline[linewidth=.04](30.8,0)(33.5,0)\pscircle*(33.5,0){.2}
\psline[linewidth=.04](33.5,0)(35.2,1.7)\pscircle*(35.2,1.7){.2}
\psline[linewidth=.04](33.5,0)(35.2,-1.7)\pscircle*(35.2,-1.7){.2}
\pscircle*(28.1,0){.2}\rput(28.1,-.85){\smsize{$(i,d_0)$}}
\pscircle*(30.8,0){.2}\rput(30.8,-.85){\smsize{$(j,*)$}}
\psline[linewidth=.04](28.1,0)(26.7,1.4)\rput(26.7,1.9){\smsize{$\bf 1$}}
\psline[linewidth=.04](35.2,1.7)(37.2,1.7)\rput(37.7,1.7){\smsize{$\bf 2$}}
\psline[linewidth=.04](35.2,-1.7)(37.2,-1.7)\rput(37.7,-1.7){\smsize{$\bf 3$}}
\rput(28.1,-1.9){\smsize{$d_0\!\in\!\Z^+$}}
\end{pspicture}
\caption{The 4 types of graphs determining the secondary coefficients 
$\{\dot\cZ_{n;\a;3}^{(\0,\1)}\}_i^r(d)$}
\label{Stermgraphs_fig}
\end{figure}
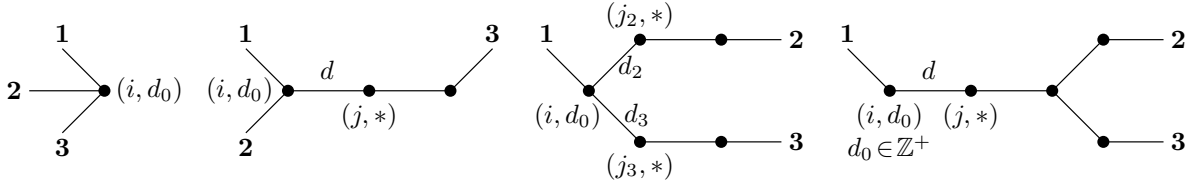

\noindent
By \e_ref{NZform_e}, \e_ref{cVform_e}, and~\e_ref{phirestr_e},
the contribution of the graphs of type~(i) to 
$\sum\limits_{d=0}^{\i}\{\dot\cZ_{n;\a;3}^{(\0,\1)}\}_i^1(d)q^d$ is
\BE{cZcont_e1}
\sum_{d=0}^{\i}\frac{q^d}{d!}
\int_{\ov\cM_{0,3|d}}\frac{\E(\dot\V_{\a}^{(d)}(\al_i))}
{\prod\limits_{k\neq i}\!\!\E(\dot\V_1^{(d)}(\al_i\!-\!\al_k))}
=\F_{n;\a}^{(0,0,0)}(\al_i,q).\EE
In the three remaining cases, we split each decorated graph $\Ga$ into subgraphs
as on page~\pageref{Ga0split_it}; see Figure~\ref{splitgraph_fig2b}.
Let $\pi_0,\pi_{c;e}$ denote the projection maps in the decomposition~\e_ref{Zlocus_e2}.
By~\e_ref{NZform_e} and~\e_ref{cVform_e},
\BE{NZcVsplit_e}\begin{split}
\frac{\E(\N Q_{\Ga})}{\E(T_{P_i}\Pn)}&=
\prod_{k\neq i}\pi_0^*\E\big(\dot\V_1^{(|\Ga_0|)}(\al_i\!-\!\al_k)\big)
\cdot\prod_{e\in\tE_{v_{\min}}}\!\!\!\!\bigg(
\pi_{c;e}^*\frac{\E(\N Q_{\Ga_{c;e}})}{\E(T_{P_i}\P^{n-1})}
\big(\om_{e;v_{\min}}\!-\!\pi_0^*\psi_e\big)\!\!\bigg),\\
\E\big(\dot\V_{n;\a}^{(|\Ga|)}\big)\big|_{Q_{\Ga}}&=
  \pi_0^*\E\big(\dot\V_{\a}^{(|\Ga_0|)}(\al_i)\big)\cdot
\prod_{e\in\tE_{v_{\min}}}\!\!\!\!\!\pi_{c;e}^*\E(\dot\V_{n;\a}^{(|\Ga_{c;e}|)}\big)\,.
\end{split}\EE
Thus, the contribution of $\Ga$ to
$\sum\limits_{d=0}^{\i}\{\dot\cZ_{n;\a;3}^{(\0,\1)}\}_i^1(d)q^d$ is
\BE{cZcontr_e5}\begin{split}
q^{|\Ga|}\!\int_{Q_{\Ga}}
\frac{\E(\dot\V_{n;\a}^{(|\Ga|)})\ev_1^*\phi_i|_{Q_{\Ga}}}{\E(\N Q_{\Ga})}
=\sum_{\b\in(\Z^{\ge0})^{\tE_{v_{\min}}}}\!\!\!\Bigg(
\frac{q^{d_0}}{d_0!}\int_{\ov\cM_{0,m_0|d_0}}\frac{\E(\dot\V_{\a}^{(d_0)}(\al_i))
\prod\limits_{e\in\tE_{v_{\min}}}\!\!\!\!\!\psi_e^{b_e}}
{\prod\limits_{k\neq i}\E(\dot\V_1^{(d_0)}(\al_i\!-\!\al_k))}\qquad&\\
\times
\prod_{e\in\tE_{v_{\min}}}\!\!\!\!\!q^{|\Ga_{c;e}|}\om_{e;v_{\min}}^{-(b_e+1)}\!\!
\int_{Q_{\Ga_{c;e}}}
\frac{\E(\dot\V_{n;\a}^{(|\Ga_{c;e}|)})\ev_1^*\phi_i}{\E(\N Q_{\Ga_{c;e}})}&\Bigg),
\end{split}\EE
where $m_0\!=\!|\vt^{-1}(v_{\min})|\!+\!|\tE_{v_{\min}}|$ 
($=3$ if $\Ga$ is of type~(ii) or~(iii), $=2$ if $\Ga$ is of type~(iv) above)
and $d_0\!=\!\d(v_{\min})$.\\

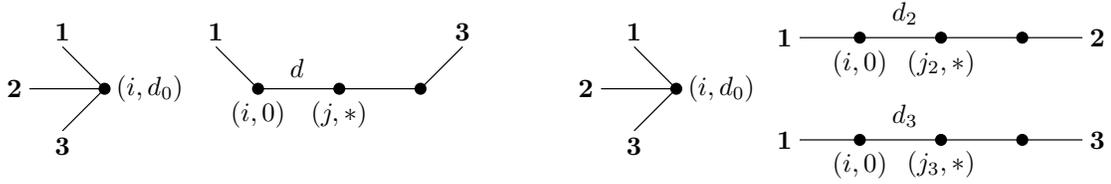
\begin{figure}
\begin{pspicture}(-1.7,-1)(10,1.2)
\psset{unit=.4cm}
\pscircle*(2,0){.2}\rput(3.5,0){\smsize{$(i,d_0)$}}
\psline[linewidth=.04](2,0)(.6,1.4)\rput(.6,1.9){\smsize{$\bf 1$}}
\psline[linewidth=.04](2,0)(-.5,0)\rput(-1,0){\smsize{$\bf 2$}}
\psline[linewidth=.04](2,0)(.6,-1.4)\rput(.6,-1.9){\smsize{$\bf 3$}}
\psline[linewidth=.04](7.1,0)(9.8,0)\rput(8.4,.7){\smsize{$d$}}
\psline[linewidth=.04](9.8,0)(12.5,0)\pscircle*(12.5,0){.2}
\psline[linewidth=.04](12.5,0)(13.9,1.4)\rput(13.9,1.9){\smsize{$\bf 3$}}
\psline[linewidth=.04](7.1,0)(5.7,1.4)\rput(5.7,1.9){\smsize{$\bf 1$}}
\pscircle*(7.1,0){.2}\rput(7.1,-.85){\smsize{$(i,0)$}}
\pscircle*(9.8,0){.2}\rput(9.8,-.85){\smsize{$(j,*)$}}
\pscircle*(21,0){.2}\rput(22.5,0){\smsize{$(i,d_0)$}}
\psline[linewidth=.04](21,0)(19.6,1.4)\rput(19.6,1.9){\smsize{$\bf 1$}}
\psline[linewidth=.04](21,0)(18.5,0)\rput(18,0){\smsize{$\bf 2$}}
\psline[linewidth=.04](21,0)(19.6,-1.4)\rput(19.6,-1.9){\smsize{$\bf 3$}}
\psline[linewidth=.04](25.1,1.7)(27.1,1.7)\rput(24.6,1.7){\smsize{$\bf 1$}}
\psline[linewidth=.04](25.1,-1.7)(27.1,-1.7)\rput(24.6,-1.7){\smsize{$\bf 1$}}
\pscircle*(27.1,1.7){.2}\rput(27.1,.85){\smsize{$(i,0)$}}
\pscircle*(27.1,-1.7){.2}\rput(27.1,-2.55){\smsize{$(i,0)$}}
\psline[linewidth=.04](27.1,1.7)(29.8,1.7)\pscircle*(29.8,1.7){.2}
\psline[linewidth=.04](27.1,-1.7)(29.8,-1.7)\pscircle*(29.8,-1.7){.2}
\psline[linewidth=.04](29.8,1.7)(32.5,1.7)\pscircle*(32.5,1.7){.2}
\psline[linewidth=.04](29.8,-1.7)(32.5,-1.7)\pscircle*(32.5,-1.7){.2}
\psline[linewidth=.04](32.5,1.7)(34.5,1.7)\rput(35,1.7){\smsize{$\bf 2$}}
\psline[linewidth=.04](32.5,-1.7)(34.5,-1.7)\rput(35,-1.7){\smsize{$\bf 3$}}
\rput(29.8,.85){\smsize{$(j_2,*)$}}\rput(29.8,-2.5){\smsize{$(j_3,*)$}}
\rput(28.6,2.5){\smsize{$d_2$}}\rput(28.6,-.9){\smsize{$d_3$}}
\end{pspicture}
\caption{The subgraphs of the 2 middle graphs in Figure~\ref{Stermgraphs_fig}}
\label{splitgraph_fig2b}
\end{figure}

\noindent
We now sum up \e_ref{cZcontr_e5} over all possibilities for $\Ga$ of each of
the three types.
For each $e\!\in\!\tE_{v_{\min}}$, 
let $v_e\!\in\!\Ver$ denote the vertex of~$e$ other than~$v_{\min}$.
By \e_ref{psiform_e} and Section~\ref{RecPf_subs}, the sum of the factor 
corresponding to $e\!\in\!\tE_{v_{\min}}$ over all possibilities for~$\Ga_e$ with $\d(e)\!=\!d_e$
and $\mu(v_e)\!=\!j_e$ fixed~is
$$(-1)^{b_e+1}\Rs{\hb=\frac{\al_{j_e}-\al_i}{d_e}}\big\{\hb^{-(b_e+1)}\dot\cZ(\al_i,\hb,q)\big\},$$
where $\dot\cZ\!=\!\dot\cZ_{n;\a}$ in cases (ii) and (iii) and 
$\dot\cZ\!=\!\dot\cZ_{n;\a;3}^{(\1,\0)}$
in case~(iv).
Thus, by the Residue Theorem on~$\P^1$ and Lemma~\ref{recgen_lmm},
the sum of the factors corresponding to $e\!\in\!\tE_{v_{\min}}$ over all possibilities
for~$\Ga_e$~is
\BE{Ressum_e1} 
(-1)^{b_e}\Rs{\hb=0,\i}\bigg\{\frac{\dot\cZ(\al_i,\hb,q)}{\hb^{b_e+1}}\bigg\}
=(-1)^{b_e}\Rs{\hb=0}\bigg\{\frac{\dot\cZ(\al_i,\hb,q)}{\hb^{b_e+1}}\bigg\}
-\begin{cases}
\de_{b_e,0},&\hbox{in cases (ii),(iii)}; \\
0,&\hbox{in case (iv)}.\end{cases}\EE
Combining \e_ref{cZcontr_e5} and \e_ref{Ressum_e1} with~\e_ref{Fred_e}, 
the first equation in~\e_ref{equivGivental_e}, and~\e_ref{cYexp_e}, 
we find that the contribution to $\sum\limits_{d=0}^{\i}\{\dot\cZ_{n;\a;3}^{(\0,\1)}\}_i^1(d)q^d$
from all graphs~$\Ga$ of types~(ii) and (iii) above is given~by
\begin{equation*}\begin{split}
&\F_{n;\a}^{(0,0,0)}(\al_i,q)\!\!\!\!\!\!\!\!\!
\sum_{\b\in(\Z^{\ge0})^{\tE_{v_{\min}}}}
\!\!\!\!\!\!\!\frac{(-\xi_{n;\a}(\al_i,q))^{|\b|}}{\b!}\!\!\!
\prod\limits_{e\in\tE_{v_{\min}}}\!\!\!\!\!
\bigg(\Rs{\hb=0}\bigg\{\frac{1}{\hb^{b_e+1}}\dot\cZ_{n;\a}(\al_i,\hb,q)\bigg\}-\de_{b_e,0}\bigg)\\
&\qquad=\F_{n;\a}^{(0,0,0)}(\al_i,q)\bigg(\Rs{\hb=0}
\bigg\{\frac{1}{\hb}\ne^{-\frac{\xi_{n;\a}(\al_i,q)}{\hb}}
\dot\cZ_{n;\a}(\al_i,\hb,q)\bigg\}-1\bigg)^{|\tE_{v_{\min}}|}\\
&\qquad
=\F_{n;\a}^{(0,0,0)}(\al_i,q)\bigg(\frac{\dot\Phi_{n;\a}^{(0)}(\al_i,q)}{\dot{I}_0(q)}
-1\bigg)^{|\tE_{v_{\min}}|}\,,
\end{split}\end{equation*}
with $|\tE_{v_{\min}}|\!=\!1$ in (ii) and $=\!2$ in (iii).
Using \cite[Theorem~4]{GWvsSQ} instead of~\e_ref{Fred_e}, we find that 
 the contribution to $\sum\limits_{d=0}^{\i}\{\dot\cZ_{n;\a;3}^{(\0,\1)}\}_i^1(d)q^d$
from all graphs~$\Ga$ of type~(iv) above is given~by
\begin{equation*}\begin{split}
&-\sum_{b=0}^{\i} \frac{(-\xi_{n;\a}(\al_i,q))^{b+1}}{(b\!+\!1)!}
\Rs{\hb=0}\bigg\{\frac{1}{\hb^{b+1}}\dot\cZ_{n;\a;3}^{(\0,\1)}(\al_i,\hb,q)\bigg\}
=-\sum_{b=1}^{\i} \frac{\xi_{n;\a}(\al_i,q)^b}{b!}
\Rs{\hb=1}\bigg\{\frac{(-1)^b}{\hb^b}\dot\cZ_{n;\a;3}^{(\0,\1)}(\al_i,\hb,q)\bigg\}.
\end{split}\end{equation*}
Putting this all together and taking into account that there are two flavors of
type~(ii) graphs, we conclude that 
\BE{wtcZna_e}\begin{split}
\sum_{d=0}^{\i}\{\dot\cZ_{n;\a;3}^{(\0,\1)}\}_i^1(d)q^d
&=\F_{n;\a}^{(0,0,0)}(\al_i,q)\frac{\dot\Phi_{n;\a}^{(0)}(\al_i,q)^2}{\dot{I}_0(q)^2}\\
&\hspace{1in}-\sum_{b=1}^{\i}\frac{\xi_{n;\a}(\al_i,q)^b}{b!}\Rs{\hb=0}
\bigg\{\frac{(-1)^b}{\hb^b}\dot\cZ_{n;\a;3}^{(\0,\1)}(\al_i,\hb,q)\bigg\}\,.
\end{split}\EE
This is the same degree-recursive relation as~\e_ref{wtcZ1_e}
if and only~if \e_ref{equiv0thm_e} holds.

\subsection{Proof of Proposition~\ref{cZstring_prp2}}
\label{recpf_subs3}

\noindent
We next apply the same argument to the power series
$$\wt\cZ_{n;\a}(\x,\hb,q)\equiv\hb^{-1}\dot\cZ_{n;\a}(\x,\hb,q)
\qquad\hbox{and}\qquad
\dot\cZ_{n;\a;2}^{(\0,\1)}(\x,\hb,q).$$
In this case, \e_ref{wtcZ1_e} becomes
\BE{wtcZb1_e}
\sum_{d=0}^{\i}\{\wt\cZ_{n;\a}\}_i^1(d)q^d
=\frac{\dot\Phi_{n;\a}^{(0)}(q)}{\dot{I}_0(q)}
-\sum_{b=1}^{\i}\frac{\xi_{n;\a}(q)^b}{b!}\Rs{\hb=0}
\bigg\{\frac{(-1)^b}{\hb^b}\wt\cZ_{n;\a}(\al_i,\hb,q)\bigg\}.\,\EE\\

\noindent
The graphs contributing to $\{\dot\cZ_{n;\a}\}_i^r(d)$ are the same as before,
as are the decomposition~\e_ref{Zlocus_e2} and the first splitting in~\e_ref{NZcVsplit_e}. 
However, the second splitting in~\e_ref{NZcVsplit_e} changes.
For graphs~$\Ga$ of type~(i) and~(ii) with $\vt(3)\!=\!v_{\min}$, it becomes
$$\E\big(\dot\V_{n;\a;2}^{(|\Ga|)}\big)\big|_{Q_{\Ga}}=
  \pi_0^*\E\big(\dot\V_{\a;0}^{(|\Ga_0|)}(\al_i)\big)\cdot
\pi_{c;e}^*\E(\dot\V_{n;\a}^{(|\Ga_{c;e}|)}\big)$$
with the second factor being 1 for the graphs of type~(i) and $e\!\in\!\tE_{v_{\min}}$ 
denoting the unique element for the graphs of type~(ii).
For graphs~of type~(ii) with $\vt(2)\!=\!v_{\min}$, graphs of type~(iii), and
graphs of type~(iv), it becomes
\begin{equation*}\begin{split}
\E\big(\dot\V_{n;\a;2}^{(|\Ga|)}\big)\big|_{Q_{\Ga}}&=
  \pi_0^*\E\big(\dot\V_{\a;|\Ga_{c;e}|}^{(|\Ga_0|)}(\al_i)\big),\\
\E\big(\dot\V_{n;\a;2}^{(|\Ga|)}\big)\big|_{Q_{\Ga}}&=
  \pi_0^*\E\big(\dot\V_{\a;|\Ga_{c;e_3}|}^{(|\Ga_0|)}(\al_i)\big)\cdot
  \pi_{c;e_2}^*\E(\dot\V_{n;\a}^{(|\Ga_{c;e_2}|)}\big),\\
\E\big(\dot\V_{n;\a;2}^{(|\Ga|)}\big)\big|_{Q_{\Ga}}&=
  \pi_0^*\E\big(\dot\V_{\a}^{(|\Ga_0|)}(\al_i)\big)\cdot
  \pi_{c;e}^*\E(\dot\V_{n;\a;2}^{(|\Ga_{c;e}|)}\big),
\end{split}\end{equation*}
respectively.\\

\noindent
Thus, similarly to~\e_ref{cZcont_e1}, 
the contribution of the graphs of type~(i) to 
$\sum\limits_{d=0}^{\i}\{\dot\cZ_{n;\a;2}^{(\0,\1)}\}_i^1(d)q^d$~is
$$\sum_{d=0}^{\i}\frac{q^d}{d!}
\int_{\ov\cM_{0,3|d}}\frac{\E(\dot\V_{\a;0}^{(d)}(\al_i))}
{\prod\limits_{k\neq i}\!\!\E(\dot\V_1^{(d)}(\al_i\!-\!\al_k))}
=\sum_{b=0}^{\i}\F_{n;\a;0}^{(0,0,b)}(\al_i,q)
\Rs{\hb=0}\bigg\{\frac{(-1)^b}{\hb^{b+1}}\LR{\dot\cZ_{n;\eset}(\al_i,\hb,q)}_{q;0}q^0\bigg\}.$$
Similarly to~\e_ref{Ressum_e1}, the sum of the factor corresponding to 
an edge $e\!\in\!\tE_{v_{\min}}$ in the analogue of~\e_ref{cZcontr_e5} 
over all possibilities for~$\Ga_e$~is
$$(-1)^{b_e}\begin{cases}
\Rs{\hb=0}\bigg\{\frac{\dot\cZ_{n;\a}(\al_i,\hb,q)}{\hb^{b_e+1}}\bigg\}
-\de_{b_e,0},&\hbox{in cases (ii) with $\vt(3)\!=\!v_{\min}$, (iii) with $e\!=\!e_2$}; \\
\Rs{\hb=0}\bigg\{\frac{\dot\cZ_{n;\eset}(\al_i,\hb,q)}{\hb^{b_e+1}}\bigg\}
-\de_{b_e,0},&\hbox{in cases (ii) with $\vt(2)\!=\!v_{\min}$, (iii) with $e\!=\!e_3$}; \\
\Rs{\hb=0}\bigg\{\frac{\dot\cZ_{n;\a;2}^{(\0,\1)}(\al_i,\hb,q)}{\hb^{b_e+1}}\bigg\}
,&\hbox{in case (iv)}.\end{cases}$$
Thus, the contribution to $\sum\limits_{d=0}^{\i}\{\dot\cZ_{n;\a;3}^{(\0,\1)}\}_i^1(d)q^d$
from all graphs~$\Ga$ of types~(ii) with $\vt(3)\!=\!v_{\min}$ and 
$\vt(2)\!=\!v_{\min}$ is
\begin{gather*}
\F_{n;\a;0}^{(0,0,0)}(\al_i,q)
\sum_{b=0}^{\i}\frac{(-\xi_{n;\a}(\al_i,q))^{b}}{b!}
\bigg(\Rs{\hb=0}\bigg\{\frac{\dot\cZ_{n;\a}(\al_i,\hb,q)}{\hb^{b+1}}\bigg\}-\de_{b,0}\bigg)
=\bigg(\frac{\dot\Phi_{n;\a}^{(0)}(\al_i,q)}{\dot{I}_0(q)}-1\bigg)
\F_{n;\a;0}^{(0,0,0)}(\al_i,q)\\
\hbox{and}\qquad
\sum_{b=0}^{\i}\sum_{r=1}^{\i} \F_{n;\a;r}^{(0,0,b)}(\al_i,q)
\Rs{\hb=0}\Bigg\{\frac{(-1)^b}{\hb^{b+1}}\LR{\dot\cZ_{n;\eset}(\al_i,\hb,q)}_{q;r}q^r\Bigg\},
\end{gather*}
respectively.
Similarly, the contribution from all graphs~$\Ga$ of type~(iii) is
\begin{equation*}\begin{split}
&\sum_{b_2,b_3\ge0}^{\i}\sum_{r=1}^{\i}\F_{n;\a;r}^{(0,0,b_3)}(\al_i,q)
\Bigg(\frac{(-\xi_{n;\a}(\al_i,q))^{b_2}}{b_2!} 
\bigg(\Rs{\hb=0}\bigg\{\frac{\dot\cZ_{n;\a}(\al_i,\hb,q)}{\hb^{b_2+1}}\bigg\}-\de_{b_2,0}\bigg)\\
&\hspace{2.5in}\times\Rs{\hb=0}\Bigg\{\frac{(-1)^{b_3}}{\hb^{b_3+1}}
\LR{\dot\cZ_{n;\eset}(\al_i,\hb,q)}_{q;r}q^r\Bigg\}\Bigg)\\
&\qquad\qquad=
\bigg(\frac{\dot\Phi_{n;\a}^{(0)}(\al_i,q)}{\dot{I}_0(q)}-1\bigg)
\sum_{b=0}^{\i}\sum_{r=1}^{\i}\F_{n;\a;r}^{(0,0,b)}(\al_i,q)
\Rs{\hb=0}\Bigg\{\frac{(-1)^b}{\hb^{b+1}}
\LR{\dot\cZ_{n;\eset}(\al_i,\hb,q)}_{q;r}q^r\Bigg\}.
\end{split}\end{equation*}
Finally, the contribution from all graphs~$\Ga$ of type~(iv) is given~by
\begin{equation*}\begin{split}
&-\sum_{b=1}^{\i} \frac{\xi_{n;\a}(\al_i,q)^b}{b!}
\Rs{\hb=1}\bigg\{\frac{(-1)^b}{\hb^b}\dot\cZ_{n;\a;2}^{(\0,\1)}(\al_i,\hb,q)\bigg\}.
\end{split}\end{equation*}
Putting this all together and using the first equation in~\e_ref{equivGivental_e},
but now with $\a\!=\!\eset$ and thus $\dot{I}_0\!=\!1$,
we conclude~that 
\begin{equation*}\begin{split}
\sum_{d=0}^{\i}\{\dot\cZ_{n;\a;2}^{(\0,\1)}\}_i^1(d)q^d
=\frac{\dot\Phi_{n;\a}^{(0)}(\al_i,q)}{\dot{I}_0(q)}
\sum_{b=0}^{\i}\sum_{r=0}^{\i} \F_{n;\a;r}^{(0,0,b)}(\al_i,q)
\Rs{\hb=0}\Bigg\{\frac{(-1)^b}{\hb^{b+1}}
\LR{\dot\cY_{n;\eset}(\al_i,\hb,q)}_{q;r}q^r\Bigg\}&\\
-\sum_{b=1}^{\i}\frac{\xi_{n;\a}(\al_i,q)^b}{b!}\Rs{\hb=0}
\bigg\{ \frac{(-1)^b}{\hb^b}\dot\cZ_{n;\a;2}^{(\0,\1)}(\al_i,\hb,q)\bigg\}&.\,
\end{split}\end{equation*}
This is the same degree-recursive relation as~\e_ref{wtcZb1_e}
if and only~if \e_ref{equiv0thm2_e} holds.

\section{Proof of \e_ref{Z2cJpt_e}}
\label{3ptpf_sec}

\noindent
The equivariant cohomology of $\Pn\!\times\!\Pn\!\times\!\Pn$ is given~by
$$H_{\T}^*(\Pn\!\times\!\Pn\!\times\!\Pn)=
\Q[\al_1,\ldots,\al_n,\x_1,\x_2,\x_3]\bigg/
\Bigg\{\prod_{k=1}^n(\x_1\!-\!\al_k),\prod_{k=1}^n(\x_2\!-\!\al_k),
\prod_{k=1}^n(\x_3\!-\!\al_k)\Bigg\}\,.$$
Thus, by the defining property of the cohomology pushforward \cite[(3.11)]{bcov0},
the three-point power series $\dot\cZ_{n;\a}$ in~\e_ref{cZdfn_e} is  
completely determined by the $n^3$ power series
\BE{cZval_e}
\dot\cZ_{n;\a}(\al_{i_1},\al_{i_2},\al_{i_3},\hb_1,\hb_2,\hb_3,q)
=\sum_{d=0}^{\i}q^d\!\!
\int_{\ov{Q}_{0,3}(\Pn,d)}\!\!\!
\frac{\E(\dot\V_{n;\a}^d)\,\ev_1^*\phi_{i_1}\,\ev_2^*\phi_{i_2}\,\ev_3^*\phi_{i_3}}
{(\hb_1\!-\!\psi_1)(\hb_2\!-\!\psi_2)(\hb_3\!-\!\psi_3)}\,.\EE
The localization formula~\e_ref{ABothm_e} reduces this expression to a sum over decorated 
trees as in Section~\ref{recpf_sec}.
Each of these trees has a unique special vertex~$v_0$: the vertex where the branches from 
the three marked points come together (one or more of the marked points may be
attached to this vertex).
We compute this sum by breaking each such tree~$\Ga$ at~$v_0$
into up to 4 ``sub-graphs":
\begin{enumerate}[label=(\roman*)]
\item\label{Ga0split3_it} $\Ga_0$ consisting of the vertex~$v_0$ only,
with 3 marked points and with the same $\mu$ and $\d$-values as in~$\Ga$;
\item for each marked point $t\!=\!1,2,3$ of $\Ga$ with $\vt(t)\!\neq\!v_0$,
$\Ga_t$ consisting of the branch of $\Ga$ running between the vertices 
$\vt(t)$ and~$v_0$, 
with the $\d$-value of $v_0$ replaced by~0 and with one new marked point attached to~$v_0$;
\end{enumerate}
see Figure~\ref{strands_fig}.
The contribution of the vertex graphs~(i) is accounted for by the Hurwitz numbers
of Proposition~\ref{equiv0_prp}, while 
the contribution of each of the strands is accounted for 
by the SQ-analogue of the double Givental's $J$-function computed by~\e_ref{main_e2},
\e_ref{equivGivental_e}, and~\e_ref{cZs_e}. 
Putting these contributions together, we will obtain~\e_ref{Z2cJpt_e}.\\

\begin{figure}
\begin{pspicture}(-2,-1.8)(10,1.2)
\psset{unit=.4cm}
\psline[linewidth=.04](1.5,0)(5,0)\rput(3.2,.7){\smsize{$d$}}
\psline[linewidth=.04](5,0)(8.5,0)\pscircle*(8.5,0){.2}
\pscircle*(1.5,0){.2}\rput(1.5,-.85){\smsize{$(i_1,0)$}}
\pscircle*(5,0){.2}\rput(5,-.85){\smsize{$(j,2)$}}
\psline[linewidth=.04](1.5,0)(0,1.5)\rput(0,2){\smsize{$\bf 1$}}
\psline[linewidth=.04](8.5,0)(10,1.5)\rput(10,2){\smsize{$\bf 2$}}
\rput(8.5,-.85){\smsize{$(i,0)$}}\rput(6.8,.5){\smsize{$e_1$}}
\rput(5.1,.8){\smsize{$v_1$}}
\rput(5.5,-3){$\Ga_1$}
\pscircle*(15.5,0){.2}
\psline[linewidth=.04](15.5,0)(13,0)\rput(12.5,0){\smsize{$\bf 1$}}
\psline[linewidth=.04](15.5,0)(17,1.5)\rput(17,2){\smsize{$\bf 2$}}
\psline[linewidth=.04](15.5,0)(17,-1.5)\rput(17,-2){\smsize{$\bf 3$}}
\rput(17.2,0){\smsize{$(i,d_0)$}}
\rput(15.5,-3){$\Ga_0$}
\psline[linewidth=.04](24.5,2.5)(23,1)\rput(22.5,1){\smsize{$\bf 1$}}
\psline[linewidth=.04](24.5,-2.5)(23,-1)\rput(22.5,-1){\smsize{$\bf 1$}}
\pscircle*(24.5,2.5){.2}\pscircle*(24.5,-2.5){.2}
\rput(24.5,3.35){\smsize{$(i,0)$}}\rput(24.5,-3.35){\smsize{$(i,0)$}}
\psline[linewidth=.04](24.5,2.5)(28,2.5)\pscircle*(28,2.5){.2}
\psline[linewidth=.04](24.5,-2.5)(28,-2.5)\pscircle*(28,-2.5){.2}
\psline[linewidth=.04](28,2.5)(30.5,2.5)\rput(31,2.5){\smsize{$\bf 2$}}
\psline[linewidth=.04](28,-2.5)(30.5,-2.5)\rput(31,-2.5){\smsize{$\bf 3$}}
\rput(28,-4){$\Ga_3$}\rput(28,4){$\Ga_2$}
\rput(26.3,2){\smsize{$e_2$}}\rput(26.3,-2){\smsize{$e_3$}}
\rput(28,1.7){\smsize{$v_2$}}\rput(28,-1.7){\smsize{$v_3$}}
\end{pspicture}
\caption{The 4 sub-graphs of the second graph in Figure~\ref{loopgraph_fig}, with label $i$
replaced by~$i_1$.}
\label{strands_fig}
\end{figure}
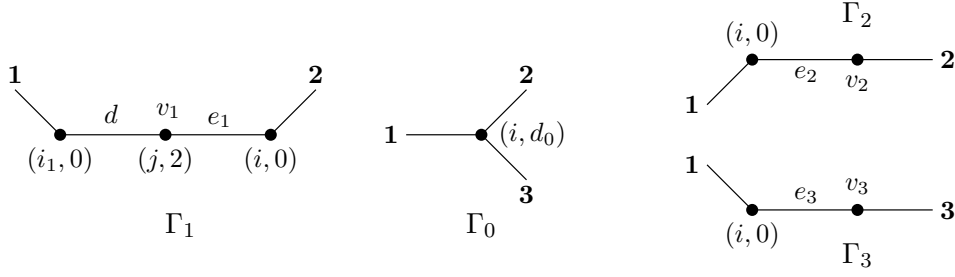

\noindent
Let $i\!=\!\mu(v_0)$ and $d_0\!=\!\d(v_0)$.
For each $t\!=\!1,2,3$ with $\vt(t)\!\neq\!v_0$, let
$e_t\!=\!\{v_0,v_t\}$ be the edge leaving $v_0$ in the direction of~$\vt(t)$.
By~\e_ref{Zlocus_e},
\BE{Z3locus_e2}
Q_{\Ga}\approx Q_{\Ga_0}\times \prod_{t=1}^3\!Q_{\Ga_t}=
(\ov\cM_{0,3|d_0}/\bS_{d_0})\times 
\prod_{t=1}^3\!Q_{\Ga_t}\,,\EE
where the $t$-th factor is defined to be a point if $\vt(t)\!=\!v_0$.
Let $\pi_0,\ldots,\pi_3$ be the component projection maps in~\e_ref{Z3locus_e2}.
By~\e_ref{NZform_e} and~\e_ref{cVform_e},
\BE{NZcVsplit3_e}\begin{split}
\frac{\E(\N Q_{\Ga})}{\E(T_{P_i}\Pn)}&=
\prod_{k\neq i}\pi_0^*\E\big(\dot\V_1^{(d_0)}(\al_i\!-\!\al_k)\big)
\cdot\prod_{t=1}^3\!\bigg(
\pi_t^*\frac{\E(\N Q_{\Ga_t})}{\E(T_{P_i}\P^{n-1})}
\big(\om_{e_t;v_0}\!-\!\pi_0^*\psi_t\big)\!\!\bigg),\\
\E\big(\dot\V_{n;\a}^{(|\Ga|)}\big)\big|_{Q_{\Ga}}&=
  \pi_0^*\E\big(\dot\V_{\a}^{(d_0)}(\al_i)\big)\cdot
\prod_{t=1}^3\!\pi_t^*\E(\dot\V_{n;\a}^{(|\Ga_t|)}\big)\,,
\end{split}\EE
with the $t$-factor defined to be~1 if $\vt(t)\!=\!v_0$.
Thus, the contribution of $\Ga$ to~\e_ref{cZval_e} is
\BE{cZ3contr_e5}\begin{split}
&\frac{1}{\prod\limits_{k\neq i}\!(\al_i\!-\!\al_k)}
\sum_{b_1,b_2,b_3\ge0}\!\!\Bigg(
\frac{q^{d_0}}{d_0!}\int_{\ov\cM_{0,3|d_0}}\frac{\E(\dot\V_{\a}^{(d_0)}(\al_i))
\prod\limits_{t=1}^3\!\psi_t^{b_t}}
{\prod\limits_{k\neq i}\E(\dot\V_1^{(d_0)}(\al_i\!-\!\al_k))}\\
&\hspace{.1in}\times
q^{|\Ga_1|}\om_{e_1;v_0}^{-(b_1+1)}\!\!\int_{Q_{\Ga_1}}\!\!\!\!\!
\frac{\E(\dot\V_{n;\a}^{(|\Ga_1|)})\,\ev_1^*{\phi_{i_1}}\ev_2^*\phi_i}
{\E(\N Q_{\Ga_1})(\hb_1\!-\!\psi_1)}~
\prod_{t=2}^3 q^{|\Ga_t|}\om_{e_t;v_0}^{-(b_t+1)}\!\!
\int_{Q_{\Ga_t}}
\frac{\E(\dot\V_{n;\a}^{(|\Ga_t|)})\ev_1^*\phi_i\,\ev_t^*\phi_{i_t}}
{\E(\N Q_{\Ga_t})(\hb_t\!-\!\psi_t)}\Bigg),
\end{split}\EE
where the $t$-th factor on the second line is defined to be $\hb_t^{-(b_t+1)}$
if $\vt(t)\!=\!v_0$.\\

\noindent
We next sum up \e_ref{cZ3contr_e5} over all possibilities for $\Ga$.
Let
$$\dot\cZ_i(\hb,\al_{i_t},\hb_t,q)
=\begin{cases}
\dot\cZ_{n;\a}(\al_{i_1},\al_i,\hb_1,\hb,q),&\hbox{if}~t\!=\!1;\\
\dot\cZ_{n;\a}(\al_i,\al_{i_t},\hb,\hb_t,q),&\hbox{if}~t\!=\!2,3.
\end{cases}$$
By \e_ref{psiform_e} and Section~\ref{RecPf_subs}, the sum of the factor
in~\e_ref{cZ3contr_e5}  
corresponding to each $t\!=\!1,2,3$ over all possibilities for~$\Ga_t$ with $\d(e_t)\!=\!d_t$
and $\mu(v_t)\!=\!j_t$ fixed~is
$$(-1)^{b_t+1}\Rs{\hb=\frac{\al_{j_t}-\al_i}{d_t}}
\big\{\hb^{-(b_t+1)}\dot\cZ_i(\hb,\al_{i_t},\hb_t,q)\big\}\,.$$
Thus, by the Residue Theorem on~$\P^1$ and Lemma~\ref{recgen_lmm},
the sum of the factor in~\e_ref{cZ3contr_e5} 
corresponding to each $t\!=\!1,2,3$ over all possibilities
for~$\Ga_t$ non-trivial~is
$$(-1)^{b_t}\Rs{\hb=0,\i,-\hb_t}
\bigg\{\frac{\dot\cZ_i(\hb,\al_{i_t},\hb_t,q)}{\hb^{b_t+1}}\bigg\}
=(-1)^{b_t}\Rs{\hb=0}\bigg\{\frac{\cZ_i(\hb,\al_{i_t},\hb_t,q)}{\hb^{b_t+1}}\bigg\}
-\hb_t^{-(b_t+1)}\prod_{k\neq i}(\al_{i_t}\!-\!\al_k).$$
Since the last term above is the contribution from the trivial sub-graph~$\Ga_t$,
the sum of the factor in~\e_ref{cZ3contr_e5} 
corresponding to each $t\!=\!1,2,3$ over all possibilities for~$\Ga_t$ 
with $\mu(v_0)\!=\!i$ fixed~is
\BE{Ressum3_e1} 
\sum_{\Ga_t}\big[\textnormal{$t$-factor in \e_ref{cZ3contr_e5}}\big]
=(-1)^{b_t}\Rs{\hb=0}\bigg\{\frac{\dot\cZ_i(\hb,\al_{i_t},\hb_t,q)}{\hb^{b_t+1}}\bigg\};\EE
this takes into account the graphs $\Ga$ with $\vt(t)\!=\!i$.\\

\noindent
By \e_ref{cZ3contr_e5}, \e_ref{Ressum3_e1}, and Proposition~\ref{equiv0_prp},
\BE{cZ3sum_e}\begin{split}
&\dot\cZ_{n;\a}(\al_{i_1},\al_{i_2},\al_{i_3},\hb_1,\hb_2,\hb_3,q)\\
&\hspace{1in}
=\sum_{i=1}^n
\frac{1}{\bfs_{n-1}(\al_i)\dot\Phi_{n;\a}^{(0)}(\al_i,q)}
\prod_{t=1}^3\Rs{\hb=0}\bigg\{\frac{1}{\hb}\ne^{-\xi_{n;\a}(\al_i,q)/\hb}
\dot\cZ_i(\hb,\al_{i_t},\hb_t,q)\bigg\}.
\end{split}\EE
By \e_ref{main_e2}, \e_ref{cZs_e}, \e_ref{equivGivental_e}, and \e_ref{cYexp_e},
\begin{equation*}\begin{split}
&\Rs{\hb=0}\bigg\{\frac{1}{\hb}\ne^{-\xi_{n;\a}(\al_i,q)/\hb}
\dot\cZ_i(\hb,\al_{i_t},\hb_t,q)\bigg\}
=\sum_{\begin{subarray}{c}s_t',s_t,r_t'\ge0\\ s_t'+s_t+r_t'=n-1 \end{subarray}}
\!\!\!\!\!\!\!\!\Bigg((-1)^{r_t'}\bfs_{r_t'}\\
&\hspace{1.5in}\times
\sum_{r_t''=0}^{s_t'}\ctC_{s_t'-\ell^-(\a),s_t'-r_t''-\ell^-(\a)}^{(r_t'')}(q)
\frac{\dot\Phi_{n;\a}^{(0)}(\al_i,q)L_{n;\a}(\al_i,q)^{s_t'-r_t''}}
{\dot{I}_0(q)\ldots\dot{I}_{s_t'-r_t''}(q)}
\ddot\cZ_{n;\a}^{(s_t)}(\al_{i_t},\hb_t,q)\Bigg)
\end{split}\end{equation*}
for $t\!=\!2,3$.
Combining this with \e_ref{chcCdfn_e}, \cite[Proposition~4.4]{Po2}, and~\e_ref{bIdfn_e},
we find~that 
\BE{cZiRes_e23}\begin{split}
&\Rs{\hb=0}\bigg\{\frac{1}{\hb}\ne^{-\xi_{n;\a}(\al_i,q)/\hb}
\dot\cZ_i(\hb,\al_{i_t},\hb_t,q)\bigg\}\\
&\hspace{1.5in}=
\sum_{s_t=0}^{n-1}\sum_{r_t=0}^{\hat{s}_t}
\dot\cC_{\hat{s}_t}^{(r_t)}(q)
\frac{\dot\Phi_{n;\a}^{(0)}(\al_i,q)L_{n;\a}(\al_i,q)^{\hat{s}_t-r_t}}{\ddot\bI_{s_t+r_t}^c(q)}
\ddot\cZ_{n;\a}^{(s_t)}(\al_{i_t},\hb_t,q)
\end{split}\EE
for $t\!=\!2,3$.
By the same reasoning,
\BE{cZiRes_e1}\begin{split}
&\Rs{\hb=0}\bigg\{\frac{1}{\hb}\ne^{-\xi_{n;\a}(\al_i,q)/\hb}
\dot\cZ_i(\hb,\al_{i_1},\hb_1,q)\bigg\}\\
&\hspace{1.5in}=
\sum_{s_1=0}^{n-1}\sum_{r_1=0}^{\hat{s}_1}
\ddot\cC_{\hat{s}_1}^{(r_1)}(q)
\frac{\ddot\Phi_{n;\a}^{(0)}(\al_i,q)L_{n;\a}(\al_i,q)^{\hat{s}_1-r_1}}{\dot\bI_{s_1+r_1}^c(q)}
\dot\cZ_{n;\a}^{(s_1)}(\al_{i_1},\hb_1,q),
\end{split}\EE
where
\BE{ddotPhi_e}
\ddot\Phi_{n;\a}^{(0)}(\al_i,q)
=\bigg(\frac{L_{n;\a}(\al_i,q)}{\al_i}\bigg)^{-\ell(\a)}\dot\Phi_{n;\a}^{(0)}(\al_i,q)\,.\EE\\

\noindent
On the other hand, by~\e_ref{Phi0dfn_e} and~\e_ref{Ldfn_e},
\begin{equation*}\begin{split}
&\sum_{i=1}^n
\frac{\dot\Phi_{n;\a}^{(0)}(\al_i,q)^3L_{n;\a}(\al_i,q)^s}
{\bfs_{n-1}(\al_i)\dot\Phi_{n;\a}^{(0)}(\al_i,q)}
\bigg(\frac{L_{n;\a}(\al_i,q)}{\al_i}\bigg)^{-\ell(\a)}
=\frac{1}{\a^{\a}}\sum_{i=1}^n L_{n;\a}(\al_i,q)^{s-|\a|}\frac{\nd L}{\nd q}\\
&\hspace{2in}
=\frac{1}{\a^{\a}}\frac{\nd}{\nd q}
\begin{cases}
\ln\prod\limits_{i=1}^nL_{n;\a}(\al_i,q),&\hbox{if}~s\!=\!|\a|\!-\!1;\\
\frac{1}{s+1-|\a|}\sum\limits_{i=1}^nL_{n;\a}(\al_i,q)^{s+1-|\a|},&\hbox{otherwise}.
\end{cases}
\end{split}\end{equation*}
The collection $\{L_{n;\a}(\al_i,q)^{-1}\}$ is the set of $n$ roots~$\y$ of the equation
$$1-\bfs_1 \y+\ldots+(-1)^n\bfs_n\y^n-\a^{\a}q\y^{\nu_n(\a)}=0\,.$$
Thus, if $s\!\ge\!0$ and $s\!+\!1\!<\!|\a|$,
$$\frac{\nd}{\nd q}\sum\limits_{i=1}^nL_{n;\a}(\al_i,q)^{s+1-|\a|}
=\frac{\nd}{\nd q}\cH^{(|\a|-s-1)}\bigg(\!\!-\frac{\bfs_{n-1}}{\bfs_n},\frac{\bfs_{n-2}}{\bfs_n},
\ldots,(-1)^{|\a|-s-1}\frac{\bfs_{\nu_n(\a)+s+1}}{\bfs_n}\bigg)=0,$$
where $\cH^{(r)}$ is as in~\e_ref{Hrdfn_e}.
If $|\a|\!=\!n$, $\{L_{n;\a}(\al_i,q)\}$ is the set of $n$ roots~$\y$ of the equation
$$\y^n-(1\!-\!\a^{\a}q)^{-1}\bfs_1\y^{n-1}+(1\!-\!\a^{\a}q)^{-1}\bfs_2\y^{n-2}-\ldots
+(-1)^n(1\!-\!\a^{\a}q)^{-1}\bfs_n=0\,.$$
Thus, if $s\!+\!1\!\le\!|\a|\!=\!n$,
\BE{RootSum_e}\sum_{i=1}^n L_{n;\a}(\al_i,q)^{s-|\a|}\frac{\nd L}{\nd q}
=\a^{\a}\cH_{\nu_n(\a)}^{(s+1-n)}(\a^{\a} q),\EE
where $\cH_{\nu}^{(r)}$ is as in \e_ref{Hrnudfn_e}.
If $|\a|\!<\!n$, $\{L_{n;\a}(\al_i,q)\}$ is the set of $n$ roots~$\y$ of the equation
\begin{equation*}\begin{split}
\y^n-\bfs_1\y^{n-1}+\ldots+(-1)^{\nu_n(\a)-1}\bfs_{\nu_n(\a)-1}\y^{|\a|+1}
&+(-1)^{\nu_n(\a)}\big(\bfs_{\nu_n(\a)}-(-1)^{\nu_n(\a)}\a^{\a}q\big)\y^{|\a|}\\
&+(-1)^{\nu_n(\a)+1}\bfs_{\nu_n(\a)+1}\y^{|\a|-1}+\ldots+(-1)^n\bfs_n=0.
\end{split}\end{equation*}
Thus, if $s\!+\!1\!\le\!|\a|\!<\!n$, \e_ref{RootSum_e} still holds.
Combining the equations in this paragraph, we find~that 
\begin{equation*}\begin{split}
\sum_{i=1}^n
\frac{\dot\Phi_{n;\a}^{(0)}(\al_i,q)^3L_{n;\a}(\al_i,q)^s}
{\bfs_{n-1}(\al_i)\dot\Phi_{n;\a}^{(0)}(\al_i,q)}
\bigg(\frac{L_{n;\a}(\al_i,q)}{\al_i}\bigg)^{-\ell(\a)}
=\begin{cases}
\cH_{\nu_n(\a)}^{(s+1-n)}(\a^{\a} q),&\hbox{if}~s\!\ge\!n-\!1;\\
0,&\hbox{if}~0\!\le\!s\!<\!n\!-\!1.\end{cases}
\end{split}\end{equation*}
Combining this with \e_ref{cZ3sum_e}-\e_ref{ddotPhi_e} and~\e_ref{ctC3dfn_e}, 
we obtain~\e_ref{Z2cJpt_e}.

\vspace{.2in}

\noindent
{\it Department of Mathematics, SUNY Stony Brook, NY 11794-3651\\
azinger@math.sunysb.edu}


\begin{thebibliography}{99}

\bibitem{ABo} M.\,Atiyah and R.\,Bott,
{\it The moment map and equivariant cohomology}, Topology 23 (1984), 1--28

\bibitem{Artin} M.~Artin, {\it Algebra}, Prentice Hall, 1991.

\bibitem{CK} I.~Ciocan-Fontanine and B.~Kim,
{\it Wall-crossing in genus zero quasimap theory and mirror maps},
arXiv:1304.7056

\bibitem{CKM} I.~Ciocan-Fontanine, B.~Kim, and D.~Maulik,
{\it Stable quasimaps to GIT quotients}, arXiv:1106.3724

\bibitem{GWvsSQ} Y.~Cooper and A.~Zinger,
{\it Mirror symmetry for stable quotients invariants}, arXiv:1201.6350

\bibitem{SQg1} Y.~Cooper and A.~Zinger,
{\it Mirror symmetry for genus 1 stable quotients invariants},
in preparation


\bibitem{Elezi} A.~Elezi,
{\it Mirror symmetry for concavex vector bundles on projective spaces},
Int.~J.~Math.~Math. Sci.~2003, no.~3, 159–-197

\bibitem{Gi2} A.~Givental,
{\it Equivariant Gromov-Witten invariants}, IMRN no.~13 (1996), 613--663

\bibitem{MirSym} K.~Hori, S.~Katz, A.~Klemm, R.~Pandharipande,
R.~Thomas, C.~Vafa, R.~Vakil, and E.~Zaslow, {\it Mirror Symmetry},
Clay Math.\ Inst., AMS, 2003

\bibitem{KR} C.~Krattenthaler and T.~Rivoal, 
{\it On the integrality of the Taylor coefficients of mirror maps}, 
Duke Math.~J.~151 (2010), 175--218

\bibitem{LLY3} B.~Lian, K.~Liu, and S.T.~Yau,
{\it Mirror Principle III}, Asian J.~of Math.\ 1, no.\ 3 (1999), no.~4, 771–-800

\bibitem{MOP09} A.~Marian, D.~Oprea, and R.~Pandharipande,
{\it The moduli space of stable quotients}, 
Geom.~Top.~15 (2011), no.~3, 1651--1706

\bibitem{MS} D.~McDuff and D.~Salamon,
{\it $J$-holomorphic Curves and Symplectic Topology}, AMS~2004


\bibitem{Po2} A.~Popa, {\it The genus one Gromov-Witten invariants
of Calabi-Yau complete intersections}, 
Trans.~AMS 365 (2013), no.~3, 1149–-1181

\bibitem{Po} A.~Popa,
{\it Two-point Gromov-Witten formulas for symplectic toric manifolds},
arXiv:1206.2703

\bibitem{bcov0_ci} A.~Popa and A.~Zinger,
{\it Mirror symmetry for closed, open, and unoriented Gromov-Witten invariants},
arXiv:1010.1946

\bibitem{RT}  Y.~Ruan and G.~Tian, {\it A mathematical theory of quantum cohomology},  
JDG 42 (1995),  no.~2, 259--367

\bibitem{V} C.~Voisin, 
{\it A mathematical proof of a formula of Aspinwall and Morrison},
Compositio Math.~104 (1996), no.~2, 135–-151 

\bibitem{bcov1} A.~Zinger,
{\it The reduced genus 1 Gromov-Witten invariants of Calabi-Yau hypersurfaces},
J.~Amer.~Math.~Soc.~22 (2009), no.~3, 691–-737

\bibitem{bcov0} A.~Zinger,
{\it Genus zero two-point hyperplane integrals in Gromov-Witten theory},
Comm.~Analysis Geom.~17 (2010), no.~5, 1–-45


\bibitem{g0ci} A.~Zinger,
{\it The genus 0 Gromov-Witten invariants of projective complete intersections},
arXiv:1106.1633


\end{thebibliography}
\end{document}